\newcommand{\me}{\mathrm{e}}
\newcommand{\mi}{\mathrm{i}}
\newcommand{\md}{\mathrm{d}}
\renewcommand{\Im}{\mathrm{Im}}
\renewcommand{\Re}{\mathrm{Re}}
\numberwithin{equation}{section}
\let \Im \relax
\let \Re \relax
\DeclareMathOperator{\Im}{Im}
\DeclareMathOperator{\Re}{Re}
\theoremstyle{definition}
\newtheorem{remark}{Remark}[section]
\theoremstyle{plain}
\newtheorem{theorem}{Theorem}[section]
\newtheorem{definition}{Definition}[section]
\newtheorem{corollary}{Corollary}[section]
\newtheorem{conjecture}{Conjecture}[section]
\newtheorem{proposition}{Proposition}[section]
\newtheorem{lemma}{Lemma}[section]
\begin{document}
\fontsize{10pt}{11pt}\selectfont
\fontencoding{T1}\selectfont
\baselineskip=11pt
\frenchspacing
\title{Algebroid Solutions of the Degenerate Third Painlev\'e Equation for Vanishing Formal Monodromy Parameter}
\author{A.~V.~Kitaev\thanks{\texttt{E-mail: kitaev@pdmi.ras.ru}} \, and \,
A.~Vartanian \\
Steklov Mathematical Institute, Fontanka 27, St. Petersburg 191023, Russia}
\date{April 11, 2023}
\maketitle
\enlargethispage*{2.17\baselineskip}
\begin{abstract}
\noindent
Various properties of algebroid solutions of the degenerate third Painlev\'e equation,
\begin{equation*}
u^{\prime \prime}(\tau) \! = \! \frac{(u^{\prime}(\tau))^{2}}{u(\tau)} \! - \! \frac{u^{\prime}(\tau)}{\tau}
\! + \! \frac{1}{\tau} \! \left(-8 \varepsilon (u(\tau))^{2} \! + \! 2ab \right) \! + \! \frac{b^{2}}{u(\tau)},\qquad
\varepsilon=\pm1,\quad\varepsilon b>0,
\end{equation*}
for the monodromy parameter $a=0$ are studied. The paper contains connection results for asymptotics
as $\tau\to+0$ and as $\tau\to+\infty$ for $a\in\mathbb{C}$. Using these results, the simplest algebroid solution with
asymptotics $u(\tau)\to c\tau^{1/3}$ as $\tau\to0$, where $c\in\mathbb{C}\setminus\{0\}$, together with its
associated integral $\smallint_0^\tau {(u(t))^{-1}\,\md t}$, are considered in detail, and their basic asymptotic behaviours
are visualized.
\vspace{0.30cm}

\textbf{2020 Mathematics Subject Classification.} 33E17, 34M30, 34M35, 34M40, 34M55, 34M56,

20F55

\vspace{0.11cm}

\textbf{Abbreviated Title.} Algebroid Solutions of the Degenerate Third Painlev\'e Equation

\vspace{0.11cm}

\textbf{Key Words.} Algebroid function, asymptotics, Coxeter group, monodromy manifold, Painlev\'e

equation
\end{abstract}
\vspace{-0.1cm}
\tableofcontents
\pagebreak
\section{Introduction} \label{sec:introduction}
We consider the degenerate third Painlev\'{e} equation~\cite{Kit87,KitVar2004} in the form
\begin{equation} \label{eq:dp3u}
u^{\prime \prime}(\tau) \! = \! \frac{(u^{\prime}(\tau))^{2}}{u(\tau)} \! - \! \frac{u^{\prime}(\tau)}{\tau}
\! + \! \frac{1}{\tau} \! \left(-8 \varepsilon (u(\tau))^{2} \! + \! 2ab \right) \! + \! \frac{b^{2}}{u(\tau)},
\end{equation}
where the prime denotes differentiation with respect to $\tau$, $\varepsilon=\pm1$, and
$b\in\mathbb{R}\setminus\lbrace 0\rbrace$ and $a\in\mathbb{C}$ are parameters. Equation~\eqref{eq:dp3u} is also
refered as the third Painlev\'e equation of $D_7$ type \cite{OKSO2006}.

Algebroid solutions of equation~\eqref{eq:dp3u} can be viewed as meromorphic solutions of the Painlev\'e-type equations
that are equivalent, in the sense of Ince's classification~\cite{Ince}, to equation~\eqref{eq:dp3u}; therefore, it is natural
to extend some results and ideas developed by one of the authors of this work in \cite{KitSIGMA2019} for the
study of the meromorphic solutions of \eqref{eq:dp3u} to this wider class of solutions. The algebroid solutions represent
an interesting class of solutions from the point of view of their asymptotics, because their large-$\tau$ asymptotic behaviour
can be explicitly expressed in terms of the initial values of the associated meromorphic functions.
Recall that, in a generic situation, such explicit formulae are not obtainable for any Painlev\'e equation.
At the same time, however, the behaviour of the algebroid  solutions at the point at infinity resembles the behaviour of
generic solutions; so, we take  this opportunity to ``visualize the asymptotics'', namely, we consider several examples of
initial values for the simplest algebroid solution and compare the graphs of the numerical solutions with their asymptotics.
This comparison elucidates many interesting features of the numeric-asymptotic correspondence. In view of the present
asymptotic study, we've included updated and reformulated connection results obtained in \cite{KitVar2004, KitVar2010}
for asymptotics of solutions of equation~\eqref{eq:dp3u} for generic $a\in\mathbb{C}$ in Appendices~\ref{app:asympt0} and
\ref{app:infty}. A detailed description of the contents of this paper is given below, after a brief account of the literature.


We now mention some works that are related to the topic of our study.
Gromak \cite{Gromak1979} proved that the general third Painlev\'e equation has algebraic solutions iff it reduces
(with, perhaps, the help of the transformation $u(\tau)\to1/u(\tau)$) to the degenerate case~\eqref{eq:dp3u} with
$\mi a=n\in\mathbb{Z}$: for each $n$, equation~\eqref{eq:dp3u} has exactly three solutions of the form $R(x)$, where $R$
is a rational function and $(2\varepsilon x)^3=b^2\tau$. From the functional point of view, we have one multi-valued function,
and the three solutions are obtained via a cyclic permutation of the sheets of the Riemann surface $(2\varepsilon x)^3=b^2\tau$.
The function $R(x)$ can be constructed via a successive application of the B\"acklund transformations to the three
different solutions $u(\tau)=\tfrac{\varepsilon}{2}\sqrt[3]{b^2\tau}$
of \eqref{eq:dp3u} for the simplest case $a=0$. Recently, Buckingham and Miller \cite{BuckinghamMiller2022} studied
a double-scaling limit of the algebraic solution as $n\to\infty$ and $\tau/n^{3/2}=\mathcal{O}(1)$.

Among other asymptotic results for equation~\eqref{eq:dp3u} that concern its general solutions, we mention the recent paper
by Shimomura~\cite{ShShBoutroux2022} on the elliptic asymptotic representation of the general solution of \eqref{eq:dp3u}
in terms of the Weierstrass $\wp$-function in cheese-like strip domains along generic directions in
$\mathbb{C}\setminus\left(\mathbb{R}\cup\mi\mathbb{R}\right)$. Another interesting paper by Gamayun, Iorgov,
and Lisovyy \cite{GILy2013} gives, in particular, a derivation of the asymptotic expansions via a proper double-scaling limit
{}from the sixth Painlev\'e equation to the degenerate third Painlev\'e equation,
with emphasis placed on the combinatorial properties of the coefficients of the asymptotic expansions.

In the last decade, an ever-increasing number of papers dedicated to the application of the degenerate third Painlev\'e equation
and its generalizations, e.g., the cylindrical reduction of the Toda system, to some models in applied and theoretical physics
and in geometry have appeared; see, for example,
\cite{BLM2020,ContattoDorigoni2015,Contatto2017,DunPla2009,Dun2012,GIL2015I,GIL2015,GIL2020,GIL2023,Hildebrand2022,
Suleimanov2017,TW1997,TW1998}.
The majority of these works refer to, or report, some novel results
in the asymptotic description of some special solutions appearing in particular applications. In this paper, we can not,
nor do we attempt to, give an overview of these works, as such a presentation would lead us too far astray from our goals.
As a matter of fact, it would be of considerable interest to prepare an account of these works in the form of a review
article dedicated to the multifarious manifestations of the degenerate third Painlev\'e equation \eqref{eq:dp3u}.
Hereafter, we discuss only those works that are of primary relevance for our current research.

The main illustrative object of study in this article is the holomorphic at $r=0$ function $H(r)$ which solves the
ordinary differential equation (ODE)
\begin{equation} \label{eq:hazzidakis}
H^{\prime \prime}(r) \! = \! \frac{(H^{\prime}(r))^{2}}{H(r)} \! - \! \frac{H^{\prime}(r)}{r} \! + \!
\frac{1}{r} \! \left((H(r))^{2} \! - \! \frac{1}{H(r)} \right),
\end{equation}
where the prime denotes differentiation with respect to $r$. This function, in the real case
$H(r)\in\mathbb{R}$ for $r\in\mathbb{R}$, was introduced by Bobenko and Eitner in \cite{BobEitLMN2000}
as the function defining the Blaschke metrics of the two-dimensional regular indefinite affine sphere in $\mathbb{R}^3$ with
two affine straight lines. They proved, in particular, that, for this special class of the affine spheres, the function $H(r)$
is a similarity solution of the general Tzitz\'{e}ica equation describing regular indefinite affine spheres in $\mathbb{R}^3$.
The authors of \cite{BobEitLMN2000} formulated a special Goursat boundary-value problem for the Tzitz\'eica
equation: the solution of this problem is a similarity function which solves equation~\eqref{eq:hazzidakis}.
Exploiting the unique solvability of the Goursat problem for
second-order hyperbolic partial differential equations, Bobenko and Eitner proved the existence and the uniqueness of the
smooth at $r=0$ real solution $H(r)$ of equation~\eqref{eq:hazzidakis}. Assuming, then, the existence of the smooth solution
$H(r)$, they deduced {}from equation~\eqref{eq:hazzidakis} that $H^{\prime\prime}(r)$  is also smooth, and, substituting
the expansion
\begin{equation} \label{eq:Hat0short}
H(r)\underset{r\to0}{=}H(0)+H^{\prime}(0)r+\mathcal{O}(r^{2}),
\end{equation}
where $\mathcal{O}(r^{2})$ is a smooth function, into equation~\eqref{eq:hazzidakis}, proved that
\begin{equation} \label{eq:Hprime0-H0}
H^{\prime}(0)= (H(0))^{2}-\frac{1}{H(0)}.
\end{equation}
Bobenko and Eitner also showed that the Painlev\'e property of this equation allows one to make several useful, for
the geometry of the affine sphere, conclusions about the qualitative behaviour of this solution; for example,
if $H(0)>0$, then the solution has neither poles nor zeros on the negative-$r$ semi-axis, and, for $H(0)<0$, the smooth
solution is growing monotonically from some---largest---pole on the negative-$r$ semi-axis to the first zero on the
positive-$r$ semi-axis.

We now commence with the detailed discussion of the contents of this work.

Section~\ref{sec:2} consists of two subsections.
In Subsection~\ref{subsec:existence}, we prove that, for $H(0)\in\mathbb{C}\setminus\{0\}$, there exists a unique solution of
equation~\eqref{eq:hazzidakis} which is holomorphic at $r=0$: we use the straightforward method that is based on the proof of
the convergence of a formal power series. This choice for the method of the proof is adopted because, in the following
Subsection~\ref{subsec:H0coeffs} and in Section~\ref{sec:asymptnumerics}, we use the recurrence relation that is analysed
in Subsection~\ref{subsec:existence}. The main goal that we pursue in Subsection~\ref{subsec:H0coeffs} is to study the
coefficients of the Taylor-series expansion of $H(r)$ introduced in Subsection~\ref{subsec:existence}. Actually, our goal
in this respect is two-fold: (i) to formulate some number-theoretic properties of these coefficients; and (ii) to give an
effective tool for their calculation.
After some preliminary results and experimentation with \textsc{Maple}, we were able to formulate a conjecture regarding
the \emph{content} of the polynomials in $a_0:=-H(0)$ defining the Taylor-series coefficients. A substantial part of
Subsection~\ref{subsec:H0coeffs} is devoted to the technique of generating functions for the calculation of the coefficients.
This technique was suggested in \cite{KitSIGMA2019} for the study of the Taylor coefficients of holomorphic solutions of
equation~\eqref{eq:dp3u} in the case where these coefficients are rational functions of the parameter
$a\in\mathbb{C}\setminus\{0\}$. Equation~\eqref{eq:hazzidakis} is related (see the discussion
below) to equation~\eqref{eq:dp3u}, but with $a=0$, and our coefficients are functions of the parameter $H(0)$. Nevertheless,
we show that the technique of generating functions is also applicable in this situation; moreover, in Appendix~\ref{app:g2},
we present a stratagem that actually helps in the situation where straightforward calculations lead to cumbersome formulae.

Via the change of variables
\begin{equation} \label{eq:hazzidakis-dP3y}
y(t) \! = \! t^{1/3}H(r),\qquad
r=\left(\frac34\right)^2t^{4/3},
\end{equation}
one shows that equation~\eqref{eq:hazzidakis} transforms into the canonical form of the third Painlev\'e equation \cite{Ince}
with the coefficients $(1,0,0,-1)$,
\begin{equation} \label{eq:dP3y}
y^{\prime \prime}(t) \! = \! \frac{(y^{\prime}(t))^{2}}{y(t)} \! - \! \dfrac{y^{\prime}(t)}{t} \! + \!
\frac{(y(t))^{2}}{t} \! - \! \dfrac{1}{y(t)},
\end{equation}
where the prime denotes differentiation with respect to $t$. Equation~~\eqref{eq:dp3u} can also be identified as a special case
of the canonical form of the third Painlev\'e equation with the following set of coefficients, $(-8\varepsilon,2ab,0,b^2)$.
One can, of course, identify equations~\eqref{eq:dp3u} and \eqref{eq:dP3y} by setting $\varepsilon=-1/8$, $b=\pm\mi$, and $a=0$;
however, since we are planning on using the asymptotic results obtained in \cite{KitVar2004,KitVar2010}, where it is assumed
that $\varepsilon=\pm1$ and $\varepsilon b>0$, we identify these equations by choosing, for the coefficients in
equation~\eqref{eq:dp3u}, the values
\begin{equation}\label{eq:a-b-epsilon-conditions}
\varepsilon \! = \! b \! = \! +1
\qquad \text{and}\qquad
a=0,
\end{equation}
and making the following change of variables
\begin{equation}\label{eq:u-y-transformation}
\tau=2^{-3/2} \me^{3\pi\mi/4}t \qquad\text{and}\qquad
u(\tau)=-2^{-3/2}\me^{-3\pi\mi/4}y(t).
\end{equation}

For future reference, we rewrite the expansion~\eqref{eq:Hat0short} in terms of the functions $y(t)$ and $u(\tau)$;
the expansion for $y(t)$ follows immediately from equations~\eqref{eq:Hat0short} and \eqref{eq:hazzidakis-dP3y}:
\begin{equation}\label{eq:y-Hat0}
y(t)\underset{t\to0}{=} t^{1/3}\!\left(H(0)+H^{\prime}(0)\!\left(3/4\right)^2t^{4/3}+\mathcal{O}\big(t^{8/3}\big)\right),
\end{equation}
where the branches of $t^{1/3}$ and $t^{4/3}$ are defined to be positive for $t>0$.
Now, equations~\eqref{eq:u-y-transformation} imply that
\begin{equation}\label{eq:u(tau)asympt0}
u(\tau)\underset{\tau\to+0}{=}\frac{1}{2}\tau^{1/3} \! \left(H(0)-H^{\prime}(0)(3/2)^{2}\tau^{4/3}
+\mathcal{O}(\tau^{8/3}) \right),
\end{equation}
where the branches of $\tau^{1/3}$ and $\tau^{4/3}$ are defined analogously as for the powers of $t$ above, and
the coefficient values~\eqref{eq:a-b-epsilon-conditions} are assumed.

Section~3 begins with the general description of the algebroid solutions of equation~\eqref{eq:dP3y}. This consideration
is  based on the fact that equation~\eqref{eq:dP3y} possesses the Painlev\'e property, and its solutons have only one branching
point at the origin $t=0$; therefore, a solution is algebroid when the exponent defining its behaviour
at the origin ($\tau=t=r=0$) is a rational number; moreover, there are no logarithmic terms in the complete asymptotic
expansion as $t\to0$ of this solution. After that, we define two particular series (infinite sequences) of algebroid solutions
and show how to match the classical definition of the algebroid function (see, for example, \cite{Steinmetz2017}) as the solution
of an algebraic equation with meromorphic coefficients. This approach is based on the study of the expansion of the solution as
$t\to0$. We derive a structure for the coefficients of this expansion as functions of the initial value $a_0$. This analysis is
similar to the corresponding part of Subsection~\ref{subsec:H0coeffs} for the study of the function $H(r)$; however,
the combinatorics of the coefficients proves to be more interesting. We then deviate from the course of study of
Subsection~\ref{subsec:H0coeffs}, and, instead of using generating functions for the coefficients, define, for each
algebroid solution,  with the help of its small-$t$ expansion, a set of meromorphic functions that are holomorphic at the origin.
In terms of these meromorphic functions, we present an explicit construction for the algebraic equations of the algebroid
solutions of the aforementioned series. This construction leads to the study of some interesting functional determinants.
Finally, we derive systems of second-order differential equations which allow one to determine the meromorphic functions
used for the construction of the algebraic equations mentioned above as the unique solutions of these systems that are
holomorphic at the origin. We expect that the methodology expounded in Section~3 will work for the case of generic algebroid
solutions of equation~\eqref{eq:dp3u}; technically, however, the explicit construction may prove to be unwieldy.

In Section~\ref{sec:mondata}, we return to the study of the function $H(r)$. We recall the definition of the monodromy
manifold defined in our work~\cite{KitVar2004} corresponding to the function $u(\tau)$ that solves \eqref{eq:dp3u} for
the coefficient values \eqref{eq:a-b-epsilon-conditions}. This manifold uniquely describes the pair of functions $u(\tau)$
and $\me^{\mi\varphi(\tau)}$, where the function $\varphi(\tau)$ is an indefinite integral, that is,
$\varphi'(\tau)=2a/\tau+b/u(\tau)$.
We see that the function $v(\tau):=\me^{\mi\varphi(\tau)}$ depends on an additional multiplicative constant of
integration compared to the function $u(\tau)$; moreover, as long as the function $v(\tau)$ is known, the function $u(\tau)$
can be readily obtained via differentiation. One can actually parametrize our system of isomonodromy deformations
\cite{KitVar2004} in terms of the function $v(\tau)$ which solves, in its own right, a third-order Painlev\'e equation that
can be derived {}from equation \eqref{eq:dp3u} via the substitution $u(\tau)=-b\left(2a/\tau+\mi v'(\tau)/v(\tau)\right)^{-1}$.
Our goal, however, is to compare the asymptotic results of the papers \cite{Kit87} and  \cite{KitVar2004}. Towards this end,
we eliminate the multiplicative constant from the monodromy data of the monodromy manifold of \cite{KitVar2004} to
arrive at the so-called contracted monodromy manifold, and show that the contracted manifold is equivalent to the
monodromy manifold considered in the paper~\cite{Kit87}.
The main purpose of Section~\ref{sec:mondata} is to explain how one can use the results of \cite{KitVar2004} in order
to calculate the monodromy data corresponding to the solution~\eqref{eq:u(tau)asympt0} associated with $H(r)$ in terms of
the initial value $H(0)$. For the reader's convenience, some basic results from \cite{KitVar2004} that are necessary for
understanding the material of this section are formulated in Appendix~\ref{app:asympt0}. These results concern the asymptotic
behaviour as $\tau\to0$ of the function $u(\tau)$ and of its indefinite integral, $\varphi(\tau)$, related to
equation~\eqref{eq:dp3u} for generic parameter $a\in\mathbb{C}$, with $|\mathrm{Im}\,a|<1$. As discussed above, the function
$\varphi(\tau)$ has a closely-knit relationship to the system of isomonodromy deformations studied in
\cite{KitVar2004,KitVar2010}, so that it is very helpful for the study of asymptotics of some definite integrals related to
$u(\tau)$.
Compared to \cite{KitVar2004}, we have, in Appendix~\ref{app:asympt0}, simplified the notation and some formulae, and
have presented explicit asymptotics for the function $\varphi(\tau)$. It is our expectation that the detailed derivation
presented in Section~\ref{sec:mondata}, in conjunction with the improved presentation for the asymptotic results given in
Appendix~\ref{app:asympt0}, will be of benefit to those readers for whom the derivation of analogous parametrizations for other
types of solutions of \eqref{eq:dp3u} is required.

In Section~\ref{sec:Coxeter}, we give a group-theoretical characterization of the algebroid solutions of \eqref{eq:dp3u}
for $a=0$. The contracted monodromy manifold ``enumerating'' the solutions of \eqref{eq:dp3u} for generic values of
$a$ is a cubic surface in $\mathbb{C}^3$. The projectivization of this surface is a singular cubic surface in $\mathbb{CP}^3$
with a singularity of type $A_3$. With the help of a rational parametrization for this cubic surface, we derive its group of
automorphisms, $G$. A formula for one of the generators of this group is not properly defined, thus we consider
its regularization: this regularization is obtained for the restriction of the group $G$, denoted as $G(s)$,
where $s$ is a Stokes multiplier, acting on a disjoint sum of two conics. These disjoint sums of two conics do not intersect
for different values of $s$, so that continuing the action of $G(s)$ as the identity transformation on the complement
of the contracted monodromy manifold to the conics on which $G(s)$ is acting non-trivially, we can present $G$ as an infinite
direct sum of groups $G(s)$ with $s\in\mathbb{C}\setminus\{-1,3\}$. The group $G(s)$ is isomorphic to a Coxeter group of the
type
$$\bullet\underset{4}{\tfrac{\phantom{xxxx}}{\phantom{xxxx}}}\bullet\underset{m}{\tfrac{\phantom{xxxx}}
{\phantom{xxxx}}}\bullet$$
which has a normal subgroup isomorphic to the dihedral group $Dih_m$, $m\in\mathbb{N}\cup\{\infty\}$.
The solution is algebroid iff the corresponding monodromy data belongs to the conics where $m$ is finite.
This condition is equivalent to the statement that the corresponding Stokes multiplier $s$ is a real algebraic number
that solves one of the polynomial equations $q_m(s)$, $m\in\mathbb{N}$, defined recursively in Section~\ref{sec:Coxeter}
with the help of the Chebyshev polynomials.
The polynomials $q_m(s)$ are known in the mathematical literature as representing the ``trigonometric'' algebraic
numbers; in particular, their Galois group is solvable, so that their roots which coincide with the Stokes multipliers of
the algebroid solutions can be presented in terms of radicals.

Section~\ref{sec:asymptnumerics} is devoted to the visualization of the large-$r$ asymptotics on the negative-$r$ semi-axis.
In this section, we compare the numerical plots of the functions $H(r)$ and
$I(r):=\smallint_r^0\tfrac{1}{\sqrt{-r}H(r)}\,\md r=(\varphi(\tau)-\varphi(0))\left.\right\vert_{\tau^{2/3}=\frac23\sqrt{-r}}$
with the plots of their asymptotics, where the function $\varphi(\tau)$ is addressed above. For the convenience of the reader,
we present a summary of our previous results \cite{KitVar2004,KitVar2010} on the small- and large-$\tau$ asymptotics for
solutions of equation~\eqref{eq:dp3u} for generic $a\in\mathbb{C}$ in Appendices~\ref{app:asympt0} and \ref{app:infty},
respectively; furthermore, in these appendices, the reader will also find several new results for asymptotics of the
function $\varphi(\tau)$. In our previous papers~\cite{KitVar2010,KitSIGMA2019,KitVar2019}, we found and corrected some
mistakes in \cite{KitVar2004}; therefore, we have amalgamated the corrected results in Appendices~\ref{app:asympt0}
and \ref{app:infty}. In addition, in these appendices, we improve the notation and simplify some of the formulae:
all these changes are indicated therein.

By the locution ``visualization of asymptotics'' we mean the visual comparison of the plots of the numerical solutions with
their asymptotics. The primary goal of this comparison is three-fold: (i) for different initial values $H(0)$, to observe
the behaviour of the functions $H(r)$ and $I(r)$ at ``finite'' distances; (ii) to verify the correctness of
the asymptotic formulae; and (iii) to understand at what rate these functions achieve their asymptotic behaviour.
These goals are fundamental {}from the point of view of applications of these functions; however, it is by no means a trivial
matter to put these concepts on rigorous mathematical footing.

Section~\ref{sec:asymptnumerics} consists of six examples that were deliberately chosen in order to exhibit the dependence of
the functions $H(r)$ and $I(r)$ on the initial value $H(0)$ on the negative-$r$ semi-axis, together with the features of their
asymptotic approximations. These examples do not represent the complete list of known solutions: the solutions that are
not mentioned here represent some special classes of solutions (in our case, solutions that depend on one real parameter) with
some specific asymptotic behaviours; for example, solutions which are singular on the negative-$r$ semi-axis, or so-called
truncated solutions~\cite{av3}.

There is an ancient proverb which states: ``One look is worth a thousand words in a book''. In the context of our studies,
it can be rephrased as the locution ``visualization of asymptotics''. A ``present-day look'', however, is
not possible without the help of computer simulations, where the presentation of the results undergo a correction by the
corresponding computer programs; thus, it is important to discuss some features of this correction. These features meddle
with fact that equation~\eqref{eq:hazzidakis} has a singularity at $r=0$, where the initial data is specified.

At first glance, everything appears to be simple: if one observes that the plots of a function and its asymptotics approach
one another on some segment of reasonable length, then, one concludes that the asymptotics is correct..., or, most likely
correct... A subtle point here, of course, is the notion of ``reasonable length'', which is not that apparent. Since we are
dealing with asymptotics, we have to verify them over relatively large distances, because, in certain situations, even though
the numerical solution and its proposed asymptotics are close to one another over short distances, they may diverge over
longer ones. This may occur, for example, if there is a minor mistake in the asymptotic formula: the behaviour of the solution
has not yet stabilized and, at this stage, is partially compensated by a mistake, and some residual discrepancies between the
plots can be explained by the fact that the asymptotics is not supposed to coincide exactly with the solution. In order to
exclude the possibility of a mistake in the asymptotics, we have to increase the length of the interval of comparison. Doing so,
however, may compromise the accuracy of the numerical solution, so that the asymptotics is correct, but the discrepancy
between the two starts to grow. Both of these problems can be rectified, but as a result, one will have to compare the plots
over relatively large intervals. Then, in order to fit into a standard page, the plots are appropriately scaled by a computer
programme. As a result, some of the features of the plots may be lost, as, say, in our case, where in some of the figures
presented in Section~\ref{sec:asymptnumerics} (see, also, Figs.~\ref{fig:H0=-02+i0045+ReHintro} and
\ref{fig:H0=-02+i0045+ImHintro} below) the reader will see sharp peaks and icicles, even though all functions are,
\emph{de facto}, smooth,  and, at their extremal points, the derivatives vanish, although ``a look'' shows that they are
close to infinity. Another problem related to the scaling is the length of the first two peaks/icicles, which are the
narrowest ones. The plot is built on the basis of a finite number of points of which very few land inside the narrow
peaks/icicles. Actually, when the number of plot points are not sufficient, these peaks/icicles resemble a fence constructed
from sticks of random lengths. Looking at such a plot, one can conclude that this situation occurs because the poles near the
negative-$r$ semi-axis are located at random distances, which, however, is not the case!
Of course, the behaviour of solutions at distances relatively close to the origin is not a particularly important problem from
the point of view of asymptotics, because we want to see that everything is correct over distances longer than that of the
location of the first two peaks/icicles; nevertheless, in many cases, the asymptotics resembles the behaviour of the numerical
solutions even over these very small distances, which is related with the problem of how quickly the solutions attain their
asymptotic behaviour; therefore, it is intriguing to observe the discrepancy between the corresponding plots even over such
small distances. The true length of the first peaks/icicles can be determined by considering close-up pictures of these
peaks/icicles,
and in the most complicated cases by making numerical calculations with smaller step sizes. Then, one varies the number of
points that are calculated for the generation of the plots in order to find a better correspondence for the lengths of the
peaks/icicles. For the examples presented in Section~\ref{sec:asymptnumerics}, the discrepancy between the lengths of
the first two peaks/icicles on both the scaled and close-up figures is about $5$ to $6$ percent, or less, whereas  for the
subsequent peaks/icicles this difference is not observable.
We present such close-up pictures in some of the examples so that the reader can compare the real lengths of the peaks/icicles
with the ones on the scaled figures. Such close-up pictures also show the reader the quality of the approximations
of the numerical solutions by their large-$r$ asymptotics even over small distances: on the non-close-up figures, such
approximations look better as a result of scaling.

One may ask: whence a notion of approximating a function at finite or even small distances by its large-$r$ asymptotics
emanate? Well, ``finite'' and ``small'' are not well-defined notions; after all, on the practical level, if one wants to
compare a function to its asymptotics, then one has to start this comparison from some ``finite'' value of the argument!
Recalling the aforementioned proverb, the reader may ask: where does one have to cast ``a look'' to be sure that the
asymptotics correctly approximates the function?
Another point is that all mathematical models of practical value are applicable in some bounded domains
(times, distances, etc.); therefore, it is imperative to know whether or not our asymptotics are actually applicable in,
or far away from, the domains of interest.

The gist of the discussion in the previous paragraph can be visualized with the help of
Figs.~\ref{fig:H0=-02+i0045+ReHintro} and \ref{fig:H0=-02+i0045+ImHintro} below.
\begin{figure}[htpb]
\begin{center}
\includegraphics[height=50mm,width=100mm]{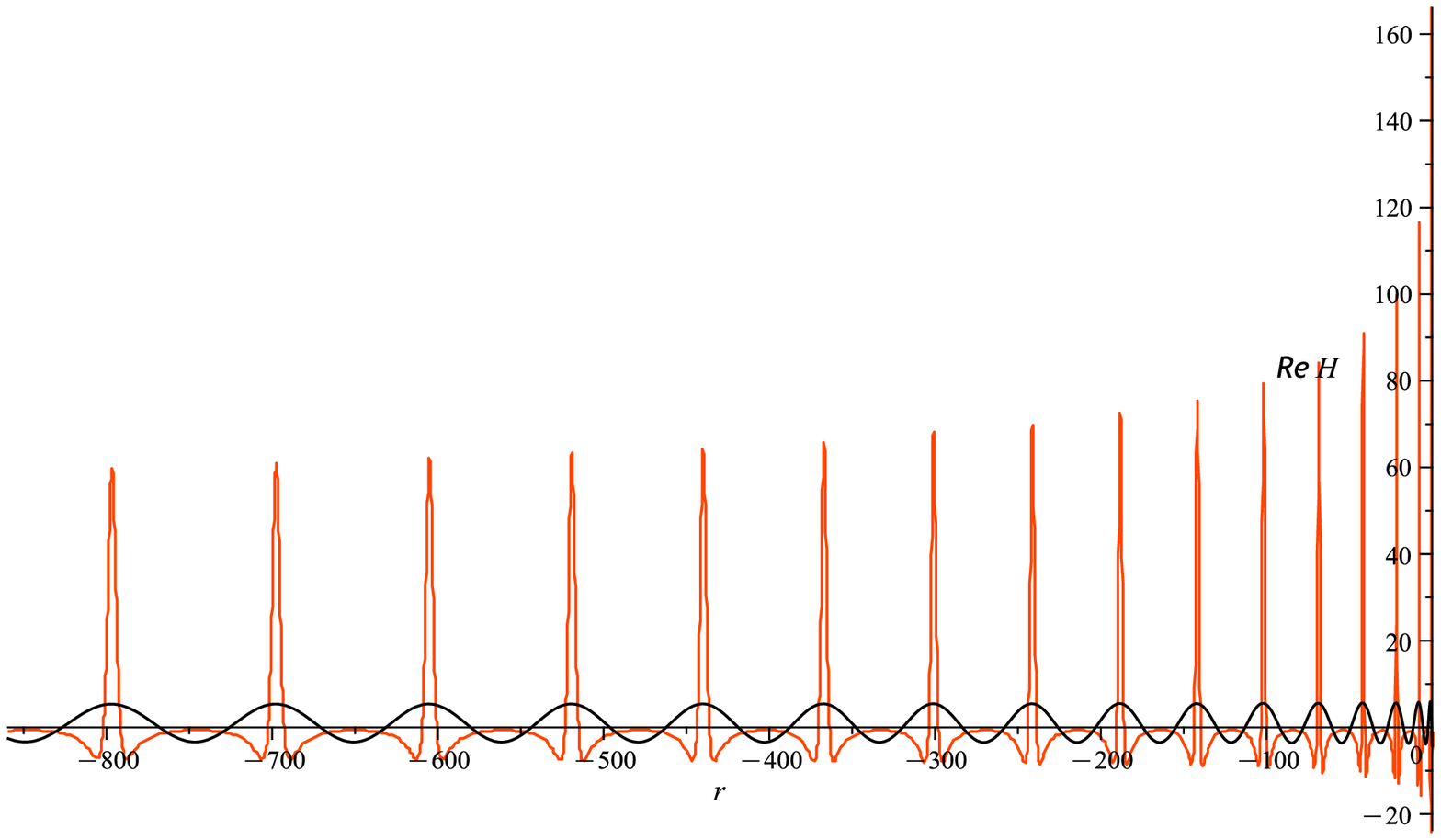}
\caption{The red and black plots are, respectively, the real parts of the numeric and large-$r$ asymptotic
(cf. equation~\eqref{eq:H-asympt-Large-regular}) values of the function $H(r)$ for $r\leqslant-0.1$ corresponding
to the initial value $H(0)=-0.2+\mi0.045$ (see Subsection~\ref{subsec:example6}).}
\label{fig:H0=-02+i0045+ReHintro}
\end{center}
\end{figure}

\begin{figure}[htpb]
\begin{center}
\includegraphics[height=50mm,width=100mm]{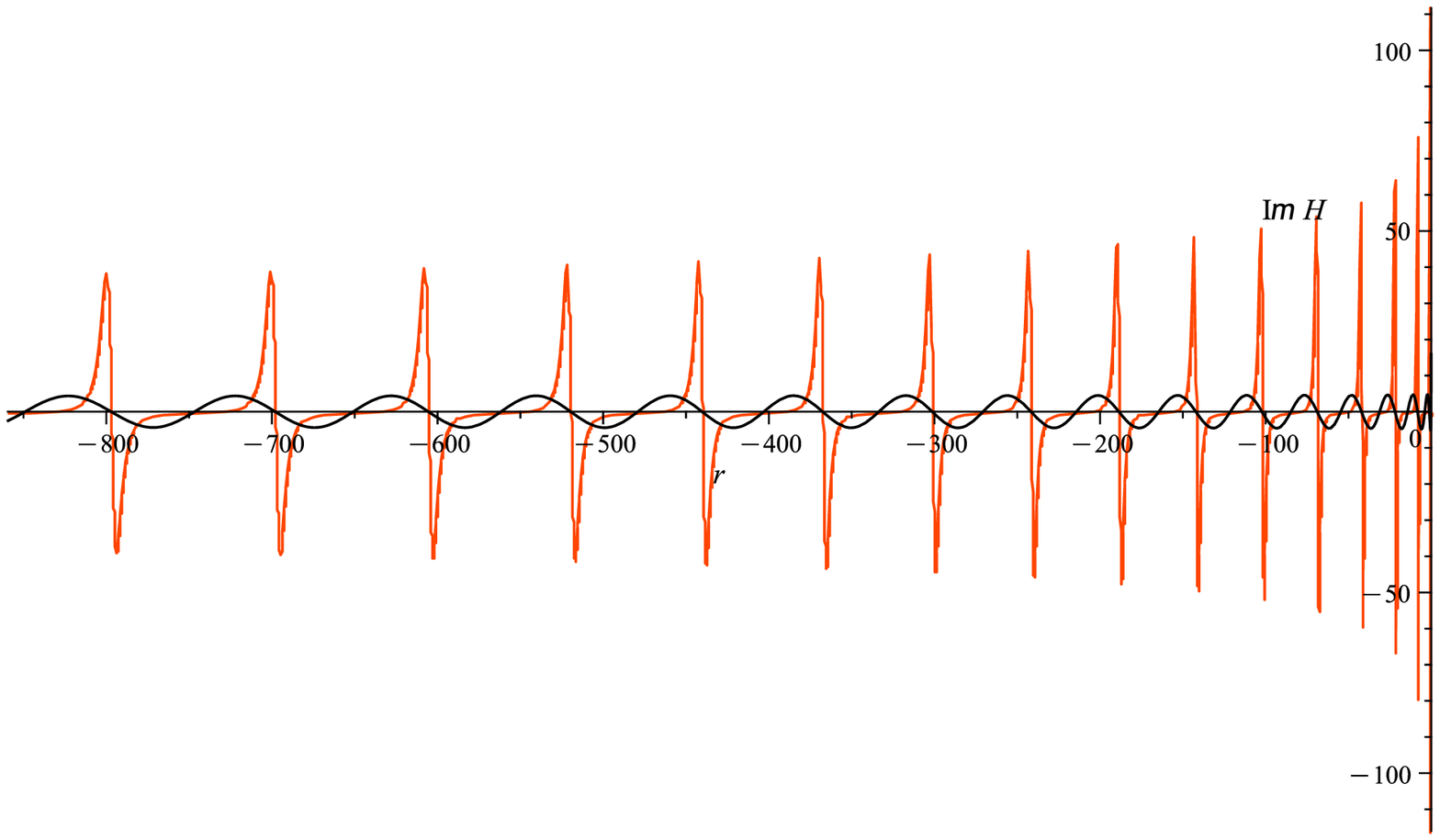}
\caption{The red and black plots are, respectively, the imaginary parts of the numeric and large-$r$ asymptotic
(cf. equation~\eqref{eq:H-asympt-Large-regular}) values of the function $H(r)$ for $r\leqslant-0.1$ corresponding to the initial
value $H(0)=-0.2+\mi0.045$ (see Subsection~\ref{subsec:example6}).}
\label{fig:H0=-02+i0045+ImHintro}
\end{center}
\end{figure}

In Figs.~\ref{fig:H0=-02+i0045+ReHintro} and \ref{fig:H0=-02+i0045+ImHintro}, the red plots are the real and imaginary parts,
respectively, of the function $H(r)$ considered in Example 6 of Section~\ref{sec:asymptnumerics}. The black plots in these
figures are the corresponding leading terms of asymptotics.
``A look'' seems to suggest that something is wrong: (i) perhaps it's the asymptotics; (ii) perhaps it's the distance whence
the asymptotics start to work; or (iii) perhaps it's the absence of the correction terms? Our claim is the following:
(i) the asymptotics are, in fact, correct; (ii) the proper distances over which these facts can be visualized can not be
attained numerically; and (iii) a finite number of correction terms will not help to visualize that the asymptotics
are correct, even though they may be beneficial for improving the correspondence of the plots on larger distances
relative to the origin.
We now justify our claims. How do we know that the asymptotics are correct? We have, in fact, two asymptotic formulae
with overlapping domains of applicability, one of which obtained in \cite{KitVar2004}, the other in \cite{KitVar2010}.
The formula taken from \cite{KitVar2004} is visualized in Figs.~\ref{fig:H0=-02+i0045+ReHintro} and
\ref{fig:H0=-02+i0045+ImHintro}, where as the formula taken from \ref{fig:H0=-02+i0045+ImHintro} is visualized in
Figs.~\ref{fig:H0=-02+i0045+ReH} and \ref{fig:H0=-02+i0045+ImH} of Subsec~\ref{subsec:example6}: this visualization shows
that the latter formulae approximate the numerical solution with a very high degree of accuracy for $r\leqslant-0.1$.
This formula from \cite{KitVar2010} can be further simplified for very large values of $|r|$, so that we can find, so to say,
the ``asymptotics of asymptotics'', and thus obtain  a simplified asymptotic formula that coincides with the one plotted in
Figs.~\ref{fig:H0=-02+i0045+ReHintro} and \ref{fig:H0=-02+i0045+ImHintro}. The simplified asymptotics shows that
$\Re\,H(r)\to1$ as $|r|\to\infty$.
In Subsection~\ref{subsec:example6}, we evaluated the distance over which the plots presented in
Figure~\ref{fig:H0=-02+i0045+ReHintro} become positive: this evaluation shows that the required time and accuracy for the
calculation of the numerical solution goes well beyond the possibilities of modern computers; furthermore, even if we could
execute such a calculation, it wouldn't be possible to visualize it, because we would only be able to see small fragments
of the corresponding plot with no possibility whatsoever of being able to discern the connection between the fragments.
Without the knowledge of the interplay of these asymptotics, one could, in principle, continue the calculations to values
like $r=-10^4,-10^5\ldots$, and observe that the plots are changing very slowly, namely, the maxima of the numerical solution
decrease slightly whilst the minima increase slightly, the distance between maxima grows like $n$, where
$n$ is the number of quasi-periods (a part of the plot between two neighbouring peaks), the scaling eats away at the distances,
but not entirely, and, visually, the general pictures remain very similar to the ones presented. The correction terms may
shift the location of the extrema and render the plot narrower in their neighbourhoods; but, since the plots are changing
very slowly, to achieve such sharp peaks with a finite number ($5$ to $10$, say) of correction terms is simply not
possible.\footnote{\label{foot:correctionsintro} It is over-arching and time consuming to explicitly calculate more then
10 correction terms (see Appendix~\ref{app:infty}). The more correction terms one keeps, the asymptotics provides a better and
better approximation for the functions $H(r)$ and $I(r)$ as $r$ continues to shift  farther and farther away from the origin.}
If we did not have the second asymptotic formula from the paper~\cite{KitVar2010}, then, we would probably illustrate,
with the help of Figs.~\ref{fig:H0=-02+i0045+ReHintro} and \ref{fig:H0=-02+i0045+ImHintro}, that the asymptotics from
\cite{KitVar2004} is not valid for the solution $H(r)$ presented in these figures! The reader will note that there are initial
values of $H(r)$ for which the first asymptotics taken from \cite{KitVar2004}, contrary to the example discussed above, better
approximates the solution for finite values of $r$, and therefore more instrumental for the study of the solutions for such
initial values.

Each of the six examples considered in Section~\ref{sec:asymptnumerics} is supplemented with three types of comments:
(i) the settings used in the corresponding \textsc{Maple} programs that would enable the reader to reproduce our plots, or
to generate plots for other solutions of equation~\eqref{eq:dp3u}; (ii) our understanding of the qualitative behaviour of
the solutions and the corresponding asymptotics; and (iii) some comments of an emotional nature---embedded in the
footnotes---when we encounter unexpected behaviours of the solutions. We now discuss these items in succession.

(i) The construction of the numerical solutions is based on the Cauchy problem with initial data $H(0)$, $H'(0)$, and $I(0)=0$,
where $H'(0)$ given in \eqref{eq:Hprime0-H0}. The problem involved is that equation~\eqref{eq:hazzidakis} has a singularity at
$r=0$. It seems that \textsc{Maple} has an algorithm that allows one to make calculations in this case, but it must understand
that the initial condition \eqref{eq:Hprime0-H0} is satisfied exactly. For some selected initial data (see
Appendix~\ref{app:Kit87}, Figs.~\ref{fig:H0=5over7}--\ref{fig:H0=100}), we were able to use standard \textsc{Maple} programs
for the numerical calculations in order to generate the corresponding plots, but for minor changes of the initial value, $H(0)$,
{}from, say, $5/7$ to $4/7$ or $3/7$, the standard programs did not work. Consequently, we had to consider the Cauchy
problem set at some point $r_1$ close to the origin. Since we are dealing with large-$r$ asymptotics, we have to
construct our solutions over relatively long intervals. For a longer interval, a lower accuracy of the solution is attained
closer to the far-end of the interval. In our calculations, therefore, we have to guarantee, somehow, the accuracy of the
calculations for the numerical solutions sufficient enough for the purposes of comparison with their asymptotics. To achieve
this goal, we used the methodology adopted in \cite{AK2020}: we do the calculations for some rather small, in our opinion,
value $r_1$, say, with good accuracy, plot the graph of the function we are analysing, consider yet another value for $r_1$,
usually 10 times closer to the origin, compare the plots, then increase the accuracy of the calculations two-fold and
compare the plots, and in the event that the plots coincide on some interval, we then increase its length by 1.5 to 2 times
and see whether or not there is a discrepancy. The plots are illustrated with different colours, so that it is easy to observe whether
or not the plots coincide visually.
This algorithmic-in-nature procedure is not as complicated as it may appear at first glance, because, after some experience,
the first approximation is already good enough, and only $2$ to $3$ additional calculations are necessary to confirm that the
numerical solution is accurate enough over the chosen interval. Calculations of the asymptotics for the function $H(r)$ do not
present any problems, since the errors are not accumulating with the distance of the calculation, so that a very high degree of
accuracy is not required here.
At the same time, though, the numerical calculation of asymptotics for the function $I(r)$ is more subtle, and even requires
considerably more execution time than for the calculation of the corresponding numerical solution.

(ii) We also supplement our examples with explanations of the behaviour of the solutions and of some features of their
approximations via the asymptotics. These explanations should be viewed upon as providing preliminary observations that,
hopefully, could be developed to the level of qualitative analysis of equation~\eqref{eq:dp3u}.
As mentioned above, the qualitative analysis of smooth real solutions is quite simple, and, in fact, was done in
\cite{BobEitLMN2000}. The qualitative analysis for complex solutions is more interesting; in this case,
we encounter an ``interaction'' between the stable and unstable attracting curves (the straight lines for the function $H(r)$ and
parabolae for $I(r)$), which complicates considerably the behaviour of the solutions. One may have a question about
the appearance of finite poles and/or zeros destroying the numerical calculations of the solutions on the negative-$r$
semi-axis. For some special classes of solutions, for example, real solutions with $H(0)<0$ (see the discussion above),
such a problem actually exists. For complex
solutions with randomly-chosen initial data $H(0)$, this problem does not appear in practice: even though we have studied
many examples of complex solutions, several of which have been presented in this paper, we have yet to encounter this
problem.

(iii) Comparing the asymptotics with the numerical solutions is, for us, a surprisingly emotional endeavour: recall that the
method we used to derive our asymptotic formulae, namely, The Isomonodromy Deformation Method, does not
``suggest'' any direct involvement of equation \eqref{eq:dp3u} in the asymptotic analysis, while the numerical methods are
based on difference schemes for the approximations of solutions of this equation; therefore, when we see that the numeric and
asymptotic plots are practically merging into one and the same curve, it resembles a manifestation of the integrity of
Mathematics. At the same time, we have an example presented in
Figs.~\ref{fig:H0=-02+i0045+ReHintro} and \ref{fig:H0=-02+i0045+ImHintro} where, as discussed above, the correctness of
the asymptotics can only be justified theoretically. In these figures, the asymptotics and numerical solutions are,
at least, located in the same `domains of the pictures', not far from the
negative-$r$ semi-axis.
A more astonishing situation occurs for the corresponding integral $I(r)$. According to our studies,
$I(r)\genfrac{}{}{0pt}{3}{=}{r \to -\infty}2\sqrt{-r}+\mathcal{O}(1)$. Actually, in Examples $1$ and $2$ of
Section~\ref{sec:asymptnumerics}, we corroborate this asymptotic behaviour; however, in Examples $3$--$6$ of
Section~\ref{sec:asymptnumerics}, for randomly chosen initial values, $H(0)$, the initial---and quite substanial---part
of the plots for $I(r)$ appear, rather unexpectedly, to be located below the negative-$r$ semi-axis!
A considerable increase of the interval of integration, and yet, $I(r)$ continues to follow the wrong tendency!
After a further increase of the integration interval, the function $I(r)$ abruptly changes its behaviour
to the correct---asymptotic---one! As a result, the plot of the numerical solution resembles an ``underground bunker'' with
two staircases leading to the surface, but in opposite directions. One may think that we do not have a formula that approximates
the right staircase; but, it happens that we do, in fact, have one that, possibly with a $2\pi k$ shift, for some integer $k$,
provides us with a reasonable approximation for the right staircase!
Who, or what kind of entity, can reside in such a bunker and manage to spoil the correct behaviour of the
function $I(r)$?\footnote{\label{foot:intro:mole}
So, what to think? Yes, this is reminiscent of the middle of the 18th century, Carlo Goldoni, Truffaldino's home! In the 21st
century, this can be interpreted as the home of someone who works for two intelligence agencies, namely, a ``mole''. We,
however, have a vague idea regarding these types of moles, whilst well acquainted with moles that live in gardens. Further
thinking in this direction (a further investigation of the plots) leads us to the understanding that the function $I(r)$ can
be interpreted as a simplified mathematical model describing the underground movements of the mole (see the detailed discussion
in the final paragraph of Subsection~\ref{subsec:stair-stringer}). After this presentation, the reader may elect to call $I(r)$
\emph{the mole function}.}
For reasons explained in Footnote~\ref{foot:intro:mole}, we coined the name ``mole's dwelling'' for this underground
bunker. On the other hand, we found some initial values for which we were not able to numerically reach
the mole's dwelling: one of these initial values is discussed in Example 6 of Section~\ref{sec:asymptnumerics}.
With the help of the asymptotics, we evaluated the location of the mole's dwelling: this location suggests that it can not be
inhabited by an ordinary mole.\footnote{\label{foot:intro:fallen}
This reminds us of the concept of the ``fallen angel'', well known in the Abrahamic religions.
We see that for different initial values the function $I(r)$ may have different interpretations.}
So, the main intrigue underlying the generation of emotions in the visualization studies is related to the rate at which
the solution attains its large-$r$ asymptotic behaviour. When this attainment is realized for values of $r$ close
to the origin, there is also a question of why it takes place so quickly. When we originally derived the asymptotics for the
function $u(\tau)$, and consequently for $H(r)$, many intermediate expressions were considerably simplified, or neglected,
under the assumption that $\tau$ was very large; for small values of $\tau$, though, such terms are close to those that
eventually form the leading term of asymptotics! In our case, a very helpful circumstance is that we have two asymptotic
formulae for each of the functions $H(r)$ and $I(r)$ which are valid in overlapping domains of the initial
values $H(0)$. We found that at least one of the asymptotics provides a good approximation for the corresponding solutions
beginning from small values of $r$.
In our opinion, the emotional component of these visualization studies indicates that there is a need for the qualitative
analysis of complex solutions of equation~\eqref{eq:dp3u} which would serve as a bridge connecting numerical and indirect
asymptotic methods.

In the main body of the paper we deal with equation~\eqref{eq:dp3u} for $a=0$; thence, we decided to verify some of the results
obtained in the work~\cite{Kit87} in Appendix~\ref{app:Kit87} by taking, as an example, the function $H(r)$. In contrast to the
examples discussed in Section~\ref{sec:asymptnumerics}, we consider, in Appendix~\ref{app:Kit87},  regular \textbf{real}
solutions $H(r)$. At the time when the paper~\cite{Kit87} was
published, there were strict limitations on the pagination count for publications, and all formulae were presented in
handwritten form; this, unfortunately, resulted in a number of misprints. In Appendix~\ref{app:Kit87}, we've corrected all
such misprints that are obvious at first glance (without any additional calculations),
and then show the consistency of the results with those presented in Appendices~\ref{app:asympt0} and \ref{app:infty}.
In Appendix~\ref{app:Kit87}, we comment on the Russian version of the paper~\cite{Kit87}. Two years after the appearance of
\cite{Kit87}, the English translation emerged; when compared with the original version, it contained additional misprints:
these misprints are also addressed in Appendix~\ref{app:Kit87}.

\section{The Solution $H(r)$ Holomorphic at the Origin} \label{sec:2}

This section consists of two subsections. In Subsection~\ref{subsec:existence}, we prove the existence of the solution of
equation~\eqref{eq:hazzidakis}, $H(r)$, holomorphic at $r=0$. In Subsection~\ref{subsec:H0coeffs}, some number-theoretic
properties of the coefficients of the Taylor-series expansion for $H(r)$ are studied.
\subsection{Existence}\label{subsec:existence}
\begin{proposition}\label{prop:FormalExpansion}
For any $a_0\in\mathbb{C}\setminus\{0\}$, there exists a unique formal solution
of equation~\eqref{eq:hazzidakis},
\begin{equation}\label{eq:Hat0-expansion}
H(r)=-a_0+\sum_{k=1}^{\infty}a_kr^k,
\end{equation}
where the coefficients $a_k\in\mathbb{C}$ are independent of $r$.
\end{proposition}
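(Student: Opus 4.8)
The plan is to substitute the ansatz~\eqref{eq:Hat0-expansion} into a cleared-denominator form of equation~\eqref{eq:hazzidakis} and extract a recurrence that pins down the coefficients $a_k$ one at a time. First I would multiply~\eqref{eq:hazzidakis} through by $rH(r)$ to obtain the polynomial differential equation
\begin{equation}\label{eq:poly-hazz}
rH(r)H^{\prime\prime}(r)=r(H^{\prime}(r))^{2}-H(r)H^{\prime}(r)+(H(r))^{3}-1 .
\end{equation}
Since a formal power series with nonzero constant term is a unit in $\mathbb{C}[[r]]$ and $H(0)=-a_{0}\neq0$, equation~\eqref{eq:poly-hazz} is equivalent to~\eqref{eq:hazzidakis} at the level of formal power series (the terms $H^{\prime}/r$ and $(1/r)((H)^{2}-1/H)$ being understood after this multiplication); hence it suffices to produce a unique formal series of the form~\eqref{eq:Hat0-expansion} solving~\eqref{eq:poly-hazz}.

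Writing $h_{0}:=-a_{0}$ and $h_{k}:=a_{k}$ for $k\geq1$, I substitute $H(r)=\sum_{k\geq0}h_{k}r^{k}$ into~\eqref{eq:poly-hazz} and compare coefficients of $r^{n}$. The $n=0$ coefficient yields $h_{0}h_{1}-h_{0}^{3}+1=0$, i.e. $a_{1}=h_{1}=h_{0}^{2}-h_{0}^{-1}$, which is well defined precisely because $a_{0}\neq0$ (this reproduces~\eqref{eq:Hprime0-H0}). For $n\geq1$ the crucial point is to check that $h_{n+1}$ enters the coefficient of $r^{n}$ only through the terms $rHH^{\prime\prime}$ and $HH^{\prime}$ — in $r(H^{\prime})^{2}$ and in $H^{3}$ the index cannot exceed $n$ — and that the two contributions combine with prefactor $n(n+1)h_{0}+(n+1)h_{0}=(n+1)^{2}h_{0}$. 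Thus the coefficient of $r^{n}$ has the shape
\begin{equation*}
(n+1)^{2}h_{0}\,h_{n+1}+P_{n}(h_{0},h_{1},\dots,h_{n})=0 ,
\end{equation*}
where $P_{n}$ is a fixed polynomial built from the convolution sums coming from $rHH^{\prime\prime}$, $r(H^{\prime})^{2}$, $HH^{\prime}$ and $H^{3}$. Because $h_{0}=-a_{0}\neq0$ and $(n+1)^{2}\neq0$, this solves uniquely for $a_{n+1}=h_{n+1}=-P_{n}(h_{0},\dots,h_{n})/\bigl((n+1)^{2}h_{0}\bigr)$; by induction on $n$ every $a_k$ is uniquely determined, giving both existence and uniqueness of the formal solution.

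The main obstacle is purely the bookkeeping in that middle step: correctly tracking the $r$-shifts in $rHH^{\prime\prime}$ and $r(H^{\prime})^{2}$, confirming that no other monomial on the right-hand side of~\eqref{eq:poly-hazz} contributes to $h_{n+1}$, and thereby establishing that the leading coefficient is exactly $(n+1)^{2}h_{0}$ and never vanishes for $a_{0}\in\mathbb{C}\setminus\{0\}$. Everything else is routine coefficient extraction. I would moreover record the explicit convolution form of $P_{n}$, since, as indicated in the text, this same recurrence is reused in Subsection~\ref{subsec:H0coeffs} and in Section~\ref{sec:asymptnumerics}.
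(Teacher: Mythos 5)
Your proposal is correct and follows essentially the same route as the paper: substitute the series, equate coefficients, and observe that $a_{n+1}$ enters only through a term with leading factor $(n+1)^{2}a_{0}\neq0$, exactly as in the paper's recurrence~\eqref{eq:ak-recurrence}. The only cosmetic difference is that you first clear denominators by multiplying by $rH(r)$, whereas the paper equates coefficients of $r^{k}$ for $k=-1,0,\ldots$ directly in equation~\eqref{eq:hazzidakis}.
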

\begin{proof}
For $a_0\in\mathbb{C}\setminus\{0\}$, substituting the expansion~\eqref{eq:Hat0-expansion} into equation~\eqref{eq:hazzidakis}
and equating to zero the coefficients of like powers of $r^k$, $k=-1, 0,\ldots$, one obtains
\begin{equation}\label{eq:a1}
a_1=\frac{a_0^3+1}{a_0},
\end{equation}
and the following recurrence relation for the coefficients $a_k$,
\begin{equation}\label{eq:ak-recurrence}
\begin{split}
(n+1)^2a_0a_{n+1}&=\big((n-1)^2a_1-3a_0^2\big)a_n+3a_0\sum_{i=1}^{n-1}a_ia_{n-i}+\sum_{i=2}^{n-1}(n+1-i)(n+1-2i)a_ia_{n+1-i}\\
&{}-\sum_{\substack{i+j+k=n\\i,j,k\geqslant1}}a_ia_ja_k, \qquad n=1,2,\ldots,
\end{split}
\end{equation}
where the conventions $\sum_{i=2}^0X_i=-X_1$ and  $\sum_{i=1}^0X_i=0$ are used.
\end{proof}
\begin{lemma}\label{lem:a-n-estimate}
For any $a_0\in\mathbb{C}\setminus\{0\}$, there exist $R>0$ and $N>0$ such that for any $n\in\mathbb{N}$,
\begin{equation}\label{ineq:a-n-estimate}
|a_n|<N\frac{R^n}{n^2}.\qquad
\end{equation}
If $\mathcal{D}$ is a compact subset of $\mathbb{C}\setminus\{0\}$, then there exist $N>0$ and $R>0$ such that
estimate~\eqref{ineq:a-n-estimate} is valid for all $a_0\in\mathcal{D}$.
\end{lemma}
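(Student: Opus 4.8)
The plan is to establish \eqref{ineq:a-n-estimate} by induction on $n$, using the recurrence \eqref{eq:ak-recurrence} directly as a majorant scheme. Fix $a_0\in\mathbb{C}\setminus\{0\}$ and recall $a_1=(a_0^3+1)/a_0$ from \eqref{eq:a1}. I look for constants $N>0$, $R>0$ such that $|a_n|<NR^n/n^2$ for every $n\in\mathbb{N}$. The base case $n=1$ is the requirement $|a_1|<NR$, which holds once $R$ is large enough relative to $N$; the finitely many further small-$n$ cases in which the sums in \eqref{eq:ak-recurrence} are empty or are interpreted via the stated summation conventions are checked directly and absorbed by enlarging $R$.

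For the inductive step, assume $|a_k|<NR^k/k^2$ for $1\le k\le n$, divide \eqref{eq:ak-recurrence} by $(n+1)^2a_0$, and estimate the four resulting terms by the triangle inequality. The two terms linear in the $a_j$ carry polynomial prefactors of degree two: $|(n-1)^2a_1-3a_0^2|\le(n-1)^2|a_1|+3|a_0|^2=O(n^2)$, and, after the symmetrization $\sum_{i=2}^{n-1}(n+1-i)(n+1-2i)a_ia_{n+1-i}=\tfrac12\sum_{i=2}^{n-1}(n+1-2i)^2a_ia_{n+1-i}$, the prefactor satisfies $(n+1-2i)^2\le(n-3)^2=O(n^2)$. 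The point is that these degree-two prefactors are exactly what is needed: multiplied by the inductive bound $\sim R^m/m^2$ they produce a bound $\sim R^m$ on each sum (with $m=n$ for three of the terms and $m=n+1$ for the symmetrized one), and dividing by $(n+1)^2$ then restores the required factor $1/(n+1)^2$.

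The analytic core of the argument is the pair of convolution estimates for the sequence $1/k^2$: there are absolute constants $C,C'>0$ with
\begin{equation*}
\sum_{i=1}^{n-1}\frac{1}{i^2(n-i)^2}\le\frac{C}{n^2}
\qquad\text{and}\qquad
\sum_{\substack{i+j+k=n\\i,j,k\ge1}}\frac{1}{i^2j^2k^2}\le\frac{C'}{n^2}.
\end{equation*}
The first follows from $\tfrac{1}{i(n-i)}=\tfrac{1}{n}\!\left(\tfrac{1}{i}+\tfrac{1}{n-i}\right)$ on squaring and summing (the two square terms contribute $\le\tfrac{\pi^2}{3n^2}$, the cross term is easily $O(1/n^2)$); the second follows by feeding the first into itself once. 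Substituting the inductive hypothesis together with these estimates into the four terms, and apportioning the ``budget'' $NR^{n+1}/(n+1)^2$ into four parts, reduces the inductive step to a system of inequalities for $N$ and $R$ of the form $R\gtrsim(|a_1|+|a_0|^2)/|a_0|$, $R\gtrsim N$, $R\gtrsim N^2/|a_0|$, and $N\lesssim|a_0|$ --- the last coming from the symmetrized term, whose division by $|a_0|$ is \emph{not} accompanied by any leftover decay in $n$.

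The one genuinely delicate point --- and the step I expect to be the main obstacle --- is checking that all of these requirements, together with the base case $|a_1|<NR$, can be met simultaneously: the symmetrized quadratic term forces $N$ to be bounded \emph{above} by a fixed multiple of $|a_0|$, whereas the base case forces $NR$ to be bounded \emph{below} by $|a_1|\sim|a_0|^2$. This is resolved by choosing the constants in the right order --- first fix $N$ at (or below) the multiple of $|a_0|$ dictated by the quadratic term, and only then take $R$ as large as the finitely many lower bounds on $R$ require, each such bound being a finite explicit expression in $|a_0|$ and $|a_1|$ once $a_0\ne0$ is fixed. Finally, for the uniform statement over a compact $\mathcal{D}\subset\mathbb{C}\setminus\{0\}$: on $\mathcal{D}$ the quantity $|a_0|$ is bounded away from $0$ and $|a_1|=|a_0^3+1|/|a_0|$ is bounded above, so the same recipe --- $N$ a small fixed multiple of $\min_{\mathcal{D}}|a_0|$ and $R$ the maximum over $\mathcal{D}$ of the required lower bounds --- furnishes constants valid for all $a_0\in\mathcal{D}$, and the induction runs verbatim.
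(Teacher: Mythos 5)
Your proposal is correct and follows essentially the same route as the paper's proof: induction on $n$ via the recurrence \eqref{eq:ak-recurrence}, a four-term decomposition estimated with the convolution bounds $\sum_i i^{-2}(n-i)^{-2}=\mathcal{O}(n^{-2})$ and its triple analogue, and the same resolution of the constants (first fix $N$ below a multiple of $|a_0|$ as forced by the quadratic term with the $(n+1-2i)$ prefactor, then take $R$ as the maximum of the finitely many lower bounds, with min/max over $\mathcal{D}$ for uniformity). Your symmetrization of the third sum to $\tfrac12\sum(n+1-2i)^2a_ia_{n+1-i}$ is a valid, slightly cleaner alternative to the paper's direct bound $|m+1-2i|\leqslant(m+1-i)+i$, but it leads to the same condition $N\lesssim|a_0|$.
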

\begin{proof}
The proof proceeds via mathematical induction.
The basis of the induction argument consists of the inequalities (cf. \eqref{eqs:a2-a6} below)
\begin{equation}\label{eq:a1a2baseInduction}
|a_1|<NR,
\qquad
|a_2|<NR^2/4.
\end{equation}
Multiplying equation~\eqref{eq:a1} by $3|a_0|/4$, we see that the inequalities~\eqref{eq:a1a2baseInduction}
are satisfied provided
\begin{equation}\label{eq:condition0}
R>\max\left\{3|a_0|, \frac{|a_0|^3+1}{|a_0|\,N}\right\},
\end{equation}
where $N>0$ is, thus far, arbitrary.
Assume now that the inequality~\eqref{ineq:a-n-estimate} holds for all $n=1,2,\ldots, m$, and prove that it is true for $n=m+1$.
Substituting $n=m$ into the recursion relation~\eqref{eq:ak-recurrence} and dividing both sides by
$a_0$, we proceed to successively estimate the four entries on the right-hand side:
\begin{enumerate}
\item[\pmb{(1)}]
\begin{equation}\label{ineq:estimate1}
\begin{gathered}
\left|\big((m-1)^2a_1-3a_0^2\big)a_m/a_0\right|\leqslant
\left((m-1)^2\frac{|a_1|}{|a_0|}+3|a_0|\right)|a_m|\leqslant
\left(\frac{(m-1)^2}{m^2}\frac{NR}{|a_0|}+\frac{3|a_0|}{m^2}\right)NR^m\\
<\left(\frac{N}{|a_0|}+\frac{3|a_0|}{Rm^2}\right)NR^{m+1}\leqslant\left(\frac18+\frac18\right)NR^{m+1}=\frac{NR^{m+1}}{4},
\end{gathered}
\end{equation}
where we took into account that $m\geqslant2$ and imposed the following conditions on $N$ and $R$:
\begin{equation}\label{eq:condition1}
0<N\leqslant\frac{|a_0|}{8},\qquad
R\geqslant6|a_0|.
\end{equation}
\item[\pmb{(2)}]
\begin{equation}\label{ineq:estimate2}
\begin{gathered}
\left|3\sum_{i=1}^{m-1}a_ia_{m-i}\right|\leqslant
3N^2R^m\sum_{i=1}^{m-1}\frac1{i^2(m-i)^2}=
\frac{3N^2R^m}{m^2}\sum_{i=1}^{m-1}\left(\frac1{i}+\frac1{m-i}\right)^2\\
=\frac{3N^2R^m}{m^2}\sum_{i=1}^{m-1}\left(\frac1{i^2}+\frac1{(m-i)^2}+\frac2{m}\left(\frac1{i}+\frac1{m-i}\right)\right)=
\frac{3N^2R^m}{m^2}\sum_{i=1}^{m-1}\left(\frac2{i^2}+\frac4m\cdot\frac1{i}\right)\\
<\frac{3N^2R^m}{m^2}\left(\frac{\pi^2}{3}+\frac4m\left(1+\frac{m-2}{2}\right)\right)=
\frac{N^2R^m(\pi^2+6)}{m^2}\leqslant
\frac{16N^2R^m}{4}\leqslant
\frac{NR^{m+1}}{4},
\end{gathered}
\end{equation}
where, in the last inequality, we assumed that
\begin{equation}\label{eq:condition2}
R\geqslant 16N.
\end{equation}
\item[\pmb{(3)}]
\begin{equation}\label{ineq:estimate3}
\begin{gathered}
\left|\frac1{a_0}\sum_{i=2}^{m-1}(m+1-i)(m+1-2i)a_ia_{m+1-i}\right|\leqslant
\frac{N^2R^{m+1}}{|a_0|}\left(\sum_{i=2}^{m-1}\frac{|m+1-2i|}{(m+1-i)i^2}\right)\\
\leqslant\frac{N^2R^{m+1}}{|a_0|}\left(\sum_{i=2}^{m-1}\frac1{i^2}+\sum_{i=2}^{m-1}\frac1{i(m+1-i)}\right)=
\frac{N^2R^{m+1}}{|a_0|}\left(\sum_{i=2}^{m-1}\frac1{i^2}+\frac2{m+1}\sum_{i=2}^{m-1}\frac1{i}\right)\\
<\frac{N^2R^{m+1}}{|a_0|}\left(\frac{\pi^2}{6}-1+\frac{m-2}{m+1}\right)<
\frac{\pi^2N^2R^{m+1}}{6|a_0|}\leqslant
\frac{NR^{m+1}}{4},
\end{gathered}
\end{equation}
where the following inequality was imposed,
\begin{equation}\label{eq:condition3}
0<N\leqslant\frac{3|a_0|}{20}.
\end{equation}
\item[\pmb{(4)}]
\begin{equation}\label{ineq:estimate4}
\begin{gathered}
\left|\frac1{|a_0|}\sum_{\substack{i+j+k=m\\i,j,k\geqslant1}}a_ia_ja_k\right|=
\left|\frac1{|a_0|}\sum_{i=1}^{m-2}a_i\sum_{j=1}^{m-i-1}a_ja_{m-i-j}\right|\leqslant
\frac{N^3R^m}{|a_0|}\sum_{i=1}^{m-2}\frac1{i^2}\sum_{j=1}^{m-i-1}\frac1{j^2(m-i-j)^2}\\
<\frac{N^3R^m}{|a_0|}\sum_{i=1}^{m-2}\frac1{i^2(m-i)^2}\left(\frac{\pi^2}{3}+2\right)<
\frac{N^3R^m}{|a_0|m^2}\left(\frac{\pi^2}{3}+2\right)^2<\frac{NR^{m+1}}{4},
\end{gathered}
\end{equation}
where the last inequality is predicated on
\begin{equation}\label{eq:condition4}
\frac{N^2}{|a_0|R}\left(\frac{\pi^2}{3}+2\right)^2\leqslant1.
\end{equation}
\end{enumerate}
Now, one verifies that there exist $N>0$ and $R>0$ such that the conditions~\eqref{eq:condition0}--\eqref{eq:condition4}
are valid. Actually, choose any $N$ satisfying the inequality
\begin{equation}\label{ineq:N-final}
0<N<\frac{|a_0|}8,
\end{equation}
and, for that choice of $N$, take
\begin{equation}\label{ineq:R-final}
R=\max\left\{16N,6|a_0|, \frac{|a_0|^3+1}{|a_0|\,N}\right\}.
\end{equation}
In this case, the conditions~\eqref{eq:condition0} and \eqref{eq:condition2}--\eqref{eq:condition4} are satisfied
automatically.

If $a_0\in\mathcal{D}$, then the functions $|a_0|$ and $(|a_0|^3+1)/|a_0|$ have minima and maxima.
In this case, we substitute $\min(|a_0|)$ in lieu of $|a_0|$ in condition \eqref{ineq:N-final}, and the maximum values for
$|a_0|$ and $(|a_0|^3+1)/|a_0|$ in \eqref{ineq:R-final}. Thus, the estimates
\eqref{ineq:estimate1}--\eqref{ineq:estimate4} hold for all $a_0\in\mathcal{D}$.

Finally, summing up the estimates \eqref{ineq:estimate1}--\eqref{ineq:estimate4}, we arrive at the
inequality~\eqref{ineq:a-n-estimate} for $n=m+1$, with $N$ and $R$ defined as in \eqref{ineq:N-final} and \eqref{ineq:R-final},
respectively.
\end{proof}
\begin{corollary}\label{cor:convergence}
The series~\eqref{eq:Hat0-expansion} is uniformly convergent for $a_0\in\mathcal{D}$, where $\mathcal{D}$ is a compact domain
of $\mathbb{C}\setminus\{0\}$. The radius of convergence, $1/R$, is estimated in Lemma~\ref{lem:a-n-estimate}.
\end{corollary}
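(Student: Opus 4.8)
Here is how I would establish Corollary~\ref{cor:convergence}.

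The plan is to feed the bound of Lemma~\ref{lem:a-n-estimate} into the Weierstrass $M$-test. First I would fix a compact set $\mathcal{D}\subset\mathbb{C}\setminus\{0\}$ and let $N>0$, $R>0$ be the constants furnished by Lemma~\ref{lem:a-n-estimate}, so that estimate~\eqref{ineq:a-n-estimate}, $|a_k|<NR^k/k^2$, holds for every $k\in\mathbb{N}$ and every $a_0\in\mathcal{D}$ simultaneously. Then, for any $r$ with $|r|\leqslant 1/R$, the general term of the series~\eqref{eq:Hat0-expansion} obeys
$$
|a_kr^k|\;<\;\frac{NR^k}{k^2}\,|r|^k\;\leqslant\;\frac{N}{k^2},
$$
and $\sum_{k\geqslant1}N/k^2<\infty$. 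Since this majorant is independent of both $r$ and $a_0$, the series~\eqref{eq:Hat0-expansion} converges absolutely and uniformly on the set $\{|r|\leqslant1/R\}\times\mathcal{D}$; in particular, it converges uniformly in $a_0\in\mathcal{D}$ (and in $r$) on this set. I would point out that it is precisely the $k^{-2}$ decay in~\eqref{ineq:a-n-estimate} that lets us include the boundary circle $|r|=1/R$, not merely the open disc.

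Next I would record the consequence for the radius of convergence. Taking $k$-th roots in $|a_k|<NR^k/k^2$ and using $N^{1/k}\to1$ and $k^{-2/k}\to1$ as $k\to\infty$, one obtains $\limsup_{k\to\infty}|a_k|^{1/k}\leqslant R$, so by the Cauchy--Hadamard formula the radius of convergence of~\eqref{eq:Hat0-expansion} is at least $1/R$; this is the estimate referred to in the statement. No matching upper bound is claimed, and none is required.

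Finally I would note that the partial sums of~\eqref{eq:Hat0-expansion} are polynomials in $r$, hence holomorphic, so their locally uniform limit $H(r)$ is holomorphic on $|r|<1/R$ and, by Proposition~\ref{prop:FormalExpansion}, solves equation~\eqref{eq:hazzidakis} there; moreover, since each $a_k$ is, by the recurrence~\eqref{eq:ak-recurrence}, a rational function of $a_0$ with no poles off $a_0=0$, and since $N,R$ can be chosen uniformly over $\mathcal{D}$, the function $H$ depends holomorphically on $a_0$ as well. I do not anticipate any real obstacle in this argument: all of the substantive work has already been done in Lemma~\ref{lem:a-n-estimate}, and the only points that need a moment's care are the admissibility of the boundary circle $|r|=1/R$ and the fact that the constants are uniform over $\mathcal{D}$, the latter being itself part of the conclusion of that lemma.
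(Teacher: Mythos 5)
Your argument is correct and is exactly the intended one: the paper states this corollary without proof as an immediate consequence of Lemma~\ref{lem:a-n-estimate}, and the Weierstrass $M$-test applied to the uniform bound $|a_k|<NR^k/k^2$ is precisely the implicit reasoning. The extra remarks on holomorphy in $a_0$ go beyond what the corollary asserts but introduce no error.
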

\subsection{Properties of the Coefficients $a_n$}\label{subsec:H0coeffs}
Using the recurrence relation~\eqref{eq:ak-recurrence}, we calculated, with the help of \textsc{Maple}, the first few
coefficients $a_k$:
\begin{equation}\label{eqs:a2-a6}
\begin{gathered}
a_2=-\frac{3}{4}(a_0^3+1),\quad
a_3=\frac{(a_0^3+1)(2a_0^3+1)}{4a_0^2},\quad
a_4=-\frac{(a_0^3+1)(20a_0^3+17)}{64a_0},\\
a_5=\frac{3(a_0^3+1)(100a_0^6+122a_0^3+25)}{1600a_0^3},\quad
a_6=-\frac{(a_0^3+1)(700a_0^6+1113a_0^3+416)}{6400a_0^2},\ldots.
\end{gathered}
\end{equation}
\begin{remark}\label{rem:constant-solution}
It is conspicuous that equation~\eqref{eq:hazzidakis} has three constant solutions, $\left(H(r)\right)^3=1$. In terms of the
solution $u(\tau)$ of equation~\eqref{eq:dp3u}, these solutions can be amalgamated as
the three branches of the algebraic solution $u(\tau)=\tau^{1/3}/2$. This fact can be reformulated, namely,
the numerators of the coefficients $a_k$, which are polynomials of $a_0^3$, are divisible by $a_0^3+1$.
Clearly, the last statement can be proved directly by induction with the help of the recurrence relation~\eqref{eq:ak-recurrence}.
\hfill $\blacksquare$\end{remark}
\begin{proposition}\label{prop:a-n-ansatz}
\begin{equation}\label{eq:a-n-ansatz}
a_n=(-1)^{n-1}\frac{\kappa_n(a_0^3+1)P_n(a_0^3)}
{(n!)^2a_0^{\frac{n}2-\frac14-(-1)^n\frac34}},\qquad
n\in\mathbb{N},
\end{equation}
where $\kappa_n\in\mathbb{N}$ and $P_n(x)\in\mathbb{Z}[x]$, with $\deg{P_n(x)}\leqslant\left\lfloor\frac{n-1}2\right\rfloor$,
and $\lfloor\ast\rfloor$ denotes the floor of a real number.
\end{proposition}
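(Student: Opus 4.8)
The plan is to prove the ansatz~\eqref{eq:a-n-ansatz} by strong induction on $n$, using the recurrence relation~\eqref{eq:ak-recurrence}. First I would record the three structural assertions that need to be propagated simultaneously: (a) the power of $a_0$ in the denominator is exactly $e_n:=\tfrac{n}{2}-\tfrac14-(-1)^n\tfrac34$ (equivalently $e_n=\lfloor (n-1)/2 \rfloor \cdot$ — more precisely $e_n = (n-1)/2$ for $n$ odd and $e_n=(n-2)/2$ for $n$ even), so that $a_n$ has the shape $(-1)^{n-1}Q_n(a_0^3)/\big((n!)^2 a_0^{e_n}\big)$ with $Q_n\in\mathbb{Z}[x]$; (b) $Q_n(x)$ is divisible by $(x+1)$ — this is exactly Remark~\ref{rem:constant-solution}, already noted to follow by a direct induction; and (c) writing $Q_n(x)=\kappa_n(x+1)P_n(x)$ with $P_n\in\mathbb{Z}[x]$ primitive (content $1$) and leading coefficient normalized so that $\kappa_n\in\mathbb{N}$, one has the degree bound $\deg P_n\leqslant\lfloor (n-1)/2\rfloor$. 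The base cases $n=1,\dots,6$ are furnished by~\eqref{eq:a1} and~\eqref{eqs:a2-a6}; one checks $e_1=0,e_2=0,e_3=1,e_4=1,e_5=3,\ldots$ against the displayed denominators and reads off $\kappa_1=1, P_1=1$; $\kappa_2=3, P_2=1$ (up to the $4$ in $(2!)^2$); etc.

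The inductive step is a term-by-term bookkeeping of the four sums on the right of~\eqref{eq:ak-recurrence} with $n=m$, after dividing by $a_0$ to isolate $a_{m+1}$. I would track two quantities for each term: the exponent of $a_0$ in the denominator and the degree in $a_0^3$ of the polynomial numerator. For the exponent: from $a_ia_{m-i}$ one gets denominator exponent $e_i+e_{m-i}$ (plus the extra $1$ from dividing by $a_0$ in terms 2 and 4), and one checks the arithmetic identity that these never exceed $e_{m+1}$ after accounting for the $(m+1)^2a_0$ on the left — the key parity fact is that $e_i+e_j$ and $e_{i+j}$ differ by a controlled amount ($0$ or $1$ or $-1$) depending on the parities of $i,j$, and the stray factors of $a_0$ in the recurrence exactly compensate. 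For the degree bound: by induction $\deg_{x}Q_i\leqslant e_i+1+\lfloor(i-1)/2\rfloor$ (the $+1$ from the $(x+1)$ factor), and one must check that the product/sum structure keeps $\deg_x Q_{m+1}\leqslant e_{m+1}+1+\lfloor m/2\rfloor$; the cubic term $\sum a_ia_ja_k$ is the one that pushes the degree up the most and therefore governs the bound. The divisibility by $a_0^3+1$ in the step is handled exactly as in Remark~\ref{rem:constant-solution}: each of the four sums, when every $a_\ell$ with $\ell\geqslant1$ is replaced by its value at a root of $a_0^3+1$, vanishes because $a_1,\ldots,a_m$ all carry the factor $(a_0^3+1)$ (here one also needs $a_1=(a_0^3+1)/a_0$ explicitly), so the whole right-hand side vanishes there and hence $a_0^3+1\mid Q_{m+1}$. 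The factorization $Q_{m+1}=\kappa_{m+1}(a_0^3+1)P_{m+1}$ with $\kappa_{m+1}\in\mathbb{N}$ and $P_{m+1}$ integral is then just: take $\kappa_{m+1}$ to be the gcd of the coefficients of $Q_{m+1}/(a_0^3+1)$ (times the sign needed to make it positive), which is a positive integer since $Q_{m+1}\in\mathbb{Z}[x]$.

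The main obstacle I anticipate is the exact denominator-exponent identity — showing that the $(n!)^2$ normalization together with the sporadic powers $a_0^{e_n}$ is self-consistent under the recurrence. This is where the innocuous-looking prefactors $(n-1)^2a_1$, $3a_0$, $(n+1-i)(n+1-2i)$ in~\eqref{eq:ak-recurrence} must conspire: for instance, the term $3a_0\sum a_ia_{m-i}$ contributes denominator exponent $e_i+e_{m-i}-1$, and one must verify $e_i+e_{m-i}-1\leqslant e_{m+1}$ for all splittings, with the deficit absorbed into the numerator as extra factors of $a_0$ (which then must be shown not to break the degree bound, since a factor $a_0$ raises $\deg_x Q$ only when it completes a power of $a_0^3$). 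Carrying out this parity/floor-function arithmetic cleanly — probably by splitting into the four cases according to the parities of $i$ and $m$ — is the bulk of the work; everything else (divisibility, integrality of $\kappa_n$, the degree bound) follows the template already set by Remark~\ref{rem:constant-solution} and by Lemma~\ref{lem:a-n-estimate}'s style of estimation, though here the estimates are replaced by exact polynomial identities. I would also remark that the precise value of $\kappa_n$ and the primitivity of $P_n$ are not pinned down by this argument — only their existence with the stated properties — which is consistent with the Proposition's formulation and with the conjecture on the \emph{content} of these polynomials mentioned in the introduction.
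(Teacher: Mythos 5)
Your proposal follows essentially the same route as the paper's proof: strong induction on the recurrence~\eqref{eq:ak-recurrence}, base case read off from \eqref{eq:a1} and \eqref{eqs:a2-a6}, multiplying and dividing each term by $(m!)^2$ so that the numeric factor in the denominator becomes $((m+1)!)^2$, and splitting the sums according to the parity of $m$ and of the summation indices in order to track the exponent of $a_0$ and the degree in $a_0^3$. One correction to your bookkeeping: the exponent $e_n=\tfrac n2-\tfrac14-(-1)^n\tfrac34$ equals $(n+1)/2$ for odd $n$ (so $e_1=1$, $e_3=2$, $e_5=3$, matching the denominators in \eqref{eq:a1} and \eqref{eqs:a2-a6}), not $(n-1)/2$ as in your parenthetical, and your listed values $e_1=0$, $e_3=1$ are likewise off; with the correct values the parity arithmetic goes through exactly as you describe.
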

\begin{proof}
The proof is by mathematical induction (with the help of the recurrence relation~\eqref{eq:ak-recurrence}).
The base of the induction is a consequence of equations~\eqref{eq:a1} and \eqref{eqs:a2-a6}.
To take the inductive step from $a_m$ to $a_{m+1}$, change $n\to m$ in the recurrence relation and assume
that \eqref{eq:a-n-ansatz} is valid for all $n\leqslant m$, substitute it, in lieu of $a_n$, into \eqref{eq:ak-recurrence},
and divide both sides of the last equation by $(n+1)^2a_0$.
Finally, on the left-hand side of the obtained equation, one has $a_{m+1}$, whilst on the right-hand side, there are
sums of terms, each of which, in view of assumption~\eqref{eq:a-n-ansatz}, has the form that one expects in order
to get $a_{m+1}$. To see this, one has to divide and multiply each term by $(m!)^2$, and note that the numeric coefficient
in the denominators of the terms is precisely $((m+1)!)^2$, and their numerators can be presented as products of natural numbers
$\kappa_j$ with binomial coefficients in the double sums and trinomial coefficients in the triple sum. In order to establish
the functional dependence of the terms with respect to $a_0$, it is convenient to consider separately the case for odd and even
values of $m$.
The sums in equation~\eqref{eq:ak-recurrence} are also convenient to split into sums over odd and even indices.
Finally, summing all the terms, one arrives at the result stated in the proposition for $n=m+1$.
\end{proof}
\begin{remark}\label{rem:kappa-degree-Pn}
The numbers $\kappa_n$ are introduced because, later on, we present a conjecture that shows how to choose them in order
to keep the coefficients of the polynomial $P_n(x)$ coprime. In fact, we prove below that
$\deg{P_n(x)}=\lfloor\tfrac{n-1}2\rfloor$; however, to
confirm this with the help of the inductive procedure based on the recurrence relation~\eqref{eq:ak-recurrence} is a circuitous
matter, because the polynomial $P_{m+1}(a_0^3)$ is a linear combination of polynomials, each of the same degree, but
with positive and negative integer coefficients (see  Remark~\ref{rem:conjecture}, equation~\eqref{eq:ak-recurrence-variant}
below).
\hfill $\blacksquare$\end{remark}
\begin{remark}\label{rem:number-theory}
Before proceeding, recall some basic notions from Number Theory:
$\nu_3(\cdot)$ is the $3$-adic valuation of the corresponding natural number, i.e., the largest power of $3$ by which it is
divisible; and $|n!|_3=3^{-\nu_3(n!)}$ is the $3$-adic absolute value of $n!$ (thus $n!|n!|_3$ coincides with the decomposition
of $n!$ on primes where the entry corresponding to $3$ is omitted).

In this subsection, we use the notation $b_n$ to denote the sum of digits of $n$ in base $3$: it is the sequence A053735
in The On-Line Encyclopedia of Integer Sequences (OEIS) \cite{OEIS1}.
There is a useful formula for the large-$n$ calculation of $b_n$ that is due to Benoit Cloitre \cite{OEIS1}:
\begin{equation}\label{eq:Cloitre}
b_n=n-2\sum_{k=1}^{\infty}\left\lfloor\frac{n}{3^k}\right\rfloor.
\end{equation}
It is interesting to compare equation~\eqref{eq:Cloitre} with the famous Legendre formula~\cite{Legendre}
which, for the $3$-adic valuation of $n!$, reads
$$
\nu_3(n!)=\sum_{k=1}^{\infty}\left\lfloor\frac{n}{3^k}\right\rfloor.
$$
Introduce the following notation for the coefficients of the polynomials $P_n(x)$:
\begin{equation}\label{eq:Pn(x)general}
P_n(x)=\sum_{k=0}^{\lfloor\frac{n-1}{2}\rfloor}p^n_kx^k,\qquad
n\in\mathbb{N}.
\end{equation}
The {\it content} of a polynomial with integer coefficients is the greatest common divisor (g.c.d.) of its coefficients
~\cite{Prasolov}, i.e.,
\begin{equation}\label{eq:content-gcd}
\mathrm{cont}(P_n)=\textrm{g.c.d.}\left\{p^n_0,\ldots,p^n_{\lfloor\frac{n-1}{2}\rfloor}\right\}.
\end{equation}
A polynomial in $\mathbb{Z}[x]$ with coprime coefficients is called {\it primitive}; equivalently, $P_n$ is primitive iff
\begin{equation}\label{eq:content=1}
P_n\in\mathbb{Z}[x]\quad\text{and}\quad\mathrm{cont}(P_n)=1.
\end{equation}
\hfill $\blacksquare$\end{remark}
\begin{proposition}\label{prop:kappa-n}
\begin{equation}\label{eq:kappa-n}
\kappa_nP_n(-1)=(-1)^{\lfloor\frac{n-1}2\rfloor}3^{n-1},\quad
n\in\mathbb{N}.
\end{equation}
\end{proposition}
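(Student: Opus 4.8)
The plan is to compare the two sides of the ansatz~\eqref{eq:a-n-ansatz} in a small disc around the point $a_0=-1$. There $a_0^3+1=0$, so, by the uniqueness part of Proposition~\ref{prop:FormalExpansion} together with Remark~\ref{rem:constant-solution}, the solution degenerates to the constant $H(r)\equiv1$ and $a_n(-1)=0$ for every $n\geqslant1$. Hence $\kappa_nP_n(-1)$ is encoded in the coefficient of the \emph{first} power of $s:=a_0+1$ in the Taylor expansion of $a_n=a_n(a_0)$ about $a_0=-1$, and the whole proof reduces to computing that linear term in two ways.

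First I would extract the linear term from the recurrence~\eqref{eq:ak-recurrence}. By~\eqref{eq:a-n-ansatz} each $a_n$ is a rational function of $a_0$ holomorphic at $a_0=-1$ (alternatively, use the holomorphic dependence of $H(r;a_0)$ on $a_0$ from Corollary~\ref{cor:convergence}), so writing $a_n=c_ns+O(s^2)$ is legitimate. From $a_0^3+1=3s+O(s^2)$ and~\eqref{eq:a1} one gets $c_1=-3$; substituting $a_n=c_ns+O(s^2)$ into~\eqref{eq:ak-recurrence} with $n=m$ and keeping only the $O(s)$ contributions — the double and triple sums, as well as the term $(m-1)^2a_1a_m$, are $O(s^2)$, and only $-3a_0^2a_m$ survives on the right — leaves $(m+1)^2c_{m+1}=3c_m$. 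Therefore $c_n=-3^n/(n!)^2$, i.e.
\begin{equation*}
a_n=-\frac{3^n}{(n!)^2}(a_0+1)+O\big((a_0+1)^2\big),\qquad n\in\mathbb N .
\end{equation*}
(This is just the linearization of~\eqref{eq:hazzidakis} about $H\equiv1$: the variation $\psi$ solves $r\psi''+\psi'-3\psi=0$ with $\psi(0)=-1$, whence $\psi(r)=-\sum_{j\geqslant0}3^jr^j/(j!)^2$.)

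Next I would expand the right-hand side of~\eqref{eq:a-n-ansatz} near $a_0=-1$. Put $e_n:=\tfrac n2-\tfrac14-(-1)^n\tfrac34$; a short check shows $e_n=k-1$ if $n=2k$ and $e_n=k+1$ if $n=2k+1$, so $e_n\in\mathbb Z_{\geqslant0}$ and $a_0^{e_n}\big\vert_{a_0=-1}=(-1)^{e_n}$. Using $a_0^3+1=3s+O(s^2)$ and $P_n(a_0^3)=P_n(-1)+O(s)$ in~\eqref{eq:a-n-ansatz} gives
\begin{equation*}
a_n=\frac{3\,(-1)^{\,n-1+e_n}\,\kappa_nP_n(-1)}{(n!)^2}\,(a_0+1)+O\big((a_0+1)^2\big).
\end{equation*}
Matching the two coefficients of $(a_0+1)$ yields $3\,(-1)^{n-1+e_n}\kappa_nP_n(-1)=-3^n$, that is, $\kappa_nP_n(-1)=(-1)^{\,n+e_n}3^{\,n-1}$. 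It remains to pin down the sign, i.e.\ to verify $n+e_n\equiv\big\lfloor\tfrac{n-1}2\big\rfloor\pmod2$: for $n=2k$ one has $n+e_n=3k-1\equiv k-1=\big\lfloor\tfrac{n-1}2\big\rfloor$, and for $n=2k+1$ one has $n+e_n=3k+2\equiv k=\big\lfloor\tfrac{n-1}2\big\rfloor$. Hence $(-1)^{n+e_n}=(-1)^{\lfloor(n-1)/2\rfloor}$ and $\kappa_nP_n(-1)=(-1)^{\lfloor(n-1)/2\rfloor}3^{n-1}$, as claimed.

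The only slightly delicate point — and thus the main obstacle — is the justification that one may read the linear-in-$(a_0+1)$ coefficients of the $a_n$ directly off the recurrence; this rests on the already-established rational form~\eqref{eq:a-n-ansatz} (equivalently, on Corollary~\ref{cor:convergence}). Everything else is the elementary linearized recursion $(m+1)^2c_{m+1}=3c_m$ and the bookkeeping with the integer exponent $e_n$ and the $(-1)$-signs.
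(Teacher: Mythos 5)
Your proof is correct and follows essentially the same route as the paper: both arguments linearize the recurrence~\eqref{eq:ak-recurrence} about $a_0=-1$ (the paper divides by $a_0^3+1$ and sets $a_0=-1$, you extract the coefficient of $a_0+1$, which differ only by the factor $3$ from $a_0^3+1=3(a_0+1)+O((a_0+1)^2)$), arrive at the same linearized recursion $(m+1)^2(\cdot)_{m+1}=3(\cdot)_m$, and perform the same parity bookkeeping with the exponent $e_n$ that the paper records in equation~\eqref{eq:-1}. Your parenthetical identification of $c_n$ with the coefficients of the variational solution $\psi(r)=-\sum_j 3^jr^j/(j!)^2$ is a nice cross-check that anticipates the generating function $g_1(r)$ of Proposition~\ref{prop:Pn(-1)}, but it does not change the substance of the argument.
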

\begin{proof}
Let $f_m=(-1)^{\lfloor\frac{m-1}2\rfloor+1}\kappa_mP_m(-1)/(m!)^2$. Consider the recurrence
relation~\eqref{eq:ak-recurrence} for $n=m$, and divide it by $a_0^3+1$.
Then, having in mind \eqref{eq:a-n-ansatz}, substituting $a_0=-1$ and taking into account
\begin{equation}\label{eq:-1}
(m-1)-\left(\frac{m}2-\frac14-(-1)^m\frac34\right)-\left\lfloor\frac{m-1}2\right\rfloor-1=\left\{
\begin{matrix}
-2&m\;\text{is\;odd},\\
0&m\;\text{is\;even},
\end{matrix}\right.
\end{equation}
we get $(m+1)^2f_{m+1}=3f_m$. Multiplying the last equation for $m=1,\ldots,n-1$, one proves $(n!)^2f_n=3^{n-1}f_1$;
since $f_1=-1$, one obtains $(n!)^2f_n=-3^{n-1}$, which can be rewritten as equation~\eqref{eq:kappa-n}.
\end{proof}
\begin{corollary}\label{cor:PnNotDIVa3+1}
For all $n\in\mathbb{N}$, the polynomial $P_n(x)$ is not divisible by $x+1$.
\end{corollary}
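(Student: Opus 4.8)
The plan is to read this off immediately from Proposition~\ref{prop:kappa-n}, which is the substantive result that precedes it; the corollary is essentially a restatement. The key observation is the standard fact that a polynomial $P(x)\in\mathbb{Z}[x]$ is divisible by $x+1$ in $\mathbb{Z}[x]$ (equivalently, in $\mathbb{Q}[x]$, by Gauss's lemma) if and only if $x=-1$ is a root of $P$, i.e. $P(-1)=0$. So the entire task reduces to showing $P_n(-1)\neq0$ for every $n\in\mathbb{N}$.

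First I would invoke Proposition~\ref{prop:kappa-n}, which gives the exact identity
\begin{equation*}
\kappa_n P_n(-1)=(-1)^{\lfloor\frac{n-1}2\rfloor}3^{n-1},\qquad n\in\mathbb{N}.
\end{equation*}
The right-hand side is $\pm 3^{n-1}$, which is a nonzero integer for every $n$. By Proposition~\ref{prop:a-n-ansatz} we know $\kappa_n\in\mathbb{N}$, hence $\kappa_n\neq 0$. Therefore $P_n(-1)=(-1)^{\lfloor\frac{n-1}2\rfloor}3^{n-1}/\kappa_n\neq0$.

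Combining the two observations: since $P_n(-1)\neq0$, the value $x=-1$ is not a root of $P_n(x)$, and consequently $x+1\nmid P_n(x)$, which is the assertion of the corollary. There is no real obstacle here — the only thing worth being slightly careful about is making sure one is entitled to pass between ``divisible by $x+1$ in $\mathbb{Z}[x]$'' and ``$-1$ is a root'', but since $x+1$ is monic this is immediate from polynomial division (no appeal to Gauss's lemma is even strictly needed). One could optionally remark that the same identity pins down $\nu_3(P_n(-1))$ relative to $\nu_3(\kappa_n)$, but that refinement is not needed for the statement as phrased.
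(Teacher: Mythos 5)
Your proposal is correct and follows exactly the paper's own route: the paper likewise deduces the corollary by contradiction from Proposition~\ref{prop:kappa-n}, noting that $P_n(-1)=0$ would contradict equation~\eqref{eq:kappa-n}. Your additional care about the equivalence between divisibility by the monic factor $x+1$ and vanishing at $x=-1$ is fine but adds nothing beyond what the paper leaves implicit.
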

\begin{proof}
By contradiction; otherwise, $P_n(-1)=0$ for some $n$, which contradicts \eqref{eq:kappa-n}.
\end{proof}
\begin{corollary}\label{cor:gcd3power}
$$
\mathrm{cont}(P_n)=3^{c_n},\qquad
\kappa_n=3^{d_n},
\qquad\textrm{where}\quad
c_n,d_n\in\{0\}\cup\mathbb{N}.
$$
\end{corollary}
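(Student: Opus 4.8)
The plan is to derive everything from Proposition~\ref{prop:kappa-n}, which already asserts that the product $\kappa_n P_n(-1)$ equals, up to sign, the pure power of three $3^{n-1}$. Since $\kappa_n$ is a positive integer and $P_n(-1)$ is an integer whose product with $\kappa_n$ is $\pm 3^{n-1}$, the integer $\kappa_n$ must divide $3^{n-1}$; the only positive divisors of $3^{n-1}$ are $3^0, 3^1, \ldots, 3^{n-1}$, whence $\kappa_n = 3^{d_n}$ with $d_n \in \{0,1,\ldots,n-1\} \subset \{0\} \cup \mathbb{N}$. Dividing the identity of Proposition~\ref{prop:kappa-n} by $\kappa_n$ then shows that $|P_n(-1)| = 3^{n-1-d_n}$ is itself a nonzero power of three.

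Next I would invoke the elementary fact that the content of a polynomial in $\mathbb{Z}[x]$ divides the value of that polynomial at every integer argument: since $\mathrm{cont}(P_n) \mid p^n_k$ for each $k$, it follows that $\mathrm{cont}(P_n) \mid \sum_k p^n_k(-1)^k = P_n(-1)$. Here $P_n$ is genuinely a nonzero polynomial --- for instance because $P_n(-1) \neq 0$ by Corollary~\ref{cor:PnNotDIVa3+1}, equivalently by Proposition~\ref{prop:kappa-n} --- so its content is a well-defined positive integer. Combining with the previous paragraph, $\mathrm{cont}(P_n)$ is a positive divisor of $3^{n-1-d_n}$, hence again a power of three: $\mathrm{cont}(P_n) = 3^{c_n}$ with $c_n \in \{0,1,\ldots,n-1-d_n\} \subset \{0\} \cup \mathbb{N}$. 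This is exactly the assertion of the corollary.

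There is essentially no hard step here; the statement is a purely formal consequence of Proposition~\ref{prop:kappa-n} together with the divisibility property of the content, and no induction or reference to the recurrence~\eqref{eq:ak-recurrence} is needed. The only point that calls for a word of care is the well-definedness of $\mathrm{cont}(P_n)$, i.e.\ that $P_n$ is not the zero polynomial, which is secured by $P_n(-1) \neq 0$. As a byproduct the argument also yields the a priori bounds $0 \leqslant d_n \leqslant n-1$ and $0 \leqslant c_n \leqslant n-1-d_n$ for free, although these refinements are not required for the statement as formulated.
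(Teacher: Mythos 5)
Your proof is correct and follows exactly the route the paper intends: the corollary is stated there without proof, as an immediate consequence of Proposition~\ref{prop:kappa-n}, and your argument (that $\kappa_n$, being a positive integer whose product with the integer $P_n(-1)$ is $\pm 3^{n-1}$, must be a power of $3$, and that $\mathrm{cont}(P_n)$ divides $P_n(-1)=\pm 3^{\,n-1-d_n}$ and is therefore also a power of $3$) supplies precisely the details the paper leaves implicit. No gap; nothing further is needed.
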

Further studies of the ansatz~\eqref{eq:a-n-ansatz} with the help of the recurrence relation~\eqref{eq:ak-recurrence} seems to be
of no avail; however, additional experimentation using \textsc{Maple} allows one to formulate the following
\begin{conjecture}\label{con:structure-ak}
\begin{equation}\label{eq:a-n-conjecture}
a_n=(-1)^{n-1}\frac{3^{\nu_3(n+1)}(a_0^3+1)P_n(a_0^3)}
{(n!|n!|_3)^2a_0^{\frac{n}2-\frac14-(-1)^n\frac34}},\qquad
n\in\mathbb{N},
\end{equation}
where $P_n(x)$ is an irreducible polynomial over $\mathbb{Q}$ with positive integer coprime coefficients,
and $\deg{P_n}=\left\lfloor\frac{n-1}{2}\right\rfloor$.
\end{conjecture}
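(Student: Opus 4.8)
We sketch a plausible route toward Conjecture~\ref{con:structure-ak}; we do not expect all of it to be routine. The scaffolding is already in place: Proposition~\ref{prop:a-n-ansatz} gives the ansatz with $\kappa_n\in\mathbb{N}$, $P_n\in\mathbb{Z}[x]$ and $\deg P_n\leqslant\lfloor\tfrac{n-1}{2}\rfloor$; Corollary~\ref{cor:gcd3power} shows $\kappa_n$ and $\mathrm{cont}(P_n)$ are powers of $3$; and Proposition~\ref{prop:kappa-n} fixes $\kappa_nP_n(-1)=(-1)^{\lfloor(n-1)/2\rfloor}3^{n-1}$, while Corollary~\ref{cor:PnNotDIVa3+1} rules out a factor $x+1$. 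Beyond this, the conjecture asserts four things: (i) $\deg P_n=\lfloor\tfrac{n-1}{2}\rfloor$ exactly; (ii) every coefficient $p^n_k$ in \eqref{eq:Pn(x)general} is positive; (iii) $\mathrm{cont}(P_n)=1$, equivalently $\kappa_n=3^{\nu_3(n+1)+2\nu_3(n!)}$, so that the prefactor in \eqref{eq:a-n-conjecture} is exactly $3^{\nu_3(n+1)}/(n!|n!|_3)^2$; and (iv) $P_n$ is irreducible over $\mathbb{Q}$. Points (i) and (ii) should be proven jointly by one induction, in which the positivity hypothesis, carried along, is precisely what prevents the cancellation of the top-degree terms flagged in Remark~\ref{rem:kappa-degree-Pn}. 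The natural first move for all of (i)--(iii) is to distil from \eqref{eq:ak-recurrence} a \emph{reduced} recurrence for the polynomials $P_n$ themselves.

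\emph{Reduced recurrence; positivity and degree.} Substituting the ansatz \eqref{eq:a-n-ansatz} into \eqref{eq:ak-recurrence}, cancelling the common powers of $a_0$ together with the factorial and $3$-adic denominators, and splitting each sum into its contributions over odd and even indices (as in the proof of Proposition~\ref{prop:a-n-ansatz}), one obtains an identity that presents $P_{m+1}(x)$ as an explicit \emph{integer} linear combination of the products $P_i(x)P_j(x)$, $xP_i(x)P_j(x)$ and $P_i(x)P_j(x)P_k(x)$ (with the index ranges inherited from \eqref{eq:ak-recurrence}), the coefficients built from binomial and trinomial coefficients and from ratios of the $3^{\nu_3(\cdot)}$ factors. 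The difficulty, anticipated in Remark~\ref{rem:kappa-degree-Pn}, is that this combination contains terms of both signs, so that neither the positivity of the $p^n_k$ nor the exact degree is visible term by term. The plan is to rescue positivity by exhibiting extra structure: either a combinatorial model for $p^n_k$ (as a weighted enumeration of rooted plane trees or lattice paths naturally attached to the equation) together with a sign-reversing involution that cancels the negative contributions, or sufficiently sharp dominance estimates---refinements of the bounds underlying Lemma~\ref{lem:a-n-estimate}---showing that the positive part of the reduced recurrence majorises the negative part coefficient by coefficient. Granting $p^n_k\geqslant0$ for all $k$ and $p^n_{\lfloor(n-1)/2\rfloor}>0$ by induction (base cases \eqref{eq:a1} and \eqref{eqs:a2-a6}), assertions (i) and (ii) follow.

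\emph{The $3$-adic normalisation.} Assertion (iii) is a $3$-adic valuation computation along the reduced recurrence. Write $P_n=3^{c_n}\widetilde P_n$ with $\widetilde P_n$ primitive; one must show $c_n=0$, whereupon consistency with Proposition~\ref{prop:a-n-ansatz} forces $\kappa_n=3^{\nu_3(n+1)+2\nu_3(n!)}$. The tools are: Legendre's formula $\nu_3(n!)=\tfrac12(n-b_n)$ (equivalently \eqref{eq:Cloitre}); Kummer's theorem, which reads off the $3$-adic valuations of the binomial and trinomial coefficients in the reduced recurrence as numbers of carries in base $3$; and Proposition~\ref{prop:kappa-n} together with Corollary~\ref{cor:PnNotDIVa3+1}, which fix the behaviour at $x=-1$. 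The induction should show that the $3$-adic content of the right-hand side never exceeds that predicted (so $c_n\equiv0$), while the factor $(m+1)^2$ on the left of \eqref{eq:ak-recurrence} forces the extra power of $3$ recorded in \eqref{eq:a-n-conjecture}; the identity $(m+1)^2f_{m+1}=3f_m$ from the proof of Proposition~\ref{prop:kappa-n} is precisely the shadow of this bookkeeping specialised to $a_0=-1$, and the claim is that it survives at the level of contents. Combining (iii) with Proposition~\ref{prop:kappa-n} also yields the by-product $\nu_3(P_n(-1))=b_n-1-\nu_3(n+1)$, used below.

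\emph{Irreducibility---the main obstacle.} The hardest assertion is (iv), and it is the one we do not expect to follow from the recurrence alone. Two avenues present themselves. The first is to extract, via the generating-function technique of Subsection~\ref{subsec:H0coeffs} and the stratagem of Appendix~\ref{app:g2}, a closed or semi-closed form for $P_n$ and to recognise it---up to an affine change of variable---as a member of a classical family with known irreducibility; the shape of the first few $P_n$, the factor $(n!)^2$, and the Bessel-type special-function solutions of the $D_7$ third Painlev\'e equation hint, speculatively, at a Bessel- or Laguerre-type polynomial, for which Schur-, Grosswald-, or Filaseta--Trifonov-type irreducibility theorems would apply. The second is a direct $p$-adic argument at the prime $3$: from $\nu_3(P_n(-1))=b_n-1-\nu_3(n+1)$ and $\mathrm{cont}(P_n)=1$ one checks that $P_n(x-1)$ is Eisenstein at $3$---hence $P_n$ irreducible---whenever $\nu_3(P_n(-1))=1$, and in general one would replace Eisenstein's criterion by Dumas' criterion, i.e.\ by analysing the $3$-adic Newton polygon of a suitable translate of $P_n$; this requires computing $\nu_3(p^n_k)$ for \emph{all} $k$. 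The genuine obstruction is that, even with the complete valuation data, the relevant Newton polygon need not certify irreducibility for every $n$ (its single segment can contain a lattice point strictly between its endpoints, as already happens around $n=26$), so a uniform proof would have to let the auxiliary prime vary with $n$, or bring in information from the analytic and monodromy theory of equation~\eqref{eq:hazzidakis}. In short, we expect (i)--(iii) to be within reach by the route above, the cancellation phenomenon in the reduced recurrence being the only substantial labour, whereas (iv) will likely require an idea genuinely outside the recurrence.
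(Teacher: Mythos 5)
First, a point of calibration: the statement you are asked to prove is stated in the paper as a \emph{conjecture}, and the paper offers no proof of it --- only partial corroboration. What the paper actually establishes is: the ansatz with $\kappa_n\in\mathbb{N}$, $P_n\in\mathbb{Z}[x]$ and $\deg P_n\leqslant\lfloor\tfrac{n-1}{2}\rfloor$ (Proposition~\ref{prop:a-n-ansatz}); $\kappa_nP_n(-1)=(-1)^{\lfloor(n-1)/2\rfloor}3^{n-1}$ and the consequence that $\kappa_n$ and $\mathrm{cont}(P_n)$ are powers of $3$; the \emph{unconditional} identity $\kappa_n\,p^n_{\lfloor(n-1)/2\rfloor}=(n+1)(n!)^2/2$ obtained from the generating function $A_1(z)$ at $a_0=\infty$, which already yields $\deg P_n=\lfloor\tfrac{n-1}{2}\rfloor$ (Remark~\ref{rem:degPn}); and primitivity of $P_n$ \emph{conditional} on the conjectured value of $\kappa_n$ (Corollary~\ref{cor:gcd1}). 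You catalogue this scaffolding accurately, but note that your plan derives the exact degree (i) from the positivity (ii) via a joint induction; that is backwards relative to the paper, where the degree is settled unconditionally and independently of positivity, so you should decouple (i) from (ii) and simply cite the generating-function computation.

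The substantive issue is that your proposal is a research programme rather than a proof, and each of the three genuinely open assertions is delegated to a device that is not exhibited. For positivity, the reduced recurrence \eqref{eq:ak-recurrence-variant} really does mix signs (the paper records that, granting the conjecture, the first line contributes sign $(-1)^n$ and the second line $(-1)^{n-1}$), and neither the sign-reversing involution nor the coefficientwise dominance estimate you invoke is constructed; without one of them the induction does not close, and this is precisely the obstruction the paper itself flags in Remark~\ref{rem:kappa-degree-Pn}. For the $3$-adic normalisation, the inequality $\kappa_n\mid 3^{\nu_3(n+1)+2\nu_3(n!)}$ follows from the leading-coefficient identity, but the reverse divisibility requires controlling $\nu_3(\kappa_n p^n_k)$ for \emph{every} $k$, and your Legendre/Kummer bookkeeping along the reduced recurrence is announced but not carried out (the specialisation $(m+1)^2f_{m+1}=3f_m$ at $a_0=-1$ only constrains the alternating sum of the coefficients, not their individual valuations). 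For irreducibility you candidly concede that the Newton-polygon route fails for some $n$ and that no uniform argument is in hand. So the proposal contains no complete proof of any of (ii)--(iv); as a map of the difficulties it is consistent with the paper's own assessment, but it does not advance the statement beyond its status as a conjecture.
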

The first few polynomials $P_n(x)$ (cf. Conjecture~\ref{con:structure-ak}) read:
\begin{equation}\label{eq:Pn(x)}
\begin{gathered}
P_1(x)=P_2(x)=1,\;\;
P_3(x)=2x+1,\;\;
P_4(x)=20x+17,\;\;
P_5(x)=100x^2+122x+25,\\
P_6(x)=700x^2+1113x+416,\;\;
P_7(x)=19600x^3+38416x^2+21275x+2450,\\
P_8(x)=78400x^3+182672x^2+134227x+29952,\;\;\ldots.
\end{gathered}
\end{equation}
\begin{remark}\label{rem:conjecture}
In a subsequent part of this subsection, we briefly outline the technique of the generating functions developed in
\cite{KitSIGMA2019}, which allows one to derive explicit formulae for the coefficients $p^n_k$ and
$p^{n}_{\left\lfloor\frac{n-1}2\right\rfloor-k}$ of the polynomials $P_n(x)$ for any given
$k\in\mathbb{Z}_{\geqslant0}:=\{0\}\cup\mathbb{N}$ and for all $n\in\mathbb{N}$. Note that
this technique allows one to prove that $\kappa_np^n_{\left\lfloor\frac{n-1}2\right\rfloor}>0$ for all $n\in\mathbb{N}$,
which implies that $\deg{P_n}=\left\lfloor\frac{n-1}{2}\right\rfloor$. Furthermore, if $\kappa_n=3^{\nu_3(n+1)+2\nu_3(n!)}$
is fixed, as assumed in Conjecture~\ref{con:structure-ak}, then we prove that the polynomial $P_n(x)$ is primitive.

Therefore, Conjecture~\ref{con:structure-ak} contains three nontrivial statements:
(1) the value of the coefficient $\kappa_n$; (2) the statement that, for this choice of $\kappa_n$, the coefficients
$p^n_k\in\mathbb{N}$ for all $n\in\mathbb{N}$ and $k=0,\ldots,\left\lfloor\frac{n-1}{2}\right\rfloor$; and (3) the assertion
that $P_n(x)$ is an irreducible polynomial over $\mathbb{Q}$.
Let us explain, in particular, the problem related with the justification of item (2). The recurrence
relation~\eqref{eq:ak-recurrence} can be rewritten as follows:
\begin{equation}\label{eq:ak-recurrence-variant}
\begin{aligned}
(n+1)^2a_0a_{n+1}&=-3a_0^2a_n+3a_0\sum_{i=1}^{n-1}a_ia_{n-i}-\sum_{\substack{i+j+k=n\\i,j,k\geqslant1}}a_ia_ja_k\\
&+\sum_{i=1}^{\left\lfloor\frac{n}2\right\rfloor}(n-2i+1)^2a_ia_{n+1-i},
\qquad n=1,2,\ldots.
\end{aligned}
\end{equation}
If we assume the validity of Conjecture~\ref{con:structure-ak}, then the sign of each term in the first line of
~\eqref{eq:ak-recurrence-variant} is $(-1)^n$, while the sign of each term of the sum in the second line of
\eqref{eq:ak-recurrence-variant} is $(-1)^{n-1}$.

In this work, the technique of generating functions is not developed in complete detail; rather, some nontrivial results
that can be obtained with their utilization are outlined.
Further results obtainable for such generating functions can be found in \cite{KitSIGMA2019}.

Perusing equation~\eqref{eq:a-n-conjecture}, one notes that the coefficients $a_n$ have three singular points with respect to
the parameter $a_0$: $-1$, $0$, and $\infty$. To each of these singular points one can construct an infinite series of generating
functions for the coefficients of polynomials $P_n$. There are two other cube roots of $-1$, but the corresponding generating
functions can be obtained via symmetry from the one corresponding to $-1$.

The following propositions are proved under the assumption that Conjecture~\ref{con:structure-ak} is valid. Some results
can be obtained without reference to this conjecture: these cases are duely noted.
\hfill $\blacksquare$\end{remark}
\begin{proposition}\label{prop:Pn(-1)}
\begin{equation}\label{eq:Pn(-1)}
3^{\nu_3(n+1)}P_n(-1)=(-1)^{\left\lfloor\frac{n-1}2\right\rfloor}3^{b_n-1},\qquad
n=1,2,\ldots,
\end{equation}
where the sequence $b_n$ is defined in Remark~\ref{rem:number-theory}.
\end{proposition}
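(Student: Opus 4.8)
The plan is to combine Proposition~\ref{prop:kappa-n}, which already gives $\kappa_nP_n(-1)=(-1)^{\lfloor(n-1)/2\rfloor}3^{n-1}$, with the conjectural value $\kappa_n=3^{\nu_3(n+1)+2\nu_3(n!)}$ from Conjecture~\ref{con:structure-ak} (equivalently $\kappa_n=3^{\nu_3(n+1)}/(|n!|_3)^2$ in the notation of Remark~\ref{rem:number-theory}). Substituting the latter into the former and solving for $3^{\nu_3(n+1)}P_n(-1)$ will reduce the claim to the purely number-theoretic identity
\begin{equation*}
(n-1)-2\nu_3(n!)=b_n-1,
\end{equation*}
i.e.\ $b_n=n-2\nu_3(n!)=n-2\sum_{k\geqslant1}\lfloor n/3^k\rfloor$, which is precisely Cloitre's formula~\eqref{eq:Cloitre} recorded in Remark~\ref{rem:number-theory} (with $\nu_3(n!)$ given by Legendre's formula). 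So the first step is to carry out this substitution and cancel the common factor $(|n!|_3)^{-2}=3^{2\nu_3(n!)}$ from the $3$-power on the right-hand side, tracking the sign factor $(-1)^{\lfloor(n-1)/2\rfloor}$, which passes through unchanged.

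First I would state explicitly that, under Conjecture~\ref{con:structure-ak}, $\kappa_n$ is a power of $3$ with exponent $d_n=\nu_3(n+1)+2\nu_3(n!)$ (consistent with Corollary~\ref{cor:gcd3power}), and rewrite Proposition~\ref{prop:kappa-n} as
\begin{equation*}
3^{\nu_3(n+1)+2\nu_3(n!)}\,P_n(-1)=(-1)^{\lfloor\frac{n-1}2\rfloor}3^{n-1}.
\end{equation*}
Dividing both sides by $3^{2\nu_3(n!)}$ gives $3^{\nu_3(n+1)}P_n(-1)=(-1)^{\lfloor\frac{n-1}2\rfloor}3^{\,n-1-2\nu_3(n!)}$. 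Then I would invoke Legendre's formula $\nu_3(n!)=\sum_{k\geqslant1}\lfloor n/3^k\rfloor$ together with Cloitre's formula $b_n=n-2\sum_{k\geqslant1}\lfloor n/3^k\rfloor$ to identify $n-1-2\nu_3(n!)=\big(n-2\nu_3(n!)\big)-1=b_n-1$, which yields~\eqref{eq:Pn(-1)} immediately. An alternative to quoting Cloitre's formula, if a self-contained derivation is preferred, is to note the classical fact that $\nu_p(n!)=(n-s_p(n))/(p-1)$ where $s_p(n)$ is the digit sum of $n$ in base $p$; for $p=3$ this reads $\nu_3(n!)=(n-b_n)/2$, i.e.\ $n-2\nu_3(n!)=b_n$, exactly what is needed.

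The only real content beyond bookkeeping is the dependence on Conjecture~\ref{con:structure-ak} for the precise value of $\kappa_n$: without it we only know from Corollary~\ref{cor:gcd3power} that $\kappa_n=3^{d_n}$ for some $d_n\in\mathbb{Z}_{\geqslant0}$, and Proposition~\ref{prop:kappa-n} then gives $d_n+\nu_3\big(P_n(-1)\big)=n-1$ together with $\mathrm{sgn}\,P_n(-1)=(-1)^{\lfloor(n-1)/2\rfloor}$, but not the individual split. So the statement~\eqref{eq:Pn(-1)} should be understood as proved \emph{under the standing assumption of Conjecture~\ref{con:structure-ak}} (as flagged at the end of Remark~\ref{rem:conjecture}), and I would say so explicitly. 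I expect no genuine obstacle here: once the value of $\kappa_n$ is granted, the proof is a two-line manipulation of $3$-adic valuations, and the main care is only in not mishandling the floor-function sign factor and in citing Legendre's/Cloitre's identities correctly.
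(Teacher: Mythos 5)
Your proof is correct, but it takes a genuinely different route from the paper's. The paper proves Proposition~\ref{prop:Pn(-1)} by constructing the first generating function at $a_0=-1$: writing $a_0=-(1-\varepsilon)^{1/3}$, it derives the ODE $(rg_1'(r))'=3g_1(r)-1$ for the coefficient of $\varepsilon^1$, solves it in terms of the modified Bessel function $I_0\big(2\sqrt{3r}\big)$ to get $g_1(r)=-\sum_{n\geqslant1}3^{n-1}r^n/(n!)^2$, and then compares coefficients (using~\eqref{eq:-1} for the sign and the Legendre/Cloitre formulae for the power of $3$). That Bessel-function computation is, in substance, an independent re-derivation of the identity $\kappa_nP_n(-1)=(-1)^{\lfloor(n-1)/2\rfloor}3^{n-1}$ of Proposition~\ref{prop:kappa-n}; you instead cite Proposition~\ref{prop:kappa-n} directly, insert the value $\kappa_n=3^{\nu_3(n+1)+2\nu_3(n!)}$ fixed by Conjecture~\ref{con:structure-ak}, and finish with the purely arithmetic identity $n-2\nu_3(n!)=b_n$ (Legendre plus the base-$3$ digit-sum form, equivalently Cloitre's formula~\eqref{eq:Cloitre}). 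Your version is shorter and avoids any analysis, and your numerics check out (e.g.\ $n=5$: $\kappa_5=27$, $P_5(-1)=3$, $b_5=3$). What the paper's route buys is the generating-function machinery itself: the same scheme with $g_k(r)$, $k\geqslant2$, is what later yields $P_n^{(k-1)}(-1)$ (Appendix~\ref{app:g2}, Proposition~\ref{prop:Pn'(-1)}), which your argument cannot reach. You are also right, and right to say explicitly, that the statement is conditional on Conjecture~\ref{con:structure-ak} only through the value of $\kappa_n$ — exactly as the paper flags at the end of Remark~\ref{rem:conjecture} — and that without it Proposition~\ref{prop:kappa-n} plus Corollary~\ref{cor:gcd3power} only constrain the sum $d_n+\nu_3(P_n(-1))$.
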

\begin{remark}\label{rem:Cloitre}
Consider, for example, $n=34=1021_3$, so that $b_{34}=1+0+2+1=4$, $\nu_3(35)=0$, thus $P_{34}(-1)=3^3$.
\hfill $\blacksquare$\end{remark}
\begin{proof}
This result is related to the first generating function for $a_0=-1$. In general, the generating functions associated
with $a_0=-1$ are constructed as follows. Define $\varepsilon$ via $a_0=-(1-\varepsilon)^{1/3}$; then,
the expansion~\eqref{eq:Hat0-expansion} can be rewritten as
\begin{equation}\label{eq:generating-1bunch}
H(r)=(1-\varepsilon)^{1/3}+\sum_{k=1}^{\infty}g_k(r)\varepsilon^k,
\end{equation}
where the coefficients $g_k(r)$, $k=1,2,\ldots$, are the generating functions. To achieve the result stated
in the proposition, one requires the generating function $g_1(r)$. Substituting the expansion~\eqref{eq:generating-1bunch} into
equation~\eqref{eq:hazzidakis}, expanding the corresponding expressions into power series in $\varepsilon$,
and equating coefficients of like powers of $\varepsilon$, one arrives at differential equations for $g_k(r)$'s;
in particular, this procedure for $\varepsilon^1$ gives rise to a differential equation for $g_1(r)$,
$$
(rg_1'(r))'=3g_1(r)-1,
$$
whose general solution is given in terms of modified Bessel functions,
$$
\frac13+C_1I_0\big(2\sqrt{3r}\big)+C_2K_0\big(2\sqrt{3r}\big),
$$
where the constant of integration $C_2=0$, because our solution $H(r)$  does not contain logarithmic terms in its small-$r$
expansion, and $C_1=-1/3$, since the small-$r$ expansion for $g_1(r)$ starts with the term $-r$. Thus,
\begin{equation}\label{eq:g1series}
g_1(r)=\frac13\left(1-I_0\big(2\sqrt{3r}\big)\right)=-\sum_{n=1}^{\infty}\frac{3^{n-1}r^n}{(n!)^2}.
\end{equation}
To complete the proof, we have to compare the series~\eqref{eq:g1series} with the part of the
expansion~\eqref{eq:Hat0-expansion} that is proportional to $\varepsilon$; in order to do so, one extricates  the factor
$a_0^3+1=\varepsilon$ from each coefficient $a_k$ and then sets $a_0=-1$.
After setting $a_0=-1$, one should take into account equation~\eqref{eq:-1}.
To verify the power of $3$ on the right-hand side of \eqref{eq:Pn(-1)}, one, using the Legendre formula, moves the factor
$3^{2\nu_3(n!)}$ from the denominators of the coefficients in the series~\eqref{eq:g1series} to corresponding numerators,
and denotes the numbers $b_n$ according to the Cloitre formula~\eqref{eq:Cloitre}. The interpretation of $b_n$ as the sum of
digits of $n$ in base $3$ is due to \cite{OEIS1}.
\end{proof}
\begin{remark}\label{prop:fence}
In $\mathbb{R}^2$, consider the points with co-ordinates $(n,b_n-1)$, $n\in\mathbb{N}$. Connect the neighbouring points
$(n,b_n-1)$ and $(n+1,b_{n+1}-1)$ with line segments. As a result, we get a semi-infinite figure located in the first
quadrant of the $(x,y)$-plane that is bounded from above by a broken line consisting of segments and from below
by the $x$-axis. For brevity, we call this figure `the fence'. For $n=3^l$, where $l\in\mathbb{Z}_{\geqslant0}$, $b_{n}=1$,
so that the fence consists of `parts'. The $l$th part of the fence is located on the segment $\left[3^l,3^{l+1}\right]$,
and at the end-points of the segment the fence has height $0$, so that the neighbouring parts have one common point lying on
the $x$-axis. Denote the area of the $l$th part of the fence by $S_l$; then,
$$
S_l=\sum_{l=3^l}^{3^{l+1}}(b_n-1)=(2l+1)3^l
\quad\text{and}\quad
\sum_{l=0}^LS_l=L3^{L+1}+1.
$$
Note that the natural formula for $S_l$ as the sum of the heights of the fence is valid only for those parts
between the points $[3^l,3^k]$ for integers $0\leqslant l<k$. In this case, the corresponding part of the fence can be transformed
into a part of a rectangular fence with the same area. Note that the sequence $S_l$ can be found in OEIS \cite{OEIS2}
as the sequence A124647. We were not able to locate in OEIS the relation between the sequences $b_n$ and $S_l$ indicated above.
\hfill $\blacksquare$\end{remark}
\begin{remark}
One might expect that the higher generating functions $g_k(r)$ for $k\geqslant2$ may be useful
for the proof that, in fact, $c_n=0$ in Corollary~\ref{cor:gcd3power}. It is straightforward to see that the functions $g_k(r)$
allow one to calculate $P_n^{(k-1)}(-1)$, the $(k-1)$th derivative of $P_n(x)$ with respect to $x$ at $x=-1$.

Using equations~\eqref{eq:Pn(x)}, one shows that $P_5^{'}(-1)=78$, $P_7^{'}(-1)=3243$, and $P_8^{'}(-1)=4083$. All of these
numbers are divisible by $3$, so that $g_2(r)$ can hardly help in establishing the hypothesis \eqref{eq:content=1}.
At the same time, it is not difficult
to see that the first nontrivial derivatives, $P_5^{''}(-1)$, $P_7^{''}(-1)$, and $P_8^{''}(-1)$, are not divisible by $3$,
so that the function $g_3(r)$ may have perspectives in proving the hypothesis~\eqref{eq:content=1}.
In Appendix~\ref{app:g2}, an explicit construction of the generating function $g_2(r)$, together with the explicit
formula for the coefficients of its expansion at $r=0$, are obtained. This case is of technical interest because, if one follows
the standard scheme for the construction of this expansion, which consists of an ODE for the generating function,
its explicit solution, and the corresponding expansion, then one encounters a cumbersome expression for the coefficients of
the expansion.
Furthermore, it is not clear whether it is possible to simplify this formula; however, in case the expansion is obtained directly
from the ODE, then the corresponding formula for the coefficients is much simpler. Therefore, it is evident that one can
explicitly continue this process of constructing the higher functions $g_k(r)$, $k=3,4,\ldots$. With the help of these functions,
one can calculate $P_n^{(k-1)}(-1)$ for any $n$ and $k$; however, to study the divisibility question with the help of the
formulae for $P_n^{(k-1)}(-1)$ may be problematic. Since the construction of the generating functions $g_k(r)$ is a recursive
process, we anticipate that the corresponding explicit expressions for the coefficients of $g_k(r)$ should be progressively
more complicated for increasing values of $k$. Hence, we do not expect that these functions will be beneficial towards
a proof of hypothesis~\eqref{eq:content=1}. Consequently, the other generating functions are considered below.
\hfill $\blacksquare$\end{remark}
\begin{proposition}\label{prop:gtnfunInfinity}
The higher coefficients of the polynomials $P_n$ (cf. \eqref{eq:Pn(x)general}$)$ are
\begin{equation}\label{eq:coefsPnSenior}
 p^n_{\lfloor\frac{n-1}2\rfloor}=\frac{(n+1)|n+1|_3(n!|n!|_3)^2}{2^n}
 =3^{b_n-n-\nu_3(n+1)}\prod_{k=0}^{n-1}\binom{n+1-k}{2}.
 \end{equation}
\end{proposition}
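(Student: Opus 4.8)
The plan is to identify the leading coefficient $p^n_{\lfloor\frac{n-1}{2}\rfloor}$ of $P_n(x)$ as the coefficient of the highest power of $a_0^3$ in $a_n$, and to track that power through the recurrence relation~\eqref{eq:ak-recurrence} (equivalently~\eqref{eq:ak-recurrence-variant}). Recall from Proposition~\ref{prop:a-n-ansatz} and Conjecture~\ref{con:structure-ak} that, after stripping the factor $(-1)^{n-1}3^{\nu_3(n+1)}(a_0^3+1)/\bigl((n!|n!|_3)^2 a_0^{\frac n2-\frac14-(-1)^n\frac34}\bigr)$, the coefficient $a_n$ reduces to $P_n(a_0^3)$ of degree $\lfloor\frac{n-1}{2}\rfloor$ in $a_0^3$. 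So the strategy is: first show that the highest-degree term in $a_0$ arising on the right-hand side of~\eqref{eq:ak-recurrence-variant} comes \emph{only} from the single term in the sum $\sum_{i=1}^{\lfloor n/2\rfloor}(n-2i+1)^2 a_i a_{n+1-i}$ with $i=\lfloor\frac{n+1}{2}\rfloor$ (i.e.\ the ``balanced'' split, where both $a_i$ and $a_{n+1-i}$ carry their top powers of $a_0^3$) — the cubic sum $\sum a_ia_ja_k$ and the quadratic sum $\sum a_i a_{n-i}$ being of strictly lower degree in $a_0$ after accounting for the $a_0$-power normalizations in~\eqref{eq:a-n-conjecture}. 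Checking this degree count is the one place requiring care; it amounts to comparing $\frac i2+\frac{n+1-i}{2}$-type exponents against $\frac{n+1}{2}$ plus the shift from $\deg P_i+\deg P_{n+1-i}$ versus $\deg P_{n+1}$, and it is exactly the ``$\kappa_n p^n_{\lfloor\frac{n-1}2\rfloor}>0$'' fact alluded to in Remark~\ref{rem:conjecture}.

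Granting the degree count, the recurrence for the leading coefficients decouples into a first-order recursion. Writing $L_n:=$ (leading coefficient of $a_n$ as a polynomial in $a_0^3$, including all constants from~\eqref{eq:a-n-conjecture}), the surviving term gives
\begin{equation*}
(n+1)^2 L_{n+1}=\bigl(n-2\lfloor\tfrac{n+1}{2}\rfloor+1\bigr)^2\, L_{\lceil (n+1)/2\rceil}\cdot(\text{lower term})\,?
\end{equation*}
— no: more precisely, the balanced split pairs $a_{\lfloor (n+1)/2\rfloor}$ with $a_{\lceil (n+1)/2\rceil}$, so one does \emph{not} get a clean one-step recursion in $n$ directly. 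Instead I would work with the normalized quantity $p^n_{\lfloor\frac{n-1}2\rfloor}$ itself and show it satisfies a product recursion: from $(n+1)^2 a_0 a_{n+1}=\dots+(n-2i+1)^2 a_i a_{n+1-i}|_{i=\lfloor(n+1)/2\rfloor}+\dots$, substitute the ansatz~\eqref{eq:a-n-conjecture} and collect the $a_0$-powers using~\eqref{eq:-1}-style bookkeeping; one arrives at a relation expressing $p^{n}_{\lfloor\frac{n-1}2\rfloor}$ as $\binom{n+1-0}{2}$ times a product telescoping down to $p^1_0=p^2_0=1$. That yields the middle expression $\dfrac{(n+1)|n+1|_3\,(n!|n!|_3)^2}{2^n}$ once all the $3$-adic factors $3^{\nu_3(\cdot)}$, $|n!|_3$, and the denominators $(n!)^2\to(n!|n!|_3)^2$ are matched; the $2^n$ in the denominator is the accumulated product of the $\tfrac12$'s from $\binom{\cdot}{2}=\tfrac{\cdot(\cdot-1)}{2}$.

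For the second equality in~\eqref{eq:coefsPnSenior}, I would convert $\dfrac{(n!|n!|_3)^2}{2^n}$ and $(n+1)|n+1|_3$ into pure powers of $3$ times integers: by the Legendre formula $\nu_3(n!)=\sum_{k\ge1}\lfloor n/3^k\rfloor$ and the Cloitre identity~\eqref{eq:Cloitre}, $b_n=n-2\sum_{k\ge1}\lfloor n/3^k\rfloor=n-2\nu_3(n!)$, so $(n!|n!|_3)^2=(n!)^2\,3^{-2\nu_3(n!)}$ and the power of $3$ bookkeeping gives the exponent $b_n-n-\nu_3(n+1)$; simultaneously $n!/2^{\lfloor\cdot\rfloor}$ reorganizes into the telescoping product $\prod_{k=0}^{n-1}\binom{n+1-k}{2}=\prod_{j=2}^{n+1}\binom{j}{2}$, which is manifestly an integer and which carries exactly the non-$3$ part of $(n+1)|n+1|_3(n!|n!|_3)^2/2^n$. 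The main obstacle is the first step — rigorously proving that no cancellation or higher-degree contribution occurs from the cubic and lower quadratic sums in~\eqref{eq:ak-recurrence-variant}, which is precisely where positivity of the coefficients (item (2) of Conjecture~\ref{con:structure-ak}) is used to rule out sign cancellations; everything after that is a telescoping computation plus the Legendre/Cloitre translation between $2$-adic and $3$-adic valuations.
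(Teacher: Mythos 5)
There is a genuine gap, and it is fatal to the proposed route. Your plan rests on the claim that the top power of $a_0$ on the right-hand side of \eqref{eq:ak-recurrence-variant} is produced only by the balanced term of the sum $\sum_{i=1}^{\lfloor n/2\rfloor}(n-2i+1)^2a_ia_{n+1-i}$. This is false. From the ansatz \eqref{eq:a-n-conjecture} one computes that $a_n$ is a Laurent polynomial in $a_0$ of top degree exactly $n+1$ for every $n$ (with $a_0$ itself of degree $1$). Hence \emph{every} group of terms on the right-hand side of \eqref{eq:ak-recurrence-variant} has the same top degree $n+3$: $\deg(a_0^2a_n)=n+3$, $\deg(a_0a_ia_{n-i})=1+(i+1)+(n-i+1)=n+3$, $\deg(a_ia_ja_k)=n+3$ for $i+j+k=n$, and $\deg(a_ia_{n+1-i})=n+3$ for every $i$, not just the balanced one. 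Concretely, for $n=2$ the recurrence reads $9a_0a_3=-3a_0^2a_2+3a_0a_1^2+a_1a_2$, and the coefficients of $a_0^5$ contributed by the three terms are $\tfrac94$, $3$, and $-\tfrac34$, summing to $\tfrac92$ and giving the correct leading coefficient $\tfrac12$ of $a_3$; the balanced term alone would give $-\tfrac1{12}$. So the leading coefficients $\alpha_n$ (defined by $a_n=\alpha_na_0^{n+1}+\cdots$) satisfy the full nonlinear convolution recursion, which does not decouple or telescope, and the subsequent ``product recursion \dots telescoping down to $p^1_0=p^2_0=1$'' has no basis.

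The paper circumvents exactly this difficulty with the generating function at $a_0=\infty$: setting $z=a_0r$ and $H(r)=a_0A_1(z)+\mathcal{O}(a_0^{-2})$, the full nonlinear recursion for the $\alpha_n$ becomes the coefficient recursion of the ODE \eqref{eq:A1}, which is solved in closed form as $A_1(z)=-(1+z/2)^{-2}$, whence $\alpha_n=(-1)^{n-1}(n+1)/2^n$ and \eqref{eq:coefsPnSenior} follows by matching with \eqref{eq:alpha-n}. Your final paragraph --- the Legendre/Cloitre bookkeeping $b_n=n-2\nu_3(n!)$ and the identity $\prod_{k=0}^{n-1}\binom{n+1-k}{2}=(n+1)!\,n!/2^n$ establishing the second equality in \eqref{eq:coefsPnSenior} --- is correct and is exactly what is needed once the value of $\alpha_n$ is known, but the first and essential step of your argument does not work as stated.
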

\begin{proof}
The proof is done with the help of the first generating function, $A_1(z)$, at the point $a_0=\infty$:
\begin{equation}\label{eq:generating-INFTYA1}
H(r)=a_0A_1(z)+\mathcal{O}\!\left(1/a_0^3\right),\quad
z=a_0r;\qquad
a_0\to\infty,\quad
|z|\leqslant\mathcal{O}(1).
\end{equation}
As a matter of fact, this expansion can be viewed as a \emph{double asymptotics} of $H(r)$. Substituting
the expansion~\eqref{eq:generating-INFTYA1} into equation~\eqref{eq:hazzidakis}, dividing the resulting equation by $a_0$,
and equating to zero the coefficient  independent of $a_0$, one arrives at a nonlinear ODE for the function $A_1(z)$:
\begin{equation}\label{eq:A1}
\left(z\frac{A_1^{'}(z)}{A_1(z)}\right)'=A_1(z).
\end{equation}
This ODE has the following general and special solutions,
\begin{equation}\label{eq:A1solutions-gen-spec}
{A_1}_{gen}=\frac{2C_2C_1^2z^{C1-1}}{(1-C_2z^{C1})^2},\qquad
{A_1}_{spec}=\frac{2}{z\ln^2(C_2z)},
\end{equation}
where $C_1$ and $C_2$ are constants of integration. Of interest is that solution in \eqref{eq:A1solutions-gen-spec}
which can be expanded into a power series in $z$,
$$
A_1(z)=\sum_{n=0}^\infty\alpha_nz^n,
$$
where $\alpha_n\in\mathbb{C}$. This expansion should be compared with the leading term of asymptotics as $a_0\to\infty$
of the function $H(r)$ in \eqref{eq:Hat0-expansion}; then, one obtains $\alpha_0=-1$, and
\begin{equation}\label{eq:alpha-n}
\alpha_n=(-1)^{n-1}\frac{3^{\nu_3(n+1)}p^n_{\left\lfloor\frac{n-1}2\right\rfloor}}{(n!|n!|_3)^2},\quad n\in\mathbb{N}.
\end{equation}
The fact that $\alpha_0=-1$ allows one to fix both constants of integration in \eqref{eq:A1solutions-gen-spec}, namely,
$C_1=1$ and $C_2=-1/2$; thus,
\begin{equation}\label{eq:A1rational}
A_1(z)=-\frac{1}{(1+z/2)^2}=\sum_{n=0}^{\infty}(-1)^{n-1}\frac{n+1}{2^n}z^n.
\end{equation}
Comparing the coefficients of the series~\eqref{eq:A1rational} with the coefficients $\alpha_n$, we arrive at
equation~\eqref{eq:coefsPnSenior}.
\end{proof}
\begin{remark}\label{rem:degPn}
In the proof above, instead of Conjecture~\ref{con:structure-ak}, we can use Proposition~\ref{prop:kappa-n}. In this case,
in lieu of equation~\eqref{eq:coefsPnSenior}, we get
$\kappa_np^n_{\lfloor\frac{n-1}2\rfloor}=(n+1)(n!)^2/2$. The last formula implies that
$p^n_{\lfloor\frac{n-1}2\rfloor}>0$ for all $n\in\mathbb{N}$; thus, we confirm that
$\deg{P_n}=\left\lfloor\frac{n-1}2\right\rfloor$.
\hfill $\blacksquare$\end{remark}
\begin{remark}\label{rem:genfunAnINFINITY}
The set of generating functions corresponding to
$a_0=\infty$, $\{A_k(z)\}_{k=1,2,\ldots}$, is defined via the expansion
\begin{equation}\label{eq:generating-INFTYbunch}
H(r)=a_0\sum_{k=1}^{\infty}\frac{A_k(z)}{a_0^{3(k-1)}},\quad
z=a_0r;\qquad
a_0\to\infty,\quad
|z|\leqslant\mathcal{O}(1).
\end{equation}
In fact, this expansion can be viewed as a double asymptotics of $H(r)$. Substituting the
expansion~\eqref{eq:generating-INFTYbunch} into equation~\eqref{eq:hazzidakis}, dividing the resulting equation by $a_0$, and
equating to zero the coefficients of successive powers of $a_0^{3(1-k)}$, $k=1, 2, \ldots$, we get, for $k=1$,
the nonlinear ODE \eqref{eq:A1} for $A_1(z)$,
and linear inhomogeneous ODEs for the determination of $A_k(z)$ for $k=2,3,\ldots$. The homogeneous part of these linear ODEs
is the same for all the functions $A_k(z)$ and can be viewed as a degenerate hypergeometric equation. The inhomogeneous part is
a rational function of $z$ with a single pole at $z=-2$. Since $z=-2$ is the only singular point of all the linear ODEs for the
functions $A_k(z)$, it then follows that the corresponding $z$-series for these functions have the same
radius of convergence, which equals $2$. According to the estimates presented in Lemma~\ref{lem:a-n-estimate},
the series~\eqref{eq:Hat0-expansion} for $H(r)$ is convergent at least
for $|a_0r|<1/16$, so that for these values of $r$ we can rearrange the series~\eqref{eq:Hat0-expansion} into
the series~\eqref{eq:generating-INFTYbunch} for the generating functions.

So, there is a recursive procedure allowing one to construct $A_k(z)$ in case all $A_l(z)$'s for $l<k$ are obtained.
The small-$z$ expansion of the function $A_k(z)$ generates the coefficients of $P_n(x)$ at the power
$x^{\left\lfloor\frac{n-1}2\right\rfloor+1-k}$.

Here, we limit our consideration only to the function $A_1(z)$. It is worth mentioning that the reader will find a very similar
construction for the higher generating functions in Section~3 of \cite{KitSIGMA2019}, where the first few generating functions
are explicitly obtained.
\hfill $\blacksquare$\end{remark}
\begin{corollary}\label{cor:gcd1}
For any $n\in\mathbb{N}$, the polynomial $P_n$ is primitive.
\end{corollary}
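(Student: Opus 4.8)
The plan is to read off primitivity directly from the explicit formula for the leading coefficient of $P_n$ established in Proposition~\ref{prop:gtnfunInfinity}, using the restriction on the content supplied by Corollary~\ref{cor:gcd3power}. First I would recall that Corollary~\ref{cor:gcd3power} already forces the content to be a power of $3$: $\mathrm{cont}(P_n)=3^{c_n}$ with $c_n\in\{0\}\cup\mathbb{N}$. Hence it suffices to exhibit a single coefficient of $P_n$ that is coprime to $3$; this makes $3^{c_n}$ divide an integer prime to $3$, so $c_n=0$ and $\mathrm{cont}(P_n)=1$, i.e.\ $P_n$ is primitive.

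The coefficient I would use is the top one, $p^{n}_{\lfloor(n-1)/2\rfloor}$. By Remark~\ref{rem:degPn} we already know $\deg P_n=\left\lfloor\tfrac{n-1}{2}\right\rfloor$, so this really is the leading coefficient, and Proposition~\ref{prop:gtnfunInfinity} gives
\[
p^{n}_{\lfloor(n-1)/2\rfloor}=\frac{(n+1)\,|n+1|_3\,(n!\,|n!|_3)^2}{2^n}.
\]
By the definition of the $3$-adic absolute value (Remark~\ref{rem:number-theory}), $(n+1)|n+1|_3$ is $n+1$ with its $3$-part stripped off and $n!\,|n!|_3$ is $n!$ with its $3$-part stripped off, so both factors of the numerator are coprime to $3$; the denominator $2^n$ carries no factor of $3$ either. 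Since $p^{n}_{\lfloor(n-1)/2\rfloor}\in\mathbb{Z}$ (because $P_n\in\mathbb{Z}[x]$), dividing a $3$-coprime integer by $2^n$ cannot create a factor of $3$, so $\nu_3\!\big(p^{n}_{\lfloor(n-1)/2\rfloor}\big)=0$. Combined with the first paragraph, this finishes the proof. A variant that avoids Conjecture~\ref{con:structure-ak} altogether is available: use the unconditional identity $\kappa_n p^{n}_{\lfloor(n-1)/2\rfloor}=(n+1)(n!)^2/2^n$ noted in Remark~\ref{rem:degPn} together with $\kappa_n=3^{d_n}$ from Corollary~\ref{cor:gcd3power}; comparing $3$-adic valuations and using Propositions~\ref{prop:kappa-n} and~\ref{prop:Pn(-1)} to pin down $d_n=2\nu_3(n!)+\nu_3(n+1)$ again yields $\nu_3\!\big(p^{n}_{\lfloor(n-1)/2\rfloor}\big)=0$.

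I expect no serious obstacle: the computational substance is already contained in Proposition~\ref{prop:gtnfunInfinity}, obtained there from the rational generating function $A_1(z)=-1/(1+z/2)^2$. The only points deserving care are (i) confirming that the quantity in~\eqref{eq:coefsPnSenior} is genuinely the leading coefficient of $P_n$, which is exactly what Remark~\ref{rem:degPn} records, and (ii) the $3$-adic bookkeeping, namely checking that each of $(n+1)|n+1|_3$, $n!\,|n!|_3$ and $2^n$ has $3$-adic valuation zero, so that there is no room for the content factor $3^{c_n}$ to hide. Once these are settled, the appeal to Corollary~\ref{cor:gcd3power} closes the argument in one line.
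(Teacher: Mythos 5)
Your argument is exactly the paper's: Corollary~\ref{cor:gcd3power} forces $\mathrm{cont}(P_n)$ to be a power of $3$, and Proposition~\ref{prop:gtnfunInfinity} shows the leading coefficient $p^{n}_{\lfloor(n-1)/2\rfloor}$ is coprime to $3$, so the content is $1$. The extra $3$-adic bookkeeping and the unconditional variant via $\kappa_n p^{n}_{\lfloor(n-1)/2\rfloor}=(n+1)(n!)^2/2^n$ are correct but only flesh out what the paper leaves implicit.
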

\begin{proof}
We have to prove that the coefficients of $P_n$ do not have a common divisor,
i.e., equation~\eqref{eq:content=1} is valid.
Proposition~\ref{prop:gtnfunInfinity} states that the highest coefficient of $P_n$ is not divisible by $3$; thus, the statement
follows from Corollary~\ref{cor:gcd3power}.
\end{proof}
\begin{proposition}\label{prop:P2k-1(0)}
For $k\in\mathbb{N}$,
\begin{align}
P_{2k-1}(0)&=p^{2k-1}_0=\frac{(2k)!|(2k)!|_3(2k-1)!|(2k-1)!|_3}{2^{3k-2}}=
\frac{3^{b_{2k-1}-(2k-1)-\nu_3(2k)}}{2^{k-1}}\prod_{l=0}^{2k-2}\binom{2k-l}{2}.\label{eq:P2k-1(0)}
\end{align}
\end{proposition}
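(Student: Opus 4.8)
The plan is to realise the constant terms $P_{2k-1}(0)$ as the Taylor coefficients of a generating function attached to the singular point $a_0=0$, in complete analogy with the proof of Proposition~\ref{prop:gtnfunInfinity} (which used the point $a_0=\infty$ and the combined variable $z=a_0r$). First I would read off from the ansatz~\eqref{eq:a-n-conjecture} that, for odd indices $n=2k-1$, the coefficient $a_{2k-1}$ has a pole of order exactly $k$ at $a_0=0$ with
\[
\hat a_{2k-1}:=\lim_{a_0\to0}a_0^{\,k}\,a_{2k-1}=\frac{3^{\nu_3(2k)}\,P_{2k-1}(0)}{\bigl((2k-1)!\,|(2k-1)!|_3\bigr)^{2}},
\]
so it suffices to compute the numbers $\hat a_{2k-1}$. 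Writing the most singular part of the odd-index terms of~\eqref{eq:Hat0-expansion} as $\hat a_{2k-1}a_0^{-k}r^{2k-1}=\hat a_{2k-1}(r^{2}/a_0)^{k}/r$ shows that the appropriate combined variable here is $\zeta=r^{2}/a_0$ (not $z=a_0r$), and that in the double-scaling limit $a_0\to0$ with $\zeta$ held fixed one has $rH(r)\to G(\zeta):=\sum_{k\geqslant1}\hat a_{2k-1}\zeta^{k}$, the zeroth and the even-index terms of~\eqref{eq:Hat0-expansion} contributing only an $\mathcal O(a_0^{3/2})$ remainder. As in Remark~\ref{rem:genfunAnINFINITY}, the rearrangement of~\eqref{eq:Hat0-expansion} into this $\zeta$-series is legitimate, for $a_0$ in a compact punctured neighbourhood of the origin and $|\zeta|$ small, via Lemma~\ref{lem:a-n-estimate} together with the polynomial structure~\eqref{eq:a-n-conjecture}.

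Next I would substitute $H(r)=\tfrac1rG(r^{2}/a_0)$ into equation~\eqref{eq:hazzidakis}. A short computation gives $H'=(2\zeta G'-G)/r^{2}$ and $H''=(4\zeta^{2}G''-2\zeta G'+2G)/r^{3}$; multiplying the equation through by $r^{3}=(a_0\zeta)^{3/2}$, every term except $-r^{3}/G$ survives the limit $a_0\to0$ — this is where the term $-1/(rH)$ of~\eqref{eq:hazzidakis} becomes subdominant while the term $H^{2}/r$ is retained — and, after simplification, $G$ is seen to solve
\[
G''(\zeta)=\frac{\bigl(G'(\zeta)\bigr)^{2}}{G(\zeta)}-\frac{G'(\zeta)}{\zeta}+\frac{\bigl(G(\zeta)\bigr)^{2}}{4\zeta^{2}}.
\]
I would then integrate this ODE explicitly: the substitution $G=\me^{u}$ reduces it to $(\zeta u')'=\me^{u}/(4\zeta)$ and the further substitution $\zeta=\me^{t}$ to the Liouville equation $\ddot u=\tfrac14\me^{u}$, whose general solution is elementary (a two-parameter family $G=32\mu^{2}B\zeta^{2\mu}/(B\zeta^{2\mu}-1)^{2}$ together with a logarithmic special solution, compare~\eqref{eq:A1solutions-gen-spec}). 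Among these, the one holomorphic at $\zeta=0$ with $G(0)=0$ and $G'(0)=\hat a_1=\lim_{a_0\to0}(a_0^{3}+1)=1$ — the two integration constants being fixed precisely by these conditions, exactly as $C_1$ and $C_2$ are fixed in~\eqref{eq:A1rational} — is the rational function
\[
G(\zeta)=\frac{64\zeta}{(8-\zeta)^{2}}=\frac{\zeta}{(1-\zeta/8)^{2}}=\sum_{k=1}^{\infty}\frac{k}{8^{\,k-1}}\,\zeta^{k},
\]
whence $\hat a_{2k-1}=k/8^{\,k-1}$.

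Combining $\hat a_{2k-1}=k/8^{\,k-1}$ with the identity above for $\hat a_{2k-1}$ gives
\[
P_{2k-1}(0)=\frac{k\,\bigl((2k-1)!\,|(2k-1)!|_3\bigr)^{2}}{2^{3k-3}\,3^{\nu_3(2k)}},
\]
and it remains to cast this into the two forms of~\eqref{eq:P2k-1(0)}. For the first form I would use $(2k)!=2k\,(2k-1)!$ and $\nu_3((2k)!)=\nu_3((2k-1)!)+\nu_3(2k)$, which yield $(2k)!\,|(2k)!|_3=\dfrac{2k}{3^{\nu_3(2k)}}\,(2k-1)!\,|(2k-1)!|_3$ and hence turn the displayed expression into $\dfrac{(2k)!\,|(2k)!|_3\,(2k-1)!\,|(2k-1)!|_3}{2^{3k-2}}$. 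For the third form I would invoke the Legendre formula together with the Cloitre formula~\eqref{eq:Cloitre} in the shape $b_{2k-1}=(2k-1)-2\nu_3\bigl((2k-1)!\bigr)$ to rewrite $|(2k-1)!|_3\,|(2k)!|_3=3^{-\nu_3((2k-1)!)-\nu_3((2k)!)}$ as $3^{\,b_{2k-1}-(2k-1)-\nu_3(2k)}$, and use the elementary identity $\prod_{l=0}^{2k-2}\binom{2k-l}{2}=\dfrac{(2k)!\,(2k-1)!}{2^{2k-1}}$ to reconstitute the factorials.

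The step I expect to be the main obstacle is making the double-scaling limit $rH(r)\to G(\zeta)$ rigorous uniformly as $a_0\to0$, since the radius-of-convergence estimate of Lemma~\ref{lem:a-n-estimate} deteriorates like $|a_0|^{2}$ and therefore does not by itself allow $\zeta$ to be kept at a fixed nonzero value. One remedy is the observation that, by~\eqref{eq:a-n-conjecture}, $a_0^{k}a_{2k-1}$ is a polynomial in $a_0$ of degree $3k$ with uniformly bounded coefficients — for instance its trailing and leading coefficients are $k/8^{\,k-1}$ and $k/4^{\,k-1}$, the latter by Proposition~\ref{prop:gtnfunInfinity} — which secures convergence of the $\zeta$-series uniformly for small $a_0$. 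Alternatively, one can sidestep convergence altogether: extract from the recurrence~\eqref{eq:ak-recurrence} the limiting recurrence for $\hat a_{2k-1}$ (and $\hat a_{2k}$) — which is exactly the coefficient recurrence of the ODE above — and check by induction that $\hat a_{2k-1}=k/8^{\,k-1}$ satisfies it.
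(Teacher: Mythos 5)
Your proposal is correct and follows essentially the same route as the paper: the paper introduces $z=r/\sqrt{a_0}$ and the generating function $B_1(z)$ via $H(r)=B_1(z)/\sqrt{a_0}+O(a_0)$, finds that $B_1$ satisfies the same ODE~\eqref{eq:A1} as $A_1$ with $C_1=2$, $C_2=1/8$, and reads off $B_1(z)=z/(1-z^2/8)^2=\sum_{k\geqslant1}\tfrac{k}{8^{k-1}}z^{2k-1}$, which is exactly your $G(\zeta)=zB_1(z)$ under $\zeta=z^2$. Your Liouville-equation integration of the limiting ODE and your closing remarks on uniformity of the double-scaling limit are a sound (and somewhat more detailed) substitute for the paper's terser treatment.
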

\begin{proof}
In this case, we introduce the variable $z=r/\sqrt{a_0}$ and define the generating function $B_1(z)$:
\begin{equation}\label{eq:HinB1}
H(r)=B_1(z)/\sqrt{a_0}+O(a_0),\qquad
a_0\to0,\quad
|z|\leqslant\mathcal{O}(1);
\end{equation}
moreover, $B_1(z)$ is an odd function of $z$, and $B_1(z)=z+\mathcal{O}(z^3)$. Substituting the expansion~\eqref{eq:HinB1} into
the ODE \eqref{eq:hazzidakis} for $H(r)$, one obtains for $B_1(z)$ the same ODE \eqref{eq:A1} as for the function $A_1(z)$,
but for different choices of the constants of integration, $C_1=2$ and $C_2=1/8$ (cf. \eqref{eq:A1solutions-gen-spec});
thus, we get
\begin{equation}\label{eq:B1series}
B_1(z)=\frac{z}{(1-z^2/8)^2}=\sum_{k=1}^{\infty}\frac{k}{8^{k-1}}z^{2k-1}.
\end{equation}
On the other hand, we calculate the coefficients of the above series with the help of equation~\eqref{eq:a-n-conjecture};
by considering the expression $\sqrt{a_0}a_{2k-1}r^{2k-1}$ and letting $a_0\to0$, one finds the leading term of asymptotics:
$$
\frac{3^{\nu_3(2k)}P_{2k-1}(0)}{((2k-1)!|(2k-1)!|_3)^2}z^{2k-1}.
$$
Equating this expression to the corresponding term of the series~\eqref{eq:B1series}, one arrives at the result stated in the
proposition.
\end{proof}
\begin{proposition}
\begin{gather}
P_2(0)=p^2_0=1,\qquad
P_4(0)=p^4_0=17,\nonumber\\
P_{2k}(0)=p^{2k}_0=\frac{13}{(35)^2}\frac{((2k+1)!|(2k+1)!|_3)^2}{2^{3(k-2)}}3^{\nu_3(2k+1)}, \quad
k=3,4,5,\ldots.
\label{eq:P2k(0)}
\end{gather}
\end{proposition}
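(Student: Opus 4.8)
The plan is to push the double-scaling argument of the proof of Proposition~\ref{prop:P2k-1(0)} one order further in $a_0$. Setting $z=r/\sqrt{a_0}$ and counting powers of $a_0$ in $a_nr^n=a_nz^na_0^{n/2}$ together with the $a_0$-exponent in the ansatz~\eqref{eq:a-n-conjecture}, the odd-index terms of~\eqref{eq:Hat0-expansion} contribute at orders $a_0^{-1/2+3m}$ and the even-index terms, together with the constant $-a_0$, at orders $a_0^{1+3m}$, $m=0,1,\dots$; hence
\begin{equation*}
H(r)=\frac{B_1(z)}{\sqrt{a_0}}+a_0B_2(z)+\mathcal{O}\!\left(a_0^{5/2}\right),\qquad a_0\to0,\quad z=r/\sqrt{a_0},\quad|z|\leqslant\mathcal{O}(1),
\end{equation*}
with $B_1(z)=z/(1-z^2/8)^2$ as in~\eqref{eq:B1series}, and where $B_2(z)$ is an \emph{even} function, holomorphic at $z=0$, with $B_2(0)=-1$ and, by~\eqref{eq:a-n-conjecture},
\begin{equation*}
[z^{2k}]B_2(z)=-\frac{3^{\nu_3(2k+1)}P_{2k}(0)}{\big((2k)!|(2k)!|_3\big)^2},\qquad k\in\mathbb{N}.
\end{equation*}
So the task reduces to computing $B_2(z)$ and reading off its Taylor coefficients.

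Substituting this expansion into~\eqref{eq:hazzidakis} and collecting the coefficient of $a_0^0$ (the coefficient of $a_0^{-3/2}$ reproduces~\eqref{eq:A1} for $B_1$) yields a second-order linear inhomogeneous ODE for $B_2$,
\begin{equation*}
\ddot B_2-\left(\frac{2\dot B_1}{B_1}-\frac1z\right)\dot B_2+\left(\frac{\dot B_1^2}{B_1^2}-\frac{2B_1}{z}\right)B_2=-\frac{1}{zB_1},
\end{equation*}
the dots denoting differentiation with respect to $z$; the inhomogeneity $-1/(zB_1)$ is the $a_0^0$-part of the $-1/(rH)$ term of~\eqref{eq:hazzidakis}, all other terms producing $a_0^0$-contributions linear in $B_2$ with coefficients built from the known $B_1$. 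The homogeneous part is the linearization of~\eqref{eq:A1} about $B_1$, so two independent solutions are $\partial_{C_2}{A_1}_{gen}$ and $\partial_{C_1}{A_1}_{gen}$ evaluated at $C_1=2$, $C_2=1/8$ (cf.~\eqref{eq:A1solutions-gen-spec}): the first is the rational, logarithm-free function $z(8+z^2)/(1-z^2/8)^3$, the second carries a $\ln z$. With Wronskian $W=B_1^2/z=z/(1-z^2/8)^4$, variation of parameters gives a particular solution (the integrands being rational in $z^2$), and I would fix the constants of integration by requiring $B_2$ to be holomorphic at $z=0$ (this removes the $\ln z$ solution), even (this removes the odd homogeneous solution), and normalised by $B_2(0)=-1$; equivalently, one notes that $z=0$ and $z=\pm2\sqrt2$ are the only finite singular points of the ODE, checks that the relevant solution is meromorphic at $z=\pm2\sqrt2$, and concludes that $B_2$ is a rational function of $z^2$ with a pole only at $z^2=8$. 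I expect the outcome to be
\begin{equation*}
B_2(z)=-\frac{393}{1225}+\frac{138}{1225}\cdot\frac{z^2}{8}-\frac1{49}\left(\frac{z^2}{8}\right)^{\!2}-\frac{832}{1225}\cdot\frac{1+6(z^2/8)+(z^2/8)^2}{\big(1-z^2/8\big)^3},
\end{equation*}
whose low-order coefficients $-1,\ -3/4,\ -17/64$ for $z^0,z^2,z^4$ are pinned down independently by $a_2,a_4$ of~\eqref{eqs:a2-a6} (a consistency check; and a shortcut: once the rational form with pole only at $z^2=8$ is established, a handful of coefficients computed from the recurrence~\eqref{eq:ak-recurrence} already determine $B_2$, in the spirit of the generating-function technique of~\cite{KitSIGMA2019}).

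Using $\sum_{k\geqslant0}(2k+1)^2w^k=(1+6w+w^2)/(1-w)^3$, the polynomial part of $B_2(z)$ affects exactly the coefficients of $z^0,z^2,z^4$, which is precisely why $P_2(0)$ and $P_4(0)$ are exceptional; one reads off $[z^2]B_2=-3/4$, $[z^4]B_2=-17/64$, and, for $k\geqslant3$, $[z^{2k}]B_2(z)=-832(2k+1)^2/(1225\cdot8^k)$. Combining with the display relating $[z^{2k}]B_2$ to $P_{2k}(0)$ gives at once $P_2(0)=-\tfrac43\cdot(-\tfrac34)=1$ and $P_4(0)=-64\cdot(-\tfrac{17}{64})=17$, while for $k\geqslant3$, using the multiplicativity of the $3$-adic absolute value in the form $(2k+1)!\,|(2k+1)!|_3=(2k+1)\,3^{-\nu_3(2k+1)}(2k)!\,|(2k)!|_3$ and rewriting $1225=(35)^2$, $832=2^6\cdot13$, $8^k=2^{3(k-2)}\cdot2^6$,
\begin{equation*}
P_{2k}(0)=-\frac{\big((2k)!|(2k)!|_3\big)^2}{3^{\nu_3(2k+1)}}\,[z^{2k}]B_2(z)=\frac{13}{(35)^2}\,\frac{\big((2k+1)!|(2k+1)!|_3\big)^2}{2^{3(k-2)}}\,3^{\nu_3(2k+1)},
\end{equation*}
as asserted. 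The main obstacle is the middle step: solving the linear inhomogeneous ODE for $B_2$ in closed form, in particular handling the double pole of $-1/(zB_1)$ together with the resonant (double exponent $\rho=1$) indicial behaviour at $z=0$ so as to isolate the unique even holomorphic solution, and verifying the absence of a logarithm at $z^2=8$ so that $B_2$ is genuinely rational; the ensuing $3$-adic and Legendre-formula bookkeeping, including the identification of the exceptional cases $k=1,2$, is routine.
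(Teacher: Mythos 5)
Your proposal is correct and follows essentially the same route as the paper: the same next-order double-scaling generating function $B_2(z)$ as in \eqref{eq:HinB2}, the same linear inhomogeneous ODE (yours is the normalized form of the paper's, after using \eqref{eq:A1} for $B_1$), the same closed-form rational $B_2$ (your expression in $w=z^2/8$ is identical to the paper's partial-fraction decomposition, via $\tfrac{1+6w+w^2}{(1-w)^3}=\tfrac{8}{(1-w)^3}-\tfrac{8}{(1-w)^2}+\tfrac{1}{1-w}$), and the same $3$-adic bookkeeping to reach \eqref{eq:P2k(0)}. The only cosmetic differences are your identification of the homogeneous solutions as $\partial_{C_1},\partial_{C_2}$ of ${A_1}_{gen}$ and your use of variation of parameters where the paper simply exhibits the explicit solution.
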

\begin{proof}
In order to calculate $P_{2k}(0)$, define the generating function $B_2(z)$ via
\begin{equation}\label{eq:HinB2}
H(r)=\frac{B_1(z)}{\sqrt{a_0}}+a_0B_2(z)+\mathcal{O}\big(a_0^{5/2}\big),\quad
z=\frac{r}{\sqrt{a_0}};\qquad
a_0\to0,\quad
|z|\leqslant\mathcal{O}(1),
\end{equation}
where $B_1(z)$ is given by the first equation in~\eqref{eq:B1series}. Now, substituting the expansion in \eqref{eq:HinB2}
into equation~\eqref{eq:hazzidakis},
dividing both sides of the resulting equation by $a_0$, expanding it in powers of $a_0^{3/2}$, and equating to zero the
coefficient of the highest term $a_0^{3/2}$, we arrive at the linear second-order inhomogeneous ODE for the
function $B_2(z)$:
$$
\left((1-z^2/8)^2B_2'(z)-(1+3z^2/8)(1-z^2/8)B_2(z)/z\right)'=B_2(z)-(1-z^2/8)^4/z^2.
$$
The homogeneous part of this ODE is a degenerate hypergeometric equation that is not complicated to solve explicitly:
\begin{align*}
B_2(z)&=C_1\frac{z(z^2+8)}{(z^2-8)^3}+C_2\frac{z(z^2\ln{z}+8\ln{z}+16)}{(z^2-8)^3}\\
&+\frac{40140800+15052800z^2+1254400z^4-56448z^6+1704z^8-25z^{10}}{78400(z^2-8)^3},
\end{align*}
where $C_1=C_2=0$, because $B_2(z)$ is a single-valued even function of $z$. Now, decompose $B_2(z)$ into partial fractions,
$$
B_2(z)=-\frac{1}{2^67^2}z^4+\frac{69}{70^2}z^2-\frac{393}{35^2}-\frac{13\cdot2^6}{35^2}\left(\frac{8}{(1-z^2/8)^3}
-\frac{8}{(1-z^2/8)^2}+\frac{1}{1-z^2/8}\right).
$$
Developing the quotients in the above equation into series in powers of $z^2$ and combining them into a unique series, we get
$$
B_2(z)=-\frac{393}{35^2}+\frac{69}{70^2}z^2-\frac{1}{2^67^2}z^4
-\frac{13}{35^2}\sum_{k=0}^{\infty}\frac{(2k+1)^2}{2^{3(k-2)}}z^{2k}.
$$
This series can be rewritten as
$$
B_2(z)=-1-\frac34z^2-\frac{17}{64}z^4
-\frac{13}{35^2}\sum_{k=3}^{\infty}\frac{(2k+1)^2}{2^{3(k-2)}}z^{2k}.
$$
Equate, now, the term $a_{2k}r^{2k}/a_0=a_{2k}a_0^{k-1}z^{2k}$ of the series $H(r)/a_0$ as $a_0\to0$
with the corresponding term of the above series for $B_2(z)$. Since
$$
\lim_{a_0\to0} a_{2k}a_0^{k-1}=-\frac{3^{\nu_3(2k+1)}P_{2k}(0)}{((2k)!|(2k)!|_3)^2},
$$
one arrives at the result asserted in the proposition.
\end{proof}
\begin{remark}
The justification for the introduction of the generating functions $B_k(z)$ is quite similar to that employed for the functions
$A_k(z)$. We define an infinite sequence of these functions via the expansion
\begin{equation}\label{eq:seriesU-Bk}
H(r)=\frac{1}{\sqrt{a_0}}\sum_{k=1}^{\infty}B_k(z)a_0^{\frac32(k-1)},\quad
z=\frac{r}{\sqrt{a_0}};\qquad
a_0\to0,\quad
|z|\leqslant\mathcal{O}(1).
\end{equation}
All the functions $B_k(z)$ are rational functions of $z$ with poles only at $z^2=8$; therefore, they can be developed into
power series in $z$ with the same radius of convergence $2\sqrt{2}$. The series~\eqref{eq:seriesU-Bk} is the rearrangement of
the series~\eqref{eq:Hat0-expansion} for $r=o(a_0^2)$ as $a_0\to0$ (see the estimates in Lemma~\ref{lem:a-n-estimate}).
The function $B_k(z)$ can be constructed explicitly provided all the functions $B_n(z)$ with $n<k$ are already obtained.
This inductive procedure
is quite analogous to the corresponding procedure for the functions $A_k(z)$. It is worth mentioning that the functions
$B_{2l+1}(z)$ define $p^{2n-1}_l$, whilst $B_{2l+2}(z)$ define $p^{2n}_l$, where $l=0,1,\ldots$ and $n=1,2,\ldots$.
\hfill $\blacksquare$\end{remark}
\begin{corollary}\label{cor:coefsRelation}
The highest and lowest coefficients of the polynomials $P_n$ are related by the following equations:
\begin{align}
2^{k-1}p_0^{2k-1}&=p_{k-1}^{2k-1},\qquad
k=1,2,\ldots,\label{eq:oddRelation}\\
2^{k-6}p_0^{2k}&=\frac{13}{(35)^2}(2k+1)p_{k-1}^{2k},\qquad
k=3,4,\ldots.\label{eq:evenRelation}
\end{align}
\end{corollary}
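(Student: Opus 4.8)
\textbf{Proof plan for Corollary~\ref{cor:coefsRelation}.}
The plan is to deduce both identities by a direct substitution of the closed-form expressions for the top and bottom coefficients of $P_n$ that have already been established, combined with the multiplicativity of the $3$-adic absolute value, i.e.\ $|(mn)|_3=|m|_3|n|_3$ and hence $|n!|_3=|n|_3\,|(n-1)!|_3$. First I would record the two floor computations $\lfloor\tfrac{(2k-1)-1}{2}\rfloor=k-1$ and $\lfloor\tfrac{2k-1}{2}\rfloor=k-1$, so that in \emph{both} the odd case $n=2k-1$ and the even case $n=2k$ the ``highest'' coefficient furnished by Proposition~\ref{prop:gtnfunInfinity} is $p^n_{k-1}$; explicitly,
\begin{equation*}
p^{2k-1}_{k-1}=\frac{(2k)|2k|_3\big((2k-1)!\,|(2k-1)!|_3\big)^2}{2^{2k-1}},\qquad
p^{2k}_{k-1}=\frac{(2k+1)|2k+1|_3\big((2k)!\,|(2k)!|_3\big)^2}{2^{2k}}.
\end{equation*}

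For the odd identity~\eqref{eq:oddRelation} I would start from Proposition~\ref{prop:P2k-1(0)}, namely $p^{2k-1}_0=\dfrac{(2k)!\,|(2k)!|_3\,(2k-1)!\,|(2k-1)!|_3}{2^{3k-2}}$, and rewrite $(2k)!\,|(2k)!|_3=(2k)|2k|_3\cdot(2k-1)!\,|(2k-1)!|_3$. Dividing $p^{2k-1}_{k-1}$ by the resulting expression cancels the numerators identically and leaves only the ratio of powers of two, $2^{3k-2}/2^{2k-1}=2^{k-1}$, which is precisely~\eqref{eq:oddRelation}, valid for all $k\in\mathbb{N}$. For the even identity~\eqref{eq:evenRelation} I would substitute into \eqref{eq:P2k(0)} the decomposition $\big((2k+1)!\,|(2k+1)!|_3\big)^2=(2k+1)^2|2k+1|_3^2\big((2k)!\,|(2k)!|_3\big)^2$ and then use $|2k+1|_3=3^{-\nu_3(2k+1)}$ to absorb the explicit factor $3^{\nu_3(2k+1)}$; this turns $p^{2k}_0$ into $\tfrac{13}{(35)^2}(2k+1)\cdot\dfrac{(2k+1)|2k+1|_3\big((2k)!\,|(2k)!|_3\big)^2}{2^{3(k-2)}}$. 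Comparison with $p^{2k}_{k-1}$ then produces the constant $\tfrac{13}{(35)^2}(2k+1)$ and the power of two $2^{2k-3(k-2)}=2^{6-k}$, i.e.\ $p^{2k}_0=\tfrac{13}{(35)^2}(2k+1)2^{6-k}p^{2k}_{k-1}$, which rearranges to~\eqref{eq:evenRelation}.

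There is no genuine obstacle here: the whole content is the bookkeeping of the $3$-adic factors $|{\cdot}!|_3$ and $3^{\nu_3(\cdot)}$, of the constant $13/(35)^2$, and of the powers of $2$, all of which reduce to elementary arithmetic once the floor values above are fixed. The only point worth flagging is the range of validity, and it is consistent with what the corollary asserts: \eqref{eq:oddRelation} holds for every $k\geqslant1$ because Propositions~\ref{prop:gtnfunInfinity} and~\ref{prop:P2k-1(0)} hold there, whereas \eqref{eq:evenRelation} is stated only for $k\geqslant3$, matching the hypothesis $k\geqslant3$ under which \eqref{eq:P2k(0)} was proved; the cases $k=1,2$ are genuinely exceptional, since $P_2(0)=1$ and $P_4(0)=17$ do not fit the uniform formula, and one should note explicitly that the even relation is not claimed for them.
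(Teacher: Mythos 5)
Your proposal is correct and is essentially the paper's own proof: the paper likewise obtains both relations by comparing the explicit formula \eqref{eq:coefsPnSenior} for the highest coefficient with the formulae \eqref{eq:P2k-1(0)} and \eqref{eq:P2k(0)} for the lowest ones, and your bookkeeping of the $3$-adic factors and powers of $2$ checks out. The only thing you spell out that the paper leaves implicit is the arithmetic itself (and the observation about the exceptional cases $k=1,2$ of the even relation, which matches the stated range $k\geqslant3$).
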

\begin{proof}
The formula ~\eqref{eq:oddRelation} (resp., \eqref{eq:evenRelation}) follows from the comparison of the explicit
formulae~\eqref{eq:coefsPnSenior} and \eqref{eq:P2k-1(0)} (resp., \eqref{eq:coefsPnSenior} and \eqref{eq:P2k(0)}).
\end{proof}
\begin{remark}\label{rem:relation-between-coefficients}
Corollary~\ref{cor:coefsRelation} is formally obtained using Conjecture~\ref{con:structure-ak}; however,
the relations~\eqref{eq:oddRelation} and \eqref{eq:evenRelation} are independent of the value of $\kappa_n$
(cf. equation~\eqref{eq:kappa-n}). Therefore, these relations can be proved via reference to Proposition~\ref{prop:kappa-n}.
\hfill $\blacksquare$\end{remark}

\section{Algebroid Solutions}\label{sec:algebroid}
In this section, we consider algebroid solutions of equation~\eqref{eq:dP3y}. It is convenient to rewrite
equation~\eqref{eq:dP3y} in the following form:
\begin{equation}\label{eq:dP3y-compact}
\left(\frac{ty'(t)}{y(t)}\right)'=y(t)-\frac{t}{(y(t))^2}.
\end{equation}
\begin{theorem}\label{th:algebroid}
If $y(t)$ is an algebroid solution of equation~\eqref{eq:dP3y-compact}, then there exist $n,m\in\mathbb{Z}_{\geqslant0}$
and $\alpha=(m+2n+3)/4$
such that
\begin{equation}\label{eq:y-w}
y(t)=x^{n+1-\alpha}w(x),\qquad
t=x^{\alpha},
\end{equation}
where the function $w(x)$, which is holomorphic at $x=0$ and $w(0)\neq0$, is the unique solution of the equation
\begin{equation}\label{eq:dP3w}
\left(\frac{xw'(x)}{w(x)}\right)'=\alpha^2\left(x^nw(x)-\frac{x^m}{(w(x))^2}\right).
\end{equation}
Conversely, for any $n,m\in\mathbb{Z}_{\geqslant0}$, $\alpha=(m+2n+3)/4$, and $a_0\in\mathbb{C}\setminus\{0\}$,
there exists a unique solution $w(x)$ of equation~\eqref{eq:dP3w} that is holomorphic at $x=0$ and $w(0)=-a_0$,
which defines, via~\eqref{eq:y-w}, an algebroid solution of equation~\eqref{eq:dP3y-compact}.
\end{theorem}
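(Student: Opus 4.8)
The linchpin of both directions is a single change-of-variables computation. Writing $y(t)=x^{\beta}w(x)$ with $t=x^{\alpha}$, one finds $\dfrac{ty'(t)}{y(t)}=\dfrac1{\alpha}\!\left(\beta+\dfrac{xw'(x)}{w(x)}\right)$, hence $\left(\dfrac{ty'}{y}\right)'=\dfrac1{\alpha^{2}x^{\alpha-1}}\left(\dfrac{xw'}{w}\right)'$, while $y-t/y^{2}=x^{\beta}w-x^{\alpha-2\beta}w^{-2}$; multiplying \eqref{eq:dP3y-compact} through by $\alpha^{2}x^{\alpha-1}$ therefore turns it into $\left(\dfrac{xw'}{w}\right)'=\alpha^{2}\bigl(x^{\alpha+\beta-1}w-x^{2\alpha-2\beta-1}w^{-2}\bigr)$, which is exactly \eqref{eq:dP3w} once we set $n=\alpha+\beta-1$ and $m=2\alpha-2\beta-1$. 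Eliminating $\beta$ gives $\alpha=(m+2n+3)/4$ and $\beta=n+1-\alpha$, so that \eqref{eq:y-w} is precisely this substitution. I would record this equivalence first; it reduces each half of the theorem to producing the right data.

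\textbf{Converse.} Fix $n,m\in\mathbb{Z}_{\geqslant0}$, $\alpha=(m+2n+3)/4$ and $a_{0}\in\mathbb{C}\setminus\{0\}$, and proceed as in Subsection~\ref{subsec:existence}. Substituting $w(x)=-a_{0}+\sum_{k\geqslant1}b_{k}x^{k}$ into \eqref{eq:dP3w} and collecting the coefficient of $x^{\,j-1}$, the part of the left-hand side that is linear in the $b_{k}$ contributes $-a_{0}^{-1}j^{2}b_{j}$, its remaining (higher-degree) part involves only $b_{1},\dots,b_{j-1}$, and the right-hand side, thanks to the factors $x^{n}$ and $x^{m}$ with $n,m\geqslant0$, likewise involves only lower-index coefficients; since the indicial coefficient $j^{2}$ never vanishes for $j\geqslant1$, this yields a recurrence uniquely determining all $b_{k}$, so there is a unique formal solution. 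Its convergence in a neighbourhood of $x=0$ then follows from a majorant estimate of the form $|b_{k}|<NR^{k}/k^{2}$, obtained by transcribing the argument of Lemma~\ref{lem:a-n-estimate} almost verbatim (the recurrence has the same shape as \eqref{eq:ak-recurrence}, the extra powers of $x$ only helping). Feeding this holomorphic $w$, with $w(0)=-a_{0}\neq0$, into \eqref{eq:y-w} and reading the equivalence of the first paragraph backwards shows that $y(t)=t^{(n+1-\alpha)/\alpha}w\bigl(t^{1/\alpha}\bigr)$ solves \eqref{eq:dP3y-compact}; since $(n+1-\alpha)/\alpha\in\mathbb{Q}$ and $w$ is a genuine power series, the expansion of $y$ at $t=0$ is a convergent Puiseux series free of logarithms, so $y$ is algebroid by the characterization recalled at the beginning of this section, and uniqueness of $y$ is inherited from that of $w$.

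\textbf{Forward.} Let $y(t)$ be an algebroid solution. Being algebroid, it is near $t=0$ a branch of an algebraic function and hence has a convergent Puiseux expansion $y(t)=\sum_{k\geqslant k_{0}}c_{k}t^{k/q}$ for some $q\in\mathbb{N}$, with $c_{k_{0}}\neq0$ and no logarithms; set $\rho:=k_{0}/q$. A dominant-balance computation at $t=0$ bounds $\rho$: from $ty'/y=\rho+\mathcal{O}(t^{1/q})$ one obtains that the left-hand side $\left(ty'/y\right)'$ of \eqref{eq:dP3y-compact} is $o(t^{-1})$, hence so is the right-hand side $c_{k_{0}}t^{\rho}-c_{k_{0}}^{-2}t^{1-2\rho}+\cdots$, and since neither the $t^{\rho}$ term nor (for $\rho\ne1/3$) the $t^{1-2\rho}$ term is cancelled at leading order, this forces $-1<\rho<1$; consequently $\mu:=k_{0}$ satisfies $|\mu|<q$. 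Now put $x=t^{1/q}$ (so $t=x^{\alpha}$ with $\alpha=q$), $n:=q-1+\mu$, $m:=2q-1-2\mu$ and $w(x):=x^{-\mu}y(x^{q})=\sum_{j\geqslant0}c_{\mu+j}x^{j}$. Then $n,m\in\mathbb{Z}_{\geqslant0}$ because $|\mu|<q$, one checks $m+2n+3=4q=4\alpha$ so that $\alpha=(m+2n+3)/4$ and $n+1-\alpha=\mu$, and $w$ is holomorphic at $x=0$ with $w(0)=c_{\mu}\neq0$; by the equivalence of the first paragraph $w$ solves \eqref{eq:dP3w}. Finally, the converse part, applied with $a_{0}=-c_{\mu}$, identifies $w$ as the unique holomorphic solution of \eqref{eq:dP3w} taking that value at the origin, which completes the forward statement.

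The step demanding the most care is the dominant-balance analysis in the forward direction: ruling out leading exponents $\rho\notin(-1,1)$ (i.e., excluding poles and high-order zeros of $y$ at $t=0$), dealing with the borderline $\rho=1/3$ — including the degenerate subcase $c_{k_{0}}^{3}=1$, which yields only the three trivial solutions $y=c_{k_{0}}t^{1/3}$ — and verifying that every Puiseux exponent of $y$ lies in $\rho+\tfrac1q\mathbb{Z}_{\geqslant0}$, so that $x^{-\mu}y(x^{q})$ is genuinely holomorphic at $x=0$. By contrast, in the converse the only laborious ingredient is the convergence estimate, which is a line-by-line adaptation of the proof of Lemma~\ref{lem:a-n-estimate}.
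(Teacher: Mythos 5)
Your proof is correct, and its skeleton coincides with the paper's: a power-type change of variables $t=x^{\alpha}$, a local analysis at the origin to pin down the admissible exponents, and a recurrence-plus-majorant argument (deferred to Lemma~\ref{lem:a-n-estimate}) for the converse. The one place where you genuinely diverge is the forward direction's exclusion of bad exponents. The paper first sets $y(t)=x^{1-\alpha}v(x)$ with $\alpha\in\mathbb{N}$, obtains the intermediate equation \eqref{eq:dP3v} with exponent $m_{1}=4\alpha-3$, and rules out $m_{1}<0$ by showing, coefficient by coefficient, that a putative Laurent pole of $v$ at $x=0$ forces all higher coefficients to vanish, so that only the algebraic solution $y=-a_{0}t^{1/3}$ (with $a_{0}^{3}=-1$) survives; only then does it factor $v=x^{n}w$. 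You instead bound the leading Puiseux exponent $\rho$ of $y$ directly by a dominant balance in \eqref{eq:dP3y-compact}, obtaining $-1<\rho<1$ and reading off $n=q-1+\mu\geqslant0$ and $m=2q-1-2\mu\geqslant0$ in one stroke. Your route is shorter and avoids the case analysis on the pole order $l$; the paper's route has the side benefit of explicitly identifying the boundary solutions (the three constant branches of $t^{1/3}$). Two small remarks: (i) the care you flag for the borderline $\rho=1/3$ is immaterial for the exponent bound, since $1/3$ already lies in $(-1,1)$ --- the cancellation $c_{k_{0}}^{3}=1$ matters only if one wants to describe the solutions with vanishing right-hand side, not to prove the bound; (ii) your construction always yields $m=2q-1-2\mu$ odd, hence a non-minimal pair $(n,m)$ for solutions such as the one attached to $H(r)$ (to which the paper assigns $(n,m)=(0,0)$, $\alpha=3/4$); this is harmless for the existence claim and is exactly the redundancy resolved later in Corollary~\ref{cor:Q=algebroid}.
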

\begin{proof}
As a consequence of the Painlev\'e property, the only branching point of the solution is $t=0$. If the solution $y(t)$ is
algebroid, then there exists a natural number $\alpha$ such that $y(x^{\alpha})$ is a holomorphic function at $x=0$, or
it has a pole of finite order. It is convenient to make the transformation $y(t)=x^{1-\alpha}v(x)$, $t=x^{\alpha}$,
and to consider the function $v(x)$ which solves
\begin{equation}\label{eq:dP3v}
\left(\frac{xv'(x)}{v(x)}\right)'=\alpha^2\left(v(x)-\frac{x^{4\alpha-3}}{(v(x))^2}\right),\qquad
4\alpha-3=m_1,
\end{equation}
where $m_1\in\mathbb{Z}$, since $\alpha\in\mathbb{N}$. Now, assume that, for some $m_1\in\mathbb{Z}$, $v(x)$ is a solution
of \eqref{eq:dP3v} that is holomorphic or has a Laurent expansion at $x=0$; then, we see that $\alpha=(m_1+3)/4$ is a rational
number, and the solution $y(t)$ has an algebraic singularity at $x=0$. It is clear that $m_1\geqslant-2$, because,
otherwise, $\alpha\leqslant0$.
In that case, after substituting $x=t^{1/\alpha}$ into the Laurent expansion for $v(x)$, one gets an infinite number of terms
with negative powers of $t$ that are growing as $t\to0$. More precisely, the local analysis shows that the only possibility
to balance the leading term is to require that
\begin{equation}\label{eq:v(x)-pole}
v(x)=\frac{1}{x^l}\sum_{k=0}^{\infty}b_kx^k,\qquad
b_0\in\mathbb{C}\setminus\{0\},\quad
m_1=-3l,\quad
l-1\in\mathbb{N};
\end{equation}
otherwise,
the right-hand side of equation~\eqref{eq:dP3v} would have a pole whilst the left-hand side would not.
Even under the assumption~\eqref{eq:v(x)-pole}, however, one cannot construct an infinite Laurent expansion, because,
by induction, one proves that all the coefficients $b_k$, $k\geqslant1$, of such an expansion should vanish:
if, for $k\geqslant2$, $b_1=\dotsb=b_{k-1}=0$, then, on the left-hand side of equation~\eqref{eq:dP3v}, we have
the leading term $k^2b_kx^{k-1}/b_0$, and, on the right-hand side, the leading term is $3\alpha^2b_0b_kx^{k-l}$,
with $b_0=-a_0$ and $a_0^3+1=0$; so, the orders of terms are different for $l\geqslant2$. One proves, analogously, that
$b_1=0$. Therefore, the only solution for all $l\geqslant2$ is $v(x)=-a_0/x^l$. For all $l$, $v(x)$ generates the same explicit
solution $y(t)=-a_0t^{1/3}$. This observation does not work for $l=1$; in this case, however, $\alpha=0$,
and equation~\eqref{eq:dP3v} (even if, instead of $\alpha^2$, one uses a parameter) is not related to the
Painlev\'e equation~\eqref{eq:dP3y-compact}.

Thus, a solution of equation~\eqref{eq:dP3v} with a Laurent expansion at $x=0$ exists if $m_1\geqslant0$.
Section~\ref{sec:2} is devoted to the case $m_1=0$. The case $m_1\geqslant1$ can be studied similarly. Here, we only outline
some key points that are important for the following discussion. The function $v(x)$ cannot have a pole at $x=0$ because the
two other terms in equation~\eqref{eq:dP3v} are bounded; therefore, we can write $v(x)=x^nw(x)$ for some
$n\in\mathbb{Z}_{\geqslant0}$: by the sense of the introduction of the parameter $n$, we suppose that $w(0)\neq0$.
Making this substitution in equation~\eqref{eq:dP3v}, one arrives at equation~\eqref{eq:dP3w} with $m=m_1-2n$. By using
arguments similar to those employed in the previous paragraph for the proof $m_1\geqslant0$, one confirms that the necessary
condition for the existence of a holomorphic at $x=0$ solution of equation~\eqref{eq:dP3w} is $m\in\mathbb{Z}_{\geqslant0}$.
Thus, the direct statement of the theorem is proved.

Conversely, consider equation~\eqref{eq:dP3w} with $n,m\in\mathbb{Z}_{\geqslant0}$. In this case, the leading terms can
always be balanced: since we are looking for the solution with $w(0)\neq0$, the leading terms as $x\to0$ of the two expressions
on the right-hand side of equation~\eqref{eq:dP3w} are $-a_0x^n$ and $-x^m/a_0^2$, whilst the leading term as $x\to0$
of the term on the left-hand side of this equation is $-(q+1)^2a_{q+1}x^q/a_0$, where we assume that $a_{q+1}$ is the second
nonvanishing coefficient in the Taylor expansion of $w(x)$ (the first one is $-a_0$). Therefore, for any given $n$ and $m$,
one can always find an appropriate $q$ to balance the leading terms. (Note that the coefficients $a_1=\dotsb=a_q=0$). Hence,
we see that, for any $a_0\neq0$, we can balance the leading terms, and the subsequent coefficients $a_k$ for $k\geqslant q+1$
of the Taylor expansion of $w(x)$ can be uniquely determined with the help of a recurrence relation that can be deduced from
equation~\eqref{eq:dP3w}. The convergence of such an expansion can be established in a manner similar to that used for the
proof of Lemma~\ref{lem:a-n-estimate}.
\end{proof}
\begin{remark}\label{rem:ALG}
For any given pair $(n,m)\in\mathbb{Z}^2_{\geqslant0}$, Theorem~\ref{th:algebroid} presents the exact construction for
a family (class) of solutions to equation~\eqref{eq:dP3y}, $y(t)=y(t;a_0)$, where $a_0\in\mathbb{C}\setminus\{0\}$: the set
whose elements are such families is denoted by $\mathbb{ALG}(dP3_0)$:\footnote{The subscript $0$ represents the fact that
we consider a special case of equation~\eqref{eq:dp3u} for $a=0$.} moreover, for any algebroid solution of
equation~\eqref{eq:dP3y}, there exists a number $a_0\in\mathbb{C}\setminus\{0\}$ such that this solution belongs to one of
the elements of $\mathbb{ALG}(dP3_0)$.
\hfill$\blacksquare$\end{remark}
\begin{corollary}\label{cor:Q=algebroid}
There exists a one-to-one correspondence between the set of positive rational numbers $(\mathbb{Q}_{>0})$
and $\mathbb{ALG}(dP3_0)$:
\begin{equation}\label{eq:Q-ALG}
\mathbb{Q}_{>0}\rightarrow \mathbb{ALG}(dP3_0),\quad
q\mapsto
y(t)\underset{t\to0}{\sim} -a_0\,t^{1-4\rho},
\quad
a_0\in\mathbb{C}\setminus\{0\}\;\;\text{and}\;\;
2\rho=\frac{1}{1+2q},
\end{equation}
where $y(t)$ is a representative of the corresponding class.
\end{corollary}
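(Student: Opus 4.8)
The plan is to unwind the parametrization of Theorem~\ref{th:algebroid}, read off the leading behaviour of each algebroid class, and check that the assignment of a class to its leading exponent is exactly the correspondence~\eqref{eq:Q-ALG}.

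First I would record that, for $(n,m)\in\mathbb{Z}^2_{\geqslant0}$ with $\alpha=(m+2n+3)/4$, the solution $y(t)=x^{n+1-\alpha}w(x)$, $t=x^{\alpha}$, $w(0)=-a_0$, provided by Theorem~\ref{th:algebroid} satisfies $y(t)\underset{t\to0}{\sim}-a_0\,t^{\beta(n,m)}$, where $\beta(n,m):=\tfrac{n+1}{\alpha}-1=\tfrac{2n+1-m}{2n+m+3}$. A one-line computation gives $\tfrac{1-\beta(n,m)}{1+\beta(n,m)}=\tfrac{m+1}{2(n+1)}$; since the right-hand side is positive, $\beta(n,m)\in(-1,1)\cap\mathbb{Q}$, and $\beta(n,m)=\beta(n',m')$ iff $\tfrac{n+1}{m+1}=\tfrac{n'+1}{m'+1}$. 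Next I would note that $\beta=1-4\rho$ with $2\rho=\tfrac1{1+2q}$ amounts to $\beta=\tfrac{2q-1}{2q+1}$, and matching this against $\beta(n,m)$ yields $q=\tfrac{n+1}{m+1}$; thus the exponent occurring in~\eqref{eq:Q-ALG} is identified with $\beta(n,m)$ precisely when $q=\tfrac{n+1}{m+1}$.

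The crucial structural step is that the pairs $(n,m)$ and $(\widetilde n,\widetilde m):=(k(n+1)-1,\,k(m+1)-1)$, $k\in\mathbb{N}$, define the \emph{same} element of $\mathbb{ALG}(dP3_0)$; here $\widetilde\alpha:=(\widetilde m+2\widetilde n+3)/4=k\alpha$ by an elementary identity. I would prove this via the substitution $x\mapsto x^{k}$: if $w(x)$ solves~\eqref{eq:dP3w} for $(n,m)$ with $w(0)=-a_0$, then $W(x):=w(x^{k})$ is holomorphic at $0$, $W(0)=-a_0\ne0$, and, with $\phi(\xi):=\xi w'(\xi)/w(\xi)$ so that $(xW'(x)/W(x))'=k^{2}x^{k-1}\phi'(x^{k})$, equation~\eqref{eq:dP3w} for $w$ gives $(xW'(x)/W(x))'=k^{2}\alpha^{2}\bigl(x^{\widetilde n}W(x)-x^{\widetilde m}/(W(x))^{2}\bigr)=\widetilde\alpha^{2}\bigl(x^{\widetilde n}W(x)-x^{\widetilde m}/(W(x))^{2}\bigr)$, i.e.\ $W$ solves~\eqref{eq:dP3w} for $(\widetilde n,\widetilde m)$. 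By the uniqueness clause of Theorem~\ref{th:algebroid}, $W$ is \emph{the} solution there with value $-a_0$ at the origin, and re-substituting $x\mapsto x^{1/k}$ in~\eqref{eq:y-w} shows the resulting $y(t)$ is the one built from $(n,m)$ with the same $a_0$. Letting $a_0$ range over $\mathbb{C}\setminus\{0\}$ identifies the two families; conversely, equal families force equal leading exponents and hence, by the first step, equal ratios $\tfrac{n+1}{m+1}$. Reducing $\tfrac{n+1}{m+1}$ to lowest terms, every element of $\mathbb{ALG}(dP3_0)$ therefore has a unique \emph{primitive} representative $(n,m)$ with $\gcd(n+1,m+1)=1$, and distinct primitive pairs give distinct elements.

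Finally, $(n,m)\mapsto\tfrac{n+1}{m+1}$ is a bijection from the set of primitive pairs onto $\mathbb{Q}_{>0}$, with inverse $q=b/a\mapsto(b-1,a-1)$ written in lowest terms; composing gives the bijection $\mathbb{Q}_{>0}\to\mathbb{ALG}(dP3_0)$. Tracking leading behaviour through the previous steps, the image of $q=b/a$ is represented by a $y$ with $y(t)\underset{t\to0}{\sim}-a_0\,t^{\beta(b-1,a-1)}$ and $\beta(b-1,a-1)=\tfrac{2b-a}{2b+a}=\tfrac{2q-1}{2q+1}=1-4\rho$, $2\rho=\tfrac1{1+2q}$, which is precisely~\eqref{eq:Q-ALG}. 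I expect the main obstacle to be this substitution step: making rigorous that $x\mapsto x^{k}$ identifies the two parametrized families as the \emph{same} subset of the algebroid solutions of~\eqref{eq:dP3y-compact}, rather than merely matching leading terms — this rests on the uniqueness clause of Theorem~\ref{th:algebroid} to ensure every member of the $(\widetilde n,\widetilde m)$-family is of the form $W(x)=w(x^{k})$.
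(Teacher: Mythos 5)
Your proposal is correct and follows essentially the same route as the paper: the injectivity via distinct leading exponents $\beta(n,m)=\tfrac{2q-1}{2q+1}$, and the key identification of the families for $(n,m)$ and $(k(n+1)-1,k(m+1)-1)$ via the substitution $x\mapsto x^{k}$ (the paper's $x\to x^{p+1}$ giving $w_{n,m}(x)=w_{\tilde n,\tilde m}(x^{p+1})$), justified by the uniqueness clause of Theorem~\ref{th:algebroid}. Your explicit verification that $W(x)=w(x^{k})$ solves equation~\eqref{eq:dP3w} for $(\widetilde n,\widetilde m)$ with $\widetilde\alpha=k\alpha$ is a slightly more detailed rendering of the step the paper states without computation.
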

\begin{proof}
Define a mapping $f:\;\mathbb{Q}_{>0}\rightarrow\mathbb{ALG}(dP3_0)$ as follows: if $q=\frac{n+1}{m+1}$, with coprime
$n+1$ and $m+1$, then $f(q)\rightarrow(n,m)\rightarrow y(t)$, where $y(t)\in\mathbb{ALG}(dP3_0)$ is
constructed in Theorem~\ref{th:algebroid}.\footnote{With abuse of notation, $y(t)$ is used to denote both a family
of solutions, $y(t)=y(t;a_0)$, to equation~\eqref{eq:dP3y} and the corresponding element of $\mathbb{ALG}(dP3_0)$.}

The mapping $f$ is injective. Consider the behaviour of $y(t)$ as $t\to0$, namely,
$y(t)\underset{t\to0}\sim t^{(n+1)/\alpha-1}w(0)$ (cf. Theorem~\ref{th:algebroid}). Since $\alpha=(m+1+2(n+1))/4$,
we get that the leading branching, $(n+1)/\alpha-1=q/(2q+1)-1$, is different for different $q$.

The mapping $f$ is surjective. According to the construction presented in Theorem~\ref{th:algebroid}, for any
$y(t)\in\mathbb{ALG}(dP3_0)$, one can find a pair of nonnegative integers $(n,m)$ so that a number
$q=(n+1)/(m+1)$ can be defined; the problem, however, is that the numbers $n+1$ and $m+1$ might not be coprime, so that
one can not claim that precisely this solution $y(t)$ corresponds to $f(q)$. We are going to prove that, for
a given $y(t)\in\mathbb{ALG}(dP3_0)$, any pair of nonnegative integers representing the same rational number $q$ is
suitable.

Assume that there exists $p\in\mathbb{N}$ such that $n+1=(p+1)(\tilde{n}+1)$ and $m+1=(p+1)(\tilde{m}+1)$,
where $\tilde{n}+1$ and $\tilde{m}+1$ are coprime.
Denote the solution of equation~\eqref{eq:dP3w} corresponding to the parameters $n$ and $m$ by $w_{n,m}(x)$. Now, making
the change of independent variable $x\to x^{p+1}$ and noting that
\begin{equation}\label{eq:alphatransformation}
\alpha=\frac14(m+1+2(n+1))=(p+1)\tilde{\alpha}=\frac{p+1}{4}(\tilde{m}+1+2(\tilde{n}+1)),
\end{equation}
one proves that $w_{n,m}(x)=w_{\tilde{n},\tilde{m}}(x^{p+1})$, assuming that $w_{n,m}(0)=w_{\tilde{n},\tilde{m}}(0)$.
Using the last equation and relation \eqref{eq:alphatransformation}, one proves that the functions $y(t)$ defined in
Theorem~\ref{th:algebroid} via the functions $w=w_{n,m}$ and $w=w_{\tilde{n},\tilde{m}}$ coincide exactly:
\begin{equation}\label{eq:y=ytilde}
y(t)=x^{n+1-\alpha}w_{n,m}(x)=
t^{\frac{n+1}{\alpha}-1}w_{n,m}\left(t^{\frac{1}{\alpha}}\right)=
t^{\frac{\tilde{n}+1}{\tilde{\alpha}}-1}w_{\tilde{n},\tilde{m}}\left(t^{\frac{p+1}{\alpha}}\right)=
t^{\frac{\tilde{n}+1}{\tilde{\alpha}}-1}w_{\tilde{n},\tilde{m}}\left(t^{\frac{1}{\tilde{\alpha}}}\right)=
\tilde{y}(t).
\end{equation}
Substituting for the function $w_{n,m}(t^{1/\alpha})$ appearing in the second equality  of equation~\eqref{eq:y=ytilde}
the first term of its Taylor expansion (cf. Theorem~\ref{th:algebroid}), one arrives at the asymptotics for $y(t)$ given
in \eqref{eq:Q-ALG}, with $1-4\rho=(n+1)/\alpha-1$. Finally, solving the latter equation for $2\rho$, one finds
\begin{equation}\label{eq:rhoALGgeberal}
2\rho=\frac{m+1}{m+1+2(n+1)}=\frac{1}{1+2q}.
\end{equation}
\end{proof}
\begin{remark}\label{rem:Alg(dP3-0)}
In the geometrical sense, Corollary~\ref{cor:Q=algebroid} states that the space of the algebroid solutions is isomorphic to the
trivial fiber bundle, $\mathbb{Q}_{>0}\times\mathbb{C}\setminus\{0\}$, where the base is $\mathbb{Q}_{>0}$, and the
cylinder, $\mathbb{C}\setminus\{0\}$, is the fiber defining the initial values of the solutions.
The constructed mapping allows one to pull back all structures to $\mathbb{ALG}(dP3_0)$; in particular, the ordering,
the topology, and the multiplicative Abelian group that are defined on $\mathbb{Q}_{>0}$. Consider, say, the group structure:
for $k=1, 2, 3$, let $y_k(t)\in\mathbb{ALG}(dP3_0)$, with the branching $\rho_k$.
We define the group multiplication $\ast$
in $\mathbb{ALG}(dP3_0)$ as follows: $y_1\ast y_2=y_3$ iff
\begin{equation}\label{eq:ALG(dP3-0)group}
\frac{1}{2\rho_3}-1=\frac12\left(\frac{1}{2\rho_1}-1\right)\left(\frac{1}{2\rho_2}-1\right).
\end{equation}
With the help of the last formula in equation~\eqref{eq:Q-ALG}, it is straightforward to check that the group
$\mathbb{Q}_{>0}$, with the usual multiplication of the rational numbers, and $\mathbb{ALG}(dP3_0)$, with the multiplication
defined above, are isomorphic. Note that the solution $y(t)$ which corresponds to the function $H(r)$
(cf. Section~\ref{sec:introduction}, equation~\eqref{eq:y-Hat0}) plays the role of the group unit in $\mathbb{ALG}(dP3_0)$.
A more interesting group that also acts in the fibers of the bundle is studied in Section~\ref{sec:Coxeter}.
\hfill $\blacksquare$\end{remark}
\begin{remark}\label{rem:rho-algebraic}
Algebroid solutions of equation~\eqref{eq:dp3u} have asymptotics as $\tau\to0$ that are similar to those of the algebroid
solutions $y(t)$:
\begin{equation*}\label{eq:u-algebroid0asympt}
u(\tau)\underset{\tau\to0}{\sim} c\,\tau^{1-4\rho},
\quad\text{with}\quad
c\in\mathbb{C}\setminus\{0\}\;\;\text{and}\;\;
2\rho=\frac{1}{1+2q}.
\end{equation*}
The notation $1-4\rho$ for the branching of the algebroid solutions is used to match with the result for asymptotics
of the general solution of equation~\eqref{eq:dp3u} stated in Appendix~\ref{app:asympt0}, Theorem~\ref{th:B1asympt0}.
\hfill $\blacksquare$\end{remark}
The remainder of this section is devoted to the study of two ``boundary'' sets of the algebroid solutions corresponding
to the pairs $(0,m)$ and $(n,0)$, respectively:
\begin{align*}
2\rho&=\frac{m+1}{m+3},\quad
m=0,1,2,\ldots,
\qquad
&m-\text{series},\\
2\rho&=\frac{1}{2n+3},\quad
n=1,2,\ldots,
\qquad
&n-\text{series}.
\end{align*}
We call them the algebroid solutions of the $m$- and $n$-series, respectively. Since
$1-4\rho=\tfrac{1-m}{m+3}$ for the $m$-series and $1-4\rho=\tfrac{2n+1}{2n+3}$ for the $n$-series, the corresponding
solutions $u(\tau)$ and $y(t)$ can be distinguished by the condition on the initial data, namely,
$y(0)=u(0)=\infty$ for $m>2$ and $y(0)=u(0)=0$ for the $n$-series. In this sense, the first two solutions of the $m$-series
are special: the one which corresponds to $m=0$ ($\rho=1/6$) has the same behaviour as the solutions of the $n$-series
for which $y(0)=u(0)=0$, and can, in principle, be treated as the only solution that belongs to both series;
the second solution corresponding to $m=1$ has a finite, nonvanishing initial value at $t=\tau=x=0$,
and is a meromorphic function in $\mathbb{C}$.
\begin{remark}\label{rem:defHnHm}
In the study of the coefficients of the Taylor expansion for the function $v(x)$, the parameter $\alpha^2$
in equation~\eqref{eq:dP3v} gives rise to slightly cumbersome expressions for the coefficients. It is convenient,
therefore, to rescale this equation, and to introduce, in lieu of $v(x)$, the normalized functions
$H_{-m}(\hat{x})$ and $H_n(\hat{x})$. In the notation of this section,
$H_0(\hat{x})=H(r)$, with $\hat{x}=r$, where $H(r)$ is the function studied in Section~\ref{sec:2}.
The definitions of the functions $H_p$ for $p\neq0$ read:
\begin{align*}
v(x)&=c_{-}^{\frac{m}{3}}H_{-m}(\hat{x}),\quad
x=c_{-}\hat{x},\quad
\left(\frac{4}{m+3}\right)^2c_{-}^{\frac{m+3}{3}}=2,\\
v(x)&=c_{+}^{\frac{2n}{3}}\hat{x}^nH_{n}(\hat{x}),\quad
x=c_{+}\hat{x},\quad
\left(\frac{4}{2n+3}\right)^2c_{+}^{\frac{2n+3}{3}}=1.
\end{align*}
Thus, $H_p(\hat{x})$, $p\in\mathbb{Z}$, are defined as meromorphic functions in $\mathbb{C}$ with $H_p(0)=-a_0\neq0$.
(These functions depend on the initial data, so that a more complete notation should be $H_p(\hat{x};a_0)$.) They
satisfy the following second-order ODEs:
\begin{align}
\left(\frac{\hat{x}H'_{-m}(\hat{x})}{H_{-m}(\hat{x})}\right)'&=
2\left(H_{-m}(\hat{x})-\frac{\hat{x}^m}{(H_{-m}(\hat{x}))^2}\right),
&m\geqslant1,\label{eq:Hm}\\
\left(\frac{\hat{x}H'_{n}(\hat{x})}{H_{n}(\hat{x})}\right)'&=\hat{x}^nH_{n}(\hat{x})-\frac{1}{(H_{n}(\hat{x}))^2},
&n\geqslant0.\label{eq:Hn}
\end{align}
Note that, according to our normalization, the function $H_0(\hat{x})$ satisfies equation~\eqref{eq:Hn}, as do equations of
the $n$-series.
\hfill $\blacksquare$\end{remark}
According to Theorem~\ref{th:algebroid}, in a neighbourhood of $\hat{x}=0$, the functions $H_p(\hat{x})$ can be developed
into Taylor series:
\begin{equation}\label{eq:HpTaylor}
H_p(\hat{x})=-a_0+\sum_{k=1}^{\infty}a_k^p\hat{x}^k.
\end{equation}
Note that the superscript $p$ in $a_k^p$ denotes the label of the corresponding function $H_p$, whilst
$H_p(0)=-a_0$ for all $p$; therefore, in the formulae below, $a_0^n=(a_0)^n$.
\begin{proposition}\label{prop:coeffsHm}
For $m\in\mathbb{N}$,
\begin{equation}\label{eq:coeffsHm}
a_{k}^{-m}=(-1)^{k+1}(k+1)a_0^{k+1}+\sum_{l=1}^{\lfloor\frac{k+1}{m+2}\rfloor}r_l^{-m}a_0^{k+1-(m+3)l},
\quad k\geqslant1,
\end{equation}
where the numbers $r_l^{-m}\in\mathbb{Q}\setminus\{0\}$, and $\lfloor\ast\rfloor$ denotes the floor of a real number.
\end{proposition}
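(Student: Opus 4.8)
The plan is to argue by induction on $k$, starting from the recurrence for the Taylor coefficients obtained by inserting the expansion \eqref{eq:HpTaylor} into equation \eqref{eq:Hm}. Writing $H_{-m}(\hat x)=\sum_{k\geqslant0}c_k\hat x^{k}$ with $c_0=-a_0$ and $c_k=a_k^{-m}$ for $k\geqslant1$, clearing the denominator in \eqref{eq:Hm} (multiplying through by $(H_{-m}(\hat x))^{2}$) turns it into the polynomial identity $\hat x H_{-m}H_{-m}''-\hat x(H_{-m}')^{2}+H_{-m}H_{-m}'-2H_{-m}^{3}+2\hat x^{m}=0$, and comparing coefficients of $\hat x^{k-1}$ yields
\begin{equation*}
k^{2}a_0\,c_k=\frac12\sum_{\substack{i+j=k\\ i,j\geqslant 1}}(i-j)^{2}c_ic_j-2\sum_{\substack{i+j+p=k-1\\ i,j,p\geqslant 0}}c_ic_jc_p+2\delta_{k-1,m},
\end{equation*}
whose right-hand side involves only $c_0,\dots,c_{k-1}$. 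The quadratic form $\tfrac12(i-j)^{2}$, which is $\geqslant0$ and vanishes exactly on the diagonal $i=j$, will be the key structural feature below; convergence of the resulting series is handled exactly as in the proof of Lemma~\ref{lem:a-n-estimate}, so only the arithmetic of the coefficients is at issue.

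Take as inductive hypothesis the ansatz \eqref{eq:coeffsHm} for $c_1,\dots,c_{k-1}$. Two parts of the claim for $c_k$ are relatively routine. \emph{Support modulo $m+3$, and rationality.} By the hypothesis each $c_i$ is a Laurent polynomial in $a_0$ all of whose exponents lie in the residue class $i+1\pmod{m+3}$; hence every term $c_ic_j$ with $i+j=k$ and every term $c_ic_jc_p$ with $i+j+p=k-1$ is supported on exponents $\equiv k+2\pmod{m+3}$, which after division by $a_0$ becomes $\equiv k+1$, while $2\delta_{k-1,m}/(k^{2}a_0)$ contributes the single exponent $-1$, in the same class because it occurs only when $k=m+1$; all coefficients so produced are rational. \emph{The leading term.} The top power is $a_0^{k+1}$, and its coefficient is governed by the leading behaviour of $H_{-m}$ as $a_0\to\infty$: writing, as for the generating functions at $a_0=\infty$ in Subsection~\ref{subsec:H0coeffs}, $H_{-m}(\hat x)=a_0A_1(z)+\cdots$ with $z=a_0\hat x$, equation \eqref{eq:Hm} degenerates at leading order to $\bigl(zA_1'(z)/A_1(z)\bigr)'=2A_1(z)$ (the coefficient $2$ in \eqref{eq:Hm} being exactly what makes this consistent), whose solution holomorphic and logarithm-free at $z=0$ with $A_1(0)=-1$ is $A_1(z)=-1/(1+z)^{2}=\sum_{k\geqslant0}(-1)^{k+1}(k+1)z^{k}$ (cf.\ \eqref{eq:A1}, \eqref{eq:A1solutions-gen-spec}); this produces the term $(-1)^{k+1}(k+1)a_0^{k+1}$, which in particular is nonzero. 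It also explains why $c_k=(-1)^{k+1}(k+1)a_0^{k+1}$ \emph{exactly} for $1\leqslant k\leqslant m$: the source $2\hat x^{m}$ does not enter the $\hat x^{k-1}$-equation until $k=m+1$, so for small $k$ the recurrence is precisely the one satisfied by the coefficients of $a_0A_1(a_0\hat x)$.

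The delicate part—where I expect the main work—is the precise $a_0$-support: that no exponent below $k+1-(m+3)\lfloor(k+1)/(m+2)\rfloor$ occurs, and that each exponent $k+1-(m+3)l$ in the admissible range occurs with a coefficient $r_l^{-m}\neq0$. Naive degree counting fails: already for $m=1$ one finds $a_2^{-1}=-3a_0^{3}+\tfrac12a_0^{-1}$, $a_3^{-1}=4a_0^{4}-\tfrac29$, $a_4^{-1}=-5a_0^{5}+\tfrac{13}{18}a_0$, and deeper down one must verify, e.g., that the a priori possible $a_0^{-4}$-term in $a_7^{-1}$ is annihilated once the quadratic, cubic and $\delta$-contributions are combined. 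I would organize this via the higher generating functions $A_{l+1}(z)$ at $a_0=\infty$, defined through $H_{-m}(\hat x)=a_0\sum_{k\geqslant1}A_k(z)a_0^{-(m+3)(k-1)}$, $z=a_0\hat x$: each $A_{l+1}(z)$ satisfies a second-order linear inhomogeneous ODE with the same homogeneous part for all $l$ (the linearization of the leading equation about $A_1$) and an inhomogeneity that is rational in $z$ with its only pole at $z=-1$; choosing the logarithm-free solution, $A_{l+1}(z)$ is itself rational with a pole only at $z=-1$, and one must show by induction—this is where the cancellations live, the vanishing of $(i-j)^{2}$ on the diagonal being essential—that it vanishes to order $(m+2)l-1$ at $z=0$. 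Since then $[z^{k}]A_{l+1}(z)$, which is exactly the coefficient of $a_0^{k+1-(m+3)l}$ in $a_k^{-m}$, is supported on $k\geqslant(m+2)l-1$, i.e.\ on $l\leqslant\lfloor(k+1)/(m+2)\rfloor$; moreover a rational function with a genuine pole of positive order at $z=-1$ has $[z^{k}]A_{l+1}(z)=(-1)^{k}\bigl(\text{polynomial in }k,\ \text{not}\equiv0\bigr)$ for all large $k$, hence eventually nonzero, the remaining small $k$ being checked directly (one expects—and a sign-organized induction on the recurrence, in the spirit of \eqref{eq:ak-recurrence-variant}, should confirm—the uniform rule that the sign of $[z^{k}]A_{l+1}(z)$ is $(-1)^{k+1+l}$, which settles every $k$ at once). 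Tracking the vanishing order through the recursion and ruling out degeneration of the pole at $z=-1$ is the crux; granting it, the induction closes and \eqref{eq:coeffsHm} follows.
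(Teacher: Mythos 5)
Your strategy is, at its core, the paper's own: the paper's ``proof'' of this proposition is Remark~\ref{rem:conjectureHmHncoeffs}, which simply defers to the inductive argument of Proposition~\ref{prop:a-n-ansatz}, i.e.\ induction on the recurrence obtained by substituting the Taylor series into \eqref{eq:Hm}. Your recurrence $k^{2}a_0c_k=\tfrac12\sum_{i+j=k}(i-j)^{2}c_ic_j-2\sum c_ic_jc_p+2\delta_{k-1,m}$ is the correct analogue of \eqref{eq:ak-recurrence}, your bookkeeping of the residue class of the exponents modulo $m+3$ and of rationality is right, the identification of the leading coefficient via $A_1(z)=-1/(1+z)^2$ is the exact $m$-series counterpart of \eqref{eq:A1rational}, and your sample values $a_2^{-1},a_3^{-1},a_4^{-1}$ check out. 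Where you go beyond the paper is in noticing that naive degree counting overshoots the stated lower end of the support: when $m+2$ divides $k+2$, products of extreme terms a priori produce the exponent $k+1-(m+3)\bigl(\lfloor\tfrac{k+1}{m+2}\rfloor+1\bigr)$, so a cancellation is required. The paper is silent on this, just as the proof of Proposition~\ref{prop:a-n-ansatz} only delivers the inequality $\deg P_n\leqslant\lfloor\tfrac{n-1}{2}\rfloor$ and defers sharpness to the generating functions of Remark~\ref{rem:genfunAnINFINITY}; your plan to control it via the functions $A_{l+1}(z)$ at $a_0=\infty$, with the translation ``$A_{l+1}$ vanishes to order $(m+2)l-1$ at $z=0$'', is the right transplantation of that machinery.

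Be aware, however, that the two sharpest assertions remain unproved in your write-up: that the vanishing order is \emph{exactly} $(m+2)l-1$ for every $l$, and that \emph{every} $r_l^{-m}$ is nonzero (your ``eventually nonzero'' argument leaves finitely many $k$ for each $l$ but there are infinitely many $l$, so the conjectured sign rule is carrying real weight). These are genuinely open in your proposal — but they are equally unproved in the paper, where the analogous nonvanishing of all intermediate coefficients for $H_0$ is only Conjecture~\ref{con:structure-ak}. Relative to the paper's own standard of proof, your account is the more honest one about where the difficulty lies.
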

\begin{remark}\label{rem:conjectureHmHncoeffs}
The proof of Proposition~\ref{prop:coeffsHm} is similar to the analogous one for the function $H(r)=H_0(\hat{x})$,
with $\hat{x}=r$, given in Subsection~\ref{subsec:H0coeffs},
Proposition~\ref{prop:a-n-ansatz}.
Here, manipulations with the sign of $a_0$ do not help to make all the numbers $r_l^{-m}>0$.
\hfill $\blacksquare$\end{remark}
\begin{proposition}\label{prop:coeffsHn}
For $n\in\mathbb{N}$,
\begin{equation}\label{eq:coeffsHn}
a_k^n=\sum_{(m_j,l_j)\in\mathcal{P}_k^n}\gamma_{m_j,l_j}^n a_0^{m_j+1-2l_j},
\end{equation}
where $\mathcal{P}_k^n$ is the set of pairs of nonnegative integers $(m_j,l_j)$ that represent all possible partitions of
\begin{equation}\label{eqPhk-partition}
k=(n+1)m_j+l_j,
\quad\text{where}\quad
l_j\in\{0,1,\ldots,m_j+1\},
\end{equation}
with the numbers $\gamma_{m_j,l_j}^n\in\mathbb{Q}\setminus\{0\}$, and $\gamma_{0,1}^n=1$.
\end{proposition}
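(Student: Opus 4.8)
The plan is to prove Proposition~\ref{prop:coeffsHn} by induction on $k$, using the recurrence relation obtained from equation~\eqref{eq:Hn} upon substitution of the Taylor expansion~\eqref{eq:HpTaylor}. The key structural observation is that the claimed form~\eqref{eq:coeffsHn}--\eqref{eqPhk-partition} describes exactly the set of monomials $a_0^{m_j+1-2l_j}$ in $a_0$ that can appear, and the constraint $k=(n+1)m_j+l_j$ with $l_j\in\{0,\dots,m_j+1\}$ is a \emph{bookkeeping} statement: each power of $\hat{x}$ carries a definite, tracked power of $a_0$. First I would derive the recurrence explicitly. Writing $H_n(\hat{x})=-a_0+\sum_{k\geqslant1}a_k^n\hat{x}^k$, one clears denominators in~\eqref{eq:Hn} by multiplying through by $(H_n(\hat{x}))^2$ (or, more conveniently, works with $\hat{x}H_n'/H_n$ directly), expands $\left(\hat{x}H_n'/H_n\right)'$, $\hat{x}^n H_n$, and $1/H_n^2$ as power series, and equates coefficients of $\hat{x}^{k}$. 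This produces, for $a_k^n$, an expression involving products of at most three earlier coefficients $a_{i}^n a_{j}^n a_{l}^n$ with $i+j+l$ related to $k$, plus the ``source'' term $a_{k-(n+1)}^n$ coming from $\hat{x}^n H_n$, plus terms from the $1/H_n^2$ piece.

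The \textbf{induction} then runs as follows. For the base case, the lowest nontrivial coefficient: from~\eqref{eq:Hn}, balancing leading terms forces $a_1^n=\dots=a_n^n=0$ and the first nonzero coefficient is $a_{n+1}^n$, coming from the partition $m_j=1$, $l_j=0$, giving a power $a_0^{1+1-0}=a_0^2$ — wait, one must check this against the normalization $\gamma_{0,1}^n=1$; in fact $\gamma_{0,1}^n$ corresponds to the partition $k=0\cdot(n+1)+1$, i.e. the trivial relation that the constant term of $H_n$ is $-a_0$, packaged so that $a_0^{0+1-2}=a_0^{-1}$ appears with coefficient... I would need to reconcile the indexing carefully — this is exactly the kind of bookkeeping the statement encodes. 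For the inductive step, assuming~\eqref{eq:coeffsHn} holds for all indices $<k$, one substitutes into the recurrence. Each term on the right-hand side is a product of (at most three) previously-determined coefficients, and each previously-determined coefficient $a_i^n$ is, by hypothesis, a sum of monomials $a_0^{m_i+1-2l_i}$ with $(n+1)m_i+l_i=i$. Multiplying such monomials and tracking: if the product involves indices $i_1,i_2,i_3$ with $\sum i_s = k - (\text{shift})$, the resulting power of $a_0$ is $\sum(m_{i_s}+1-2l_{i_s})$ plus an adjustment from the $1/H_n^2$ expansion (which contributes negative powers of $-a_0$, hence shifts the exponent down) — and one must verify that in every case the total exponent equals $m_j+1-2l_j$ for the appropriate $(m_j,l_j)$ solving $(n+1)m_j+l_j=k$, $l_j\leqslant m_j+1$.

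The \textbf{main obstacle}, I expect, is precisely showing that the exponent arithmetic closes: that every term produced by the recurrence lands on an admissible pair $(m_j,l_j)$ (in particular, that $l_j\leqslant m_j+1$ is never violated) and, conversely — for the assertion $\gamma^n_{m_j,l_j}\neq0$ — that the admissible monomials do not cancel. The first half is a homogeneity/grading argument: assign to $a_k^n$ the ``weight'' pair and check that the three-term recurrence is weight-homogeneous; this is routine once set up, but requires care with the $1/H_n^2 = a_0^{-2}(1 - \sum a_k^n\hat{x}^k/a_0 + \dots)^{-2}$ expansion, since the geometric-series tail contributes arbitrarily many factors and one must confirm the weights still align (they should, because each factor $a_i^n/(-a_0)$ shifts the $m$-part by $+$ something and the $l$-part consistently, by the relation $(n+1)m_i+l_i=i$, and an extra $\hat x^i$ is absorbed into raising $k$). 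The second half — non-vanishing of the $\gamma$'s — is genuinely harder and, judging from the analogous Proposition~\ref{prop:a-n-ansatz} (where $\deg P_n$ is only \emph{bounded} by the recurrence and the sharp degree needs the generating-function machinery of Subsection~\ref{subsec:H0coeffs}), I would not attempt to prove $\gamma^n_{m_j,l_j}\neq0$ purely from the recurrence; instead I would either (a) invoke, as the paper does for $H_0$, a generating-function/double-scaling argument for the extreme coefficients, or (b) state the non-vanishing as the portion requiring separate (possibly conjectural, or \textsc{Maple}-assisted) verification. So my proof would establish the \emph{form}~\eqref{eq:coeffsHn} with $\gamma^n_{m_j,l_j}\in\mathbb{Q}$ rigorously by the weighted induction, pin down $\gamma^n_{0,1}=1$ by direct inspection of the lowest-order balance, and handle the ``$\neq0$'' clause by the same auxiliary technique used in Section~\ref{sec:2}, flagging it as the delicate point.
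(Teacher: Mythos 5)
Your overall strategy---induction on $k$ through the recurrence obtained by substituting \eqref{eq:HpTaylor} into \eqref{eq:Hn}, with a weight/grading argument to show that every term lands on an admissible pair $(m_j,l_j)$---is the route the paper takes (it gives no separate proof of Proposition~\ref{prop:coeffsHn}, but the companion Proposition~\ref{prop:coeffsHm} is explicitly said to be proved ``similarly to Proposition~\ref{prop:a-n-ansatz}'', i.e.\ by exactly this kind of induction on the recurrence). The grading you gesture at can be made completely clean: assigning weight $1$ to $a_0$ and weight $2$ to $\hat{x}$ makes equation \eqref{eq:Hn} homogeneous modulo $2n+3$, which forces the exponent of $a_0$ in $a_k^n$ to lie in the single residue class $(2n+3)m_j+1-2k \equiv 1-2k \pmod{2n+3}$; this is the same symmetry recorded in Corollary~\ref{cor:symmetryHnyn}. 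What then remains for the induction is only the range restriction $0\leqslant l_j\leqslant m_j+1$, and your honest flagging of the ``$\gamma\neq0$'' clause as the delicate point is fair---the paper does not establish it from the recurrence either (only specific non-vanishings, needed later in Proposition~\ref{prop:yn-series}, are checked by other means).

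However, your base case is wrong, and the error propagates into your reading of the normalization. Balancing the lowest-order terms of \eqref{eq:Hn} does \emph{not} give $a_1^n=\dotsb=a_n^n=0$: at order $\hat{x}^0$ the left-hand side contributes $-a_1^n/a_0$ and the right-hand side contributes $-1/a_0^2$ (from $-1/H_n^2$; the term $\hat{x}^nH_n$ only enters at order $\hat{x}^n$), so $a_1^n=1/a_0\neq0$ for every $n$. This is precisely what $\gamma_{0,1}^n=1$ encodes: the pair $(m_j,l_j)=(0,1)$ solves $1=(n+1)\cdot0+1$ with $l_j\leqslant m_j+1$, and the associated monomial is $a_0^{0+1-2}=a_0^{-1}$, i.e.\ $a_1^n=\gamma_{0,1}^n\,a_0^{-1}$. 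It is not a statement about the constant term $-a_0$. The vanishing you want is $a_2^n=\dotsb=a_n^n=0$ (for $n\geqslant2$), which in the combinatorial language is $|\mathcal{P}_k^n|=0$ for $2\leqslant k\leqslant n$ (cf.\ Remark~\ref{rem:coeffsHn}). You appear to have imported the count ``$a_1=\dotsb=a_q=0$ with $q=\min(n,m)$'' from the proof of Theorem~\ref{th:algebroid}; for the $n$-series one has $m=0$, hence $q=0$ and no forced vanishing at order $1$. With the base case corrected the rest of your plan goes through.
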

\begin{remark}\label{rem:coeffsHn}
As a matter of fact, the set $\mathcal{P}_k^n$ contains very few elements:
\begin{equation}\label{eq:P-cardinality}
|\mathcal{P}_k^n|=\left\lfloor\frac{k+2n+2}{n+1}\right\rfloor-\left\lfloor\frac{k+2n+2}{n+2}\right\rfloor.
\end{equation}
If the set is empty, then the corresponding coefficient $a_k^n=0$.

As an application of equation~\eqref{eq:P-cardinality}, $|\mathcal{P}_1^n|=1$ for $n\geqslant1$;
in fact, $a_1^n=1/a_0$ for all $n$. On the other hand, for $n\geqslant2$,
$|\mathcal{P}_k^n|=0$, $k=2,\dotsc,n$; thus, $a_2^n=\dotsb=a_n^n=0$ for $n\geqslant2$.
Concurrently, for $n\geqslant1$, $|\mathcal{P}_{n+1}^n|=1$, so that $a_{n+1}^n\neq0$.
As another example, consider, say, $|\mathcal{P}_{37}^3|=\lfloor\tfrac{45}{4}\rfloor-\lfloor\tfrac{45}{5}\rfloor=11-9=2$;
in fact, $37=4\times9+1=4\times8+5$, thus $a_{37}^3=\gamma_{9,1}^3a_0^8+\gamma_{8,5}^3a_0^{-1}$.

For $n=1,2,3,4$, we found the sequences $|P_k^n|$ in OEIS \cite{OEIS4}. Actually, our sequences do not include the
first few members of the sequences in OEIS because these sequences have different combinatorial definitions.
For $n=5$, we did not find the corresponding sequence in OEIS. It seems that our combinatorial definition of the
sequences $|P_k^n|$ might be new.
\hfill $\blacksquare$\end{remark}
\begin{remark}\label{rem:ypDefinition-ypHp-a0-dependence}
For every function $H_p(\hat{x})$, there corresponds a solution to equation~\eqref{eq:dP3y} (cf.~\eqref{eq:dP3y-compact})
which is denoted by $y_p(t)$. Amalgamating the consecutive transformations relating equations~\eqref{eq:dP3y-compact},
\eqref{eq:Hm}, and ~\eqref{eq:Hn}, we find that
\begin{align}
ty_{-m}(t)&=c_{-}^{\frac{m}{3}}t^{\frac{4}{m+3}}H_{-m}\left(c_{-}^{-1}t^{\frac{4}{m+3}}\right),\label{eq:ym-definition}\\
ty_n(t)&=c_{+}^{-\frac{n}{3}}t^{\frac{4(n+1)}{2n+3}}H_n\left(c_{+}^{-1}t^{\frac{4}{2n+3}}\right),
\label{eq:yn-definition}
\end{align}
where $y_{-m}(t)$, $m\in\mathbb{Z}_{\geqslant0}$, and $y_{n}(t)$, $n\in\mathbb{N}$, denote the solutions of the $m$- and the
$n$-series, respectively.

Sometimes, it is imperative to explicitly indicate the dependence of our functions on the parameter $a_0$; in such cases,
we write
\begin{equation*}
y_p(t)=y_p(t;a_0),\quad
H_p(\hat{x})=H_p(\hat{x};a_0),\qquad
p\in\mathbb{Z}.
\end{equation*}
\hfill $\blacksquare$\end{remark}
\begin{corollary}\label{cor:symmetryHmym}
For $m\in\mathbb{N}$ and $q,l\in\mathbb{Z}$,
\begin{gather}
H_{-m}\left(\hat{x}\me^{-\frac{2\pi\mi q}{m+3}};a_0\me^{\frac{2\pi\mi q}{m+3}}\right)=
\me^{\frac{2\pi\mi q}{m+3}}H_{-m}\left(\hat{x};a_0\right),\label{eq:symmetry-Hm}\\
y_{-m}(t;a_0)=\me^{\mi\varphi_{m,q,l}}y_{-m}\left(t\me^{\mi\varphi_{m,q,l}};a_0\me^{\frac{2\pi\mi q}{m+3}}\right),\qquad
\varphi_{m,q,l}=\frac{\pi}{2}\big(l(m+3)-q\big).
\label{eq:symmetry-ym}
\end{gather}
\begin{proof}
The first symmetry ~\eqref{eq:symmetry-Hm} is proved via a straightforward calculation with the help of
equations~\eqref{eq:HpTaylor} and \eqref{eq:coeffsHm}. The second symmetry ~\eqref{eq:symmetry-ym}
also follows by means of a direct calculation using the definition of $y_{-m}(t)$ in \eqref{eq:ym-definition} and the
first symmetry for $H_{-m}(\hat{x})$.
\end{proof}
\end{corollary}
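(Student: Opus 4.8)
The plan is to prove the first symmetry~\eqref{eq:symmetry-Hm} and then deduce the second~\eqref{eq:symmetry-ym} by unwinding the definition~\eqref{eq:ym-definition} of $y_{-m}$ in terms of $H_{-m}$. For~\eqref{eq:symmetry-Hm} I would argue via the scaling invariance of the ODE~\eqref{eq:Hm} together with the uniqueness part of Theorem~\ref{th:algebroid}. Fix any $\lambda\in\mathbb{C}$ with $\lambda^{m+3}=1$ and set $G(\hat x):=\lambda H_{-m}(\lambda\hat x;a_0)$. Since $\tfrac{\hat xG'(\hat x)}{G(\hat x)}=\bigl(\tfrac{uH_{-m}'(u;a_0)}{H_{-m}(u;a_0)}\bigr)\big|_{u=\lambda\hat x}$, differentiating and invoking~\eqref{eq:Hm} gives $\bigl(\tfrac{\hat xG'}{G}\bigr)'=2\lambda H_{-m}(\lambda\hat x;a_0)-2\lambda^{m+1}\hat x^{m}/H_{-m}(\lambda\hat x;a_0)^2$; because $\lambda^{m+3}=1$ forces $\lambda^{m+1}=\lambda^{-2}$, this equals $2\bigl(G(\hat x)-\hat x^{m}/G(\hat x)^2\bigr)$, so $G$ solves~\eqref{eq:Hm}. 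As $G$ is holomorphic at $\hat x=0$ with $G(0)=-\lambda a_0\neq0$, the uniqueness assertion of Theorem~\ref{th:algebroid} yields $G(\hat x)=H_{-m}(\hat x;\lambda a_0)$, i.e. $H_{-m}(\hat x;\lambda a_0)=\lambda H_{-m}(\lambda\hat x;a_0)$. Specializing $\lambda=\me^{2\pi\mi q/(m+3)}$ and replacing $\hat x$ by $\hat x\me^{-2\pi\mi q/(m+3)}$ produces~\eqref{eq:symmetry-Hm}. (The coefficient-based route indicated in the statement works just as well: inserting~\eqref{eq:HpTaylor} and the structural formula~\eqref{eq:coeffsHm}, one checks that under $a_0\mapsto a_0\omega$, $\hat x\mapsto\hat x\omega^{-1}$ with $\omega:=\me^{2\pi\mi q/(m+3)}$ every monomial of $H_{-m}$, which has the shape $a_0^{k+1-(m+3)l}\hat x^{k}$, acquires precisely the factor $\omega^{1-(m+3)l}=\omega$ since $\omega^{m+3}=1$.)

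To obtain~\eqref{eq:symmetry-ym} I would then substitute~\eqref{eq:ym-definition}. Writing $\beta:=\tfrac{4}{m+3}$ and $\omega:=\me^{2\pi\mi q/(m+3)}$, one has $y_{-m}(t;a_0)=c_{-}^{m/3}\,t^{\beta-1}H_{-m}\bigl(c_{-}^{-1}t^{\beta};a_0\bigr)$, and the computational core is the arithmetic identity
\[
\beta\varphi_{m,q,l}=\frac{4}{m+3}\cdot\frac{\pi}{2}\bigl(l(m+3)-q\bigr)=2\pi l-\frac{2\pi q}{m+3},
\]
so that $\me^{\mi\beta\varphi_{m,q,l}}=\omega^{-1}$ (the integer $l$ kills the $2\pi l$) and hence $\me^{\mi(\beta-1)\varphi_{m,q,l}}=\omega^{-1}\me^{-\mi\varphi_{m,q,l}}$. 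Evaluating $y_{-m}$ at the shifted argument $t\me^{\mi\varphi_{m,q,l}}$ with initial value $a_0\omega$, the argument of $H_{-m}$ becomes $c_{-}^{-1}t^{\beta}\omega^{-1}$, and~\eqref{eq:symmetry-Hm} turns $H_{-m}(c_{-}^{-1}t^{\beta}\omega^{-1};a_0\omega)$ into $\omega H_{-m}(c_{-}^{-1}t^{\beta};a_0)$; collecting the prefactors gives $\me^{\mi(\beta-1)\varphi_{m,q,l}}\cdot\omega=\me^{-\mi\varphi_{m,q,l}}$, whence $y_{-m}(t\me^{\mi\varphi_{m,q,l}};a_0\omega)=\me^{-\mi\varphi_{m,q,l}}y_{-m}(t;a_0)$, which is exactly~\eqref{eq:symmetry-ym} after rearranging.

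The one point that needs genuine care — and the step I expect to be the main obstacle — is fixing the branches of the fractional powers $t^{4/(m+3)}$ and $t^{4/(m+3)-1}$ appearing in~\eqref{eq:ym-definition}. Because $y_{-m}$ is an algebroid, hence multivalued, function, the manipulation $(t\me^{\mi\varphi_{m,q,l}})^{\beta}=t^{\beta}\me^{\mi\beta\varphi_{m,q,l}}$ is legitimate only relative to a fixed determination, and the free integer $l$ in $\varphi_{m,q,l}$ is precisely what enumerates the sheets of $y_{-m}$ (correspondingly, $\varphi_{m,q,l}-\varphi_{m,q,l'}=\tfrac{\pi}{2}(l-l')(m+3)$ acts trivially on the argument $t^{\beta}$ of $H_{-m}$ and merely compensates the sheet change in $t^{\beta-1}$). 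I would therefore first state the branch conventions explicitly, consistently with those already in force for $y(t)$ throughout Section~\ref{sec:algebroid}, check that the displayed computation is carried out on the correct sheet for each admissible $l$, and only then perform the otherwise routine bookkeeping with the roots of unity above.
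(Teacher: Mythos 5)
Your proposal is correct and, in its essentials, matches the paper's proof: the parenthetical coefficient check (each monomial $a_0^{k+1-(m+3)l}\hat{x}^{k}$ from \eqref{eq:HpTaylor} and \eqref{eq:coeffsHm} picks up exactly $\omega^{1-(m+3)l}=\omega$) is precisely the paper's argument for \eqref{eq:symmetry-Hm}, and your derivation of \eqref{eq:symmetry-ym} from \eqref{eq:ym-definition} via $\me^{\mi\beta\varphi_{m,q,l}}=\omega^{-1}$ is the paper's "direct calculation". The ODE-invariance-plus-uniqueness route you lead with is a valid alternative the paper does not use, and your attention to the branch convention for $t^{4/(m+3)}$ is consistent with the convention the paper fixes after Proposition~\ref{prop:yn-series}.
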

\begin{corollary}\label{cor:symmetryHnyn}
For $n\in\mathbb{Z}_{\geqslant0}$ and $q,l\in\mathbb{Z}$,
\begin{gather}
H_n\left(\hat{x}\me^{\frac{4\pi\mi q}{2n+3}};a_0\me^{\frac{2\pi\mi q}{2n+3}}\right)=
\me^{\frac{2\pi\mi q}{2n+3}}H_n(\hat{x};a_0),\label{eq:symmetry-Hn}\\
y_n(t;a_0)=\me^{\mi\psi_{n,q,l}}y_n\left(t\me^{\mi\psi_{n,q,l}};a_0\me^{\frac{2\pi\mi q}{2n+3}} \right),\qquad
\psi_{n,q,l}=\pi q+\frac{\pi}{2}l(2n+3).
\label{eq:symmetry-yn}
\end{gather}
\end{corollary}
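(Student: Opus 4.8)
The plan is to follow the two-step pattern of Corollary~\ref{cor:symmetryHmym}: first prove the ``internal'' symmetry \eqref{eq:symmetry-Hn} of the meromorphic function $H_n$, and then transport it through the definition \eqref{eq:yn-definition} of $y_n(t)$ to obtain \eqref{eq:symmetry-yn}.

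For \eqref{eq:symmetry-Hn}, set $\omega=\me^{2\pi\mi q/(2n+3)}$, so $\omega^{2n+3}=1$, and compare Taylor expansions at $\hat x=0$. Substituting $\hat x\mapsto\omega^2\hat x$, $a_0\mapsto\omega a_0$ into \eqref{eq:HpTaylor}, the constant term becomes $-\omega a_0=\omega\,(-a_0)$, and, using Proposition~\ref{prop:coeffsHn}, the $\hat x^k$-term is multiplied, term by term over $(m_j,l_j)\in\mathcal P_k^n$, by $\omega^{2k}\,\omega^{\,m_j+1-2l_j}$. The partition relation $k=(n+1)m_j+l_j$ from \eqref{eqPhk-partition} gives $2k+(m_j+1-2l_j)=(2n+3)m_j+1$, so each such factor equals $\omega^{(2n+3)m_j+1}=\omega$; hence the entire right-hand side of \eqref{eq:HpTaylor} is multiplied by $\omega$, which is \eqref{eq:symmetry-Hn} near the origin, and it extends to all of $\mathbb C$ by analytic continuation of these meromorphic functions. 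For $n=0$ one uses Proposition~\ref{prop:a-n-ansatz} in place of Proposition~\ref{prop:coeffsHn}, and the analogous congruence modulo $3$ goes through. Alternatively, and uniformly in $n\geqslant0$: the scaling $H_n(\hat x)\mapsto\lambda H_n(\mu\hat x)$ leaves equation~\eqref{eq:Hn} invariant exactly when $\lambda=\mu^{n+1}$ and $\lambda^2\mu=1$, i.e.\ $\mu^{2n+3}=1$; taking $\mu=\omega^2$ forces $\lambda=\omega^{-1}$, and the uniqueness of the holomorphic solution with a prescribed nonzero value at the origin, from Theorem~\ref{th:algebroid}, yields the identity.

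For \eqref{eq:symmetry-yn}, rewrite \eqref{eq:yn-definition} as $y_n(t;a_0)=c_{+}^{-n/3}\,t^{(2n+1)/(2n+3)}H_n\!\big(c_{+}^{-1}t^{4/(2n+3)};a_0\big)$ and substitute $t\mapsto t\me^{\mi\psi_{n,q,l}}$, $a_0\mapsto\omega a_0$ on the right. The exponent $\psi_{n,q,l}=\pi q+\tfrac{\pi}{2}l(2n+3)$ is arranged so that $\tfrac{4}{2n+3}\,\psi_{n,q,l}=\tfrac{4\pi q}{2n+3}+2\pi l$; thus the argument of $H_n$ is multiplied by $\me^{4\pi\mi q/(2n+3)}=\omega^2$ (up to a full turn), and \eqref{eq:symmetry-Hn} contributes the factor $\omega$. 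Collecting the remaining phases, $y_n$ acquires the multiplier $\me^{\mi\psi_{n,q,l}}\cdot\me^{\mi\psi_{n,q,l}(2n+1)/(2n+3)}\cdot\omega=\me^{\mi\psi_{n,q,l}\,4(n+1)/(2n+3)}\,\omega$, and since $\tfrac{4(n+1)}{2n+3}\,\psi_{n,q,l}=\tfrac{4\pi q(n+1)}{2n+3}+2\pi l(n+1)$ this multiplier equals $\omega^{2(n+1)}\,\omega=\omega^{2n+3}=1$, which is \eqref{eq:symmetry-yn}.

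The step requiring the most care --- and the only genuine obstacle --- is the bookkeeping of branches of the fractional powers $t^{4/(2n+3)}$ and $t^{(2n+1)/(2n+3)}$: one must check that pulling the factor $\me^{\mi\psi_{n,q,l}}$ out of $(t\me^{\mi\psi_{n,q,l}})^{\bullet}$ is done consistently with the branch conventions fixed in Remark~\ref{rem:defHnHm} and equations~\eqref{eq:ym-definition}--\eqref{eq:yn-definition}; the integer $l$ is precisely the parameter that accounts for moving between sheets of the multivalued solution $y_n$. Once those conventions are in force, everything reduces to the two congruences $2k+(m_j+1-2l_j)\equiv1\pmod{2n+3}$ and $2(n+1)+1\equiv0\pmod{2n+3}$, both of which are trivial.
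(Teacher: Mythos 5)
Your proof is correct and follows essentially the same route as the paper: the congruence $2k+(m_j+1-2l_j)=(2n+3)m_j+1$ obtained from the partition relation \eqref{eqPhk-partition} is exactly the identity the paper uses to show that each Taylor term acquires the factor $\me^{\frac{2\pi\mi q}{2n+3}}$, and \eqref{eq:symmetry-yn} is then read off from the definition \eqref{eq:yn-definition} just as you do. Your alternative argument via the scaling invariance $\lambda=\mu^{n+1}$, $\lambda^{2}\mu=1$ of equation \eqref{eq:Hn} combined with the uniqueness statement of Theorem~\ref{th:algebroid} is also valid (and arguably cleaner), but it is not the route the paper takes.
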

\begin{proof}
The function $H_0(\hat{x})=H(r)$, with $r=\hat{x}$, formally belongs to $m$-series; however, its intermediate position
between the $m$- and the $n$-series diminishes its level of symmetry, so that it has the same type of
symmetry as the $n$-series. The formal proof of this fact follows the same line of reasoning as for the $n$-series
(see below); however, it requires another formula for the coefficients $a_k^0=a_k$ given in
equation~\eqref{eq:a-n-ansatz}. Here, we outline the proof for a generic member of the $n$-series.

Consider equation~\eqref{eqPhk-partition}. It can be rewritten in the following form:
\begin{equation*}
m_j+1-2l_j=(2n+3)m_j-(2k-1).
\end{equation*}
This equation implies (cf.~\eqref{eq:coeffsHn}) that, for all $k\geqslant0$,
\begin{equation*}
a_k^n\big(a_0\me^{\frac{2\pi\mi p}{2n+3}}\big)\left(\hat{x}\me^{\frac{4\pi\mi p}{2n+3}}\right)^k=
\me^{\frac{2\pi\mi p}{2n+3}}a_k^n(a_0)\hat{x}^k,
\end{equation*}
where we write $a_k^n=a_k^n(a_0)$. Now, equation~\eqref{eq:symmetry-Hn} follows from the Taylor
series for $H_n(\hat{x})$ (cf. \eqref{eq:HpTaylor}), and equation~\eqref{eq:symmetry-yn} is obtained from the
first one upon invoking the definition of $y_n(t)$ given in ~\eqref{eq:yn-definition}.
\end{proof}
\begin{remark}\label{rem:symmetry}
Applying Corollaries~\ref{cor:symmetryHmym} and \ref{cor:symmetryHnyn} and taking into consideration that
the functions $H_p(\hat{x};a_0)$ ($p=-m,n$) are defined via convergent series whose coefficients are rational functions
of $a_0$, it follows that $H_p(\hat{x};a_0\me^{2\pi\mi})= H_p(\hat{x};a_0)$. The dependence of the functions
$y_p(t;a_0)$ on $a_0$  is defined via the functions $H_p$; therefore, $y_p(t;a_0\me^{2\pi\mi})=y_p(t;a_0)$.
\hfill $\blacksquare$\end{remark}
\begin{proposition}\label{prop:yn-series}
\begin{equation}\label{eq:yn-series}
ty_{n}(t)=\left(\frac{2n+3}{4}\right)^2\,z^{\frac{n+1}{2n+3}}\,\sum_{q=0}^{2n+2}z^{\frac{q}{2n+3}}\,f_q^n(z)
=\left(\frac{2n+3}{4}\right)^2\,\sum_{q=0}^{2n+2}z^{\frac{q}{2n+3}}\,\hat{f}_q^n(z),\quad
z=\left(\frac{4}{2n+3}\right)^6t^4,
\end{equation}
where the functions $f_q^n(z)$, $q=0,1,\dotsc,2n+2$, are holomorphic at $z=0$,
\begin{equation*}
f_q^n(z)=\sum_{l=0}^{\infty}a_{l,q}^n\,z^l,\quad
a_{l,q}^n:= a_{l(2n+3)+q}^n\in\mathbb{C},\quad
q=0,1,\ldots,2n+2;
\end{equation*}
furthermore,
$f_q^n(0)\neq0$ iff $q\in\{0,1,n+1,n+2,n+3,2n+2\}$; moreover, $\hat{f}_k^n(z)=zf_{k+n+2}^n(z)$ for $k=0,1,\dotsc,n$,
$\hat{f}_k^n(z)=f_{k-n-1}^n(z)$ for $k=n+1,n+2,\ldots,2n+2$, and $\hat{f}_q^n(0)\neq0$ iff $q\in\{n+1,n+2,2n+2\}$.
\end{proposition}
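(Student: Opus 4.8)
The plan is to push the explicit substitution relating $y_n$ and $H_n$ through the Taylor expansion of $H_n$, and then to reorganise the resulting Puiseux series twice (we take $n\ge1$; the case $n=0$ is entirely analogous, using Proposition~\ref{prop:a-n-ansatz} in place of Proposition~\ref{prop:coeffsHn}). First I would start from the definition \eqref{eq:yn-definition} and set $\hat x=c_+^{-1}t^{4/(2n+3)}$. From the normalisation $\big(\tfrac{4}{2n+3}\big)^{2}c_+^{(2n+3)/3}=1$ one gets $c_+^{(2n+3)/3}=\big(\tfrac{2n+3}{4}\big)^{2}$ and $c_+^{-(2n+3)}=\big(\tfrac{4}{2n+3}\big)^{6}$, so $\hat x^{2n+3}=\big(\tfrac{4}{2n+3}\big)^{6}t^{4}=z$, i.e. $\hat x=z^{1/(2n+3)}$ (with branches positive for positive arguments). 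Since $(c_+\hat x)^{n+1}=t^{4(n+1)/(2n+3)}$ and $-n/3+n+1=(2n+3)/3$, equation \eqref{eq:yn-definition} collapses to $t\,y_n(t)=\big(\tfrac{2n+3}{4}\big)^{2}\hat x^{n+1}H_n(\hat x)$.

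Next I would insert the Taylor series \eqref{eq:HpTaylor} of $H_n$, which converges in a punctured disc about $\hat x=0$ by the convergence estimate used in the existence part of Theorem~\ref{th:algebroid} (the analogue of Lemma~\ref{lem:a-n-estimate}). This gives $\hat x^{n+1}H_n(\hat x)=z^{(n+1)/(2n+3)}\big(-a_0+\sum_{k\ge1}a_k^{n}z^{k/(2n+3)}\big)$; writing $k=l(2n+3)+q$ with $0\le q\le2n+2$ so that $z^{k/(2n+3)}=z^{l}z^{q/(2n+3)}$, and grouping the constant term (as the $q=0$ contribution) with the remaining terms by the residue $q$, produces the first equality of \eqref{eq:yn-series} with $f_q^n(z)=\sum_{l\ge0}a_{l,q}^{n}z^{l}$; each $f_q^n$ is holomorphic at $z=0$ because a subseries of a power series of positive radius of convergence again has positive radius of convergence. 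For the second equality I would absorb the prefactor $z^{(n+1)/(2n+3)}$ into the sum, so that the $q$-th summand carries $z^{(q+n+1)/(2n+3)}$, and split off one integer power of $z$ precisely when $q+n+1\ge2n+3$, i.e. $q\ge n+2$. Relabelling by the new residue $k$ then gives $\hat f_k^n(z)=z\,f_{k+n+2}^{n}(z)$ for $k=0,\ldots,n$ and $\hat f_k^n(z)=f_{k-n-1}^{n}(z)$ for $k=n+1,\ldots,2n+2$; since the exponents $k/(2n+3)$ with $0\le k\le2n+2$ all lie in $[0,1)$ and are distinct, this decomposition is unique, so these $\hat f_k^n$ are the functions in the statement, and they are again holomorphic at $z=0$.

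It then remains to identify the nonzero constant terms. For $q=0$, $f_0^n(0)=H_n(0)=-a_0\ne0$; for $q\ge1$, $f_q^n(0)=a_{0,q}^{n}=a_q^{n}$. By Proposition~\ref{prop:coeffsHn}, $a_q^{n}=\sum_{(m_j,l_j)\in\mathcal{P}_q^n}\gamma_{m_j,l_j}^{n}a_0^{m_j+1-2l_j}$, and on $\mathcal{P}_q^n$ one has $m_j+1-2l_j=(2n+3)m_j-(2q-1)$, so distinct $m_j$ give distinct powers of $a_0$; since the $\gamma$'s are nonzero, $a_q^{n}$ is a nonzero Laurent polynomial in $a_0$ when $\mathcal{P}_q^n\neq\varnothing$ and is identically $0$ otherwise. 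Rewriting $l_j\in\{0,\ldots,m_j+1\}$ as $0\le q-(n+1)m_j\le m_j+1$, one sees that $\mathcal{P}_q^n\neq\varnothing$ iff there is an integer $m$ with $\tfrac{q-1}{n+2}\le m\le\tfrac{q}{n+1}$; inspecting these intervals for $0\le q\le2n+2$ shows that such an $m$ exists exactly for $q\in\{1,n+1,n+2,n+3,2n+2\}$, and not for $q=2,\ldots,n$ or $q=n+4,\ldots,2n+1$ --- which is also what the cardinality formula \eqref{eq:P-cardinality} records. Adjoining the case $q=0$ gives $f_q^n(0)\ne0$ iff $q\in\{0,1,n+1,n+2,n+3,2n+2\}$. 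The $\hat f_q^n(0)$ statement then follows at once from the reindexing formulas: $\hat f_k^n(0)=0$ for $k=0,\ldots,n$ owing to the explicit factor $z$, while for $k=n+1,\ldots,2n+2$ one has $\hat f_k^n(0)=f_{k-n-1}^{n}(0)$, which is nonzero iff $k-n-1$ lies in $\{0,1,n+1,n+2,n+3,2n+2\}\cap\{0,\ldots,n+1\}=\{0,1,n+1\}$, i.e. iff $k\in\{n+1,n+2,2n+2\}$.

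The main obstacle will not be any single hard step but the careful index bookkeeping across the two successive Puiseux-regroupings; the one genuinely substantive point is the observation that distinct $m_j$ contribute distinct powers of $a_0$ to $a_q^{n}$, which rules out accidental cancellation and upgrades the purely combinatorial (non)emptiness of $\mathcal{P}_q^n$ into the stated dichotomy for the constant terms $f_q^n(0)$.
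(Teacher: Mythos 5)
Your proposal is correct and follows exactly the route of the paper's (very terse) proof: substitute the Taylor series \eqref{eq:HpTaylor} into the definition \eqref{eq:yn-definition}, rearrange the Puiseux series by residues mod $2n+3$, and read off the nonvanishing constant terms from Proposition~\ref{prop:coeffsHn} and Remark~\ref{rem:coeffsHn}. The index bookkeeping, the identification $\hat{x}^{2n+3}=z$, and the (non)emptiness analysis of $\mathcal{P}_q^n$ all check out, so this is simply a fully detailed version of the paper's argument.
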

\begin{proof}
The definition of the function $y_n(t)$ given in \eqref{eq:yn-definition} and the series for $H_n(\hat{x})$ in
\eqref{eq:HpTaylor} imply, after a rearrangement, the result presented in ~\eqref{eq:yn-series}.
The properties of the functions $f_q^n(z)$ follow from Proposition~\ref{prop:coeffsHn} and Remark~\ref{rem:coeffsHn}.
\end{proof}
\begin{remark}
It is important to note that, when using for the solution $y_n(t)$ the representation~\eqref{eq:yn-series}, and for
$y_{-m}(t)$ the analogous representation given in equation~\eqref{eq:ym-series} below, the following rule
for the consistency of the branches is assumed:
$z^{\frac{q}{p}}=|\big(\frac{4}{2n+3}\big)^6t|^{\frac{4q}{p}}\me^{\mi\frac{4q}{p}\arg{t}}$.
\hfill $\blacksquare$\end{remark}
\begin{proposition}\label{prop:ym-series}
Depending on the value of $(m+3)\!\!\!\mod\!4$, define the natural numbers $p_k$ for $k=1,2,4$ as follows:
\begin{equation*}
p_4:=m+3\equiv2n+1,\quad
2p_2:=m+3\equiv2(2n+1),\;\;
\text{and}\;\;
4p_1:=m+3\equiv4n,\quad
n\in\mathbb{N}.
\end{equation*}
Then, for $m=4p_k/k-3$, $y_{-m}(t)$ inherits the representation
\begin{equation}\label{eq:ym-series}
ty_{-m}(t)=2\left(\frac{p_k}{k}\right)^2(z_k)^{\frac{1}{p_k}}\,\sum_{q=0}^{p_k-1}(z_k)^{\frac{q}{p_k}}\,f_q^{-m}(z_k)
=2\left(\frac{p_k}{k}\right)^2\,\sum_{q=0}^{p_k-1}(z_k)^{\frac{q}{p_k}}\,\hat{f}_q^{-m}(z_k),\quad
z_k=(c_{-})^{-p_k}t^k,
\end{equation}
where the functions $f_q^{-m}(z_k)$, $q=0,1,\ldots,p_k-1$, are holomorphic at $z_k=0$,
\begin{equation*}
f_q^{-m}(z_k)=\sum_{l=0}^{\infty}a_{l,q}^{-m}\,(z_k)^l,\quad
a_{l,q}^{-m}:=a_{lp_k+q}^{-m}\in\mathbb{C},\quad
q=0,1,\ldots,p_k-1;
\end{equation*}
furthermore,
$f_q^{-m}(0)\neq0$ for all $q=0,1\ldots,p_k-1$; moreover, $\hat{f}_0^{-m}(z_k)=z_kf_{p_{k}-1}^{-m}(z_k)$,
$\hat{f}_j^{-m}(z_k)=f_{j-1}^{-m}(z_k)$ for $j=1,2,\ldots,p_k-1$,
$\hat{f}_q^{-m}(0)\neq0$ for $q=1,2,\ldots,p_k-1$, and $\hat{f}_0^{-m}(0)=0$.
\end{proposition}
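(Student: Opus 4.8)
The plan is to establish Proposition~\ref{prop:ym-series} by the very mechanism used for the $n$-series in Proposition~\ref{prop:yn-series}: substitute the Taylor expansion~\eqref{eq:HpTaylor} of $H_{-m}(\hat x)$ into the representation~\eqref{eq:ym-definition} of $ty_{-m}(t)$, reduce the branch exponent $4/(m+3)$ to lowest terms, and then rearrange the resulting (absolutely convergent) series into classes indexed by the residue modulo the denominator of that reduced fraction.

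First I would dispose of the arithmetic. According as $m+3$ is $\equiv0$, $\equiv2$, or odd $\bmod\,4$, one has $\gcd(4,m+3)=4/k$ with $k\in\{1,2,4\}$; then $4/(m+3)=k/p_k$ in lowest terms, where $p_k$ is precisely the number defined in the statement (so $m+3=(4/k)p_k$, i.e. $m=4p_k/k-3$, and $\gcd(k,p_k)=1$). Setting $z_k=(c_-)^{-p_k}t^k$ I get $(z_k)^{1/p_k}=c_-^{-1}t^{4/(m+3)}$, which is exactly the argument fed to $H_{-m}$ in~\eqref{eq:ym-definition}, while $c_-^{m/3}t^{4/(m+3)}=c_-^{(m+3)/3}(z_k)^{1/p_k}$. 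Hence
\begin{equation*}
ty_{-m}(t)=c_-^{(m+3)/3}\,(z_k)^{1/p_k}\,H_{-m}\!\left((z_k)^{1/p_k}\right)=c_-^{(m+3)/3}\sum_{\kappa\geqslant0}A_\kappa\,(z_k)^{(\kappa+1)/p_k},
\end{equation*}
with $A_0=-a_0$ and $A_\kappa=a_\kappa^{-m}$ for $\kappa\geqslant1$. The overall constant is pinned down by the normalization $\bigl(4/(m+3)\bigr)^2c_-^{(m+3)/3}=2$ of Remark~\ref{rem:defHnHm}: combined with $m+3=(4/k)p_k$ this gives $c_-^{(m+3)/3}=(m+3)^2/8=2(p_k/k)^2$, the prefactor appearing in~\eqref{eq:ym-series}.

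Next comes the regrouping. Writing $\kappa=lp_k+q$ with $l\geqslant0$ and $q\in\{0,1,\dots,p_k-1\}$, one has $(z_k)^{(\kappa+1)/p_k}=(z_k)^{(q+1)/p_k}(z_k)^l$, so collecting the terms with fixed $q$ yields the first form in~\eqref{eq:ym-series}, namely $ty_{-m}(t)=2(p_k/k)^2\sum_{q=0}^{p_k-1}(z_k)^{(q+1)/p_k}f_q^{-m}(z_k)$ with $f_q^{-m}(z_k)=\sum_{l\geqslant0}A_{lp_k+q}(z_k)^l$. Since $(z_k)^l=\hat x^{lp_k}$ for $\hat x=(z_k)^{1/p_k}$, each such series is dominated, for $|\hat x|$ below the radius of convergence of the Taylor series of $H_{-m}$ (which exists by Theorem~\ref{th:algebroid}; cf.\ Corollary~\ref{cor:convergence}), by $|\hat x|^{-q}\sum_\kappa|A_\kappa||\hat x|^\kappa$, so $f_q^{-m}$ is holomorphic at $z_k=0$ and the rearrangement is legitimate. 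For the second form I would peel off the $q=p_k-1$ summand, which is $(z_k)^{p_k/p_k}f_{p_k-1}^{-m}(z_k)=z_kf_{p_k-1}^{-m}(z_k)$, and re-index the rest by $j=q+1\in\{1,\dots,p_k-1\}$; this produces exactly $\hat f_0^{-m}(z_k)=z_kf_{p_k-1}^{-m}(z_k)$ and $\hat f_j^{-m}(z_k)=f_{j-1}^{-m}(z_k)$ for $j=1,\dots,p_k-1$.

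Finally, for the (non)vanishing claims: $f_q^{-m}(0)=A_q$, which is $-a_0\neq0$ for $q=0$ and equals $a_q^{-m}$ for $1\leqslant q\leqslant p_k-1$. By Proposition~\ref{prop:coeffsHm}, $a_q^{-m}$ has leading term $(-1)^{q+1}(q+1)a_0^{q+1}$; for $k\in\{1,2\}$ one checks $q\leqslant p_k-1\leqslant m$, so $a_q^{-m}$ \emph{equals} that leading term and is nonzero for every $a_0\neq0$, while for $k=4$ only $q=m+1,m+2$ carry one extra term and $a_q^{-m}\neq0$ holds for generic $a_0$; in all cases $f_q^{-m}(0)\neq0$, hence $\hat f_j^{-m}(0)=f_{j-1}^{-m}(0)\neq0$ for $j=1,\dots,p_k-1$, whereas $\hat f_0^{-m}(0)=0$ because of the explicit factor $z_k$. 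I expect the only genuinely delicate points — the rest being a transcription of the $n$-series argument, and actually simpler here since no coefficient $a_k^{-m}$ vanishes (Proposition~\ref{prop:coeffsHm}), so that every $f_q^{-m}(0)$ is nonzero, in contrast with Proposition~\ref{prop:yn-series} — to be the bookkeeping of the three congruence cases, the verification that $c_-^{(m+3)/3}$ collapses uniformly to $2(p_k/k)^2$, and keeping straight the index shift $q\mapsto q+1$ (and the single extra factor $z_k$) that converts the first form of~\eqref{eq:ym-series} into the second.
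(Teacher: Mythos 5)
Your proposal is correct and follows essentially the same route as the paper, which simply combines \eqref{eq:ym-definition} with \eqref{eq:HpTaylor}, rearranges into residue classes modulo $p_k$ while tracking the divisibility of $m+3$ by $2$ and $4$, and reads off the nonvanishing of the $f_q^{-m}(0)$ from Proposition~\ref{prop:coeffsHm}; you have merely written out the arithmetic (the identity $c_-^{(m+3)/3}=2(p_k/k)^2$ and the index shift $q\mapsto q+1$) that the paper leaves implicit. Your remark that for $k=4$ the coefficients $a_{m+1}^{-m}$, $a_{m+2}^{-m}$ acquire an extra term and are nonzero only for generic $a_0$ is a legitimate (and slightly more careful) reading of the same point the paper glosses over.
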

\begin{proof}
The proof is very similar to the proof of Proposition~\ref{prop:yn-series}:
combination of the formulae \eqref{eq:ym-definition} and \eqref{eq:HpTaylor} followed by the rearrangement
presented in equation~\eqref{eq:ym-series}; the only difference between the proofs is that,
here, one has to take into account the divisibility of $m+3$ by $2$ and $4$. The properties of the functions
$f_q^n(z_k)$ are deduced from the properties of the coefficients $a_k^n$ formulated in Proposition~\ref{prop:coeffsHm}.
\end{proof}
\begin{remark}\label{rem:pk-zk}
The natural numbers $p_k$ and the variables $z_k$ defined in Proposition~\ref{prop:ym-series} can be explicitly
written as follows:
\begin{equation*}
\frac{p_k}{k}=\frac{m+3}{4},\qquad
z_4=\frac{2^9t^4}{(2n+1)^6},\quad
z_2=\frac{2^{3/2}t^2}{(2n+1)^3},\quad
z_1=\frac{t}{2^{3/4}n^{3/2}}.
\end{equation*}
\hfill $\blacksquare$\end{remark}
\begin{proposition}\label{prop:meromorphic-fpq}
The analytic continuations of the functions $f_q^n(z)$ and $f_q^{-m}(z_k)$ (cf. Propositions~\ref{prop:yn-series}
and \ref{prop:ym-series}, respectively) are meromorphic on $\mathbb{C}$.
\end{proposition}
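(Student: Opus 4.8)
The plan is to lift the meromorphicity of the Painlev\'e transcendent $y(t)$ (equivalently of $u(\tau)$) through the explicit algebraic change of variables that defines the functions $f_q^n$ and $f_q^{-m}$, and then to argue that the operation of extracting a single ``branch component'' of a function whose multivaluedness is purely of the finite-cyclic type $z\mapsto z\me^{2\pi\mi}$ preserves the meromorphic character. Recall that equation~\eqref{eq:dP3y} has the Painlev\'e property and that its solutions branch only at $t=0$; hence $y(t)$ extends to a single-valued meromorphic function on the universal cover of $\mathbb{C}\setminus\{0\}$, i.e. on the Riemann surface of $\log t$, with poles and zeros as its only singularities. For an algebroid solution, Theorem~\ref{th:algebroid} tells us more: in the variable $x$ with $t=x^\alpha$ (integer $\alpha$), the function $v(x)=x^{\alpha-1}y(x^\alpha)$ is meromorphic at $x=0$, and since it solves the Painlev\'e-type equation~\eqref{eq:dP3v} which again has the Painlev\'e property, $v(x)$ is in fact \emph{meromorphic on all of $\mathbb{C}$} in the variable $x$. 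The same then holds, after the rescalings of Remark~\ref{rem:defHnHm}, for $H_n(\hat{x})$ and $H_{-m}(\hat{x})$: each is a meromorphic function on $\mathbb{C}_{\hat x}$.

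First I would make the reduction precise. By Propositions~\ref{prop:yn-series} and \ref{prop:ym-series}, the functions $f_q^n$ and $f_q^{-m}$ are obtained from $H_n$ and $H_{-m}$ purely by (i) an affine rescaling of the independent variable, (ii) passing to a finite cyclic cover $\hat{x}=z^{1/(2n+3)}$ (resp.\ $\hat x=(z_k)^{1/p_k}$ up to constants), and (iii) the \emph{discrete Fourier decomposition}
\begin{equation*}
H_n\!\big(\zeta\big)=\sum_{q=0}^{2n+2}\zeta^{q}\,f_q^n\!\big(\zeta^{2n+3}\big),
\qquad
f_q^n(z)=\frac{1}{2n+3}\sum_{j=0}^{2n+2}\omega^{-jq}\,H_n\!\big(\omega^{j}z^{1/(2n+3)}\big),
\end{equation*}
where $\omega=\me^{2\pi\mi/(2n+3)}$ and $z^{1/(2n+3)}$ is any fixed branch; the analogous formula with $p_k$ in place of $2n+3$ gives $f_q^{-m}$ from $H_{-m}$. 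The crucial point is that the right-hand side of the last display is manifestly \emph{single-valued} in $z$: replacing the chosen branch $z^{1/(2n+3)}$ by $\omega\,z^{1/(2n+3)}$ merely permutes the summands cyclically and reintroduces the factor $\omega^{q}$, which is compensated by shifting the summation index, so the sum returns to itself. Hence $f_q^n$ extends along any path in $\mathbb{C}\setminus\{0\}$ to a single-valued function. Moreover, for $z\neq 0$, each $H_n\big(\omega^j z^{1/(2n+3)}\big)$ is a meromorphic function of $z$ (composition of the entire-codomain meromorphic function $H_n$ with the locally biholomorphic map $z\mapsto\omega^j z^{1/(2n+3)}$ away from $z=0$), and a finite sum of meromorphic functions is meromorphic; therefore $f_q^n$ is meromorphic on $\mathbb{C}^\ast$. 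Combined with the holomorphy at $z=0$ already established in Proposition~\ref{prop:yn-series}, we conclude $f_q^n$ is meromorphic on all of $\mathbb{C}$; the identical argument applies to $f_q^{-m}$ using Proposition~\ref{prop:ym-series}.

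The only genuinely delicate point—and the one I would spell out carefully—is behaviour \emph{near} the branch locus, i.e.\ that no essential singularity is hidden at $z=0$ and that the single-valuedness just described is global rather than merely local. The holomorphy at $z=0$ is supplied by the convergent Taylor expansions $f_q^n(z)=\sum_l a_{l,q}^n z^l$ from the statements of Propositions~\ref{prop:yn-series} and \ref{prop:ym-series} (these in turn rest on Corollary~\ref{cor:convergence} / Lemma~\ref{lem:a-n-estimate} and their analogues for $H_p$), so the expansion already pins down a holomorphic germ at the origin; what remains is the standard monodromy argument that a function which is (a) meromorphic on $\mathbb{C}^\ast$, (b) holomorphic at $0$, and (c) single-valued by the symmetry computation above, is meromorphic on $\mathbb{C}$. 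I expect the writing of this last step to be the main obstacle, in the sense that one must be slightly careful to distinguish the \emph{a priori} multivalued object ``$f_q^n$ defined by its germ at $0$'' from the \emph{a posteriori} single-valued meromorphic function it continues to; the symmetry relations of Corollaries~\ref{cor:symmetryHnyn} and \ref{cor:symmetryHmym} (applied with the rotation $\hat x\mapsto \hat x\,\omega$, $a_0$ fixed, using Remark~\ref{rem:symmetry} to see $a_0\me^{2\pi\mi}=a_0$) are exactly the input that makes the cyclic-permutation argument rigorous rather than formal. Everything else is bookkeeping with the explicit substitutions of Propositions~\ref{prop:yn-series} and \ref{prop:ym-series}.
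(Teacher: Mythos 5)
Your proposal is correct and follows essentially the same route as the paper: the paper writes the branch decomposition in matrix form $tY(t)=AF(z)$ with $A$ the DFT/Vandermonde matrix and inverts it to get $F(z)=tA^{-1}Y(t)$, which is exactly your explicit inverse-DFT formula $f_q^n(z)=\tfrac{1}{2n+3}\sum_j\omega^{-jq}H_n\bigl(\omega^j z^{1/(2n+3)}\bigr)$, and then concludes meromorphy from the fact that the branches of the Painlev\'e transcendent have only poles. Your additional verification of single-valuedness under a change of the branch of $z^{1/(2n+3)}$ is a worthwhile explicit check of a point the paper leaves implicit, but it does not constitute a different method.
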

\begin{proof}
Any one of the functions $y_{r}(t)$ introduced in Propositions~\ref{prop:yn-series} and \ref{prop:ym-series} admit the
following representation in a neighbourhood of $z=0$ ($z_k=0$):
\begin{equation}\label{eq:y-meromorphic}
ty(t)=c^2\sum_{q=0}^{p-1}z^{\frac{q}{p}}\hat{f}_q(z),\qquad
p\in\mathbb{N},\;\;
c\in\mathbb{Q}\setminus\{0\},
\end{equation}
where the functions $\hat{f}_q(z)$ are holomorphic in a neighbourhood of $z=0$.

For $p\geqslant1$ and $k=0,1,\ldots,p-1$,
define the functions $y_k(t):=y(te^{2\pi in_k})$, where the winding number $n_k=k(\text{mod}\,p)$, and the column vectors
\begin{equation*}\label{eq:vectorsYF}
Y(t)=(y_0(t),y_1(t),\ldots,y_{p-1}(t))^T\quad\text{and}\quad
F(z)=(\hat{f}_0(z),z^{1/p}\hat{f}_1(z),\ldots,z^{(p-1)/p}\hat{f}_{p-1}(z))^T,
\end{equation*}
where $T$ denotes transposition.
Then, equation~\eqref{eq:y-meromorphic} can be rewritten in the matrix form
\begin{equation}\label{eq:y(t)-matrix}
tY(t)=AF(z),
\end{equation}
where
\begin{equation*}
A \! = \!
\begin{pmatrix}
1 & 1 & 1& \hdotsfor{1} & 1 \\
1 & \varepsilon_p & \varepsilon_p^2 & \dots & \varepsilon_p^{p-1} \\
1 & \varepsilon_p^2 & \varepsilon_p^4& \hdotsfor[1]{1} & \varepsilon_p^{2(p-1)} \\
\vdots & \vdots & \vdots & \ddots & \vdots \\
1 & \varepsilon_p^{p-1} & \varepsilon_p^{2(p-1)} & \hdots & \varepsilon_p^{(p-1)^2}
\end{pmatrix},
\qquad \varepsilon_p \! = \! \me^{\frac{2 \pi \mi}{p}}, \quad p \! \in \! \mathbb{N}.
\end{equation*}
The $p\times p$ matrix $A$ is invertible because $\det{A}=p^{\frac{p}{2}}e^{-\frac{\pi\mi(p-1)(p-2)}{4}}\neq0$
(see \cite{FadSom1952}, Problem ${}^{*}299$); therefore,
\begin{equation}\label{eq:FinY}
F(z)=tA^{-1}Y(t),\qquad
t=\left(\frac{2n+3}{4}\right)^{3/2}z^{1/4}.
\end{equation}
For the functions $y_{-m}(t)$ (cf. Proposition~\ref{prop:yn-series}), $t$ is given via the inversion of the formulae for $z_k$
(cf. Remark~\ref{rem:pk-zk}).

Equation~\eqref{eq:FinY} defines the analytic continuation of the vector-valued function $F(z)$ on $\mathbb{C}$; therefore,
the only singularities of the components of $F(z)$ are poles, i.e., the only singular points of the functions $\hat{f}_q(z)$
on $\mathbb{C}$ are poles.
\end{proof}
\begin{remark}\label{rem:algebroid}
Proposition~\ref{prop:yn-series} (resp., Proposition~\ref{prop:ym-series}) implies that solutions of the $n$-series
(resp., $m$-series) are single-valued on the Riemann surface of $w^{2n+3}=z$ (resp., $w^{p_k}=z_k$). Below, we show
how to meet the formal definition of the algebroid function (see, for example, \cite{Steinmetz2017}).
\hfill $\blacksquare$\end{remark}
\begin{definition}\label{def:F(z)}
Consider the function
\begin{equation}\label{eq:g(z)Fconstruction}
g(z)=\sum_{q=0}^{\nu-1} \omega^q\tilde{f}_q(z),\qquad
\omega^{\nu}=z,
\end{equation}
where the functions $\tilde{f}_q(z)$ are holomorphic at $z=0$.
For $k=1,2,\ldots,\nu$, define the functions $f_q^k(z)$ holomorphic at $z=0$ via the identity
\begin{equation*}
(g(z))^k=\left(\sum_{q=0}^{\nu-1} \omega^q\tilde{f}_q(z)\right)^k=:
\sum_{q=0}^{\nu-1} \omega^qf_q^k(z).
\end{equation*}
Define the $\nu\times\nu$ matrix $F_{\nu}(z)=\{f_q^k(z)\}$, where  $q=0,1,\ldots,\nu-1$ enumerates the columns and
$k=1,2,\ldots,\nu$ enumerates the rows.
\end{definition}
\begin{remark}\label{rem:detF(x)calculation}
Although the definition of the matrix $F_{\nu}(z)$ looks simple enough, the exact calculation of its determinant
appears to be a rather complicated problem. The determinants of $F_k(z)$ for $k=1,\ldots,7$ can be calculated almost
immediately; however, the calculation of the determinant for the matrix $F_8(z)$ takes roughly $340\text{s}$:
in the factorized form over $\mathbb{Z}[\tilde{f}_0,\tilde{f}_1,\dotsc,\tilde{f}_{\nu-1}]$, the polynomial has four factors,
and the size of one of them, namely, a polynomial of degree $12$, exceeds $10^6$ symbols, and was not printable. We
did not succeed in calculating the determinant of $F_9(z)$ because \textsc{Maple}, after a few hours
of computations, was incapable of allocating enough memory on a computer equipped with 16GBs of RAM, not even for the calculation
of one $8\times8$ minor of $F_9(z)$ (almost the entirety of the RAM was occupied together with part of the hard drive).
We present, for example, the explicit formula for $\det(F_3(z))$:
\begin{equation}\label{eq:detF3}
\det(F_3(z))=\Big((\tilde{f}_1(z))^3-z(\tilde{f}_2(z))^3\Big)
\Big((\tilde{f}_2(z))^3z^2+\Big((\tilde{f}_1(z))^3-3\tilde{f}_0(z)\tilde{f}_1(z)\tilde{f}_2(z)\Big)z+(\tilde{f}_0(z))^3\Big).
\end{equation}
Using the fact that $\tilde{f}_k(z)$ are holomorphic at $z=0$, it is easy to prove that $\det(F_3(z))$ is identically
non-vanishing.
\hfill $\blacksquare$\end{remark}
\begin{proposition}\label{prop:F(x)degree}
By regarding the functions $\tilde{f}_q(z)$ as transcendental elements over $\mathbb{Z}$
rather than functions of $z$, denote them, in this sense, as the variables $\tilde{f}_q$.
Consider $\det(F_{\nu}(z))$
as a polynomial of $z$ over $\mathbb{Z}[\tilde{f}_0,\tilde{f}_1,\ldots,\tilde{f}_{\nu-1}]$. Then,
\begin{gather}
\deg{\det(F_{\nu}(z))}\leq\frac{\nu(\nu-1)}{2},\nonumber\\
\det(F_{\nu}(z))\underset{z\to\infty}{=}(\tilde{f}_{\nu-1})^{\frac{\nu(\nu+1)}{2}}(-z)^{\frac{\nu(\nu-1)}{2}}
+\mathcal{O}\left(z^{\frac{(\nu-2)(\nu+1)}{2}}\right).\label{eq:detF(z)infty}
\end{gather}
\end{proposition}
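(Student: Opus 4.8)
The plan is to treat the $\tilde{f}_q$ as the indeterminates generating $S:=\mathbb{Z}[\tilde{f}_0,\ldots,\tilde{f}_{\nu-1}]$, so that $g=\sum_{q=0}^{\nu-1}\tilde{f}_q\omega^q\in S[\omega]$ is a polynomial of degree $\nu-1$ in $\omega$, and to encode the relation $\omega^{\nu}=z$ by viewing $z$ as a variable over $S$. The first step is to bound each entry of $F_{\nu}(z)$. Writing $g^k=\sum_{j=0}^{k(\nu-1)}c^{(k)}_j\omega^j$ with $c^{(k)}_j\in S$, one has $c^{(k)}_{k(\nu-1)}=\tilde{f}_{\nu-1}^{k}$, since the top $\omega$-power of $g^k$ comes solely from $(\tilde{f}_{\nu-1}\omega^{\nu-1})^k$. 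Reducing modulo $\omega^{\nu}=z$ gives $f^k_q(z)=\sum_{i\ge 0}c^{(k)}_{q+i\nu}z^i$, whence $\deg_z f^k_q\le\lfloor(k(\nu-1)-q)/\nu\rfloor\le(k(\nu-1)-q)/\nu$; moreover, when $\nu\mid k(\nu-1)-q$ (equivalently $q\equiv-k\pmod{\nu}$) the degree equals $(k(\nu-1)-q)/\nu$ exactly, with leading coefficient the nonzero element $\tilde{f}_{\nu-1}^{k}\in S$.

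Next I would expand the determinant over the integral domain $S[z]$: relabelling the columns $q=0,\ldots,\nu-1$ as $j=q+1$,
\[
\det F_{\nu}(z)=\sum_{\sigma\in\mathfrak{S}_{\nu}}\operatorname{sgn}(\sigma)\prod_{k=1}^{\nu}f^k_{\sigma(k)-1}(z).
\]
For each $\sigma$, multiplicativity of $\deg_z$ in $S[z]$ together with the bound above gives $\deg_z\prod_k f^k_{\sigma(k)-1}\le\sum_{k=1}^{\nu}(k(\nu-1)-(\sigma(k)-1))/\nu$, and since $\sum_{k=1}^{\nu}k=\nu(\nu+1)/2$ and $\sum_{k=1}^{\nu}(\sigma(k)-1)=\nu(\nu-1)/2$, this sum equals $\nu(\nu-1)/2$ for \emph{every} $\sigma$; this already yields $\deg_z\det F_{\nu}(z)\le\nu(\nu-1)/2$. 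Equality in the estimate forces $\nu\mid k(\nu-1)-(\sigma(k)-1)$, i.e. $\sigma(k)\equiv 1-k\pmod{\nu}$, for all $k$; since $k\mapsto(1-k)\bmod\nu$ is itself a bijection onto $\{0,\ldots,\nu-1\}$, this pins down $\sigma$ uniquely as the reversal $\sigma_0(k)=\nu+1-k$. Hence for every $\sigma\ne\sigma_0$ the integer $\sum_k\lfloor(k(\nu-1)-(\sigma(k)-1))/\nu\rfloor$ is strictly less than $\nu(\nu-1)/2$, so $\deg_z\prod_k f^k_{\sigma(k)-1}\le\nu(\nu-1)/2-1=(\nu-2)(\nu+1)/2$.

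Finally, only $\sigma_0$ contributes at the top degree: for it $\sigma_0(k)-1=\nu-k\equiv-k\pmod{\nu}$, so $f^k_{\nu-k}(z)$ has $z$-degree $k-1$ with leading coefficient $\tilde{f}_{\nu-1}^{k}$, and therefore $\prod_k f^k_{\nu-k}$ has degree $\sum_{k=1}^{\nu}(k-1)=\nu(\nu-1)/2$ and leading coefficient $\prod_{k=1}^{\nu}\tilde{f}_{\nu-1}^{k}=\tilde{f}_{\nu-1}^{\nu(\nu+1)/2}$. Since the reversal $\sigma_0$ has $\binom{\nu}{2}=\nu(\nu-1)/2$ inversions, $\operatorname{sgn}(\sigma_0)=(-1)^{\nu(\nu-1)/2}$, so the top term of $\det F_{\nu}(z)$ is $(-1)^{\nu(\nu-1)/2}\tilde{f}_{\nu-1}^{\nu(\nu+1)/2}z^{\nu(\nu-1)/2}=\tilde{f}_{\nu-1}^{\nu(\nu+1)/2}(-z)^{\nu(\nu-1)/2}$, with everything else $\mathcal{O}(z^{(\nu-2)(\nu+1)/2})$, which is \eqref{eq:detF(z)infty}.

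The only genuinely delicate point is the bookkeeping in the first step: establishing $\deg_z f^k_q\le\lfloor(k(\nu-1)-q)/\nu\rfloor$ with the precise description of when it is sharp, and then tracking which residue conditions survive once these floors are summed along a permutation. Once that is in place, the uniqueness of the extremal permutation $\sigma_0$, the identification of its leading coefficient, and the inversion count for its sign are all routine.
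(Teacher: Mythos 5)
Your proof is correct and follows essentially the same route as the paper: bound the $z$-degrees of the entries, expand the determinant over permutations, and show that only the anti-diagonal (reversal) permutation attains the maximal total degree $\nu(\nu-1)/2$, with leading coefficient $\tilde{f}_{\nu-1}^{\nu(\nu+1)/2}$ and sign $(-1)^{\nu(\nu-1)/2}$. The only difference is that you make explicit, via the bound $\deg_z f^k_q\leqslant\lfloor(k(\nu-1)-q)/\nu\rfloor$ and the congruence $\sigma(k)\equiv 1-k\pmod{\nu}$, the uniqueness of the extremal permutation, which the paper's proof merely asserts.
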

\begin{proof}
By the definition of the matrix $F_{\nu}(z)$, the elements of the $k$th row are polynomials in $z$ with degrees less
than or equal to $k-1$; therefore, the degree of the polynomial $\det(F_{\nu}(z))$ cannot be greater than
$0+1+\dotsb+\nu-1$.
In fact, the highest degree can only be attained by one product of the elements forming the determinant, that is,
the product of the leading terms of the polynomials on the main off-diagonal (listed, successively, from the
upper-right corner to the bottom-left),
\begin{equation*}\label{eq:highestOFFdiagonalsF(z)}
\tilde{f}_{\nu-1},\; (\tilde{f}_{\nu-1})^2z,\;\ldots,\; (\tilde{f}_{\nu-1})^{\nu}z^{\nu-1},
\end{equation*}
the product of which, with the corresponding sign $(-1)^{\frac{\nu(\nu-1)}{2}}$, represents the leading term of the polynomial
$\det(F_{\nu}(z))$.
\end{proof}
\begin{conjecture}\label{con:F-reducibility}
The polynomial $\det(F_{\nu}(z))$, $\nu\geqslant3$, is always reducible over
$\mathbb{Z}[\tilde{f}_0,\tilde{f}_1,\ldots,\tilde{f}_{\nu-1}]$, with the number of factors equal to the number of divisors
of $\nu$ (including $1$ and the number itself). One of the factors is a polynomial of $z$ with degree $\nu-1$.
\end{conjecture}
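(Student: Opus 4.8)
The plan is to pass to a function-field picture. Set $K=\mathbb{Q}(\tilde f_0,\dots,\tilde f_{\nu-1})(z)$ and $L=K(\omega)$ with $\omega^\nu=z$; since $z$ is transcendental, $x^\nu-z$ is irreducible over $K$, so $[L:K]=\nu$ and $1,\omega,\dots,\omega^{\nu-1}$ is a $K$-basis of $L$. Then $g=\sum_{q=0}^{\nu-1}\tilde f_q\omega^q$ is the element of Definition~\ref{def:F(z)}, the $k$-th row of $F_\nu(z)$ is the coordinate vector of $g^k$ in that basis, and factoring out the matrix of multiplication by $g$ gives $\det(F_\nu(z))=\det(M_\nu)\cdot N_{L/K}(g)$, where $M_\nu$ has for its rows the coordinate vectors of $1,g,\dots,g^{\nu-1}$ (the algebraic independence of the $\tilde f_q$ forces $K(g)=L$, so $\det M_\nu\not\equiv0$; compare the non-vanishing of $\det(F_3(z))$ recorded in Remark~\ref{rem:detF(x)calculation}). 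One factor is thus $N_{L/K}(g)=\prod_{j=0}^{\nu-1}g^{(j)}$ with conjugates $g^{(j)}=\sum_q\zeta^{jq}\omega^q\tilde f_q$, $\zeta=\me^{2\pi\mi/\nu}$; being invariant under $\sigma\colon\omega\mapsto\zeta\omega$ it is a polynomial in $z$ over $\mathbb{Z}[\tilde f_0,\dots,\tilde f_{\nu-1}]$ of degree exactly $\nu-1$, with constant term $\tilde f_0^{\,\nu}$ (hence coprime to $z$). Its irreducibility I would get from the standard norm-form argument: over $L(\zeta)$, regard $N_{L/K}(g)$ as a polynomial in $\tilde f_0$ whose roots $-\sum_{q\ge1}\zeta^{jq}\omega^q\tilde f_q$ are cyclically permuted by $\sigma$; transitivity of $\langle\sigma\rangle$ forbids proper sub-products with coefficients in the base field, so $N_{L/K}(g)$ is absolutely irreducible, hence irreducible over $\mathbb{Z}$ by Gauss's lemma.

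It remains to factor $\det M_\nu$ into $d(\nu)-1$ further irreducibles, $d(\nu)$ being the number of divisors of $\nu$. Using the Fourier matrix $W=(\zeta^{qj})$ and $D=\diag(1,\omega,\dots,\omega^{\nu-1})$, the product $M_\nu DW$ is the genuine Vandermonde $\bigl((g^{(j)})^{k-1}\bigr)_{k,j}$, so $\omega^{\binom{\nu}{2}}\Delta_\zeta\det M_\nu=\prod_{0\le j<j'\le\nu-1}(g^{(j')}-g^{(j)})$ with $\Delta_\zeta:=\det W\ne0$. Squaring, writing $g^{(j+\delta)}-g^{(j)}=\sigma^j(h_\delta)$ for $h_\delta:=g^{(\delta)}-g^{(0)}$, and grouping the pairs by $e=\gcd(\delta,\nu)$, one obtains in $\mathbb{Z}[\tilde f_0,\dots,\tilde f_{\nu-1},z]$
\begin{equation*}
z^{\nu-1}\Delta_\zeta^2(\det M_\nu)^2=\pm\prod_{\substack{e\mid\nu\\ e<\nu}}\mathcal{N}_e^{\,\phi(\nu/e)},\qquad
\mathcal{N}_e:=\prod_{j=0}^{\nu-1}\sigma^j(h_e).
\end{equation*}
The grouping is legitimate because $N_{L/K}(h_\delta)$ depends only on $\gcd(\delta,\nu)$ — this is exactly where the non-Galois nature of $L/K$ intervenes: the automorphisms $\tau_b\colon\zeta\mapsto\zeta^b$ of $L(\zeta)$ fix $K$ pointwise and carry $h_\delta$ to $h_{b\delta}$, so the two norms agree.

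Now $h_e$ is supported on $\{q:1\le q\le\nu-1,\ (\nu/e)\nmid q\}$, a set of size $\nu-e$ containing $q=1$; consequently $\mathcal{N}_e=z^{a_e}S_e$ with $a_e\ge1$, $z\nmid S_e$, and $\deg_z S_e\le\nu-2$. For a divisor $e<\nu/2$ the conjugate linear forms $\sigma^j(h_e)$ are pairwise non-proportional (they differ already in the ratio of their $\omega^1$- and $\omega^2$-coefficients, since $\nu/e\ge3$), so by the cyclic-orbit argument $S_e$ is irreducible. For $e=\nu/2$ (possible only when $\nu$ is even) one has $h_{\nu/2}=-2\omega\ell_{\nu/2}$ with $\ell_{\nu/2}=\sum_k\omega^{2k}\tilde f_{2k+1}\in K(\omega^2)$, whence $\mathcal{N}_{\nu/2}=(\text{unit})\,z\,R_{\nu/2}^{\,2}$ with $R_{\nu/2}=N_{K(\omega^2)/K}(\ell_{\nu/2})$ irreducible. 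Feeding these back, the $z$-powers cancel against $z^{\nu-1}$ (indeed $z\nmid\det M_\nu$, as one sees by setting $\tilde f_2=\dots=\tilde f_{\nu-1}=z=0$), $\Delta_\zeta^2$ is a nonzero integer, $\phi(\nu/e)$ is even whenever $\nu/e\ge3$, and the exponent $2$ on $R_{\nu/2}$ absorbs the odd $\phi(2)$; taking square roots exhibits $\det M_\nu$ as a unit times $\prod_{e\mid\nu,\,e<\nu/2}S_e^{\,\phi(\nu/e)/2}$, multiplied by $R_{\nu/2}$ when $\nu$ is even.

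Finally, the $S_e$ and $R_{\nu/2}$ involve precisely the variables $\tilde f_q$ with $q$ in the support of $h_e$, whose cardinalities $\nu-e$ are pairwise distinct, and none of these polynomials involves $\tilde f_0$, which $N_{L/K}(g)$ does; hence all of $N_{L/K}(g)$ and the $S_e,R_{\nu/2}$ are pairwise non-associate. Since the number of divisors $e\mid\nu$ with $e<\nu/2$, incremented by $1$ when $\nu$ is even, equals $d(\nu)-1$ in either parity, adjoining $N_{L/K}(g)$ yields exactly $d(\nu)$ distinct irreducible factors of $\det(F_\nu(z))$, one of which — namely $N_{L/K}(g)$ — is a polynomial in $z$ of degree $\nu-1$. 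The two steps I expect to cost the real work are the irreducibility of the norm forms $S_e$ (the orbit-and-non-proportionality analysis of the conjugate linear forms, and ruling out any hidden common factor), and the precise $z$-adic accounting, $v_z(\mathcal{N}_e)=a_e$ with $\sum_{e\mid\nu,\,e<\nu}\phi(\nu/e)\,a_e=\nu-1$, needed to pass from the squared identity to an honest factorisation of $\det M_\nu$ inside the polynomial ring.
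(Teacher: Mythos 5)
The paper does not prove this statement: it is posed as a conjecture, supported only by the explicit factorization \eqref{eq:detF3} and by machine computations up to $\nu=8$, so your proposal has to stand on its own. Much of it does. The decomposition $\det(F_\nu(z))=\det(M_\nu)\cdot N_{L/K}(g)$ is correct and matches the paper's example exactly (for $\nu=3$ the norm form is $\tilde f_0^3+z\tilde f_1^3+z^2\tilde f_2^3-3z\tilde f_0\tilde f_1\tilde f_2$, the second factor of \eqref{eq:detF3}, while $\det M_3=\tilde f_1^3-z\tilde f_2^3$ is the first); the norm form does have $z$-degree $\nu-1$ and constant term $\tilde f_0^{\nu}$, and its irreducibility holds because it is the minimal polynomial of $-\sum_{q\geqslant1}\omega^q\tilde f_q$ over $\mathbb{Q}(\tilde f_1,\dots,\tilde f_{\nu-1},z)$ viewed as a monic polynomial in $\tilde f_0$; the Vandermonde identity for $(\det M_\nu)^2$ is right; and the $z$-adic count ($v_z(\mathcal{N}_\delta)\geqslant1$ for each of the $\nu-1$ values of $\delta$, summing to $v_z\big(z^{\nu-1}(\det M_\nu)^2\big)=\nu-1$ because $z\nmid\det M_\nu$) correctly forces $v_z(\mathcal{N}_\delta)=1$ for every $\delta$.

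The gap is the assertion that $N_{L/K}(h_\delta)$ depends only on $e=\gcd(\delta,\nu)$. The automorphism $\tau_b$ carries $\mathcal{N}_\delta$ to $\mathcal{N}_{b\delta}$, so the $\mathcal{N}_\delta$ in one gcd class are Galois \emph{conjugates} over $K$, not equal; $\mathcal{N}_\delta$ does not lie in $K$. Already for $\nu=3$ one computes $\mathcal{N}_1=3\sqrt{-3}\,z\,(\tilde f_1^3-z\tilde f_2^3)=-\mathcal{N}_2$. Worse, the primitive parts $P_\delta:=\mathcal{N}_\delta/z$ need not even be associate: one always has $P_{-\delta}=(-1)^\nu P_\delta$ (because $p^{(-\delta)}_{j}=-p^{(\delta)}_{j-\delta}$), but for $\nu=5$ the form $p^{(2)}_0$ is non-proportional to $p^{(1)}_j$ for every $j$ (the $q=1,2$ coefficient ratios force $\zeta^j(\zeta+1)=\zeta^2+1$, which no fifth root of unity satisfies), so $P_1\not\doteq P_2$ and $\det M_5\doteq P_1P_2$ is a product of two distinct conjugate $\mathbb{Q}(\zeta)$-irreducibles, not the square $S_1^{\phi(5)/2}=S_1^2$ your formula asserts; the ``taking square roots'' step therefore fails as written. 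The count of \emph{distinct} factors can still be rescued: within one gcd class the $P_\delta$ form a single orbit under $\mathrm{Gal}(\mathbb{Q}(\zeta)/\mathbb{Q})$, so any $\mathbb{Q}$-irreducible factor of $\det M_\nu$ divisible by one of them is divisible by all of them up to associates, while classes with different $e$ are separated by their variable supports of cardinality $\nu-e$; this yields exactly $d(\nu)-1$ pairwise non-associate irreducible factors of $\det M_\nu$, hence $d(\nu)$ for $\det(F_\nu(z))$. But that is a Galois-descent argument you did not write, it says nothing about the multiplicities $\phi(\nu/e)/2$ you claim, and your intermediate objects $S_e$ are not well defined over $\mathbb{Z}[\tilde f_0,\dots,\tilde f_{\nu-1}]$ in the first place.
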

Equation~\eqref{eq:detF3} in an illustration of Conjecture~\ref{con:F-reducibility}.
\begin{lemma}\label{lem:detF(z)small-z}
\begin{align}
&\det(F_{\nu}(0))=(\tilde{f}_0(0))^{\nu}(\tilde{f}_1(0))^{\frac{\nu(\nu-1)}{2}},\label{eq:detF(0)}\\
&\det(F_{\nu}(z))\underset{z\to0}{=}(-1)^{\nu+1}(\tilde{f}_1(0))^{\frac{\nu(\nu+1)}{2}}z+\mathcal{O}\left(z^2\right),\quad
\tilde{f}_0(0)=0,\label{eq:as-detF(z)|f0=0}
\end{align}
\begin{multline}\label{eq:as-detF(z)|f0-fn=0}
\det(F_{2n+3}(z))\underset{z\to0}{=}(-1)^{\frac{(n+1)(3n+4)}{2}}(\tilde{f}_{n+1}(0))^{(n+2)(2n+3)}z^{(n+1)^2}
+\mathcal{O}\left(z^{(n+1)^2+1}\right),\quad
n\in\mathbb{Z}_{\geqslant0},\\
\tilde{f}_0(0)=\dotsb=\tilde{f}_n(0)=0.
\end{multline}
\end{lemma}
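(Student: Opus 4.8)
The plan is to collapse all three formulae into one master identity for $\det(F_{\nu}(z))$ obtained by restricting $g(z)$ to the branches of $\omega^{\nu}=z$, and then to read off the leading $z$-behaviour from the Puiseux expansions of these restrictions. Fix $z$ in a punctured neighbourhood of $0$ together with a branch of $z^{1/\nu}$, put $\varepsilon_{\nu}:=\me^{2\pi\mi/\nu}$, and let $\omega_{j}:=\varepsilon_{\nu}^{j}z^{1/\nu}$, $j=0,1,\dots,\nu-1$, be the roots of $\omega^{\nu}=z$. Setting $g_{j}:=g(z)\vert_{\omega=\omega_{j}}=\sum_{q=0}^{\nu-1}\tilde f_{q}(z)\varepsilon_{\nu}^{jq}z^{q/\nu}$ and evaluating the identity $(g(z))^{k}=\sum_{q}\omega^{q}f_{q}^{k}(z)$ of Definition~\ref{def:F(z)} at $\omega=\omega_{j}$ gives $g_{j}^{k}=\sum_{q}\varepsilon_{\nu}^{jq}z^{q/\nu}f_{q}^{k}(z)$; equivalently, $G:=(g_{j}^{k})_{k=1,\dots,\nu;\,j=0,\dots,\nu-1}$ factors as $G=F_{\nu}(z)\,\diag(z^{q/\nu})_{q}\,(\varepsilon_{\nu}^{jq})_{q,j}$. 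The last matrix is a Vandermonde matrix in the $\nu$-th roots of unity, with nonzero determinant $D_{\nu}:=\prod_{0\le j<j'\le\nu-1}(\varepsilon_{\nu}^{j'}-\varepsilon_{\nu}^{j})$, and $\det\diag(z^{q/\nu})_{q}=z^{(\nu-1)/2}$; factoring $g_{j}$ out of the $j$-th column of $G$ shows $\det G=\big(\prod_{j<j'}(g_{j'}-g_{j})\big)\big(\prod_{j}g_{j}\big)$. Hence
\[
\det(F_{\nu}(z))=\frac{1}{D_{\nu}\,z^{(\nu-1)/2}}\Big(\prod_{0\le j<j'\le\nu-1}(g_{j'}-g_{j})\Big)\Big(\prod_{j=0}^{\nu-1}g_{j}\Big).
\]

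Next I would extract the leading $z$-behaviour. Write $\mu:=\min\{q\ge0:\tilde f_{q}(0)\ne0\}$ and $\mu':=\min\{q\ge1:\tilde f_{q}(0)\ne0\}$; in the three cases of the lemma $(\mu,\mu')$ equals $(0,1)$, $(1,1)$, and $(n+1,n+1)$, and $\gcd(\mu',\nu)=1$ throughout. Since $\tilde f_{q}(z)=\mathcal{O}(z)$ for $q<\mu$ while $\mu<\nu$, the lowest-order term of $g_{j}$ is $\tilde f_{\mu}(0)\varepsilon_{\nu}^{j\mu}z^{\mu/\nu}$, so $\prod_{j}g_{j}=\big((-1)^{\nu+1}\big)^{\mu}\tilde f_{\mu}(0)^{\nu}z^{\mu}(1+o(1))$ (using $\prod_{j}\varepsilon_{\nu}^{j}=(-1)^{\nu+1}$). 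In $g_{j'}-g_{j}$ the $\tilde f_{0}$-terms cancel, so its lowest-order term is $\tilde f_{\mu'}(0)(\varepsilon_{\nu}^{j'\mu'}-\varepsilon_{\nu}^{j\mu'})z^{\mu'/\nu}$, which is nonzero for every pair $j\ne j'$ precisely because $\gcd(\mu',\nu)=1$; as $j\mapsto\mu'j\bmod\nu$ is then a permutation $\pi$ of $\mathbb{Z}/\nu\mathbb{Z}$, the alternating property of the Vandermonde gives $\prod_{j<j'}(\varepsilon_{\nu}^{j'\mu'}-\varepsilon_{\nu}^{j\mu'})=\operatorname{sgn}(\pi)D_{\nu}$, whence $\prod_{j<j'}(g_{j'}-g_{j})=\operatorname{sgn}(\pi)D_{\nu}\tilde f_{\mu'}(0)^{\nu(\nu-1)/2}z^{\mu'(\nu-1)/2}(1+o(1))$. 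Substituting and cancelling $D_{\nu}z^{(\nu-1)/2}$,
\[
\det(F_{\nu}(z))=\operatorname{sgn}(\pi)\,\big((-1)^{\nu+1}\big)^{\mu}\,\tilde f_{\mu}(0)^{\nu}\,\tilde f_{\mu'}(0)^{\nu(\nu-1)/2}\,z^{\,\mu+(\mu'-1)(\nu-1)/2}\,(1+o(1)).
\]

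It then remains to specialize. For \eqref{eq:detF(0)}: $(\mu,\mu')=(0,1)$ and $\pi=\mathrm{id}$ when $\tilde f_{1}(0)\ne0$, the $z$-exponent is $0$, and the prefactor is $\tilde f_{0}(0)^{\nu}\tilde f_{1}(0)^{\nu(\nu-1)/2}$; since $\det(F_{\nu}(0))$ is a polynomial in $\tilde f_{0}(0),\dots,\tilde f_{\nu-1}(0)$, the identity extends off $\{\tilde f_{1}(0)\ne0\}$ by Zariski density. For \eqref{eq:as-detF(z)|f0=0}: $(\mu,\mu')=(1,1)$, $\pi=\mathrm{id}$, the $z$-exponent is $1$, the sign is $(-1)^{\nu+1}$, and the error is $\mathcal{O}(z^{2})$ because $\det(F_{\nu}(0))=0$. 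For \eqref{eq:as-detF(z)|f0-fn=0}: $\nu=2n+3$ is odd, $(\mu,\mu')=(n+1,n+1)$, $\gcd(n+1,2n+3)=1$, so the $z$-exponent equals $\mu+(\mu'-1)(\nu-1)/2=(n+1)^{2}$, the prefactor exponent is $\nu+\nu(\nu-1)/2=(n+2)(2n+3)$, and the sign is $\big((-1)^{\nu+1}\big)^{n+1}\operatorname{sgn}(\pi)=\operatorname{sgn}(\pi)$ (as $\nu$ is odd); Zolotarev's lemma identifies $\operatorname{sgn}(\pi)$ with the Jacobi symbol $\big(\tfrac{n+1}{2n+3}\big)$, which equals $(-1)^{(n+1)(3n+4)/2}$ by quadratic reciprocity and the supplementary laws. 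The master identity moreover shows the vanishing order at $z=0$ is $\ge(n+1)^{2}$, with equality iff $\tilde f_{n+1}(0)\ne0$, so the remainder is $\mathcal{O}(z^{(n+1)^{2}+1})$ and the degenerate sub-case $\tilde f_{n+1}(0)=0$ is already contained in this estimate.

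The principal obstacle is the sign in \eqref{eq:as-detF(z)|f0-fn=0}: identifying the reindexing permutation of the Vandermonde factor as multiplication by $n+1$ on $\mathbb{Z}/(2n+3)\mathbb{Z}$, evaluating its signature via Zolotarev, and checking $\big(\tfrac{n+1}{2n+3}\big)=(-1)^{(n+1)(3n+4)/2}$. A lesser point needing care is the absence of cancellation among leading terms — guaranteed precisely by $\gcd(\mu',\nu)=1$, which makes every difference $\varepsilon_{\nu}^{j'\mu'}-\varepsilon_{\nu}^{j\mu'}$ non-zero — together with the observation that $\det(F_{\nu}(z))$ is holomorphic, so the fractional power $z^{-(\nu-1)/2}$ in the master identity is necessarily absorbed by the fractional powers carried by the two products.
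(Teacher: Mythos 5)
Your proof is correct, and it takes a genuinely different route from the paper's. The paper works directly with the matrix: for \eqref{eq:detF(0)} it performs $\nu$ successive column/row reductions, peeling off the factors $\tilde{f}_0(0),\tilde{f}_0(0)\tilde{f}_1(0),\dotsc$, and for \eqref{eq:as-detF(z)|f0=0} and \eqref{eq:as-detF(z)|f0-fn=0} it locates the unique monomial of minimal $z$-order in the Leibniz expansion of $\det(F_\nu(z))$ and reads the sign off as the parity of an explicitly exhibited permutation ($\nu,1,2,\dotsc,\nu-1$ in the second case; the ``mixed'' column order $n+2,2n+3,n+1,\dotsc$ in the third). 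You instead evaluate $g$ on the $\nu$ branches of $\omega^\nu=z$, which factors the branch-value matrix as $F_\nu(z)$ times a diagonal matrix times a root-of-unity Vandermonde, and reduces everything to the single identity $\det(F_\nu(z))=D_\nu^{-1}z^{-(\nu-1)/2}\prod_{j<j'}(g_{j'}-g_j)\prod_j g_j$; the three cases then fall out of one leading-order computation, with the sign delivered by the signature of multiplication by $\mu'$ on $\mathbb{Z}/\nu\mathbb{Z}$ via Zolotarev's lemma. I checked your sign: indeed $\bigl(\tfrac{n+1}{2n+3}\bigr)=\bigl(\tfrac{-1}{2n+3}\bigr)\bigl(\tfrac{2}{2n+3}\bigr)=(-1)^{(n+1)+(n+1)(n+2)/2}=(-1)^{(n+1)(3n+4)/2}$, since the exponents differ by the even number $n(n+1)$. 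Your approach buys uniformity and a general formula for the leading term whenever $\gcd(\mu',\nu)=1$ (and, as you note, an order lower bound covering the degenerate sub-cases even without coprimality, since cancellations only raise the vanishing order); the paper's argument is more elementary and self-contained, needing no number theory. Two small points to tighten: in the Zariski-density step for \eqref{eq:detF(0)} you should say explicitly that $\det(F_\nu(0))$ depends only on the values $\tilde{f}_q(0)$, treated as independent indeterminates, so the identity established on $\{\tilde{f}_0(0)\tilde{f}_1(0)\neq0\}$ propagates; and in \eqref{eq:as-detF(z)|f0=0} the case $\tilde{f}_1(0)=0$ (where the asserted leading coefficient vanishes and only $\mathcal{O}(z^2)$ remains to be shown) deserves the same one-line remark you give for the degenerate sub-case of \eqref{eq:as-detF(z)|f0-fn=0}.
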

\begin{proof}
Consider the matrix $F_\nu(0)$. Its first column consists of powers of $\tilde{f}_0(0)$, that is, $(\tilde{f}_0(0))^k$,
$k=1,\dotsc,\nu$.
Remove $\tilde{f}_0(0)$ from the first column so that it appears as a factor of the determinant;
then, the $(1,1)$-element of the resulting matrix is equal to $1$.
Multiplying the first row by proper powers of $\tilde{f}_0(0)$ and subtracting them, successively, from the other rows,
we get a first column consisting of zeros, with the exception of the $(1,1)$-element which equals $1$. It is clear that
the resulting determinant is equal to its minor obtained by deleting the first column and the first row:
this minor is equal to the determinant of the derived $(\nu-1)\times(\nu-1)$ matrix.
The first column of this newly-obtained matrix consists of the elements $\binom{k}{1}(\tilde{f}_0(0))^k\tilde{f}_1(0)$,
$k=1,\dotsc,\nu-1$; in particular, the first element is $\tilde{f}_0(0)\tilde{f}_1(0)$. Remove this factor from the column
and obtain a determinant whose first column consists of the terms $\binom{k}{1}(\tilde{f}_0(0))^{k-1}$: the first element
is equal to $1$.
Multiplying the rows of this determinant by proper powers of $\tilde{f}_0(0)$ and subtracting them successively
from the subsequent rows, one obtains a first column with $1$ as its first element and whose remaining elements are all
equal to $0$; thus, the transformed determinant is equal to the $(\nu-2)\times(\nu-2)$ minor that is obtained by
deleting the first row and the first column.
The first column of the $(\nu-2)\times(\nu-2)$ determinant derived in the previous step consists of the elements
$\binom{k}{2}(\tilde{f}_0(0))^{k-1}(\tilde{f}_1(0))^2$, $k=2,\dotsc,\nu-1$. All the terms containing $\tilde{f}_2(0)$
that were in the third column of the original determinant are now cancelled as a result of the previous subtractions
and certain identities for the binomial coefficients. The first element of this column, $\tilde{f}_0(0)\tilde{f}_1(0)^2$,
is now removed from the determinant, and it combines with the factors $\tilde{f}_0(0)$ and $\tilde{f}_0(0)\tilde{f}_1(0)$
obtained in the previous two steps. Hence, this procedure undergoes $\nu$ steps, and it results in an overall
multiplicative factor equal to
$\tilde{f}_0(0)\pmb{\cdot}\tilde{f}_0(0)\tilde{f}_1(0)\pmb{\cdot}\dotsb\pmb{\cdot}\tilde{f}_0(0)(\tilde{f}_1(0))^{\nu-1}$.

For the case $\tilde{f}_0(0)=0$, let $\tilde{f}_0(z)=zg_0(z)$ for some function $g_0(z)$ that is holomorphic at $z=0$.
If we recall
the construction of the matrix $F_{\nu}(z)$, then it becomes clear that successive powers of $z$ appear because in products
of the type $\prod_{j\leqslant k} \omega^j\tilde{f}_j$ the sums of indices over $j$ become greater than $\nu$, $2\nu$, etc.
Therefore, expanding
the holomorphic functions $\tilde{f}_j(z)$ into Taylor series, it is apparent that the smallest power of $z$ is generated by
the products with the smallest sums of indices. It is evident that there is only one term in $\det(F_{\nu}(z))$ with
this property, namely, it is the term that appears as the product of the successive powers of $\tilde{f}_1(0)$,
which are contained in the matrix elements that lie on the next line above the main diagonal, and the term
$z(\tilde{f}_1(0))^{\nu}$, which is the only term containing the first power of $z$ that is located at the bottom-left corner
of the matrix $F_{\nu}(z)$:
$\tilde{f}_1(0)\pmb{\cdot}(\tilde{f}_1(0))^2\pmb{\cdot}\dotsb\pmb{\cdot}(\tilde{f}_1(0))^{\nu-1}\pmb{\cdot}
z(\tilde{f}_1(0))^{\nu}$. The parity of this term is equal to the parity of the permutation $\nu,1,2,\dotsc,\nu-1$.

The proof of the asymptotics~\eqref{eq:as-detF(z)|f0-fn=0} is quite similar to the previous proof for
\eqref{eq:as-detF(z)|f0=0}. In this case, $\nu=2n+3$.
Set $\tilde{f}_0(z)=zg_0(z),\ldots, \tilde{f}_n(z)=zg_n(z)$, and employ a \emph{gedankenexperiment} by associating the
remaining functions $\tilde{f}_k(z)$ as corresponding to power of $\omega$, that is, $\tilde{f}_k(z)\to\omega^k\tilde{f}_k(z)$.
In this manner, we understand that the minimal power of $z$ is given by only one entry of the determinant, which consists of
the product of the terms
\begin{equation*}
\tilde{f}_{n+1}(0)\pmb{\cdot}(\tilde{f}_{n+1}(0))^2\pmb{\cdot}(\tilde{f}_{n+1}(0))^3z\pmb{\cdot}(\tilde{f}_{n+1}(0))^4z
\pmb{\cdot}\dotsb\pmb{\cdot}(\tilde{f}_{n+1}(0))^{2n+1}z^{n}\pmb{\cdot}(\tilde{f}_{n+1}(0))^{2n+2}z^{n}
\pmb{\cdot}(\tilde{f}_{n+1}(0))^{2n+3}z^{n+1}.
\end{equation*}
These terms
are the entries of the matrix elements in the successive rows $1,2,\ldots,2n+3$, but in the `mixed'
columns $n+2,2n+3,n+1,2n+2,n,2n+1,\ldots,2,n+3,1$. This permutation consists of $(n+1)(3n+4)/2$ transpositions.
The product equals $(\tilde{f}_{n+1}(0))^{1+2+\dotsb+2n+3}z^{2(0+1+\dotsb+n)+n+1}$.
\end{proof}
\begin{proposition}\label{prop:polynomialEquation}
There exist meromorphic functions $g_k^p(z)$, $z\in\mathbb{C}$, such that the functions $y_{p}(t)$ satisfy the
polynomial equations
\begin{equation}\label{eq:polynom-yp}
\sum_{k=1}^{2n+2} g_k^n(z)(ty_n(t))^k-1=0,\quad
n\in\mathbb{N},\qquad
\sum_{k=1}^{p_k-1} g_k^{-m}(z_k)(ty_{-m}(t))^k-1=0,\quad
m\in\mathbb{Z}_{\geqslant0},
\end{equation}
where $z$ is defined in ~\eqref{eq:yn-series}, $p_k$ and $z_k$ are given in Remark~\ref{rem:pk-zk}, and the polynomials
in equations~\eqref{eq:polynom-yp} are irreducible over the field of meromorphic functions.
\end{proposition}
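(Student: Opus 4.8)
The plan is to recognise each $ty_p(t)$ as a function $g(z)$ of precisely the type treated in Definition~\ref{def:F(z)}, to translate the desired polynomial relation into a linear system governed by the matrix $F_\nu(z)$, and to solve that system by Cramer's rule; the meromorphy of the coefficients and the non-degeneracy of the system will come from Proposition~\ref{prop:meromorphic-fpq} and Lemma~\ref{lem:detF(z)small-z}, while the precise degree and the irreducibility will be the points demanding dedicated work.

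First I would invoke Proposition~\ref{prop:yn-series} (for the $n$-series, with $\nu=2n+3$ and $z$ as defined there) and Proposition~\ref{prop:ym-series} (for the $m$-series, with $\nu=p_k$ and $z=z_k$): in either case $ty_p(t)$ has exactly the form $g(z)=\sum_{q=0}^{\nu-1}\omega^{q}\tilde f_q(z)$, $\omega^{\nu}=z$, of \eqref{eq:g(z)Fconstruction}, where $\tilde f_q(z)$ is the function $\hat f_q(z)$ of those propositions up to the overall rational constant $\bigl(\tfrac{2n+3}{4}\bigr)^{2}$ (resp. $2\bigl(\tfrac{p_k}{k}\bigr)^{2}$). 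Each $\tilde f_q$ is holomorphic at $z=0$ and, by Proposition~\ref{prop:meromorphic-fpq}, continues to a function meromorphic on all of $\mathbb{C}$; write $K$ for the field of such functions. From the cited propositions I also record the behaviour at the origin: $\tilde f_0(0)=0$ with $\tilde f_q(0)\neq0$ for $1\leqslant q\leqslant p_k-1$ in the $m$-series, and $\tilde f_0(0)=\dots=\tilde f_n(0)=0$ with $\tilde f_{n+1}(0)\neq0$ in the $n$-series.

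Next I would form the $\nu\times\nu$ matrix $F_\nu(z)=\{f^{k}_q(z)\}$ associated with this $g$ as in Definition~\ref{def:F(z)}, so that $(g(z))^{k}=\sum_{q=0}^{\nu-1}\omega^{q}f^{k}_q(z)$; comparing the coefficients of $\omega^{0},\dots,\omega^{\nu-1}$ and using $1=(g(z))^{0}=\omega^{0}\cdot1+\omega^{1}\cdot0+\dots$, a relation $\sum_k g_k(z)(g(z))^{k}=1$ is equivalent to the linear system $F_\nu(z)^{T}\,(g_1(z),\dots,g_\nu(z))^{T}=(1,0,\dots,0)^{T}$. By Lemma~\ref{lem:detF(z)small-z} applied to the present $\tilde f_q$ one has $\det F_\nu(z)=(-1)^{\nu+1}(\tilde f_1(0))^{\nu(\nu+1)/2}z+\mathcal O(z^{2})$ in the $m$-series case and $\det F_\nu(z)=(-1)^{(n+1)(3n+4)/2}(\tilde f_{n+1}(0))^{(n+2)(2n+3)}z^{(n+1)^{2}}+\mathcal O(z^{(n+1)^{2}+1})$ in the $n$-series case, and the leading coefficient is nonzero by the origin data above; hence $\det F_\nu(z)\not\equiv0$, $F_\nu(z)$ is invertible over $K$, and Cramer's rule expresses each solution component $g_k^{p}(z)$ as the quotient of a cofactor of $F_\nu(z)$ — a polynomial in the $\tilde f_q(z)$, hence an element of $K$ — by $\det F_\nu(z)\in K$. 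Any such quotient again lies in $K$, which gives the required meromorphy on $\mathbb{C}$ of the coefficients.

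It remains to obtain the stated upper summation limit together with irreducibility, and these are two facets of one statement: the relation produced above is, after normalising its constant term to $-1$, the minimal polynomial of $ty_p(t)$ over $K$ (by uniqueness of the expansion in the basis $g,g^{2},\dots,g^{\nu}$), so irreducibility over $K$ is automatic and the upper limit equals $[K(ty_p(t)):K]$. I would compute this degree by showing that the cofactor of $F_\nu(z)$ premultiplying $g_\nu$ — equivalently the minor of $F_\nu(z)$ obtained by deleting its last row and its $\omega^{0}$-column — is identically zero, which forces $g_\nu\equiv0$ and leaves the sum running only to $\nu-1=2n+2$ (resp. $p_k-1$); this is precisely where the origin vanishing $\tilde f_0(0)=\dots=\tilde f_n(0)=0$ (resp. $\tilde f_0(0)=0$) enters, through a determinantal identity for that minor of the same nature as the ones carried out in the proof of Lemma~\ref{lem:detF(z)small-z}, and it can be cross-checked against the $\omega$-expansion of $ty_p(t)$ together with $\gcd(n+1,n+2)=1$ (resp. $\gcd(1,2)=1$), which already shows $ty_p(t)$ is not a function of any proper power $\omega^{d}$, $d\mid\nu$. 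I expect this last step — the exact degree count via the identical vanishing of the leading cofactor — to be the main obstacle: the construction of the coefficients, their meromorphy, and the identification with the minimal polynomial all follow readily from Cramer's rule, Proposition~\ref{prop:meromorphic-fpq}, and Lemma~\ref{lem:detF(z)small-z}, whereas pinning down the degree demands a determinantal argument finely tuned to the precise vanishing pattern of the $\tilde f_q$ at the origin.
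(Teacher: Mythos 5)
Your construction of the coefficients coincides with the paper's own proof: take $g(z)=ty_p(t)$ in Definition~\ref{def:F(z)} with $\tilde{f}_q=\hat{f}_q$, use Lemma~\ref{lem:detF(z)small-z} together with $\hat{f}_{n+1}^n(0)\neq0$ (for the $n$-series) or $\hat{f}_1^{-m}(0)\neq0$ (for the $m$-series) to see that $\det F_{\nu}(z)\not\equiv0$, invert the linear system, and obtain meromorphy of the coefficients from Proposition~\ref{prop:meromorphic-fpq}. This is exactly the paper's argument, which writes $tY_n(t)=F_{\nu}(z)\Omega(z)$ and reads the polynomial relation off the first component of $\Omega(z)=(F_{\nu}(z))^{-1}tY_n(t)$; your $F_{\nu}^T\mathbf{g}=e_0$ is the same system transposed. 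Your irreducibility argument (minimal polynomial of degree equal to the number of branches, the branch count being supplied by the nonvanishing of $\hat{f}_{n+1}(0)$, $\hat{f}_{n+2}(0)$ and $\gcd(n+1,n+2)=1$) is a slightly more explicit version of the paper's remark that reducibility would force fewer than $2n+3$ (resp.\ $p_k$) branches.

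The final step of your proposal, however, would fail. You propose to prove that the cofactor of $F_{\nu}(z)$ multiplying $g_{\nu}$ (delete the last row and the $\omega^0$-column) vanishes identically, so that the sum stops at $\nu-1$. It does not vanish: already for $\nu=3$ that minor equals $(\tilde{f}_1(z))^3-z(\tilde{f}_2(z))^3$, the first factor in \eqref{eq:detF3}, which is manifestly nonzero for the $\hat{f}_q$ at hand. Moreover it \emph{cannot} vanish, on your own showing: if $g_{\nu}\equiv0$, then $ty_p(t)$ would satisfy a degree-$(\nu-1)$ equation over the field $K$ of meromorphic functions of $z$ and hence have at most $\nu-1$ branches, contradicting your gcd argument that it has exactly $\nu$ distinct branches; so your degree count is internally inconsistent. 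The correct conclusion---and the one the paper's proof actually delivers, since the first component of $\Omega(z)$ involves all $\nu$ powers $(ty_p(t))^1,\dotsc,(ty_p(t))^{\nu}$---is that the relation has exact degree $\nu=2n+3$ (resp.\ $p_k$) and is, up to normalization, the minimal polynomial, whence irreducible; the irreducibility argument presupposes precisely this degree. The upper summation limits $2n+2$ and $p_k-1$ in \eqref{eq:polynom-yp} are an off-by-one slip in the statement and should read $2n+3$ and $p_k$; they are not something to be established by forcing the top coefficient to zero.
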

\begin{proof}
Consider the construction of the matrix $F_{\nu}(z)$ (cf. Definition~\ref{def:F(z)}) by taking
$g(z)=ty_p(t)$, where $t$ is defined via $z$ or $z_k$ depending on whether $p=n$ or $p=-m$
(cf. Propositions~\ref{prop:yn-series} or \ref{prop:ym-series}, respectively, and Remark~\ref{rem:pk-zk}). Note that, for $p=n$,
the parameter $\nu=2n+3$, whilst for $p=-m$, $\nu=p_k$, $k=1,2,4$. Next, let $\tilde{f}_q(z)=\hat{f}_q(z)$, and, for
the $m$-series, put $z=z_k$. Since the proof for the $n$- and the $m$-series are literally the same, with only the slight
change of the notation delineated above, we present it for the $n$-series.

Introduce two column vectors: $Y_n(t)=(y_n(t),(y_n(t))^2,\ldots,(y_n(t))^{\nu})^T$ and
$\Omega(z)=(1,\omega,\ldots,\omega^{\nu})^T$.
Now, using the construction for the matrix $F_{\nu}(z)$ given in Definition~\ref{def:F(z)}, one writes
\begin{equation}\label{eq:Yn=FOmega}
tY_{n}(t)=F_{\nu}(z)\Omega(z).
\end{equation}
According to Proposition~\ref{prop:yn-series}, $\hat{f}_{n+1}^n(0)\neq0$; therefore,
Lemma~\ref{lem:detF(z)small-z} (the asymptotics~\eqref{eq:as-detF(z)|f0-fn=0}) implies that $F_{\nu}(z)$ does not vanish
identically, so that one can invert equation~\eqref{eq:Yn=FOmega} to arrive $(F_{\nu}(z))^{-1}tY_n(t)=\Omega(z)$.
Consequently, the first polynomial equation in \eqref{eq:polynom-yp} is none other than the equation for the first component of
$\Omega(z)$. In the case of the $m$-series, the invertibility of matrix $F_{\nu}(z_k)$ is justified via the
asymptotics~\eqref{eq:as-detF(z)|f0=0}, because, according to Proposition~\ref{prop:ym-series}, $\hat{f}_1^{-m}(0)\neq0$.

The irreducibility of the polynomials in~\eqref{eq:polynom-yp} follows from the fact that, otherwise, the functions
$y_n(t)$ and $y_{-m}(t)$ would have fewer than $2n+3$ and $p_k$ branches, respectively. This fact, however,
contradicts the small-$t$ expansions of these functions given in ~\eqref{eq:ym-definition} and \eqref{eq:yn-definition}.
\end{proof}

In Proposition~\ref{prop:polynomialEquation}, the polynomial equations, as well as their solutions, are given in terms of the
functions $\hat{f}_q(z)$. Below, we show that these functions can be characterized as meromorphic solutions of some
special nonlinear systems of polynomial differential equations.

\begin{proposition}\label{prop:Esystem}
For any algebroid solution, $y(t)$, of equation~\eqref{eq:dP3y} (cf. \eqref{eq:dP3y-compact}$)$ with $p$ branches,
there exists a system $\mathcal{E}_p$ of $p$ second-order polynomial {\rm ODEs}, $E_p^q=0$, where
\begin{equation}\label{eq:sysEp}
E_p^q=E_p^q\Big(z,\{\hat{f}_0,\hat{f}_1,\dotsc,\hat{f}_{p-1}\};\{\hat{f}_0^{\prime},\hat{f}_1^{\prime},\dotsc,
\hat{f}_{p-1}^{\prime}\};
\{\hat{f}_0^{\prime\prime},\hat{f}_1^{\prime\prime},\dotsc,\hat{f}_{p-1}^{\prime\prime}\}\Big),\quad
q=0,1,\ldots,p-1,
\end{equation}
which has a meromorphic (in $\mathbb{C})$ solution $\{\hat{f}_0(z),\hat{f}_1(z),\dotsc,\hat{f}_{p-1}(z)\}$
that defines $y(t)$ via the formulae given in Propositions~\ref{prop:yn-series} or \ref{prop:ym-series}.
Conversely, any meromorphic
solution of the system $\mathcal{E}_p$ defines, via Propositions~\ref{prop:yn-series} or \ref{prop:ym-series},
an algebroid solution of equation~\eqref{eq:dP3y} (cf. \eqref{eq:dP3y-compact}$)$ with $p$-branches.
\end{proposition}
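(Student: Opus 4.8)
The plan is to obtain the system $\mathcal{E}_p$ by substituting the branch\nobreakdash-separated representation of Proposition~\ref{prop:yn-series} (for the $n$-series) or Proposition~\ref{prop:ym-series} (for the $m$-series) into equation~\eqref{eq:dP3y-compact} and equating the $p$ coefficients of the powers $\omega^0,\dots,\omega^{p-1}$ of the branching variable. Fix an algebroid solution $y(t)$ with $p$ branches and write $ty(t)=c^2g(z)$, where $g(z):=\sum_{q=0}^{p-1}\omega^q\hat{f}_q(z)$ with $\omega^p=z$, where $z$ (resp.\ $z_k$) is the single\nobreakdash-valued variable of Proposition~\ref{prop:yn-series} (resp.\ \ref{prop:ym-series}), so that $z\propto t^\kappa$ with $\kappa\in\{1,2,4\}$ and $t\,\tfrac{\md}{\md t}=\kappa z\,\tfrac{\md}{\md z}$, and $c^2$ is the rational prefactor occurring there. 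Since $y=Y/t$ with $Y=ty=c^2g$, one has $ty'/y=tY'/Y-1=\kappa zg'(z)/g(z)-1$, and a direct computation---the same one used in the proof of Theorem~\ref{th:algebroid}---turns equation~\eqref{eq:dP3y-compact}, after multiplication by $t$ and then by $g^2$, into a single second\nobreakdash-order polynomial ODE,
\begin{equation*}
\kappa^2\bigl(zg'g+z^2g''g-z^2(g')^2\bigr)=c^2g^3-C\,z^{4/\kappa},
\end{equation*}
with $C$ an explicit nonzero constant and $4/\kappa\in\{1,2,4\}$.

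The next step is to expand this identity in the cyclic algebra $\mathbb{C}[z][\omega]/(\omega^p-z)$. From $\omega=z^{1/p}$ one has $\omega'=\omega/(pz)$, whence $z(\omega^jv(z))'=\omega^j\!\left(zv'(z)+\tfrac{j}{p}v(z)\right)$; iterating this, $g$, $zg'$ and $z^2g''$ are each a sum of $\omega^j$ times an explicit first- or second\nobreakdash-order differential polynomial in $\hat{f}_j$ over $\mathbb{Q}[z]$. Multiplying out the products $g'g$, $g''g$, $(g')^2$ and $g^3$, reducing $\omega^a\mapsto z^{\lfloor a/p\rfloor}\omega^{a\bmod p}$, and noting that the term $z^{4/\kappa}$ sits entirely in the $\omega^0$ slot, one reads off for each $q=0,\dots,p-1$ the coefficient of $\omega^q$; this coefficient, denoted $E_p^q$, is by construction a polynomial in $z$ and in $\hat{f}_0,\dots,\hat{f}_{p-1}$ and their first and second derivatives, i.e.\ of the shape~\eqref{eq:sysEp} (clearing the overall factor $p^2$ makes all coefficients integral). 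This reduction is the main---though essentially bookkeeping---obstacle: one must check that no negative powers of $z$ survive (they cannot, because the original ODE was pre\nobreakdash-multiplied by $g^2$) and that second derivatives enter only through $z^2g''g$, so that the order stays $\leqslant2$.

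For the direct statement, let $y(t)$ be an algebroid solution with $p$ branches. Proposition~\ref{prop:yn-series} or~\ref{prop:ym-series} supplies the representation above with the $\hat{f}_q$ holomorphic at $z=0$, and Proposition~\ref{prop:meromorphic-fpq} upgrades them to functions meromorphic on all of $\mathbb{C}$. Because $y$ solves equation~\eqref{eq:dP3y-compact} along every sheet of its Riemann surface, $g$ satisfies the displayed ODE simultaneously for all $p$ values $\omega=\zeta^\ell z^{1/p}$, $\ell=0,\dots,p-1$, $\zeta=\me^{2\pi\mi/p}$; equivalently, $\sum_{q=0}^{p-1}\zeta^{\ell q}\bigl(z^{q/p}E_p^q(z)\bigr)=0$ for every $\ell$. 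The Vandermonde matrix $(\zeta^{\ell q})_{\ell,q=0}^{p-1}$ is invertible, so $E_p^q\equiv0$ for every $q$, and $\{\hat{f}_0,\dots,\hat{f}_{p-1}\}$ is a meromorphic solution of $\mathcal{E}_p$.

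For the converse, take any meromorphic solution $\{\hat{f}_0,\dots,\hat{f}_{p-1}\}$ of $\mathcal{E}_p$, form $g(z):=\sum_{q=0}^{p-1}\omega^q\hat{f}_q(z)$ with $\omega=z^{1/p}$, and set $y(t):=c^2g(z)/t$, $z=\mathrm{const}\cdot t^\kappa$, as above. Then $g\not\equiv0$ (otherwise $E_p^0$ would reduce to $C\,z^{4/\kappa}\not\equiv0$, contradicting $E_p^0\equiv0$), and the hypothesis $E_p^q\equiv0$ for all $q$ gives $\sum_q\omega^qE_p^q\equiv0$, which by the construction of the $E_p^q$ is precisely the displayed second\nobreakdash-order ODE for $g$; undoing the reduction to $\mathcal{E}_p$---i.e.\ dividing by $g^2$ and reversing the birational substitutions $Y=ty$ and $z=\mathrm{const}\cdot t^\kappa$---shows that $y(t)$ solves equation~\eqref{eq:dP3y-compact}. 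Finally, $y(t)$ is single\nobreakdash-valued on the $p$-sheeted cover $\omega^p=z$, has its only possible branch points at $t=0$ and $t=\infty$, and is meromorphic elsewhere, so it is an algebroid solution; its number of branches equals $p$ under the nondegeneracy conditions on the $\hat{f}_q(0)$ recorded in Propositions~\ref{prop:yn-series} and~\ref{prop:ym-series}, and a proper divisor of $p$ otherwise. Since assembling and disassembling are mutually inverse, this yields the asserted correspondence.
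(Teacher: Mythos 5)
Your proposal is correct and follows essentially the same route as the paper: substitute the branch decomposition $ty(t)=c^2\sum_q\omega^q\hat{f}_q(z)$ into the compact form of the ODE, collect the coefficients $E_p^q$ of $\omega^q$ after reducing modulo $\omega^p=z$, and invoke the invertibility of the matrix $(\zeta^{\ell q})$ (the matrix $A$ of Proposition~\ref{prop:meromorphic-fpq}) to conclude that each $E_p^q$ vanishes, with the converse obtained by reversing the substitution. Your treatment is somewhat more explicit on the bookkeeping (the uniform $z^{4/\kappa}$ inhomogeneity and the honest caveat that the branch count in the converse could a priori drop to a divisor of $p$ without the nondegeneracy conditions on $\hat{f}_q(0)$), but these are refinements of, not departures from, the paper's argument.
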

\begin{proof}
The proof is constructive. Consider, for example, the case of the $n$- and the $m$-series for for even $m$. According to
Propositions~\ref{prop:yn-series} and \ref{prop:ym-series}, the solution $y(t)$, in this case, can be presented in
the following form:
\begin{equation}\label{eq:yEsystem}
ty(t)=v(z)=(c_1)^2\sum_{q=0}^{p-1}z^{\frac{q}{p}}\tilde{f}_q(z),
\end{equation}
where $p$ is an odd positive integer, and $c_1$ is a parameter. The equation for the function $v(z)$ can be written as
\begin{equation}\label{eq:vEsystem}
v(z)\delta^2v(z)-\left(\delta v(z)\right)^2=(c_2)^2\left((v(z))^3-z\right),\qquad
\delta=z\frac{\md}{\md z},
\end{equation}
where $c_2$ is some parameter. Since, at this stage, the functions $\tilde{f}_q$ are defined modulo multiplication by a parameter,
we can, upon rescaling  $v(z)$ and $z$, always fix $c_1=c_2=1$.

Substituting $v(z)$ given by \eqref{eq:yEsystem} into equation~\eqref{eq:vEsystem} we arrive at, after straightforward
calculations, the equation of the form
\begin{equation}\label{eq:defE}
\sum_{q=0}^{p-1}z^{\frac{q}{p}}E_p^q=0,
\end{equation}
where $E_p^q$ are meromorphic functions of the form \eqref{eq:sysEp}. Since the functions $E_p^q$ are single-valued,
we, after repeating the arguments used in the proof of Proposition~\ref{prop:meromorphic-fpq}, arrive at the equation
$A\vec{\mathcal{E}}_p=\vec{0}$ for the vector $\vec{\mathcal{E}}_p=(E_p^0, z^{1/p}E_p^1,\ldots,z^{(p-1)/p}E_p^{p-1})^T$,
where the matrix $A$ is defined in Proposition~\ref{prop:meromorphic-fpq}.

Conversely, if we have a meromorphic solution of the system $\mathcal{E}_p$, we construct the functions $v(z)$ and $y(t)$
via the formulae~\eqref{eq:yEsystem}; after substituting $y(t)$ into equation~\eqref{eq:dP3y}, one arrives at
equation~\eqref{eq:defE}, which is valid by virtue of the fact that $E_p^q=0$, $q=0,1,\dotsc,p-1$.
\end{proof}
\begin{remark}\label{eq:Eexplicit}
For the system $\mathcal{E}_p$ constructed in the proof of Proposition~\ref{prop:Esystem}, we make some additional remarks.

All meromorphic solutions of the system $\mathcal{E}_p$ are holomorphic at the origin. For even values of $m=2n\geqslant2$,
the systems $\mathcal{E}_{m+3}$ and $\mathcal{E}_{2n+3}$ coincide modulo scaling ($c_1=c_2=1$). This last fact implies that,
for any $n\geqslant1$, system $\mathcal{E}_{2n+3}$ has exactly two meromorphic solutions: these solutions can be distinguished with
the help of the initial data given in Propositions~\ref{prop:yn-series} and ~\ref{prop:ym-series}.

This is not the case for $m=0$ (see
Remark~\ref{rem:exampleE3} below). The other solutions of the $m$-series for even $m$ (cf. Proposition~\ref{prop:ym-series})
may also have the same branching number when $p_2=2n+3$ or $p_4=2n+3$; however, the systems $\mathcal{E}_{p_2}$
and $\mathcal{E}_{p_4}$ are different, because they are obtained from an equation like~\eqref{eq:vEsystem} where
the variable $z$ on the right-hand side is changed to $(z_2)^2$ or $(z_4)^4$, respectively. Certainly, we can map them into
the corresponding system $\mathcal{E}_{2n+3}$ via the change of variable $(z_2)^2=z$ or $(z_4)^4=z$; but, in this case,
solutions that are holomorphic at $z_2=0$ (resp., $z_4=0$) in the variable $z_2$ (resp., $z_4$) will have an expansion over
$\sqrt{z}$ (resp.,$\sqrt[4]{z}$) at $z=0$.

The explicit form of the system $\mathcal{E}_{2n+3}$, whose derivation is described in the proof of
Proposition~\ref{prop:Esystem}, reads:
\begin{align*}
&E_p^q:&
&\underset{q_i\geqslant0,\,q_j\geqslant0}{\sum_{q_i+q_j=q(\text{mod}\,p)}}z^{\frac{q_i+q_j-q}{p}}
\left(\hat{f}_{q_i}\bigg(\frac{q_j^2}{p^2}\hat{f}_{q_j}+\frac{2q_j}{p}\delta(\hat{f}_{q_j})+\delta^2(\hat{f}_{q_j})\bigg)
-\left(\frac{q_i}{p}\hat{f}_{q_i}+\delta(\hat{f}_{q_i})\!\right)\!\!
\left(\frac{q_j}{p}\hat{f}_{q_j}+\delta(\hat{f}_{q_j})\!\right)\!\!\right)\\
&&
&=(c_1)^4(c_2)^2\underset{q_i\geqslant0,\,q_j\geqslant0,\,q_k\geqslant0}{\sum_{q_i+q_j+q_k=q(\text{mod}\,p)}}z^{\frac{q_i+q_j+q_k-q}{p}}
\hat{f}_{q_i}\hat{f}_{q_j}\hat{f}_{q_k}\quad -\quad\frac{z}{(c_1)^6}\delta_{0,q},\qquad
q=0,1,\ldots,p-1,
\end{align*}
where $\hat{f}_{q_l}=\hat{f}_{q_l}(z)$, $\delta(\hat{f}_{q_l})=z\frac{\md}{\md z}\hat{f}_{q_l}(z)$, $l=i,j,k$, and
$\delta_{0,q}$ is the Kronecker delta.
\hfill $\blacksquare$\end{remark}
\begin{remark}\label{rem:exampleE3}
Here, we consider the example of $\mathcal{E}_3$ system associated with the solution $H(r)$ considered in Section~\ref{sec:2}.
Recall that the corresponding  solution of equation~\eqref{eq:dP3y-compact} (cf. \eqref{eq:dP3y}) is denoted by
$y_0(t)$. Define
\begin{equation}\label{eq:y0m=0example}
v_0(z):=ty_0(t)=\hat{f}_0(z)+z^{1/3}\hat{f}_1(z)+z^{2/3}\hat{f}_2(z),\qquad
z=t^4.
\end{equation}
Comparing this formula with the one given in Proposition~\ref{prop:ym-series}, one notes that the scaling coefficient
of $z$ has been modified because we want to arrive exactly at the series introduced in Section~\ref{sec:2}.
Substituting into equation~\eqref{eq:dP3y-compact} the function $v_0(z)$, we, after straightforward transformations, arrive
at the following ODE for $v_0(z)$:
\begin{equation}\label{eq:v3-compact}
9\left(\frac{zv_0'(z)}{v_0(z)}\right)'=v_0(z)-\frac{z}{(v_0(z))^2}.
\end{equation}
Recall that, in the notation of Section~\ref{sec:2}, $z=r$. To simplify the notation in the ensuing system for the
functions $\hat{f}_q$, $q=0,1,2$, we omit their $z$-dependence, and the primes denote differentiation with respect to $z$:
\begin{align*}
&E_3^0:&
&
z^2\big(\hat{f}_2\hat{f}_1^{\prime\prime}-2\hat{f}_1^{'}\hat{f}_2^{'}+\hat{f}_1\hat{f}_2^{''}\big)
+z\big(\hat{f}_0\hat{f}_0^{''}-(\hat{f}_0^{'})^2\big)
+(1/3)z\hat{f}_2\hat{f}_1^{'}+(5/3)z\hat{f}_1\hat{f}_2^{'}+\hat{f}_0\hat{f}_0^{'}\\
&&
&=(1/9)\big(z(\hat{f}_2)^3+(\hat{f}_0)^3/z+(\hat{f}_1)^3-\hat{f}_1\hat{f}_2-1\big)+(2/3)\hat{f}_0\hat{f}_1\hat{f}_2,\\
&E_3^1:&
&
z^2\big(\hat{f}_2\hat{f}_2^{''}-(\hat{f}_2^{'})^2\big)
+z\big(\hat{f}_1\hat{f}_0^{''}-2\hat{f}_0^{'}\hat{f}_1^{'}+\hat{f}_0\hat{f}_1^{''}\big)
+(1/3)\hat{f}_1\hat{f}_0^{'}+(5/3)\hat{f}_0\hat{f}_1^{'}+z\hat{f}_2\hat{f}_2^{'}\\
&&
&=(1/3)\big((\hat{f}_0)^2\hat{f}_1/z+\hat{f}_0(\hat{f}_2)^2+(\hat{f}_1)^2\hat{f}_2\big)-(1/9)\hat{f}_0\hat{f}_1/z,\\
&E_3^2:&
&z^2\big(\hat{f}_0\hat{f}_2^{''}-2\hat{f}_0^{'}\hat{f}_2^{'}+\hat{f}_2\hat{f}_0^{''}
+\hat{f}_1\hat{f}_1^{''}-(\hat{f}_1^{'})^2\big)
+z\big((7/3)\hat{f}_0\hat{f}_2^{'}-(1/3)\hat{f}_2\hat{f}_0^{'}+\hat{f}_1\hat{f}_1^{'}\big)\\
&&
&=(1/3)\big((\hat{f}_1)^2\hat{f}_0+(\hat{f}_0)^2\hat{f}_2+z(\hat{f}_2)^2\hat{f}_1\big)-(4/9)\hat{f}_0\hat{f}_2.
\end{align*}
Analysing the order of the poles in the $E_3^0$ equation, one proves that a meromorphic solution of the $\mathcal{E}_3$
system cannot have a pole at $z=0$: assume, to the contrary, that $\hat{f}_0$ has a pole at $z=0$ of order higher than the
orders of the poles of $\hat{f}_1$ and $\hat{f}_2$; then, it is easy to arrive at a contradiction, namely, that one of the
functions $\hat{f}_1$ or $\hat{f}_2$  would have to have a pole of higher order (by at least $1$).
An analogous contradiction appears if one assumes that either $\hat{f}_1$ or  $\hat{f}_2$ has the highest-order pole.
If, on the other hand, all the poles are of the same order, then the term $\hat{f}_0^3/z$ has a pole at the origin that
cannot be cancelled by the pole of any other term of the $E_3^0$ equation.
Thus, any meromorphic solution of $\mathcal{E}_3$ is regular at $z=0$.

It is easy to establish that there is only one
solution of the $\mathcal{E}_3$ system that can be expanded in a Taylor series at $z=0$; its first few coefficients
can be found with the help of \textsc{Maple}:
$$
\hat{f}_0(z)=z(a_2+a_5z+a_8z^2+\dotsb),\quad
\hat{f}_1(z)=a_0+a_3z+a_6z^2+\dotsb,\quad
\hat{f}_2(z)=a_1+a_4z+a_7z^2+\dotsb,
$$
where the numbers $a_k$, $k=0, 1,2,\ldots$, are defined in Section~\ref{sec:2} (see equations~\eqref{eq:Hat0-expansion},
\eqref{eq:a1}, \eqref{eq:ak-recurrence}, \eqref{eqs:a2-a6}, and \eqref{eq:a-n-conjecture}). This structure of the series
for the functions $\hat{f}_q(z)$ coincides with the one for $m=0$ in Proposition~\ref{prop:ym-series}; in particular,
$\hat{f}_0(z)=zf_0(z)$, with $f(0)\neq0$.
\hfill $\blacksquare$\end{remark}
\section{The Monodromy Data} \label{sec:mondata}
The space of solutions of the Painlev\'e equations can be characterized by the manifold of the monodromy data;
in fact, this manifold is an algebraic variety defined by a set of polynomial equations in $\mathbb{C}^n$.
The co-ordinates of the points of this manifold are called the monodromy data of the solution;
in particular, the manifold of the monodromy data for equation~\eqref{eq:dp3u} is defined in \cite{KitVar2004}.
Below, we present a reduced version of this manifold corresponding to the case $a=0$.

Consider $\mathbb{C}^{7}$ with co-ordinates $(s_{0}^{0},s_{0}^{\infty},s_{1}^{\infty},g_{11},g_{12}, g_{21},g_{22})$.
The monodromy manifold for equation~\eqref{eq:dp3u} with $a=0$, denoted by $\mathcal{M}$, is defined via the following system
of algebraic equations:
\begin{gather}
s_{0}^{\infty}s_{1}^{\infty} \! = \! -2 \! - \! \mi s_{0}^{0}, \label{eq:mdta2} \\
g_{21}g_{22} \! - \! g_{11}g_{12} \! + \! s_{0}^{0}g_{11}g_{22} \! = \! \mi, \label{eq:mdta3} \\
g_{11}^{2} \! - \! g_{21}^{2} \! - \! s_{0}^{0} g_{11} g_{21} \! = \! \mi s_{0}^{\infty}, \label{eq:mdta4} \\
g_{22}^{2} \! - \! g_{12}^{2} \! + \! s_{0}^{0} g_{12} g_{22} \! = \! \mi s_{1}^{\infty}, \label{eq:mdta5} \\
g_{11}g_{22} \! - \! g_{12} g_{21} \! = \! 1. \label{eq:mdta6}
\end{gather}
\begin{remark}
Multiplying equations~\eqref{eq:mdta4} and \eqref{eq:mdta5}, one proves, with the help of the three remaining equations, that
this product is an identity; therefore, $\dim_{\mathbb{C}}{\mathcal{M}}=3$.
As a matter of fact, this monodromy manifold uniquely characterizes a solution of a slightly extended system rather than just
solutions, $u(\tau)$, to equation~\eqref{eq:dp3u}, namely, $\mathcal{M}$ uniquely characterizes the pair of functions
$(u(\tau),\varphi(\tau))$, where $\varphi(\tau)$ can be written as the indefinite integral $\smallint^{\tau}\md\xi/u(\xi)$,
and the additional parameter is needed in order to fix a particular primitive function. The function $\varphi(\tau)$
is addressed in Appendices~\ref{app:asympt0} and \ref{app:infty}.

The unique parametrization of solutions $u(\tau)$ is achieved via a quadratic contraction of $\mathcal{M}$. Define
the following contraction variables:
\begin{equation}\label{eq:contraction-variables}
\tilde{g}_1=\mi g_{12}g_{11},\quad
\tilde{g}_2=\mi g_{21}g_{22},\quad
\tilde{g}_3=g_{11}g_{22},\quad
\tilde{g}_4=g_{12}g_{21},\quad
\tilde{s}=1+\mi s_0^0=-(1+s_{0}^{\infty}s_{1}^{\infty}).
\end{equation}
The parameter $\tilde{g}_4$ is introduced merely for convenience; it plays an auxiliary role, and is formally not required
for the definition of the contracted monodromy manifold. Note that, by definition, one has
$\tilde{g}_3\tilde{g}_4=-\tilde{g}_1\tilde{g}_2$, and equation~\eqref{eq:mdta6} allows one to remove $\tilde{g}_4$, that is,
$\tilde{g}_3-\tilde{g}_4=1$. Finally, multiplying equation~\eqref{eq:mdta3} by $-\mi$, we arrive at algebraic equations
defining, in $\mathbb{C}^4$ with co-ordinates $(\tilde{g}_1,\tilde{g}_2,\tilde{g}_3,\tilde{s})$, the contracted monodromy
manifold:
\begin{equation}\label{eqs:ContrManifold}
\tilde{g}_1-\tilde{g}_2+\tilde{g}_3(1-\tilde{s})=1,\qquad
\tilde{g}_3(\tilde{g}_3-1)=-\tilde{g}_2\tilde{g}_1.
\end{equation}
Either one of the co-ordinates $\tilde{g}_1$ or $\tilde{g}_2$ can be further excluded from the system~\eqref{eqs:ContrManifold},
so that the contracted monodromy manifold can be presented as a single equation in $\mathbb{C}^3$:
\begin{equation}\label{eq:ContrManReduced}
(\tilde{g}_1)^2+\tilde{g}_1\tilde{g}_3(1-\tilde{s})+(\tilde{g}_3)^2=\tilde{g}_1+\tilde{g}_3
\end{equation}

For an equation equivalent to \eqref{eq:dp3u} with $a=0$ (see Appendix~\ref{app:Kit87},
equation~\eqref{eq:tzieica-sym}),
two (one for each value of $\varepsilon=\pm1$) equivalent (related by a birational transformation) monodromy manifolds were
introduced in \cite{Kit87}. For $\varepsilon=+1$, say, the corresponding manifold is described by the following system of equations:
\begin{equation}\label{eqs:manifold87}
g_1+g_2(1-s)+g_3=1,\qquad
g_2(g_2-1)=g_1g_3.
\end{equation}
The two manifolds~\eqref{eqs:ContrManifold} and \eqref{eqs:manifold87} should be birationally equivalent. There are two
apparent permutation transformations:
\begin{equation*}\label{eqs:permutation}
\tilde{s}=s,\quad
\tilde{g}_3=g_2,\quad
\tilde{g}_2=-g_3,\quad
\tilde{g}_1=g_1\qquad
\text{or}\qquad
\tilde{s}=s,\quad
\tilde{g}_3=g_2,\quad
\tilde{g}_2=-g_1,\quad
\tilde{g}_1=g_3.\qquad
\end{equation*}
These transformations, however, do not correlate with the parametrization of the solutions that are obtained in
the papers \cite{Kit87,KitVar2004}; in fact, comparing the amplitude of the oscillation terms of asymptotics,
we see that $\tilde{g}_3=g_3$, implying that there should be some other birational transformation connecting the contraction
manifolds:
\begin{equation}\label{eqs:contraction-symmetries-g2=-g2}
s=\tilde{s},\;\;
g_3=\tilde{g}_3,\;\;
g_2=-\tilde{g}_2,\;\;
g_1=\tilde{g}_1-\tilde{s}(\tilde{g}_2+\tilde{g}_3),
\end{equation}
or
\begin{equation}\label{eqs:contraction-symmetries-g2=g1}
s=\tilde{s},\;\;
g_3=\tilde{g}_3,\;\;
g_2=\tilde{g}_1,\;\;
g_1=-\tilde{g}_2+\tilde{s}(\tilde{g}_1-\tilde{g}_3).
\end{equation}
The proof of the above transformations is slightly more complicated than that for the permutated ones; therefore, we outline
the proof by taking, as an example, the transformation~\eqref{eqs:contraction-symmetries-g2=g1}.
Substituting the formulae for the variables without tildes into the first equation of \eqref{eqs:manifold87}, we immediately
confirm the validity of the first equation of \eqref{eqs:ContrManifold} for the tilde variables. To confirm the second
equation of \eqref{eqs:ContrManifold}, we have to use the first one twice; more precisely, substituting variables
without tildes into the second equation of \eqref{eqs:manifold87}, we get
\begin{align*}\label{eq:prooftransformations}
\tilde{g}_1(\tilde{g}_1-1)=\tilde{g}_3(-\tilde{g}_2+s\tilde{g}_1-s\tilde{g}_3)&\quad\Rightarrow\quad
\tilde{g}_1(\tilde{g}_2-\tilde{g}_3+\tilde{s}\tilde{g}_3)=\tilde{g}_3(-\tilde{g}_2+\tilde{s}\tilde{g}_1-
\tilde{s}\tilde{g}_3)\Rightarrow\\
\tilde{g}_1\tilde{g}_2=\tilde{g}_3(\tilde{g}_1-\tilde{g}_2+\tilde{s}\tilde{g}_3)&\quad\Rightarrow\quad
\tilde{g}_1\tilde{g}_2=\tilde{g}_3(1-(1-\tilde{s})\tilde{g}_3+\tilde{s}\tilde{g}_3).
\end{align*}
\hfill $\blacksquare$\end{remark}
\begin{lemma} \label{lem:mondataH0}
Let $u(\tau)$ be the solution of equation~\eqref{eq:dp3u} with $a=0$ defined by the asymptotics~\eqref{eq:u(tau)asympt0}.
Then, the monodromy data characterizing $u(\tau)$ reads:
\begin{equation} \label{eqs:mondataH0}
\begin{gathered}
\tilde{s}=0,\quad
s_{0}^{0}=\mi,\quad
s_{0}^{\infty}s_{1}^{\infty}=-1,\quad
s_{0}^{\infty}g_{12}^{2}=s_{1}^{\infty}g_{21}^2=-\frac{\mi(H(0)-1)^{2}}{3H(0)},\\
\tilde{g}_4=g_{12}g_{21}=\frac{(H(0)-1)^2}{3H(0)},\quad
\tilde{g}_3=g_{11}g_{22}=\frac{1}{3H(0)}\Big(H(0)-\me^{\frac{2\pi\mi}{3}}\Big)\Big(H(0)-\me^{-\frac{2\pi\mi}{3}}\Big),\\
\tilde{g}_2=\mi g_{21}g_{22}=-\frac{\me^{-\frac{2\pi\mi}{3}}}{3H(0)}\Big(H(0)-1\Big)\Big(H(0)-\me^{-\frac{2\pi\mi}{3}}\Big),\quad
\tilde{g}_1=\mi g_{12}g_{11}=\frac{\me^{\frac{2\pi\mi}{3}}}{3H(0)}\Big(H(0)-1\Big)\Big(H(0)-\me^{\frac{2\pi\mi}{3}}\Big).\\
\end{gathered}
\end{equation}
\end{lemma}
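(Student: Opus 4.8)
The plan is to obtain \eqref{eqs:mondataH0} from the generic $\tau\to+0$ connection result recalled in Appendix~\ref{app:asympt0} (Theorem~\ref{th:B1asympt0}), specialized to the solution $u(\tau)$ attached to $H(r)$. The first step is to pin down the branching exponent. By \eqref{eq:u(tau)asympt0} one has
\begin{equation*}
u(\tau)\underset{\tau\to+0}{=}\tfrac12 H(0)\,\tau^{1/3}-\tfrac98 H'(0)\,\tau^{5/3}+\mathcal{O}(\tau^{3}),
\end{equation*}
and, equivalently, by Theorem~\ref{th:algebroid} together with Corollary~\ref{cor:Q=algebroid} (with $q=1$), $u(\tau)$ is the member of $\mathbb{ALG}(dP3_{0})$ with $1-4\rho=1/3$, i.e. $\rho=1/6$. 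Since $a=0$, the formula of Appendix~\ref{app:asympt0} expressing $\rho$ through $a$ and the Stokes multiplier $s_{0}^{0}$ --- equivalently, through $\tilde{s}=1+\mi s_{0}^{0}=-(1+s_{0}^{\infty}s_{1}^{\infty})$ --- becomes an equation for $\tilde{s}$ alone, whose solution is $\tilde{s}=0$; hence $s_{0}^{0}=\mi$ and $s_{0}^{\infty}s_{1}^{\infty}=-1$, the first two entries of \eqref{eqs:mondataH0}.

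The second step recovers the remaining, $H(0)$-dependent, data from the amplitude part of Theorem~\ref{th:B1asympt0}. At $a=0$, $\rho=1/6$ this theorem expresses the coefficient of $\tau^{1-4\rho}=\tau^{1/3}$ in the small-$\tau$ expansion of $u(\tau)$ --- here $\tfrac12 H(0)$ --- as an explicit (Gamma-function) function of the quadratic monodromy invariants, in particular of $\tilde{g}_{3}=g_{11}g_{22}$ and $\tilde{g}_{4}=g_{12}g_{21}$. Inverting this relation, and using the defining equations \eqref{eqs:ContrManifold}--\eqref{eq:ContrManReduced} of the contracted manifold with $\tilde{s}=0$ (which cut out the one-parameter subfamily with $\rho=1/6$), one is led to the stated values, conveniently written with $\omega=\me^{2\pi\mi/3}$ as
\begin{gather*}
\tilde{g}_{3}=\frac{(H(0)-\omega)(H(0)-\bar\omega)}{3H(0)},\qquad
\tilde{g}_{4}=\tilde{g}_{3}-1=\frac{(H(0)-1)^{2}}{3H(0)},\\
\tilde{g}_{1}=\frac{\omega\,(H(0)-1)(H(0)-\omega)}{3H(0)},\qquad
\tilde{g}_{2}=-\frac{\bar\omega\,(H(0)-1)(H(0)-\bar\omega)}{3H(0)}.
\end{gather*}
As an independent check, the sub-leading coefficient $-\tfrac98 H'(0)$ in the displayed expansion, with $H'(0)=(H(0))^{2}-1/H(0)$ from \eqref{eq:Hprime0-H0}, must coincide with the corresponding combination produced by Theorem~\ref{th:B1asympt0} at $\rho=1/6$ --- where the first correction to the leading term and the $\tau^{1+4\rho}$-term merge into the single power $\tau^{5/3}$ --- and this over-determined equality fixes the residual sign/branch ambiguities. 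The remaining entries $s_{0}^{\infty}g_{12}^{2}=s_{1}^{\infty}g_{21}^{2}=-\mi\tilde{g}_{4}$ are then determined by the normalization $\varphi(0)=0$ of the primitive $\varphi(\tau)$ underlying $I(r)$, through the asymptotics of $\varphi(\tau)$ recorded in Appendix~\ref{app:asympt0}; this is exactly the extra datum needed to lift the point of the contracted manifold to a point of $\mathcal{M}$.

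The third step is the verification of internal consistency, i.e. that the values above satisfy the full system \eqref{eq:mdta2}--\eqref{eq:mdta6}. Via \eqref{eq:contraction-variables} and the reduction leading to \eqref{eqs:ContrManifold}, this amounts to checking $\tilde{g}_{1}-\tilde{g}_{2}+\tilde{g}_{3}=1$ and $\tilde{g}_{3}(\tilde{g}_{3}-1)=-\tilde{g}_{1}\tilde{g}_{2}$ for $\tilde{s}=0$; both follow by direct computation using $(H(0)-\omega)(H(0)-\bar\omega)=(H(0))^{2}+H(0)+1$, $\omega+\bar\omega=-1$, $\omega\bar\omega=1$, together with the factorizations $\omega(H(0))^{2}+H(0)+\bar\omega=\omega(H(0)-1)(H(0)-\omega)$ and its conjugate.

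The main obstacle is the bookkeeping in the first two steps: one must transport the connection formulae of \cite{KitVar2004} --- stated there in a different normalization --- through the chain of substitutions \eqref{eq:a-b-epsilon-conditions}, \eqref{eq:u-y-transformation}, \eqref{eq:hazzidakis-dP3y}, \eqref{eq:y-Hat0}, keeping exact track of the branches of $\tau^{1/3}$ and $\tau^{4/3}$, and must control the degeneration of the generic asymptotic expansion at the resonant point $(a,\rho)=(0,1/6)$, where Gamma-function factors in the amplitude formula can acquire spurious zeros or poles that must be seen to cancel. Once these normalization issues are settled, the algebra in the remaining steps --- all of it with the cube roots of $-1$ --- is routine.
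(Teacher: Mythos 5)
Your plan is essentially the paper's proof: specialize Theorem~\ref{th:B1asympt0} to $a=0$, read off $\rho=1/6$ from the exponent $1-4\rho=1/3$ (whence, via \eqref{eq:appAsympt0:rho-s}, $s_0^0=\mi$, $s_0^{\infty}s_1^{\infty}=-1$, $\tilde{s}=0$), match the coefficient of $\tau^{1/3}$ to $\tfrac12H(0)$ to obtain one linear relation among $\tilde{g}_1,\tilde{g}_2,\tilde{g}_3$ (the paper's \eqref{eq:H0-g-linearProof}, after evaluating $\mathbf{p}_1(\pm1/6)$), and then solve this together with the two equations \eqref{eqs:ContrManifold}; the quadratic term cancels and a linear equation for $\tilde{g}_4=\tilde{g}_3-1$ remains. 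Two of your side remarks, however, are off the mark, though neither is load-bearing.

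First, the entries $s_0^{\infty}g_{12}^2=s_1^{\infty}g_{21}^2=-\mi\tilde{g}_4$ do \emph{not} require the normalization $\varphi(0)=0$ or any lift from the contracted manifold to $\mathcal{M}$: they are invariant under the rescaling $(g_{11},g_{12},g_{21},g_{22})\mapsto(\lambda g_{11},\lambda^{-1}g_{12},\lambda g_{21},\lambda^{-1}g_{22})$ that the contraction quotients out, and follow algebraically from \eqref{eq:mdta4}--\eqref{eq:mdta5} (multiplied by $g_{12}^2$, resp.\ $g_{21}^2$) once the $\tilde{g}_k$ are known; indeed $\mi s_0^{\infty}g_{12}^2=-(\tilde{g}_1^2+\tilde{g}_1\tilde{g}_4+\tilde{g}_4^2)=\tilde{g}_4$ by the manifold relations with $\tilde{s}=0$. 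The lemma never determines more than the contracted point. Second, your ``independent check'' via the $\tau^{5/3}$ coefficient is not available from Theorem~\ref{th:B1asympt0} as stated: the error there is a multiplicative $\bigl(1+o(\tau^{\delta})\bigr)$ with unspecified $\delta>0$, so the $\tau^{1+4\rho}$ term of the expanded product is not separated from the error unless one already knows $\delta$ (and, in fact, the full $\tau^{4/3}$-series structure) from the local analysis of Section~\ref{sec:2}. The paper performs exactly this comparison, but deliberately outside the proof, in Remark~\ref{rem:explorationAsymptotics}, for precisely this reason; in any case no residual sign or branch ambiguity survives the leading-order matching, so no such check is needed.
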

\begin{proof}
The asymptotics as $\tau\to0$ of the solution $u(\tau)$ does not contain logarithmic terms (cf. \eqref{eq:u(tau)asympt0});
therefore, its parametrization via the monodromy data is given in Appendix~\ref{app:asympt0}, Theorem~\ref{th:B1asympt0}
(where we substitute $a=0$):
\begin{align} \label{eq:u(tau)omegaProof}
u(\tau)\underset{\tau\to+0}{=}&\,\frac{\tau}{16 \pi}\!\left(\varpi_{1}(-\rho)\varpi_{2}(-\rho)\tau^{-4\rho}
+\varpi_{1}(-\rho)\varpi_{2}(\rho)+\varpi_{1}(\rho) \varpi_{2}(-\rho)+\varpi_{1}(\rho)\varpi_{2}(\rho)\tau^{4\rho}\right)
\left(1+o(\tau^{\delta})\right),
\end{align}
where $\varpi_{k}(\lambda)$ for $k=1,2$ and $\lambda=\pm\rho$ are given in
equations~\eqref{eq:app:varpi-pk-chi}--\eqref{eq:app:pk-gamma}, and $\delta>0$.

To match with the asymptotics~\eqref{eq:u(tau)asympt0}, one has to assume that
$\varpi_{1}(-\rho) \varpi_{2}(-\rho)\neq0$ and $1-4\rho=1/3$:
the last equality implies $\rho=1/6$. In the conclusions above, we used the fact that the leading term of
asymptotics~\eqref{eq:u(tau)omegaProof} is symmetric with respect to the change $\rho\to-\rho$,
so that, in case $\rho\neq0$, one can always assume that $\rho>0$. Equations~\eqref{eq:appAsympt0:rho-s} now read
\begin{equation*} \label{eq:Asympt0:rho-sProof}
\cos\Big(\frac{\pi}3\Big)=\frac12=-\frac{\mi s_{0}^{0}}{2}=1+\frac{1}{2}s_{0}^{\infty}s_{1}^{\infty},
\end{equation*}
which confirms the fist three relations in \eqref{eqs:mondataH0}.

Comparing the coefficients of the leading terms in equations~\eqref{eq:u(tau)omegaProof} and ~\eqref{eq:u(tau)asympt0},
we get
\begin{equation}\label{eq:omegaH0Proof}
\frac{\varpi_{1}(-1/6)\varpi_{2}(-1/6)}{16\pi}=\frac{1}{2}H(0).
\end{equation}
For $\rho=1/6$, equations~\eqref{eq:app:varpi-pk-chi}--\eqref{eq:app:pk-gamma} in Appendix~\ref{app:asympt0} read:
\begin{gather}
\varpi_{1}(\pm 1/6)=\mathbf{p}_1(\pm 1/6)\chi_{1}(\pm1/6)=\mathbf{p}_1(\pm 1/6)
\left(g_{11}\me^{\mi\pi/4}\me^{\pm\mi\pi/6}+g_{21}\me^{-\mi\pi/4}\me^{\mp\mi\pi/6}\right),\label{eq:omega1Proof}\\
\varpi_{2}(\pm 1/6)=\mathbf{p}_2(\pm 1/6) \chi_{2}(\pm 1/6)=\mathbf{p}_2(\pm 1/6)
\left(g_{12}\me^{\mi\pi/4}\me^{\pm\mi\pi/6}+g_{22}\me^{-\mi\pi/4}\me^{\mp\mi\pi/6}\right),\label{eq:omega2Proof}
\end{gather}
with
\begin{equation}\label{eq:p1pm1/6GAMMA:p2p1}
\mathbf{p}_1(\pm 1/6)=\pm6\me^{\pm\mi\pi/12}\left(\frac{1}{2}\right)^{\pm 1/6}
\frac{\Gamma(1\mp\frac{1}{3})\Gamma (1\pm\frac{1}{6})}{\Gamma(1\pm\frac{1}{3})},\qquad
\mathbf{p}_2(\pm 1/6)=\mathbf{p}_1(\pm 1/6)\me^{\mp\mi\pi/6},
\end{equation}
and $\Gamma (\ast)$ is the (Euler) gamma function \cite{BE1}. The numbers $\mathbf{p}(\pm 1/6)$ can be calculated
explicitly,
\begin{equation}\label{eq:p1pm1/6numbers}
\mathbf{p}_1(1/6)=3\sqrt{2\pi}\,\me^{\frac{\pi\mi}{12}},\quad
\mathbf{p}_1(-1/6)=-2\sqrt{2\pi}\,\me^{-\frac{\pi\mi}{12}},\quad
\mathbf{p}_1(1/6)\,\mathbf{p}_1(-1/6)=-12\pi.
\end{equation}
Combining equations~\eqref{eq:omegaH0Proof}--\eqref{eq:omega2Proof}, one obtains
\begin{equation}\label{eq:H0-gProof}
(\mathbf{p}_1(-1/6))^{2}\left(g_{11}\me^{\mi\pi/4}\me^{-\mi\pi/6}+g_{21}\me^{-\mi\pi/4}\me^{\mi\pi/6}\right)\!
\left(g_{12}\me^{\mi\pi/4}\me^{-\mi\pi/6}+g_{22}\me^{-\mi\pi/4}\me^{\mi\pi/6}\right)
\me^{\mi\pi/6}=8\pi H(0).
\end{equation}
Using the expression for $\mathbf{p}_{1}(-1/6)$ given in~\eqref{eq:p1pm1/6numbers}, multiplying out
equation~\eqref{eq:H0-gProof} and  exploiting \eqref{eq:mdta6} in order to remove the term $g_{12}g_{21}$,
and introducing the $\tilde{g}_k$ variables, one arrives at the following equation for the monodromy data:
\begin{equation}\label{eq:H0-g-linearProof}
\tilde{g}_1\me^{-\frac{\pi\mi}{3}}+2(\tilde{g}_3-1)-\tilde{g}_2\me^{\frac{\pi\mi}{3}}=H(0)-1.
\end{equation}
Equation~\eqref{eq:H0-g-linearProof} should be supplemented with two equations defining the monodromy
manifold~\eqref{eqs:ContrManifold}; therefore, we obtain three equations for the three variables $\tilde{g}_k$, $k=1,2,3$.
In order to solve these equations, express $\tilde{g}_1$ and $\tilde{g}_2$ from the linear equations
as linear combinations of $H(0)-1$ and $\tilde{g}_3-1$; then, substituting these expressions into the quadratic equation
in \eqref{eqs:ContrManifold}, one finds that the quadratic term $(\tilde{g}_3-1)^2$ cancels, so that we get a linear
equation in $\tilde{g}_3-1=\tilde{g}_4$ which contains $H(0)$. Solving the last equation for $\tilde{g}_4$,
we arrive at the expression for $\tilde{g}_4$ stated in \eqref{eqs:mondataH0}.
Then, the formula for $\tilde{g}_3$ follows from the equation  $\tilde{g}_3-1=\tilde{g}_4$, and $\tilde{g}_1$ and
$\tilde{g}_2$ are obtained from the linear equations mentioned in the proof above.
\end{proof}
\begin{remark}\label{rem:explorationAsymptotics}
Note that the error estimate in the asymptotics~\eqref{eq:u(tau)omegaProof} contains an undetermined positive parameter
$\delta$. The value of this parameter in many questions, as, in particular, demonstrated in the proof above, is not
important. For some very special cases, though, this parameter may turn out to be an impediment to the direct application
of asymptotics for the calculation of the monodromy data; in such cases, however, it is the special properties of the solution
that may, nevertheless, help to circumvent this problem (see, for example, \cite{KitSIGMA2019}).
The value of $\delta$ is not universal, and it depends on the solution;
of course, the local analysis of the solution allows one to find the value of the parameter $\delta$ for particular
solutions (see \eqref{eq:u-H-expansionCONFIRM} below).

Below, we show how our asymptotic formula~\eqref{eq:u(tau)omegaProof} is consistent with the expansion for the function
$H(r)$ studied in Section~\ref{sec:2}. As a matter of fact, we present an alternative calculation for the monodromy
data that were obtained in Lemma~\ref{lem:mondataH0} for the purpose of demonstrating the applicability of those formulae
in Appendix~\ref{app:asympt0}, Theorem~\ref{th:B1asympt0} that were not used for the calculation of the monodromy data.
Note, however, that this latter calculation does, in fact, use the value of $\delta$.

We commence with the proof of two identities for $\varpi_{k}(\pm1/6)$, $k=1,2$:
\begin{equation}\label{eqs:varpi-sum-relation}
\begin{gathered}
\varpi_{1}(-1/6)\,\varpi_{2}(1/6)+\varpi_{1}(1/6)\,\varpi_{2}(-1/6)\\
=\mathbf{p}_1(1/6)\,\mathbf{p}_1(-1/6)\left(\me^{\mi\pi/6}\chi_{1}(1/6)\,\chi_{2}(-1/6)+\me^{-\mi\pi/6}
\chi_{1}(-1/6)\,\chi_{2}(1/6)\right)\\
=-12\pi\sqrt{3}\,(\mi(g_{11}g_{12}-g_{21}g_{22})+g_{12}g_{21})=0;
\end{gathered}
\end{equation}
the first equality follows from equation~\eqref{eq:app:varpi-pk-chi} and the second equation in \eqref{eq:p1pm1/6GAMMA:p2p1};
the second equality uses the definition~\eqref{eq:app:chi-g} and the third equation in \eqref{eq:p1pm1/6numbers};
and the last relation is equivalent to equations~\eqref{eq:mdta3} (with $s_0^0=\mi$) and \eqref{eq:mdta6}.
In an analogous manner, one proves the following identity:
\begin{equation}\label{eq:varphiProduct}
\begin{aligned}
\varpi_{1}(1/6)\,\varpi_{1}(-1/6)\,\varpi_{2}(1/6)\,\varpi_{2}(-1/6)&=\\
\underbrace{\big(\mathbf{p}_1(-1/6)\mathbf{p}_1(1/6)\big)^{2}}_{=(-12\pi)^{2}}
\underbrace{\big(g_{11}^{2}-g_{21}^{2}-\mi g_{11}g_{21}\big)}_{=\mi s_{0}^{\infty}}
\underbrace{\big(g_{22}^{2}-g_{12}^{2}+\mi g_{12}g_{22}\big)}_{=\mi s_{1}^{\infty}}&=144\,\pi^{2};
\end{aligned}
\end{equation}
the first equality is derived with the help of the definitions in Theorem~\ref{th:B1asympt0}; the underbraced relations
follow from the third relation in equation~\eqref{eq:p1pm1/6numbers} and the equations~\eqref{eq:mdta4} and \eqref{eq:mdta5}
defining the monodromy manifold; and the last equality is a consequence of equation~\eqref{eq:mdta2} with $s_0^0=\mi$.

Equations~\eqref{eq:varphiProduct} and ~\eqref{eq:omegaH0Proof} imply that
\begin{equation} \label{eq:varpi1/H0}
\varpi_{1}(1/6)\,\varpi_{2}(1/6)=\frac{144\,\pi^{2}}
{\varpi_{1}(-1/6)\,\varpi_{2}(-1/6)}=\frac{18\pi}{H(0)}.
\end{equation}
Now, with the help of ~\eqref{eq:omega1Proof} and ~\eqref{eq:omega2Proof}, equation~\eqref{eq:varpi1/H0} can be presented as
follows:
\begin{equation}\label{eq:1/H0-Proof}
\big(\mathbf{p}_1(1/6)\big)^{2}\left(g_{11}\me^{\mi\pi/4}\me^{\mi\pi/6}+
g_{21}\me^{-\mi\pi/4}\me^{-\mi\pi/6}\right)\left(g_{12}\me^{\mi\pi/4}
\me^{\mi \pi/6}+g_{22}\me^{-\mi \pi/4}\me^{-\mi\pi/6}\right)
\me^{-\mi \pi/6}=\frac{18\pi}{H(0)}.
\end{equation}
Using the formula for $\mathbf{p}(1/6)$ given in \eqref{eq:p1pm1/6numbers}, multiplying out the expressions in
parentheses, and introducing the contraction variables, we rewrite equation~\eqref{eq:1/H0-Proof} as
\begin{equation}\label{eq:1/H0-g-linearProof}
\tilde{g}_1\me^{\frac{\pi\mi}{3}}+2(\tilde{g}_3-1)-\tilde{g}_2\me^{-\frac{\pi\mi}{3}}=\frac{1}{H(0)}-1.
\end{equation}
Equation~\eqref{eq:1/H0-g-linearProof} is consistent with equation~\eqref{eq:H0-g-linearProof}, and, together with the
equations defining the contraction manifold, are equivalent to equations~\eqref{eqs:mondataH0} stated in
Lemma~\ref{lem:mondataH0}.

Now, we show that the asymptotics~\eqref{eq:u(tau)omegaProof} is consistent with the local expansion of the function $u(\tau)$
that follows from equations~\eqref{eq:hazzidakis-dP3y}--\eqref{eq:u-y-transformation} and \eqref{eq:Hat0-expansion}.
For this purpose, substitute into the asymptotic expansion~\eqref{eq:u(tau)omegaProof} the relations for
$\varpi_{k}(\pm1/6)$, $k=1,2$, given in equations~\eqref{eq:omegaH0Proof}, \eqref{eqs:varpi-sum-relation},
and \eqref{eq:varpi1/H0}. The parameter $\delta$ is not yet specified; but, we know the expansion for $H(r)$ given in
Subsection~\ref{subsec:existence}: the latter implies, in fact, that $\delta=3/4$. Thus, replacing
$1+o\big(\tau^{\delta}\big)$ by a corresponding series expansion, one arrives at
\begin{equation}\label{eq:u-H-expansionCONFIRM}
u(\tau)\underset{\tau\to+0}{=}\frac12\tau^{1/3}\left(H(0)+\frac{9}{4H(0)}\tau^{4/3}\right)
\left(1+\sum_{m=1}^{\infty}\mu_{m}\tau^{4m/3}\right)=\frac12\tau^{1/3}H\left(-\left(\frac32\right)^2\tau^{4/3}\right),
\end{equation}
where $\mu_m$ are $\tau$-independent coefficients, and the function $H(r)$ is defined via the series~\eqref{eq:Hat0-expansion}.
Multiplying out the expressions in parentheses in equation~\eqref{eq:u-H-expansionCONFIRM} one shows that
\begin{align*}
&H(0)\mu_1+\frac{9}{4H(0)}=-\frac94H'(0)=-\frac94\left((H(0))^2-\frac{1}{H(0)}\right)
\Rightarrow\mu_1=-\frac94H(0),\\
&H(0)\mu_m+\frac{9}{4H(0)}\mu_{m-1}=(-1)^m\left(\frac32\right)^{2m}a_m,\quad
m=2,3,\dotsc,
\end{align*}
where the numbers $a_m$ are defined in Section~\ref{sec:2}. Obviously, the series $\sum_{m=1}^{\infty}\mu_{m}\tau^{4m/3}$
is uniquely defined and convergent in some neighbourhood of $\tau=0$.
\hfill $\blacksquare$\end{remark}

\begin{corollary}\label{cor:algebraicMONDATA}
There are three algebraic solutions of equation~\eqref{eq:dp3u} (cf. \eqref{eq:dP3y}$)$ with $a=0$  that correspond
to three constant solutions of equation~\eqref{eq:hazzidakis}. These solutions and the corresponding
monodromy data of equation~\eqref{eq:dp3u} read:
\begin{enumerate}
\item[{\rm\pmb{(1)}}]\label{item:H0=1}
\begin{equation*}
u(\tau)=\frac{1}{2}\tau^{1/3},\qquad\qquad
y(t)=t^{1/3},\qquad\qquad
H(r)=1,
\end{equation*}
\begin{equation*}
\tilde{s}=\tilde{g}_1=\tilde{g}_2=\tilde{g}_4=0,\qquad
\tilde{g}_3=1,
\end{equation*}
\begin{equation*}
s_0^0=\mi,\quad
s_0^{\infty}=-\mi g_{11}^2,\quad
s_1^{\infty}=-\mi g_{22}^2,\quad
g_{12}=g_{21}=0,\quad
g_{11}g_{22}=1;
\end{equation*}
\item[{\rm\pmb{(2)}}]\label{item:H0=e+2ipi/3}
\begin{equation*}
u(\tau)=\frac{1}{2}\me^{\frac{2\pi\mi}{3}}\tau^{1/3},\qquad\qquad
y(t)=\me^{\frac{2\pi\mi}{3}}t^{1/3},\qquad\qquad
H(r)=\me^{\frac{2\pi\mi}{3}},
\end{equation*}
\begin{equation*}
\tilde{s}=\tilde{g}_1=\tilde{g}_3=0,\qquad
\tilde{g}_2=-1,\quad
\tilde{g}_4=-\mi,
\end{equation*}
\begin{equation*}
s_0^0=\mi,\quad
s_0^{\infty}=\mi g_{21}^2,\quad
s_1^{\infty}=\mi g_{12}^2,\quad
g_{11}=0,\quad
g_{12}g_{21}=-1,\quad
g_{21}g_{22}=\mi;
\end{equation*}
\item[{\rm\pmb{(3)}}]\label{item:H0=e-2ipi/3}
\begin{equation*}
u(\tau)=\frac{1}{2}\me^{-\frac{2\pi\mi}{3}}\tau^{1/3},\qquad\qquad
y(t)=\me^{-\frac{2\pi\mi}{3}}t^{1/3},\qquad\qquad
H(r)=\me^{-\frac{2\pi\mi}{3}},
\end{equation*}
\begin{equation*}
\tilde{s}=\tilde{g}_2=\tilde{g}_3=0,\qquad
\tilde{g}_1=1,\quad
\tilde{g}_4=-\mi,
\end{equation*}
\begin{equation*}
s_0^0=\mi,\quad
s_0^{\infty}=\mi g_{21}^2,\quad
s_1^{\infty}=\mi g_{12}^2,\quad
g_{22}=0,\quad
g_{12}g_{21}=-1,\quad
g_{11}g_{12}=-\mi.
\end{equation*}
\end{enumerate}
\end{corollary}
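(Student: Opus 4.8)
The plan is to reduce the whole statement to Remark~\ref{rem:constant-solution} and Lemma~\ref{lem:mondataH0}. First I would record the elementary observation that a constant function $H(r)\equiv c$ solves equation~\eqref{eq:hazzidakis} if and only if $c^{2}-1/c=0$, i.e.\ $c^{3}=1$; this re-derives the three constant solutions noted in Remark~\ref{rem:constant-solution} and is consistent with~\eqref{eq:Hprime0-H0}, since $H'(0)=(H(0))^{2}-1/H(0)=0$ whenever $H(0)^{3}=1$. Feeding $H(r)\equiv c$ through the change of variables~\eqref{eq:hazzidakis-dP3y} gives $y(t)=c\,t^{1/3}$, and then~\eqref{eq:u-y-transformation} yields $u(\tau)=\tfrac12 c\,\tau^{1/3}$, the three values of $c$ producing the three branches of the algebraic function $(2u)^{3}=\tau$. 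The one mildly delicate point here is the branch bookkeeping: one must check that $t=2^{3/2}\me^{-3\pi\mi/4}\tau$ together with the prescribed branches of $t^{1/3}$ and $\tau^{1/3}$ combine to give exactly the coefficient $\tfrac12 c$ — but this is routine.

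Next I would observe that each such $u(\tau)$ is precisely the solution singled out in Lemma~\ref{lem:mondataH0}: its small-$\tau$ behaviour is the leading term of~\eqref{eq:u(tau)asympt0} with $H(0)=c$, and by Remark~\ref{rem:constant-solution} all higher Taylor coefficients $a_{k}$ ($k\geqslant1$) of the associated series vanish when $a_{0}^{3}+1=0$, equivalently $c^{3}=1$, so there is no ambiguity. Hence the monodromy data of $u(\tau)=\tfrac12 c\,\tau^{1/3}$ are obtained by specializing the formulas~\eqref{eqs:mondataH0} to $H(0)=1,\ \me^{2\pi\mi/3},\ \me^{-2\pi\mi/3}$. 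Using $H(0)^{2}+H(0)+1=0$ for the two primitive roots, the vanishing factors $(H(0)-1)$ and $(H(0)-\me^{\pm2\pi\mi/3})$ produce the asserted zero entries — e.g.\ $\tilde g_{3}=0$ exactly when $H(0)=\me^{\pm2\pi\mi/3}$, while $\tilde g_{1}=\tilde g_{2}=0$ and $\tilde g_{3}=1$ when $H(0)=1$ — and the remaining entries collapse to the stated constants after elementary simplification; in particular the relation $\tilde g_{3}-\tilde g_{4}=1$ coming from~\eqref{eq:mdta6} fixes $\tilde g_{4}$ in each case.

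Finally I would promote the contracted data back to the individual multipliers $(s_{0}^{0},s_{0}^{\infty},s_{1}^{\infty},g_{ij})$ using the defining equations~\eqref{eq:mdta2}--\eqref{eq:mdta6} of $\mathcal{M}$. For $H(0)=1$, the fact $g_{11}g_{22}=\tilde g_{3}=1\neq0$ forces $g_{11},g_{22}\neq0$, whence $\tilde g_{1}=\tilde g_{2}=0$ gives $g_{12}=g_{21}=0$, and then~\eqref{eq:mdta4} and~\eqref{eq:mdta5} give $s_{0}^{\infty}=-\mi g_{11}^{2}$, $s_{1}^{\infty}=-\mi g_{22}^{2}$; the cases $H(0)=\me^{\pm2\pi\mi/3}$ are handled identically, with the vanishing of $\tilde g_{3}$ forcing $g_{11}=0$ (resp.\ $g_{22}=0$) and~\eqref{eq:mdta6} then giving $g_{12}g_{21}=-1$. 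Consistency with~\eqref{eq:mdta2} — which reads $s_{0}^{\infty}s_{1}^{\infty}=-1$ once $s_{0}^{0}=\mi$ — is then automatic. I expect the main obstacle to be exactly this last bookkeeping: keeping straight which individual entries vanish and matching signs and phases against the stated data, rather than anything conceptual; no classification of all algebroid solutions is required, only the construction of these three and the evaluation of their monodromy data.
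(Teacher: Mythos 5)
Your proposal is correct and follows exactly the route the paper intends: the corollary is stated without proof immediately after Lemma~\ref{lem:mondataH0}, and is obtained, as you do, by identifying the three constant solutions $H(r)^3=1$ with the algebraic solutions $u(\tau)=\tfrac12 H(0)\tau^{1/3}$, substituting the three cube roots of unity into~\eqref{eqs:mondataH0}, and then unpacking the contracted data via~\eqref{eq:mdta2}--\eqref{eq:mdta6}. Note only that your (correct) computation gives $\tilde g_4=g_{12}g_{21}=-1$ in items (2) and (3), consistent with the definition $\tilde g_4=g_{12}g_{21}$ and with $\tilde g_3-\tilde g_4=1$, so the value $\tilde g_4=-\mi$ printed in the statement is a typographical slip rather than a gap in your argument.
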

\begin{remark}\label{rem:algebraicFunction}
The branch of $\tau^{1/3}$ in Corollary~\ref{cor:algebraicMONDATA} is fixed such that it is positive for $\tau>0$.
The function $y(t)$ is calculated with the help of relation~\eqref{eq:u-y-transformation}. These three different
solutions of equation~\eqref{eq:dp3u} (cf. \eqref{eq:dP3y}) coincide, of course, with the pullback
of the three branches of the algebraic function $\tfrac12\tau^{1/3}$ (resp., $t^{1/3}$); however, from the
point of view of solutions to the ODE, they represent three different solutions, since, for the same value of $\tau$
(resp., $t$), they have different initial values.
\hfill $\blacksquare$\end{remark}
\begin{remark}\label{rem:algebraicUniqueness}
According to ~\cite{Gromak1979}, the solutions enumerated in Corollary~\ref{cor:algebraicMONDATA} are the only
algebraic solutions of equation~\eqref{eq:dp3u} with $a=0$ (resp., equation~\eqref{eq:dP3y}). Here, we show
how this fact can be deduced from our asymptotic results. The essential singular point at infinity imposes severe
restrictions on the algebraic behaviour of solutions at this point. Our main results concerning  the asymptotic
behaviour of solutions at the point at infinity are presented in Appendix~\ref{app:infty}. These asymptotic results imply,
in particular, that in case of algebraic behaviour of solutions at the point at infinity, the corresponding monodromy data
necessarily satisfy the condition $g_{12}g_{21}g_{11}g_{22}=0$ (cf. equation~\eqref{eq:app:u-reg-as-conditions} with
~\eqref{eq:app:u-reg-as}, and equation~\eqref{eq:app:u-sing-as-conditions} with ~\eqref{eq:app:u-sing-as}).
There is, seemingly, another possibility, namely, $g_{11}g_{22}=1$ (cf. equation~\eqref{eq:app:nu+1}
with ~\eqref{eq:app:u-reg-as}). The latter case, in conjunction with equation~\eqref{eq:mdta6}, implies that $g_{12}g_{21}=0$,
so that the condition $g_{12}g_{21}g_{11}g_{22}=0$ holds. This last condition, with the help of the equations defining the
monodromy manifold (cf. equations~\eqref{eq:mdta3}--\eqref{eq:mdta6}), can be subdivided into three sub-cases:
(1) $g_{12}g_{21}=0$; (2) $g_{11}=0$; and (3) $g_{22}=0$.

Consider sub-case (1) for which the large-$\tau$ asymptotics is stated in Theorem~3.2 of
\cite{KitVar2004}.\footnote{In Theorem~3.2 of \cite{KitVar2004}, set $a=0$, $\varepsilon b=+1$,
$(\varepsilon_{1},\varepsilon_{2})=(0,0)$, $s_{0}^{0}(0,0):=s_{0}^{0}$, $s_{0}^{\infty}(0,0):=s_{0}^{\infty}$,
$s_{1}^{\infty}(0,0):=s_{1}^{\infty}$, and $g_{ij}(0,0):= g_{ij}$, $i,j\in\lbrace1,2\rbrace$.}
This theorem implies that the algebraic behaviour is possible only if $s_0^0=\mi$. Now, equations~\eqref{eq:mdta3}
and ~\eqref{eq:mdta6} imply that $g_{12}=g_{21}=0$. This supplies the necessary conditions for the
existence of algebraic solutions in case the monodromy data satisfy $g_{11}g_{22}=1$. The first case
in Corollary~\ref{cor:algebraicMONDATA} supplies the sufficiency conditions.

With the help of equations~\eqref{eq:mdta3}--\eqref{eq:mdta6}, sub-case (2) gives rise to the following
values for the monodromy data: $g_{11}=0$, $g_{12}g_{21}=-1$,  $g_{21}g_{22}=\mi$, and $s_0^{\infty}=\mi g_{21}^2$.
Even though the values of $s_0^0$ and $s_1^{\infty}$ cannot be determined directly, there is, however, a simple
ruse related to symmetries.
Clearly, if $u(\tau)$ is an algebraic solution, then $\hat{u}(\tau)=u\big(\tau\me^{2\pi\mi}\big)$ is also an
algebraic solution; consequently, one can deduce the action of the transformation $\tau\to\tau\me^{2\pi\mi}$ on the
monodromy manifold. In our case, that is, $a=0$, this action, in terms of the Stokes matrices, $S_k$, and the connection matrix,
$G$, defined in \cite{KitVar2004}, reads: $\hat{S}_k^{\infty}=S_k^{\infty}$ and $-\mi S_0^0\sigma_1\hat{G}=G$, where
$\sigma_1=\left(\begin{smallmatrix}0&1\\1&0\end{smallmatrix}\right)$, and where the `hat' denotes the monodromy matrices
corresponding to the solution $\hat{u}(\tau)$, whereas the monodromy matrices without the `hat' correspond to the solution
$u(\tau)$. Taking into account the definition of these matrices \cite{KitVar2004} and
assuming that the hat variables correspond to sub-case (2), the matrix relations can be rewritten in terms of the corresponding
scalar variables as follows: $\hat{s}_k^{\infty}=s_k^{\infty}$, $k=0,1$, $\hat{s}_0^0=s_0^0$, $g_{12}=g_{21}=0$,
$g_{11}=-\mi\hat{g}_{21}$, and $g_{22}=-\mi\hat{g}_{12}$. Thus, we map sub-case (2) to sub-case (1). For sub-case (1),
it is proved that algebraic solutions exist iff $s_0^0=\mi$, which means that the same is true for sub-case (2).
Clearly, the same matrix action links the monodromy data corresponding to sub-cases (3) and (2), where, now, the hat variables
correspond to sub-case (3). Since sub-case (2) is studied,
one can repeat the aforementioned arguments in order to confirm the uniqueness of the algebraic solution for sub-case (3).
\hfill $\blacksquare$\end{remark}
\section{The Coxeter Group}\label{sec:Coxeter}
In this section, we consider some group actions on the monodromy manifold.
\begin{proposition}\label{prop:cubicA3}
The projectivization of the contracted monodromy manifold for equation~\eqref{eq:dp3u} is a singular cubic surface of type $A_3$.
\end{proposition}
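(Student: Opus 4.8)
The plan is to projectivize the single cubic equation~\eqref{eq:ContrManReduced}, locate the singular locus by a direct Jacobian computation, and then read off the analytic type of the resulting (unique) singular point by means of the splitting lemma.

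First I would homogenize: writing $(\tilde g_1,\tilde g_3,\tilde s)=(X/W,Y/W,S/W)$ and clearing denominators in~\eqref{eq:ContrManReduced} gives the surface $\mathcal C\subset\mathbb{CP}^3$ defined by
\begin{equation*}
F(X,Y,S,W):=X^2W+XYW-XYS+Y^2W-XW^2-YW^2=0,
\end{equation*}
whose only degree-three monomial is $-XYS$; thus $\mathcal C$ is a genuine cubic surface (not a cone or a lower-dimensional locus) and its affine part $\{W\neq0\}$ reproduces~\eqref{eq:ContrManReduced}. I would then compute $F_X,F_Y,F_S,F_W$ and solve $F_X=F_Y=F_S=F_W=0$ (on the common zero locus of the partials, $F=0$ follows from Euler's identity). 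Since $F_S=-XY$, any singular point has $X=0$ or $Y=0$; because $F$ is symmetric under $X\leftrightarrow Y$ it suffices to treat $X=0$. Substituting $X=0$ into $F_Y$ and $F_W$ forces $W(2Y-W)=0$ and $Y(Y-2W)=0$, which together force $W=0$ and then $Y=0$ (and $F_X=0$ is then automatic). Hence the singular locus of $\mathcal C$ is the single point $P=[0:0:1:0]$; in particular it is zero-dimensional, so $\mathcal C$ is irreducible and reduced, since a reducible or non-reduced cubic would be singular along a curve.

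Next I would study $P$ in the affine chart $\{S\neq0\}$, where with $(x,y,w)=(X/S,Y/S,W/S)$ the surface becomes
\begin{equation*}
f(x,y,w)=-xy+w\,(x^2+xy+y^2)-w^2(x+y)=0
\end{equation*}
and $P$ is the origin. The quadratic part is $-xy$, a form of rank $2$ (corank $1$), so the splitting lemma (analytic Morse lemma with $w$ as a parameter) gives $f\sim \varepsilon_1 u^2+\varepsilon_2 v^2+g(w)$ with $\operatorname{ord}_0 g\geq 3$, and the singularity is of type $A_{(\operatorname{ord}_0 g)-1}$. To determine $\operatorname{ord}_0 g$ I would substitute $x=p+q$, $y=p-q$ (so $-xy=-p^2+q^2$, $x^2+xy+y^2=3p^2+q^2$, $x+y=2p$) and then absorb $q^2(1+w)$ and $p^2(3w-1)-2w^2p$ into squares of new variables, using that $1+w$ and $1-3w$ are units near $w=0$; what remains is $g(w)=w^4/(1-3w)$, which vanishes to order exactly $4$. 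Rescaling $w$ by the unit $(1-3w)^{-1/4}$ then brings $f$ to the normal form $u^2+v^2+w^4$, i.e.\ $P$ is an ordinary $A_3$ double point. As $P$ is the only singularity of $\mathcal C$, this shows $\mathcal C$ is a singular cubic surface of type $A_3$.

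I expect the Jacobian step to be routine; the delicate point is the splitting computation, namely checking that $g$ vanishes to order \emph{exactly} $4$ (order $\geq5$ would yield $A_{\geq4}$, and any degeneracy of the quadratic part would force a $D$- or $E$-type point), which is why I would carry out the explicit substitution above rather than invoke the Bruce--Wall/Cayley--Salmon classification of singular cubic surfaces, although that classification does provide an alternative route once the tangent cone at $P$ is known to be a pair of distinct planes.
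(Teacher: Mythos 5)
Your computation is correct, and it takes a genuinely different route from the paper's. The paper does not analyse the singularity directly: it keeps the parameter $a$ of equation~\eqref{eq:dp3u}, reduces the contracted manifold to the single affine cubic~\eqref{eq:ContrManModified-aGen}, and then chooses projective coordinates so that the surface becomes $x_0x_1x_3=x_2(x_0+x_1+x_2)(x_0-ux_1)$ with $\sqrt{-u}=\me^{-\pi a}$ (equation~\eqref{eq:ContrManProjective-aGen}); the type $A_3$ is then read off from the Bruce--Wall/Sakamaki classification of singular cubic surfaces. Your Jacobian-plus-splitting-lemma argument is self-contained, locates the unique singular point $[0:0:1:0]$ explicitly, and the normal-form step checks out: with $x=p+q$, $y=p-q$ one gets $f=q^2(1+w)+(3w-1)\bigl(p-\tfrac{w^2}{3w-1}\bigr)^2+\tfrac{w^4}{1-3w}$, and the remainder has order exactly $4$, hence $A_3$. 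What the paper's route buys is the Cayley-type form, which it immediately reuses to enumerate the ten lines on the surface; what your route buys is independence from the classification literature and an explicit local normal form.

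The one genuine mismatch is scope. You homogenize~\eqref{eq:ContrManReduced}, which is the contracted manifold for $a=0$, whereas the proposition (and the paper's proof) concerns generic $a\in\mathbb{C}$, for which the relevant cubic is~\eqref{eq:ContrManModified-aGen} and carries the parameter $E:=\me^{-\pi a}$. As written, your argument proves only the $a=0$ case. The repair is routine and requires no new idea: the homogenization of~\eqref{eq:ContrManModified-aGen} still has $F_S=-XY$, the same case analysis again forces the single singular point $[0:0:1:0]$, and in the chart $S\neq0$ the splitting-lemma remainder becomes $g(w)=\tfrac{w^4}{4}\bigl(\tfrac{(E+1)^2}{1-3w}-\tfrac{(E-1)^2}{1+w}\bigr)$, whose leading coefficient is $E\neq0$; so the order is again exactly $4$ and the point is $A_3$ for every $a$. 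You should either carry out this parametric version or state explicitly that you are treating only the $a=0$ specialization.
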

\begin{proof}
The monodromy manifold for equation~\eqref{eq:dp3u} in the generic case $a\in\mathbb{C}$ is defined in \cite{KitVar2004}
(see \cite{KitVar2004}, p. 1172, the system (33)). We introduce the same change of variables \eqref{eq:contraction-variables}
as for the case $a=0$ and arrive at the following system for the contracted variables:
\begin{equation}\label{eqs:ContrManifold-aGEN}
\tilde{g}_1-\tilde{g}_2+\tilde{g}_3(1-\tilde{s})=\me^{-\pi a},\qquad
\tilde{g}_3(\tilde{g}_3-1)=-\tilde{g}_1\tilde{g}_2.
\end{equation}
Solving the first equation of the system~\eqref{eqs:ContrManifold-aGEN} for $\tilde{g}_2$ and substituting the result into the
second equation, we obtain the following cubic equation in $\mathbb{C}^3$ with the parameter $\me^{-\pi a}$,
\begin{equation}\label{eq:ContrManModified-aGen}
\tilde{g}_1\tilde{g}_3(\tilde{s}-1)=(\tilde{g}_1)^2+(\tilde{g}_3)^2 -(\tilde{g}_1\me^{-\pi a}+\tilde{g}_3).
\end{equation}
Introducing local co-ordinates in $\mathbb{CP}^3$, $\{x_0:x_1:x_2:x_3\}$, according to the formulae,
\begin{equation}\label{eq:projectivizationContrMan-aGen}
\tilde{g}_3=-\frac{x_0}{x_2},\qquad \tilde{g}_1=-\frac{x_1}{x_2}\me^{-\pi a},\qquad
\tilde{s}-1+2\cosh(\pi a)=\frac{x_3}{x_2}\,\me^{\pi a},
\end{equation}
one rewrites equation~\eqref{eq:ContrManModified-aGen} as follows:
\begin{equation}\label{eq:ContrManProjective-aGen}
x_0x_1x_3=x_2(x_0+x_1+x_2)(x_0-ux_1),\quad\mathrm{where}\quad
\sqrt{-u}=\me^{-\pi a}.
\end{equation}
This surface has a singularity of type $A_3$  \cite{BruceWall1979,Sakamaki2010}.
\end{proof}
Henceforth, till the end of this section, we proceed with the study of the case $a=0$ $(u=-1)$; in this case,
equation~\eqref{eq:ContrManProjective-aGen} reads
\begin{equation}\label{eq:ContrManProjective}
x_0x_1x_3=x_2(x_0+x_1+x_2)(x_0+x_1).
\end{equation}
Equation~\eqref{eq:ContrManProjective} contains ten $\mathbb{CP}^3$-lines \cite{BruceWall1979,Sakamaki2010}.
These lines can be presented as the intersection of two hyperplanes.
Three of these lines belong to the hyperplane $x_2=0$, which is located at ``infinity'', namely, they can be
presented as the intersection of the plane $x_2=0$ with the planes $x_0=0$, $x_1=0$, and $x_3=0$.
The monodromy co-ordinates cannot take on infinite values; therefore, we cannot give, at least not directly,
an interpretation for these lines in terms of the monodromy data and the corresponding solutions of equation~\eqref{eq:dp3u}.
Consequently, for our purposes, we resort back to $\mathbb{C}^4$,
and denote the co-ordinates in this space as $(x,y,z,s)$. We identify these co-ordinates with our monodromy data
as follows:
\begin{equation*}\label{eqs:coordinates:mon-C4}
x=\tilde{g}_1,\quad
y=-\tilde{g}_2,\quad
z=\tilde{g}_3,\quad
s=\tilde{s}.
\end{equation*}
In these co-ordinates, the system of equations~\eqref{eqs:ContrManifold} defining the contracted monodromy
manifold reads
\begin{equation}\label{eqs:contrmanifold-xyz}
x+y+z(1-s)=1,\qquad
z(z-1)=xy.
\end{equation}
The remaining seven lines of the surface~\eqref{eq:ContrManProjective} and the corresponding monodromy co-ordinates are:
\begin{enumerate}
\item
$x_0=0$ and $x_1=0$, $(0,1,0,s)$;
\item
$x_0=0$ and $x_1+x_2=0$, $(1,0,0,s)$;
\item
$x_1=0$ and $x_0+x_2=0$, $(0,s,1,s)$;
\item
$x_3=0$ and $x_0+x_1=0$, $(x,x+1,-x,-1)$;
\item
$x_3=0$ and $x_0+x_1+x_2=0$, $(x,x-1,1-x,-1)$;
\item\label{item:9}
$x_1+x_2=0$ and $x_0+x_1+x_3=0$, $(1,s(s-1),s,s)$;
\item
$x_0+x_2=0$ and $x_0+x_1+x_3=0$, $(s,0,1,s)$;
\end{enumerate}
where $s,x\in\mathbb{C}$. Note that, in item~\ref{item:9} above, the dependence of $y$ with respect to $s$ is quadratic,
and it remains a straight line due to the fact that the surface~\eqref{eq:ContrManProjective} is written in terms of
co-ordinates that do not depend on $y$. These lines appear again later in this section whilst studying the action of
the Coxeter group on the contracted monodromy manifold.

Letting $z=\kappa x$ in the system~\eqref{eqs:contrmanifold-xyz}, one finds a rational parametrization of the contracted
monodromy manifold in terms of the parameters $\kappa,s\in\mathbb{C}$. (The second equation in~\eqref{eqs:contrmanifold-xyz}
suggests three other rational parametrizations; however, this fact is not important for our considerations.) With the
help of this rational parametrization, we find the following transformations of the contracted monodromy manifold:
\begin{align}
&r_1:\;\;(x,y,z,s)\longrightarrow(y,x,z,s),\label{eq:r1tr}\\
&r_2:\;\;(x,y,z,s)\longrightarrow(z,y+(x-z)s,x,s),\label{eq:r2tr}\\
&r_3:\;\;(x,y,z,s)\longrightarrow\left(\frac{(2-x)z+xy}{z+y},-\frac{z-y}{z+y}y,\frac{z-y}{z+y}z,2-s\right).\label{eq:r3tr}
\end{align}
One can consider these transformations as acting in $\mathbb{C}^4$. Straightforward calculations show that these
transformations are of order $2$:
\begin{equation}\label{eqs:r1r2r3Reflections}
r_1^2=r_2^2=r_3^2=1,
\end{equation}
where $1$ in ~\eqref{eqs:r1r2r3Reflections} above and in \eqref{eqs:reflections-relations} below denotes the transformation
corresponding to the identity map in $\mathbb{C}^4$. While the transformations $r_1$ and $r_2$ act in $\mathbb{C}^4$,
the transformation $r_3$ is not defined on the hyperplane $z+y=0$; moreover, if one desires to apply it twice in order to prove
the last relation in~\eqref{eqs:r1r2r3Reflections}, then one has to exclude the hyperplane $z-y=0$. If one wants to consider
the action of the group generated by the three transformations~\eqref{eq:r1tr}--\eqref{eq:r3tr}, then one has to remove
a countable number of surfaces from $\mathbb{C}^4$. We will not discuss this question further because we are primarily
interested in the action of these transformations on the surface~\eqref{eqs:contrmanifold-xyz}.

Restricted to the surface~\eqref{eqs:contrmanifold-xyz}, these reflections satisfy the relations
\begin{equation}\label{eqs:reflections-relations}
(r_3r_1)^4=(r_1r_3)^4=1\qquad
\text{and}\qquad
(r_3r_2)^2=(r_2r_3)^2=1.
\end{equation}
Of course, the action of $r_3$ is not defined on the entirety of the surface~\eqref{eqs:contrmanifold-xyz}, so that the
relations~\eqref{eqs:reflections-relations} are proved only for those points of \eqref{eqs:contrmanifold-xyz}
where the corresponding transformations are defined. In fact, as we show at the end of this section, one can
regularize the definition of $r_3$ on the surface~\eqref{eqs:contrmanifold-xyz} so that after the excision of a few
lines from the surface it is well defined.

We commence our considerations with the dihedral group $\mathcal{D}_{1,2}$ generated by $\{r_1,r_2\}$.
Let $\mathcal{N}_{1,2}$ be its normal subgroup with generator $r_1r_2$.
\begin{proposition}\label{prop:finiteOrbits}
There is a one-to-one correspondence between algebroid solutions of equation~\eqref{eq:dp3u}
and the finite orbits of the action of $\mathcal{N}_{1,2}$ on the monodromy manifold~\eqref{eqs:contrmanifold-xyz}.
The length of the finite orbits coincides with the order of a generator of the symmetry transformations for the
corresponding algebroid solutions.
\end{proposition}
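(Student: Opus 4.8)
The plan is to connect three descriptions of the same object: (a) the algebroid solutions, which by Corollary~\ref{cor:Q=algebroid} are parametrized by $\mathbb{Q}_{>0}$ together with an initial value $a_0$, and whose symmetry transformations are recorded in Corollaries~\ref{cor:symmetryHmym} and \ref{cor:symmetryHnyn}; (b) the monodromy data attached to a solution, living on the surface~\eqref{eqs:contrmanifold-xyz}; and (c) the orbits of $\mathcal{N}_{1,2}=\langle r_1r_2\rangle$ acting on that surface. First I would identify, using the rational parametrization $z=\kappa x$, $s=\text{const}$ of the surface, the precise action of the generator $r_1r_2$ on the parameter $\kappa$ for fixed $s$. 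A direct computation from~\eqref{eq:r1tr} and~\eqref{eq:r2tr} shows that $r_2$ sends $(\kappa,s)$ to something of the form $(\kappa',s)$ where $\kappa'$ is a Möbius function of $\kappa$ with coefficients depending on $s$, and similarly for $r_1$; composing, $r_1r_2$ acts on $\kappa$ as an element of $\mathrm{PGL}_2(\mathbb{C})$ depending only on $s$. The orbit of a point under $\langle r_1r_2\rangle$ is finite if and only if this Möbius transformation is of finite order, i.e. elliptic with rotation number a root of unity, and the orbit length equals that order.

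Next I would translate the isomonodromy correspondence. The monodromy data of a solution $u(\tau)$ of~\eqref{eq:dp3u} is constant along the deformation, but the symmetry transformations $\tau\to\tau\me^{\mi\varphi}$ of Corollaries~\ref{cor:symmetryHmym}, \ref{cor:symmetryHnyn} (combined with the accompanying rescaling of $a_0$) act nontrivially on the monodromy manifold; one computes, exactly as in Remark~\ref{rem:algebraicUniqueness} where the action of $\tau\to\tau\me^{2\pi\mi}$ on the Stokes and connection matrices is spelled out, that the generator of the cyclic symmetry group of an algebroid solution induces precisely the map $r_1r_2$ (or a conjugate of it) on~\eqref{eqs:contrmanifold-xyz}. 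Here the key point is that a solution is algebroid precisely when it has a \emph{finite} cyclic group of such symmetries — this is the content of Theorem~\ref{th:algebroid} together with Corollary~\ref{cor:Q=algebroid}, where the branching exponent is rational with denominator controlling the order — and under the correspondence this finite cyclic group is carried to $\langle (r_1r_2)^k\rangle$ for the appropriate $k$, whose orbit on the manifold is finite of length equal to the order of the symmetry. Conversely, a finite orbit of $\mathcal{N}_{1,2}$ forces the Möbius transformation on $\kappa$ to have finite order $d$, which pins down the value of $s$ (equivalently $s_0^0$) to one of countably many algebraic values, and then Theorem~\ref{th:B1asympt0} of Appendix~\ref{app:asympt0}, read backwards as in Lemma~\ref{lem:mondataH0}, produces a solution whose small-$\tau$ asymptotics has branching exponent $1-4\rho$ with $\rho$ rational — hence an algebroid solution — whose symmetry group has order $d$. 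Bijectivity then follows by checking that distinct finite orbits give distinct pairs $(\rho,a_0)$ and hence, by Corollary~\ref{cor:Q=algebroid}, distinct algebroid solutions, and vice versa.

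The main obstacle I anticipate is making rigorous the passage in step two: $r_3$ and, more subtly, the composite actions are only partially defined on the surface (Section~\ref{sec:Coxeter} already flags that $r_3$ is undefined on $z+y=0$ and that a countable family of lines must be excised), and one must verify that the excised lines do not contain any point corresponding to an algebroid solution, so that the orbit count is not corrupted. Equivalently, I must check that the ``bad'' loci in the $(\kappa,s)$-chart — where the parametrization degenerates or where $r_1r_2$ has a fixed point causing the orbit to collapse — correspond either to the three algebraic solutions of Corollary~\ref{cor:algebraicMONDATA} (which are algebroid with symmetry group of order $1$, and must be accounted for separately) or to non-algebroid solutions. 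A secondary technical point is to confirm that the rescaling $a_0\mapsto a_0\me^{2\pi\mi q/(2n+3)}$ appearing in the symmetry, composed the right number of times, returns $a_0$ to itself (Remark~\ref{rem:symmetry}), so that a finite orbit on the manifold indeed closes up into a genuine finite \emph{cyclic} group of symmetries of one and the same solution rather than an infinite tower of rescaled copies; this is exactly where the coprimality bookkeeping of Corollary~\ref{cor:Q=algebroid} enters and must be carried through carefully.
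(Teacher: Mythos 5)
Your core strategy coincides with the paper's: identify the generator $r_1r_2$ of $\mathcal{N}_{1,2}$ with the solution transformation $u(\tau,c)\mapsto-\mi u\big(\tau\me^{\pi\mi/2},-c\big)=u\big(\tau,-c\,\me^{-2\pi\mi\rho}\big)$, so that the orbit is governed by the multiplier $\me^{2\pi\mi(1/2-\rho)}$ acting on $c$, and finiteness of the orbit is equivalent to $\rho\in\mathbb{Q}$, i.e.\ to the solution being algebroid by Corollary~\ref{cor:Q=algebroid}; the orbit length is then the order of that multiplier, matching Corollaries~\ref{cor:symmetryHmym} and \ref{cor:symmetryHnyn}. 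Two of your announced obstacles are non-issues here: $\mathcal{N}_{1,2}$ is generated by $r_1r_2$ alone, and both $r_1$ and $r_2$ are globally defined polynomial maps on $\mathbb{C}^4$, so no regularization of $r_3$ and no excision of lines is needed for this proposition (that machinery only enters for the full group $\mathcal{G}$ later in Section~\ref{sec:Coxeter}).

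There is, however, one genuine gap. Your argument establishes the dichotomy only for solutions admitting the power-law asymptotics $u(\tau,c)\sim c\,\tau^{1-4\rho}$ of Theorem~\ref{th:B1asympt0}, which requires $\rho\neq0$, $|\Re\rho|<1/2$. The solutions with logarithmic behaviour as $\tau\to0$, sitting at $s=3$ ($\rho=0$) and $s=-1$ ($\rho=1/2$), are not algebroid, yet the formal multiplier $-\me^{-2\pi\mi\rho}$ at these values has finite order ($2$ and $1$ respectively), so the criterion ``$\rho$ rational $\Leftrightarrow$ finite orbit'' cannot be applied to them; without excluding them the claimed bijection fails. The paper closes this by exhibiting the orbits at $s=\pm1,3$ explicitly (Proposition~\ref{prop:s=-1s=3orbits}): the restriction of $r_1r_2$ to these hyperplanes is unipotent, the iterates grow linearly (resp.\ quadratically) in $n$, and the orbits are infinite. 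Your M\"obius-transformation framework can absorb this --- at $s=-1,3$ the induced transformation is parabolic, hence of infinite order --- but you must then also dispose of its fixed points (a parabolic element has a fixed point whose orbit is trivially finite), verifying that no such fixed point lies on the manifold~\eqref{eqs:contrmanifold-xyz}; your blanket equivalence ``finite orbit iff the M\"obius transformation has finite order'' is false without that check.
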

\begin{proof}
Let $u(\tau,c)$ be a solution corresponding to the branching parameter $\rho\in\mathbb{C}$
($u(\tau,c)\underset{\tau\to0}{\sim} c\,\tau^{1-4\rho}$, where $c\in\mathbb{C}\setminus\{0\}$:
see Appendix~\ref{app:asympt0}); then the transformation corresponding
to the generator $r_1r_2$ is $u(\tau,c)\to-\mi u(\tau\me^{\pi\mi/2},-c)=u(\tau,-c\me^{-2\pi\mi\rho})$.
If $\rho\notin\mathbb{Q}$, then any finite number of such transformations give different solutions. On the other hand,
all algebroid solutions after a finite number of such iterations are mapped to themselves. For the $m$- and the $n$-series,
these symmetries are defined explicitly in Corollaries~\ref{cor:symmetryHmym} and \ref{cor:symmetryHnyn}.

The only solutions that remain are those possessing logarithmic behavior as $\tau\to0$ \cite{Kit87,KitVar2004}:
they have infinite orbits that are explicitly presented in Proposition~\ref{prop:s=-1s=3orbits} below.
\end{proof}

The action of $\mathcal{D}_{1,2}$ on $\mathbb{C}^4$ does not change the fourth coordinate. This fact allows us to treat
$s$ as a parameter, and to consider the action of $\mathcal{D}_{1,2}$ in $\mathbb{C}^3$ by regarding it as
the hyperplane $s=s_0$ in $\mathbb{C}^4$; in this case, we denote  this action by $\mathcal{D}_{1,2}(s_0)$.
\begin{proposition}\label{prop:dihedral-finite}
Define $\rho_1=1/2-\rho$, where $\rho$ is the branching parameter of the algebroid solution (see
Corollary~\ref{cor:Q=algebroid} and Remark~\ref{rem:rho-algebraic}$)$.
The group $\mathcal{D}_{1,2}(s_0)$ is finite iff $s_0$ is an algebraic number that can be written in the form
$s_0=1+2\cos(2\pi\rho_1)$, with $\rho_1\in\mathbb{Q}$ such that $0<2\rho_1<1$. In this case, the length of the orbit
of the normal subgroup $\mathcal{N}_{1,2}$ coincides the denominator of $\rho_1$ in its representation as an irreducible
fraction.
\end{proposition}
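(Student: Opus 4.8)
The plan is to reduce the statement to the linear algebra of the rotation $r_1r_2$ on the slice $\{s=s_0\}$. First I would compute $r_1r_2$ explicitly: composing \eqref{eq:r1tr} after \eqref{eq:r2tr} gives $(r_1r_2)(x,y,z,s_0)=(s_0x+y-s_0z,\,z,\,x,\,s_0)$, so on the hyperplane $\{s=s_0\}\cong\mathbb{C}^3$ the transformation $r_1r_2$ is the linear map $M$ with matrix $\bigl(\begin{smallmatrix} s_0 & 1 & -s_0\\ 0 & 0 & 1\\ 1 & 0 & 0\end{smallmatrix}\bigr)$; a direct substitution into \eqref{eqs:contrmanifold-xyz} confirms (as it must, since $r_1$ and $r_2$ do) that $M$ preserves both the affine plane $P:=\{x+y+z(1-s_0)=1\}$ and the curve $C_{s_0}:=P\cap\{z(z-1)=xy\}$. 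The characteristic polynomial of $M$ factors as $(\lambda-1)\bigl(\lambda^2+(1-s_0)\lambda+1\bigr)$, with $\lambda=1$ having eigenvector $(1,1,1)$; for $s_0\neq3$ the point $p_0=(3-s_0)^{-1}(1,1,1)$ lies on $P$ and is fixed by $M$, so $P=p_0+V$ with $V:=\{x+y+z(1-s_0)=0\}$ an $M$-invariant plane on which $M$ has the two eigenvalues $\lambda_\pm$ satisfying $\lambda_+\lambda_-=1$ and $\lambda_++\lambda_-=s_0-1$.

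Next, $\mathcal{D}_{1,2}(s_0)$ is generated by the two involutions $r_1|_{C_{s_0}}$ and $r_2|_{C_{s_0}}$, hence is a (possibly infinite) dihedral group, and it is finite precisely when its rotation $(r_1r_2)|_{C_{s_0}}=M|_{C_{s_0}}$ has finite order, in which case $\mathcal{N}_{1,2}=\langle r_1r_2\rangle$ is cyclic of that order. The key step is that the order of $M$ on $C_{s_0}$ equals the order of $M$ on $V$. Indeed, the list of lines of the surface given earlier in this section shows that $(1,0,0)$, $(0,1,0)$ and $(0,s_0,1)$ all lie on $C_{s_0}$ for every $s_0$, and these three points are affinely independent; hence $C_{s_0}$ affinely spans $P$, so (using that $M$ is linear and fixes $p_0$) $M^k$ fixes $C_{s_0}$ pointwise iff $M^k$ fixes $P$ pointwise iff $M^k|_V=\mathrm{id}$. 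Therefore $\mathcal{D}_{1,2}(s_0)$ is finite iff $\lambda_\pm$ are roots of unity, i.e. iff there is $\rho_1\in\mathbb{Q}$ with $\{\lambda_+,\lambda_-\}=\{\me^{2\pi\mi\rho_1},\me^{-2\pi\mi\rho_1}\}$, equivalently $s_0=1+(\lambda_++\lambda_-)=1+2\cos(2\pi\rho_1)$; such an $s_0$ is then an algebraic number.

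To fit the normalisation in the statement I would identify this $\rho_1$ with $\tfrac12-\rho$. The parameter $s_0=\tilde s$ is related to the branching parameter $\rho$ of the corresponding solution by $\tilde s=1-2\cos(2\pi\rho)$ (cf. Appendix~\ref{app:asympt0}, equation~\eqref{eq:appAsympt0:rho-s}, already used in the proof of Lemma~\ref{lem:mondataH0}), and since $1-2\cos(2\pi\rho)=1+2\cos\bigl(2\pi(\tfrac12-\rho)\bigr)$ one may take $\rho_1=\tfrac12-\rho$. Writing $\rho_1=p/q$ in lowest terms with $0<2\rho_1<1$, the eigenvalues $\me^{\pm2\pi\mi p/q}$ are distinct and each of order $q$, so $M|_V$ is semisimple of order exactly $q$, whence $|\mathcal{N}_{1,2}|=q$ is the denominator of $\rho_1$. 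The range $0<2\rho_1<1$ is precisely what makes this representative unique (the unordered pair $\{\me^{2\pi\mi\rho_1},\me^{-2\pi\mi\rho_1}\}$ determines $\rho_1$ only modulo $1$ and up to sign) and what rules out the two values $2\rho_1\in\{0,1\}$, i.e. $s_0\in\{3,-1\}$, at which one checks directly that $r_1r_2$ has infinite order on $C_{s_0}$ (for $s_0=-1$, $M|_V$ is a non-semisimple $(\lambda+1)^2$-block; for $s_0=3$, $M|_P$ is a fixed-point-free affine map); thus those $s_0$, which are exactly the ones not of the stated form, correctly lie on the infinite side of the equivalence. Finally I would record consistency with Proposition~\ref{prop:finiteOrbits}, whose proof shows that $r_1r_2$ acts on the algebroid solution by $u(\tau,c)\mapsto u(\tau,-c\,\me^{-2\pi\mi\rho})$, of order the least $k$ with $\me^{2\pi\mi k(1/2-\rho)}=1$, namely the denominator of $\rho_1$.

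I expect the only genuinely delicate point to be the claim that the order of $r_1r_2$ as a transformation of the one-dimensional curve $C_{s_0}$ is not accidentally a proper divisor of the order of the ambient linear map $M$; this is handled by the affine-spanning argument above, but one must take care about possible degenerations of the conic $C_{s_0}$ and about the two excluded values $s_0\in\{-1,3\}$. The surrounding algebra — the explicit form of $M$, its spectrum, and the identity $1-2\cos\theta=1+2\cos(\pi-\theta)$ — is routine.
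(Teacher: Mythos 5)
Your proof is correct, and while it shares two ingredients with the paper's --- the three-independent-points device for promoting a pointwise identity on the conic to a matrix identity, and the relation $\tilde{s}=1-2\cos(2\pi\rho)$ from Theorem~\ref{th:B1asympt0} --- it establishes the finiteness criterion by a genuinely different mechanism. The paper never computes the spectrum of $r_1r_2$: it imports Proposition~\ref{prop:finiteOrbits} (the action of $r_1r_2$ on an algebroid solution is $(\rho,c)\mapsto(\rho,c\,\me^{2\pi\mi\rho_1})$, hence the orbit is finite of length the denominator of $\rho_1$), uses the unimodular matrix $\widehat{R}=(\mathbf{R}_1,\mathbf{R}_2,\mathbf{R}_3)$ built from the points $(1,0,0)$, $(0,1,0)$, $(s_0,0,1)$ to conclude $\widehat{P}_n(s_0)=I$, and disposes of $s_0\in\{-1,3\}$ via the explicit infinite orbits of Proposition~\ref{prop:s=-1s=3orbits}. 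You instead exhibit the matrix $M$ of $r_1r_2$, factor its characteristic polynomial as $(\lambda-1)\bigl(\lambda^2+(1-s_0)\lambda+1\bigr)$, and read the whole equivalence off the spectrum, with the excluded values $s_0=-1,3$ falling out as the non-semisimple and fixed-point-free cases. What your route buys: both directions of the ``iff'' are proved by self-contained linear algebra, the analytic correspondence with solutions being needed only to normalise $\rho_1=\tfrac12-\rho$ and to identify the orbit of the actual solution; and you get for free that \emph{every} orbit on the conic has length exactly $q$, since the only fixed direction of $M$ is $(1,1,1)$ and $p_0$ lies off the conic, which sharpens the orbit-length claim. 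What the paper's route buys: brevity given the machinery already in place, and an argument that stays anchored to the space of solutions, which is the theme of the section.
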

\begin{proof}
We proved in Proposition~\ref{prop:finiteOrbits} that, when acting on the contracted monodromy manifold, $\mathcal{D}_{1,2}$
has finite orbits for the points corresponding to the algebroid solutions. Here, we consider the action of
$\mathcal{D}_{1,2}(s_0)$ in $\mathbb{C}^3$. Consider the column-vector $\mathbf{R}=(x_0,y_0,z_0)^T\in\mathbb{C}^3$; then,
after $n$ iterations via $r_1r_2$, we arrive at the point $\widehat{P}_n(s_0)\mathbf{R}$, where $\widehat{P}_n(s_0)$ is a
$3\times3$ matrix with polynomial entries in $\mathbb{Z}[s_0]$. Assume that $s_0$ corresponds to an algebroid solution whose
orbit on the contracted monodromy manifold has length $n$. In this case, $\widehat{P}_n(s_0)\mathbf{R}=\mathbf{R}$, for
$\mathbf{R}$ defining a point on the monodromy manifold. We choose the following three points of the manifold,
$\mathbf{R}_1=(1,0,0)^T$, $\mathbf{R}_2=(0,1,0)^T$, and $\mathbf{R}_3=(s_0,0,1)^T$; then, the matrix
$\widehat{R}:=(\mathbf{R}_1,\mathbf{R}_2,\mathbf{R}_3)$ has unit determinant and satisfies the equation
$(\widehat{P}_n(s_0)-I)\widehat{R}=0$, where $I$ is the $3\times3$ identity matrix . Thus, for this value of $s_0$,
$\widehat{P}_n(s_0)=I$.
Since the matrix $\widehat{P}_n(s_0)$ does not depend on the initial point $\mathbf{R}$, it means that the condition
$\widehat{P}_n(s_0)\mathbf{R}=\mathbf{R}$ is true for any point $\mathbf{R}\in\mathbb{C}^3$.

Reverting to the proof of Proposition~\ref{prop:finiteOrbits}, since we know that the generator of the transformation
$r_1r_2$ is equivalent to the change of the parameters defining the solution $(\rho,c)\to(\rho,-c\me^{-2\pi\mi\rho})$
because $-c\me^{-2\pi\mi\rho}=c\me^{2\pi\mi(1/2-\rho)}=c\me^{2\pi\mi\rho_1}$, we see that the finite orbits are possible
only for rational $\rho_1$, and the lengths of these orbits coincide with the denominators of the irreducible representation
of the numbers $\rho_1$ as ratios of integers.

According to Theorem~\ref{th:B1asympt0} (see Appendix~\ref{app:asympt0}), the contracted Stokes multiplier $s=1+\mi s_0^0$
(cf.~\eqref{eq:contraction-variables}) is related to the branching parameter $\rho$ of the solution $u(\tau)$ as
$s=1-2\cos(2\pi\rho)$, with $0<2\rho<1$; hence, $s=1+2\cos(2\pi\rho_1)$, where $0<2\rho_1<1$. Since, for the algebroid
solutions, $\rho_1$ is rational, the corresponding numbers $s$ are algebraic \cite{Lehmer1933}, and the dihedral group
$\mathcal{D}_{1,2}(s_0)$ is finite for this $s_0=s$.
\end{proof}
\begin{remark}\label{rem:example-finiteorbits}
The length of the orbit of the normal subgroup $\mathcal{N}_{1,2}$ corresponding to solution $u(\tau)$ defined via
$H(r)$ (cf. Section~\ref{sec:algebroid}, Remark~\ref{rem:rho-algebraic} and Section~\ref{sec:mondata}, Lemma~\ref{lem:mondataH0})
equals $3$ because $s=0$, $\rho=1/6$, and $\rho_1=1/3$. The length of the orbit corresponding
to the solution $u(\tau)$ holomorphic at $\tau=0$ (cf. Section~\ref{sec:algebroid}, Remark~\ref{rem:rho-algebraic}) equals $4$
because $s=1$ and $\rho=\rho_1=1/4$.

Below, we define the set of minimal polynomials $q_k(s)$, $k\in\mathbb{N}$; for $k=3,4,\dotsc$, these polynomials
define the algebraic numbers $s=1+2\cos(2\pi\rho_1)$, $0<2\rho_1<1$, that coincide with the contracted Stokes multipliers
corresponding to the algebroid solutions. This set is defined so that the subscript $k$ of the polynomial $q_k(s)$ coincides
with the denominator of $\rho_1$ in its representation as an irreducible fraction, and thus with the length of the
corresponding orbit of $\mathcal{N}_{1,2}$.
\hfill $\blacksquare$\end{remark}
Since the minimal polynomials $q_k(s)$ defining the algebraic numbers $1+2\cos(2\pi\rho_1)$ for $\rho_1\in\mathbb{Q}$ and
$0<2\rho_1<1$ play an important role in the description of the algebroid solutions, we briefly recall the corresponding
construction in the notation adopted in this paper. The subject is well known \cite{Lehmer1933}, so that some
details of the proofs are omitted.

Consider the cyclotomic equation $\me^{2\pi\mi\rho_1 n}=1$. Use the Euler formula
$\me^{2\pi\mi\rho_1}=\cos(2\pi\rho_1)+\mi\sin(2\pi\rho_1)$ to find that the cyclotomic equation is
equivalent to $T_n(\cos(2\pi\rho_1))=\cos(2\pi\rho_1 n)=1$, where $T_n(x)$ is the $n$th Chebyshev polynomial of the first kind;
the explicit formulae for it can be found in \cite{BE2}:
$$
T_n(x)=\frac{n}{2}\sum_{m=0}^{\lfloor\frac{n}{2}\rfloor}(-1)^m\frac{(n-m-1)!}{m!(n-2m)!}(2x)^{n-2m}.
$$
With the aid of the Euler formula, it is easy to establish that the polynomial $T_n(x)-1$ for $n\geqslant2$ is always reducible,
and, moreover, the roots of the polynomials on the right-hand sides of the following identities are of order two,
\begin{gather}
2q_1(s)\left(T_{2n+1}\left(\frac{s-1}{2}\right)-1\right)=\left(\prod_{d\smallsetminus(2n+1)}q_d(s)\right)^2, \label{eq:T2n+1Q2}\\
2q_1(s)q_2(s)\left(T_{2n+2}\left(\frac{s-1}{2}\right)-1\right)=\left(\prod_{d\smallsetminus(2n+2)}q_d(s)\right)^2,\quad
n\in\mathbb{N}, \label{eq:T2n+2Q2}
\end{gather}
where $q_1(s)=2(\cos(2\pi\rho_1)-1)=2(s-1)/2-2=s-3$, and $q_1(s)q_2(s)=2(\cos(4\pi\rho_1)-1)=4\cos^2(2\pi\rho_1)-4=(s-3)(s+1)$,
so that $q_2(s)=s+1$. The polynomials $q_d(s)$ are assumed to be irreducible over $\mathbb{Z}$. Equations~\eqref{eq:T2n+1Q2}
and \eqref{eq:T2n+2Q2} allow one to recursively derive the polynomials $q_k(s)\in\mathbb{Z}[s]$ for all $k\in\mathbb{N}$.
If we assume that the polynomials are monic, then the sequence $q_k(s)$ satisfying the system~\eqref{eq:T2n+1Q2}
and \eqref{eq:T2n+2Q2} is unique.

It follows (by mathematical induction) from the Gau\ss\; identity for the Euler totient function, $\varphi(n)$,
\begin{equation*}
\sum_{d\smallsetminus n}\varphi(d)=n,
\end{equation*}
that $\deg q_k(s)=\varphi(k)/2$ for $k>2$. The set of roots of the polynomials $q_k(s)$, $k\in\mathbb{N}$, are,
by construction, real algebraic numbers that are dense on the segment $[-1,3]$.
The Galois group of the polynomials $q_k(s)$ is solvable, so that all its roots can be presented in terms of radicals.
Thus, the Stokes multipliers corresponding to the algebroid solutions can be expressed in terms of radicals.

In \cite{WatkinsZeitlin1993}, the authors, using identities for the Chebyshev  polynomials,
derive, from the system~\eqref{eq:T2n+1Q2} and \eqref{eq:T2n+2Q2}, a more convenient system that allows one to
recursively obtain the polynomials $q_k(s)$:
\begin{align}
2(T_{n+1}((s-1)/2)-T_n((s-1)/2))&=\prod_{d\smallsetminus(2n+1)}q_d(s),\label{eq:CebyshevOddQ}\\
2(T_{n+1}((s-1)/2)-T_{n-1}((s-1)/2))&=\prod_{d\smallsetminus(2n+2)}q_d(s),\qquad
n\in\mathbb{N}.\label{eq:ChebyshevEvenQ}
\end{align}
We list below the first $18$ polynomials $q_k(s):=q_k$ derived with the help of equations~\eqref{eq:CebyshevOddQ} and
\eqref{eq:ChebyshevEvenQ}:
\begin{align*}
q_1&=s-3,\;
q_2=s+1,\;
q_3=s,\;
q_4=s-1,\;
q_5=s^2-s-1,\;
q_6=s-2,\;
q_7=s^3-2s^2-s+1\;\\
q_8&=s^2-2s-1,\;
q_9=s^3-3s^2+3,\;
q_{10}=s^2-3s+1,\;
q_{11}=s^5-4s^4+2s^3+5s^2-2s-1,\\
q_{12}&=s^2-2s-2,\;
q_{13}=s^6-5s^5+5s^4+6s^3-7s^2-2s+1,\;
q_{14}=s^3-4s^2+3s+1,\\
q_{15}&=s^4-5s^3+5s^2+5s-5,\;
q_{16}=s^4-4s^3+2s^2+4s-1,\\
q_{17}&=s^8-7s^7+14s^6+s^5-25s^4+9s^3+12s^2-3s-1,\;
q_{18}=s^3-3s^2+1.
\end{align*}
As follows from Corollary~\ref{cor:Q=algebroid}, the boundary values of $s$, i.e., $s=-1$ ($2\rho=1$) and $s=3$ ($2\rho=0$),
are the roots of the polynomials $q_1(s)$ and $q_2(s)$. In fact, we know that for these values of $s$ there correspond
solutions of equation~\eqref{eq:dp3u} for $a=0$ that have logarithmic behaviour as $\tau\to0$ \cite{Kit87,KitVar2004}.
Thus, according to Proposition~\ref{prop:dihedral-finite}, the corresponding dihedral group $\mathcal{D}_{1,2}(s_0)$,
$s_0=-1,3$, is infinite.
\begin{proposition}\label{prop:s=-1s=3orbits}
Let $s=-1$ or $s=3$ and the point $(x_0,y_0,z_0)$ belong to $\mathbb{C}^3$ or to the manifold of the monodromy
data~\eqref{eqs:contrmanifold-xyz}; then, the orbits of the normal subgroup $\mathcal{N}_{1,2}(s)$ in $\mathbb{C}^3$
or on the manifold of the monodromy data are infinite. The points, after $n\in\mathbb{Z}_{\geqslant0}$
iterations, have the following co-ordinates:
\begin{align*}
&s=-1;&
x_n&=(-1)^n\left\lfloor1+\frac{n}{2}\right\rfloor x_0+(-1)^{n+1}\left\lfloor\frac{n+1}{2}\right\rfloor y_0
+n(\mathrm{mod}(2))\;z_0,\\
&&
y_n&=(-1)^n\left\lfloor\frac{n}{2}\right\rfloor x_0+(-1)^{n-1}\left\lfloor\frac{n-1}{2}\right\rfloor y_0
+n(\mathrm{mod}(2))\;z_0,\\
&&
z_n&=(-1)^{n+1}\left\lfloor\frac{n+1}{2}\right\rfloor x_0+(-1)^{n}\left\lfloor\frac{n}{2}\right\rfloor y_0
+(n+1)(\mathrm{mod}(2))\;z_0,\\
&s=3;&
x_n&=\frac{(n+1)(n+2)}{2}\,x_0+\frac{n(n+1)}{2}\,y_0+\big(1-(n+1)^2\big) z_0,\\
&&
y_n&=\frac{n(n-1)}{2}\,x_0+\frac{(n-2)(n-1)}{2}\,y_0+\big(1-(n+1)^2\big) z_0,\\
&&
z_n&=\frac{n(n+1)}{2}\,x_0+\frac{(n-1)n}{2}\,y_0+(1-n^2) z_0.\\
\end{align*}
\end{proposition}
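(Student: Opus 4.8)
The statement is a computation on $\mathbb{C}^{3}$, and the plan is to reduce it to the powers of a single $3\times3$ matrix. As already observed in the proof of Proposition~\ref{prop:dihedral-finite}, since $r_{1}$ and $r_{2}$ fix the last coordinate $s$ and act linearly on $(x,y,z)$, the generator $r_{1}r_{2}$ of $\mathcal{N}_{1,2}(s)$ is represented on the hyperplane $s=s_{0}$ by the matrix
$$
M(s_{0})=\begin{pmatrix} s_{0} & 1 & -s_{0}\\ 0 & 0 & 1\\ 1 & 0 & 0\end{pmatrix},
$$
read off from \eqref{eq:r1tr}--\eqref{eq:r2tr}, so that $\widehat{P}_{n}(s_{0})=M(s_{0})^{n}$. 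Because the second and third rows of $M(s_{0})$ are $(0,0,1)$ and $(1,0,0)$, the three coordinate sequences of $M(s_{0})^{n}\mathbf{R}$ are linked, for \emph{every} $s_{0}$, by $z_{n}=x_{n-1}$ and $y_{n}=x_{n-2}$; hence it suffices to compute the first coordinate $x_{n}$ and then shift the index.

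We would next write down the scalar recurrence. Applying $M(s_{0})$ once and using the two shift relations gives $x_{n+1}=s_{0}x_{n}-s_{0}x_{n-1}+x_{n-2}$, whose characteristic polynomial is $(\lambda-1)\bigl(\lambda^{2}-(s_{0}-1)\lambda+1\bigr)$. For $s_{0}=-1$ this is $(\lambda-1)(\lambda+1)^{2}$, so $x_{n}=A+(\beta+\gamma n)(-1)^{n}$; for $s_{0}=3$ it is $(\lambda-1)^{3}$, so $x_{n}$ is a quadratic polynomial in $n$. In each case the three unknown constants are pinned down by evaluating $M(s_{0})^{n}$ at $n=0,1,2$ on the standard basis $e_{1},e_{2},e_{3}$, i.e. by reading off $I$, $M(s_{0})$ and $M(s_{0})^{2}$. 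Re-expressing the terms $(\beta+\gamma n)(-1)^{n}$ through floors via $\lfloor(n+1)/2\rfloor=(2n+1-(-1)^{n})/4$, and then substituting $x_{n}$, $z_{n}=x_{n-1}$, $y_{n}=x_{n-2}$, produces the asserted expressions; each is then confirmed directly by induction on $n$ from the recurrence, which is routine.

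It remains to pass to the monodromy manifold and to establish infinitude of the orbits. Since \eqref{eqs:contrmanifold-xyz} is invariant under $r_{1}$ and $r_{2}$, the same formulas describe the action on the manifold. The coefficient expressions in the closed forms grow without bound as $n\to\infty$ (linearly for $s_{0}=-1$, quadratically for $s_{0}=3$), so the orbit of any point not fixed by $M(s_{0})$ is infinite; the fixed locus of $M(s_{0})$ is the line $x_{0}=y_{0}=z_{0}$, and a one-line check shows that this line meets neither of the surfaces \eqref{eqs:contrmanifold-xyz} with $s=-1$ nor $s=3$, so every orbit on the monodromy manifold is genuinely infinite. The main obstacle is expected to be purely bookkeeping: correctly solving the two degenerate recurrences and matching signs and floors in the $s_{0}=-1$ case; once the Ansatz is fixed, the inductive verification is mechanical.
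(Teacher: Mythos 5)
Your reduction to powers of the single matrix $M(s_0)$ representing $r_1\circ r_2$ (read off from \eqref{eq:r1tr}--\eqref{eq:r2tr}) is sound, and the shift relations $z_n=x_{n-1}$, $y_n=x_{n-2}$ together with the factorization $(\lambda-1)\bigl(\lambda^2-(s_0-1)\lambda+1\bigr)$ do produce the closed forms. The paper's own proof is exactly the bare induction you defer to at the end, so in substance the two arguments coincide; yours additionally explains where the formulas come from. A useful by-product of your route: carried out honestly, it gives $y_n=x_{n-2}$, whose $z_0$-coefficient for $s_0=3$ is $1-(n-1)^2$, not the $1-(n+1)^2$ printed in the statement. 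The printed formula already fails at $n=1$, where the matrix gives $y_1=z_0$ while the statement gives $-3z_0$; so the ``routine'' inductive confirmation you promise would not close against the stated $y_n$, and you should report the discrepancy (a typo in the proposition) rather than assert the verification is mechanical.

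The genuine gap is in the infinitude argument. ``Not fixed by $M(s_0)$'' does not imply ``infinite orbit'': an invertible map can have periodic points of period $\geqslant 2$, and for $s_0=-1$ this happens off the fixed line. Writing $x_{2m}=m(x_0-y_0)+x_0$ and $x_{2m+1}=-(m+1)(x_0-y_0)+z_0$, the orbit is $2$-periodic precisely on the plane $x_0=y_0$ (e.g. $(1,1,0)\mapsto(0,0,1)\mapsto(1,1,0)$), so the finite-orbit locus in $\mathbb{C}^3$ is a plane, not the line $x_0=y_0=z_0$; in particular the blanket claim that every orbit in $\mathbb{C}^3$ is infinite is false as stated (the origin is another counterexample), a looseness already present in the proposition itself. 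What your argument must actually check for the manifold statement at $s=-1$ is that the plane $x_0=y_0$ misses \eqref{eqs:contrmanifold-xyz}: substituting $y=x$ into $x+y+2z=1$ and $z(z-1)=xy$ gives $x=\tfrac12-z$ and then $0=\tfrac14$, a contradiction, so the conclusion survives --- but this is a different check from the one you performed on the line $x_0=y_0=z_0$. For $s_0=3$ your reasoning is fine, since a quadratic polynomial in $n$ is periodic only if constant, which forces the point onto the fixed line, and that line indeed misses the $s=3$ manifold.
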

\begin{proof}
By mathematical induction:
the base of the induction, $n=0$, can be verified immediately, and the inductive step is straightforward to make
with the help of the explicit formula for the transformation $r_1r_2$ (cf. \eqref{eq:r1tr} and \eqref{eq:r2tr}).
\end{proof}

Now, consider the transformation $r_3$ (cf. \eqref{eq:r3tr}). This transformation is interesting for us provided that it acts
on the monodromy manifold~\eqref{eqs:contrmanifold-xyz}, and thus its action can be extended to the space of solutions;
therefore, we consider its action on the monodromy manifold rather than on $\mathbb{C}^4$.

If we want to apply this transformation once to a point $\mathcal{P}$ of the monodromy
manifold~\eqref{eqs:contrmanifold-xyz}, then the co-ordinates of $\mathcal{P}$ should satisfy the condition
$\mathcal{P}\neq(1,0,0,s)$ or $\mathcal{P}\neq(x,x-1,1-x,-1)$, $x,s\in\mathbb{C}$.

The first condition can, however, be regularized; in this case, both the numerators and denominators of the proposed image of
$r_3$ (cf. \eqref{eq:r3tr}) are zeros. Using this fact, one can set $z+y=\varepsilon$,\footnote{Not be confused with $\varepsilon$
in the Introduction.} and rewrite the equations
defining the monodromy manifold~\eqref{eqs:contrmanifold-xyz} in the following manner:
\begin{equation}\label{eqs:xyz-epsilon}
z=\frac{\varepsilon}{2}+\sqrt{\frac{\varepsilon}{s+1}+\frac{\varepsilon^2(s-3)}{4(s+1)}},\qquad
y=\frac{\varepsilon}{2}-\sqrt{\frac{\varepsilon}{s+1}+\frac{\varepsilon^2(s-3)}{4(s+1)}},\qquad
x=zs+1-\varepsilon.
\end{equation}
Substituting these equations into \eqref{eq:r3tr} and considering the limit $\varepsilon\to0$, one finds that
\begin{equation*}\label{eq:r3-100s}
r_3(1,0,0,s)=\left(\frac{1-s}{1+s},\frac{2}{1+s},\frac{2}{1+s},2-s\right),\quad
s\in\mathbb{C}\setminus\{-1\}.
\end{equation*}
One can readily verify that this definition implies $r_3^2(1,0,0,s)=(1,0,0,s)$, so that the third relation
in ~\eqref{eqs:r1r2r3Reflections} holds.
We see that the only problem occurs when $s=-1$. For $s=-1$, the monodromy manifold consists of two lines,
$(x,x-1,1-x,-1)$ and $(x,x+1,-x,-1)$, where $x\in\mathbb{C}$. They are two different lines which are related by the
symmetry $r_1(x,x-1,1-x,-1)=(x_1,x_1+1,-x_1,-1)$, where $x_1=x-1$.
On the other hand, $r_3(x,x+1,-x,-1)=((2x-1)x,(2x+1)(x+1),(2x+1)x,3)$: the last quadric curve provides a rational
parametrization for the monodromy manifold when $s=3$. Therefore, we can correctly define the action of $r_3$ on
the first curve $(x,x-1,1-x,-1)$ only in the sense of projective geometry; however, having in mind an application
to the theory of the degenerate third Painlev\'e equation, we do not consider this option.
Note that, for all other points of the monodromy manifold, any transformation $r_3w(r_1,r_2)$, where $w$ is any word
consisting of two letters $r_1$ and $r_2$, can be
regularized in a natural way; e.g., $r_3r_1(0,1,0,s)=r_3(1,0,0,s)$ and $r_3r_2(0,s,1,s)=r_3(1,0,0,s)$, or the more complicated
examples, $r_3(r_1r_2)^3(1,s(s-1),s,s)=r_3(1,0,0,s)$ and $r_3(r_1r_2)^4=(s,(s-1)(s^2-s-1),s(s-1))=r_3(1,0,0,s)$. In the last
two examples, one can, upon using equations~\eqref{eqs:r1r2r3Reflections}, certainly find a general formula for the
transformations $r_3(r_1r_2)^3$ and $r_3(r_1r_2)^4$, and then apply the limiting procedure of the type delineated above.
The latter limiting procedure, however, is significantly more elaborate than that described by equations~\eqref{eqs:xyz-epsilon}.

Thus, in case one would like to consider the action of the complete group $\mathcal{G}$ (with generators
$r_1$, $r_2$, and $r_3$) on the monodromy manifold, one has to remove from it
those points with the fourth co-ordinate $s=-1$ and $s=3$. The previous considerations suggest the following construction:
for any $s\in\mathbb{C}\setminus\{-1,3\}$ and $s\neq1$, consider in $\mathbb{C}^3$, with co-ordinates $x$, $y$, and $z$,
the following two planes:
\begin{gather}\label{eqs:hyperplane}
\mathcal{H}_s:\qquad
x+y+z(1-s)=1,\\
\mathcal{H}_{2-s}:\qquad
x+y-z(1-s)=1.
\end{gather}
The intersection of these planes is the line $x+y=1$, which lies on the plane $z=0$. Consider the
quadric $z(z-1)=xy$: each plane $\mathcal{H}_k$, $k=s,2-s$, intersects it by a conic $\mathcal{C}_k$. These two conics
have two common points, $(1,0,0)$ and $(0,1,0)$, in $\mathbb{C}^3$; however, in $\mathbb{C}^4$, instead of these points, we have
two pairs of points: $(1,0,0,s)$ and $(1,0,0,2-s)$, and  $(0,1,0,s)$ and $(0,1,0,2-s)$. Therefore, the correct geometric object
for the action of the group $\mathcal{G}$ in $\mathbb{C}^3$ is its restriction on the disjoint sum of two conics
$\mathcal{C}_s\sqcup\mathcal{C}_{2-s}$. Denote this restriction as $\mathcal{G}(s)$: this notation assumes that
$\mathcal{G}(s)\equiv\mathcal{G}(2-s)$.
In the case $s=1$, the planes $\mathcal{H}_s$ and $\mathcal{H}_{2-s}$ coincide and,
instead of the disjoint sum of conics, we have one conic $\mathcal{C}_1$.

The dihedral group $\mathcal{D}_{1,2}(s_0)$ for $s_0=s$ and $s_0=2-s$ acts on the conics $\mathcal{C}_s$ and $\mathcal{C}_{2-s}$,
respectively, whilst the transformation $r_3$ maps the points of one conic to another, e.g., $r_3(0,1,0,s)=(0,1,0,2-s)$.
\begin{proposition}\label{prop:CoxeterAction}
The Coxeter group $\mathcal{G}(s)$ is finite iff $s$ is the Stokes multiplier corresponding to algebroid solutions
of equation~\eqref{eq:dp3u} for $a=0$, or, in other words, it is a root of some polynomial $q_m(s)$. In this case,
$2-s$ is also a root of some polynomial $q_n(s)$ and $\text{ord}\,\mathcal{G}(s)=4\max\{m,n\}$.
\end{proposition}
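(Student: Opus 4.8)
The plan is to use the structure established in the preceding discussion: $\mathcal{G}(s)$ is generated by $r_1$, $r_2$ acting on the conic $\mathcal{C}_s$ and by $r_3$ which interchanges $\mathcal{C}_s$ and $\mathcal{C}_{2-s}$, and on each conic the action of $r_1$, $r_2$ is the dihedral action analyzed in Proposition~\ref{prop:dihedral-finite}. First I would record the abstract group-theoretic picture. The relations \eqref{eqs:r1r2r3Reflections} and \eqref{eqs:reflections-relations} show that $\mathcal{G}(s)$ is a quotient of the Coxeter group with diagram $\bullet\!-\!4\!-\!\bullet\!-\!m\!-\!\bullet$ (the generators being $r_3$, $r_1$, $r_2$ in that order, so that $r_1r_3$ has order $4$ and $r_1r_2$ has order $m$ for the appropriate $m$). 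By the earlier description of this section, $\mathcal{G}(s)$ has a normal subgroup isomorphic to $Dih_m$ where $m$ is the order of $r_1r_2$ restricted to $\mathcal{C}_s$; so $\mathcal{G}(s)$ is finite iff this $m$ is finite, and then $|\mathcal{G}(s)|$ is a controlled multiple of $m$.

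Next I would pin down $m$ explicitly. By Proposition~\ref{prop:dihedral-finite}, the restricted dihedral group $\mathcal{D}_{1,2}(s)$ is finite iff $s=1+2\cos(2\pi\rho_1)$ for some $\rho_1\in\mathbb{Q}$ with $0<2\rho_1<1$, and then the order of the rotation $r_1r_2$ equals the denominator $m$ of $\rho_1$ in lowest terms; by the cyclotomic/Chebyshev construction recalled after Remark~\ref{rem:example-finiteorbits}, this is precisely the condition that $q_m(s)=0$. Similarly, on the other conic $\mathcal{C}_{2-s}$ the rotation $r_1r_2$ has finite order iff $2-s=1+2\cos(2\pi\rho_1')$ for rational $\rho_1'$, i.e. $q_n(2-s)=0$; but since $s\mapsto 2-s$ sends $1+2\cos(2\pi\rho_1)$ to $1+2\cos(2\pi(\tfrac12-\rho_1))$, rationality of $\rho_1$ is equivalent to rationality of $\tfrac12-\rho_1$, so $s$ is a root of some $q_m$ iff $2-s$ is a root of some $q_n$. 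This establishes the first two assertions and reduces the order computation to determining $|\mathcal{G}(s)|$ in terms of $m$ and $n$.

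For the order formula I would argue as follows. When $\mathcal{G}(s)$ is finite it acts on the disjoint union $\mathcal{C}_s\sqcup\mathcal{C}_{2-s}$; the subgroup $\mathcal{H}$ preserving each conit separately is (the pullback of) the image of the group generated by $\mathcal{D}_{1,2}(s)$ and $\mathcal{D}_{1,2}(2-s)$ together with $r_3$-conjugates, and $r_3$ visibly swaps the two conics, giving an index-$2$ extension when the two conics are genuinely disjoint (i.e. $s\ne1$). On each conic the faithful action is that of a dihedral group — of order $2m$ on $\mathcal{C}_s$ and $2n$ on $\mathcal{C}_{2-s}$ — and conjugation by $r_3$ identifies the rotation subgroups of the two, forcing $\mathcal{H}$ to be a dihedral-type group whose rotation part has order $\mathrm{lcm}(m,n)$; combined with the standard fact (from the finite Coxeter groups of type $I_2(k)\times$ something, or directly from the diagram $\bullet\!-\!4\!-\!\bullet\!-\!m\!-\!\bullet$) that the ``$4$'' bond contributes the factor yielding $4\max\{m,n\}$, one obtains $\mathrm{ord}\,\mathcal{G}(s)=4\max\{m,n\}$. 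Concretely I expect this comes out by checking that the finite Coxeter group with diagram $\bullet\!-\!4\!-\!\bullet\!-\!m\!-\!\bullet$ has order $8m$ when it is finite (type $B_3/C_3$-like when $m=3$, dihedral $\times$ reflection otherwise), then observing that the quotient realized as $\mathcal{G}(s)$ collapses this by the factor relating $m$ and $n$, leaving exactly $4\max\{m,n\}$; the verification that no further collapse occurs uses the three explicit points $\mathbf{R}_1,\mathbf{R}_2,\mathbf{R}_3$ trick from the proof of Proposition~\ref{prop:dihedral-finite} to show the action is faithful.

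The main obstacle I anticipate is the order computation — i.e. justifying the precise constant $4\max\{m,n\}$ rather than merely ``finite.'' The subtle points are (i) correctly accounting for the $r_3$-induced identification between the rotation subgroups on $\mathcal{C}_s$ and $\mathcal{C}_{2-s}$, which is what produces $\max\{m,n\}$ rather than $m+n$ or $mn$; (ii) handling the regularization of $r_3$ at the points $(1,0,0,s)$, $(0,1,0,s)$ so that the group action is genuinely well-defined on the excised conics (this was set up in equations~\eqref{eqs:xyz-epsilon} and the surrounding discussion, so it can be invoked); and (iii) the degenerate case $s=1$, where $\mathcal{C}_s=\mathcal{C}_{2-s}$ and one must check the formula still reads $4\max\{m,n\}=4m$ with $n=m$. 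Everything else is bookkeeping with the Chebyshev recursion \eqref{eq:CebyshevOddQ}--\eqref{eq:ChebyshevEvenQ} and the dihedral analysis already in hand.
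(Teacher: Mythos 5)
The paper states this proposition without proof, so there is no in-text argument to compare against; judged on its own terms, your proposal settles the first two assertions but not the order formula, and the order formula is where all the content lies. The reduction of the dihedral parts to Proposition~\ref{prop:dihedral-finite}, and the observation that $s=1+2\cos(2\pi\rho_1)\mapsto 2-s=1+2\cos\bigl(2\pi(\tfrac12-\rho_1)\bigr)$ preserves rationality of $\rho_1$, are correct. One thing you only gesture at and should make explicit: writing $\rho_1=p/m$ in lowest terms, the denominator $n$ of $\tfrac12-\rho_1$ satisfies $\{m,n\}=\{k,2k\}$ with $k$ odd when $m\not\equiv0\ (\mathrm{mod}\ 4)$ and $\{m,n\}=\{m,m\}$ when $4\mid m$; hence $\mathrm{lcm}(m,n)=\max\{m,n\}$, which is what converts the natural invariant $2\,\mathrm{lcm}(m,n)=\lvert\langle r_1,r_2\rangle\rvert$ (the product $r_1r_2$ acts with order $m$ on $\mathcal{C}_s$ and $n$ on $\mathcal{C}_{2-s}$, hence with order $\mathrm{lcm}(m,n)$ on the disjoint union) into $2\max\{m,n\}$.

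The genuine gap is the step from $\lvert\langle r_1,r_2\rangle\rvert=2\max\{m,n\}$ to $\mathrm{ord}\,\mathcal{G}(s)=4\max\{m,n\}$. The ``standard fact'' you invoke is false: the abstract Coxeter group with diagram $\bullet\,4\,\bullet\,m\,\bullet$ is finite only for $m=3$ (type $B_3$, of order $48$, not $8m=24$) and is infinite for every $m\geqslant4$, so no order can be read off the diagram, and the sentence about the ``$4$ bond contributing the factor $4\max\{m,n\}$'' assumes the conclusion. What must actually be proved is the single containment $r_3r_1r_3\in\langle r_1,r_2\rangle$ as transformations of $\mathcal{C}_s\sqcup\mathcal{C}_{2-s}$: granted this, $\langle r_1,r_2\rangle\cong Dih_{\max\{m,n\}}$ is normal of index $2$ (since $r_3$ swaps the conics it cannot lie in $\langle r_1,r_2\rangle$ for $s\neq1$), giving the order; without it you obtain neither the order nor even finiteness in the ``if'' direction, because two finite subgroups of the automorphism group of a conic can perfectly well generate an infinite subgroup of $PGL_2(\mathbb{C})$. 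The relations \eqref{eqs:r1r2r3Reflections}--\eqref{eqs:reflections-relations} cannot supply this containment --- they only give $r_3r_2r_3=r_2$ and that $(r_3r_1)^2$ is an involution commuting with $r_1$ --- so it has to be verified on the explicit formulas \eqref{eq:r1tr}--\eqref{eq:r3tr} restricted to the rational parametrization of the two conics (for instance by exhibiting a word $w(r_1,r_2)$ with $r_3r_1r_3=w$ there), with the degenerate case $s=1$, where the two conics coincide and the ``swapping'' argument for $r_3\notin\langle r_1,r_2\rangle$ fails, checked separately. Your appeal to the $\mathbf{R}_1,\mathbf{R}_2,\mathbf{R}_3$ trick addresses faithfulness, i.e.\ the lower bound, not this closure statement, which is the upper bound.
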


\section{Large-$r$ Asymptotics of $H(r)$ and Numerical Aspects}\label{sec:asymptnumerics}
In Section~\ref{sec:mondata}, the $\tau\to+0$ asymptotics of the general solution $u(\tau)$ of equation~\eqref{eq:dp3u} is used
in order to determine the monodromy data corresponding to the function $H(r)$.
This data constitutes the set of parameters that enables one to determine, with the help of the results
derived in \cite{KitVar2004,KitVar2010}, the asymptotics as $r\to-\infty$ of the function $H(r)$ and the corresponding integral
$I(r):=\smallint_r^0\frac{1}{\sqrt{-r}H(r)}\,\md r$.
For the convenience of the reader, all the necessary asymptotic results from \cite{KitVar2004,KitVar2010}
are collected in Appendices~\ref{app:asympt0} and ~\ref{app:infty} below. In this section, we present and
compare the asymptotic and numerical results for several solutions corresponding to different choices of the initial
value $H(0)$.

Before we present the corresponding asymptotic formulae,
let us comment on the numerical calculations. The function $H(r)$ and the corresponding monodromy data are defined
via the initial value $H(0)$; on the other hand, equation~\eqref{eq:hazzidakis} defining $H(r)$ is singular at $r=0$.
Strictly speaking, one has to take a step from $r=0$ to $r=r_1$: this step should be smaller than the radius of convergence
of the series (cf. \eqref{ineq:N-final} and \eqref{ineq:R-final}) representing $H(r)$, and then calculate, with the help of this
series, the initial data for $H(r)$ at $r=r_1$. Theoretically, this calculation can be executed with arbitrary precision.
Our calculations are performed via \textsc{Maple} 16 and 17. We found that, in case we want to calculate only the function $H(r)$ for
initial data at $r=0$ given by Gaussian rationals, then, in many (but not all!) cases, the standard \textsc{Maple} procedure for
the numerical solution of ODEs was able to correctly calculate the corresponding solution; at least visually the plots obtained
by the `simplified' procedure and the `correct' method coincide.
This might be occurring because (cf. equation~\eqref{eq:Hprime0-H0}) $H'(0)=H(0)^2-1/H(0)$
appears to be a much more complicated Gaussian rational than $H(0)$, thus \textsc{Maple} treats it in floating-point arithmetic,
hence making this relation only approximately valid, and applies general numerical procedures. Clearly, for generic Cauchy
data specified at singular points of an ODE, the regular solution does not exist.

Theorem~\ref{th:asympt-infty2004} (see Appendix~\ref{app:infty}) implies the following asymptotics for $H(r)$:
\begin{equation}\label{eq:H-asympt-Large-regular}
H(r)\underset{r\to-\infty}{=}1-\sqrt{6\nu_1}\cdot
\frac{\cos\big(\psi(r)+o\big(r^{-\delta}\big)\big)}{\sqrt[4]{-3r}},\qquad
\delta>0,
\end{equation}
where
\begin{gather}
\psi(r)=2\sqrt{-3r}+\frac{\nu_1}{2}\ln{(-3r)}+\nu_1\ln(24)+\frac{3\pi}{4}
-\frac{3\pi\mi}{2}\nu_1-\frac{\mi}{2}\ln(2\pi)+\mi\ln\left(\tilde{g}_1\sqrt{\nu_1}\,\Gamma(\mi\nu_1)\right),
\label{eq:psi-for-H-regular}\\
\nu_1:=\frac{\ln{\tilde{g}_3}}{2\pi},\qquad
\left|\text{Im}\,\nu_1\right|<\frac16=0.1666\ldots.\label{eq:cond:nu+1regular}
\end{gather}
Here and below, the following natural conventions for the branches of multi-valued functions are assumed: (i)
the branches of all roots of positive numbers are positive; (ii) the branch of a root of a parameter, which may take positive
values, is fixed to be positive, and then further defined via analytic continuation; and (iii) the branches of logarithms of
positive numbers are real.

The parameter $\nu_1$ is uniquely defined via equation~\eqref{eq:cond:nu+1regular}. It is related to the parameter
$\tilde\nu$ in Appendix~\ref{app:infty} via the relation $\mi\nu_1=\tilde\nu+1$.
The branch of $\sqrt{\nu_1}$ can be chosen as per the above conventions; however, the particular choice for
the branch of $\sqrt{\nu_1}$ is not important, provided that the following natural branch matching is assumed:
$\sqrt{6\nu_1}=\sqrt{6}\cdot\sqrt{\phantom{!}\!\nu_1}$ (cf.~\eqref{eq:H-asympt-Large-regular} and \eqref{eq:psi-for-H-regular}).

The asymptotics~\eqref{eq:H-asympt-Large-regular} is not valid for $H(0)=1$, $\me^{\pm\frac{2\pi\mi}{3}}$
because the monodromy parameters $\tilde{g}_1$ and $\tilde{g}_3$ are not defined for these values of $H(0)$. We can, however,
formally consider that, for $H(0)=1$, the asymptotics remains valid, since it is known
(cf. Corollary~\ref{cor:algebraicMONDATA}, item (1)) that
$\tilde{g}_3=1$, which implies that $\nu_1=0$ if we assume that the cosine function remains finite, in which case
$H(r)\to1$ as $r\to-\infty$, which is consistent with the fact that $H(r)=1$ for all $r$.

An additional restriction on the initial value $H(0)$ is provided by equation~\eqref{eq:cond:nu+1regular}: this condition not
only fixes the branch of the logarithm, but also imposes a condition on $H(0)$.

For the same conditions as for asymptotics~\eqref{eq:H-asympt-Large-regular}, the following asymptotic formula is valid:
\begin{equation}\label{eq:asympt-reg-int}
\int_{r}^0\frac{\md r}{\sqrt{-r}H(r)}\underset{r\to-\infty}{=}
2\sqrt{-r}+2\nu_1\ln(2+\sqrt{3})+\mi\ln\left(
\frac{\me^{\frac{2\pi\mi}{3}}H(0)-\me^{-\frac{2\pi\mi}{3}}}{\me^{\frac{2\pi\mi}{3}}-H(0)\me^{-\frac{2\pi\mi}{3}}}\right)
+\mathrm{E}(r),
\end{equation}
where
\begin{equation}\label{eq:corr-asympt-reg-int}
\mathrm{E}(r)=\frac{\sqrt{6\nu_1}}{2}
\cdot\frac{\sin\big(\psi(r)+o\big(r^{-\delta}\big)\big)}{\sqrt[4]{-3r}}.
\end{equation}
While the branch of the right-most logarithmic term in equation~\eqref{eq:psi-for-H-regular} is not important, the branch of
the right-most logarithmic term in equation~\eqref{eq:asympt-reg-int} is fixed by the condition $|\text{Im}\,\ln(\cdot)|<\pi$;
the last condition does not impose any additional restrictions on the initial value $H(0)$ since
$|\text{Im}\,\ln(\cdot)|=\pi$ only for $H(0)=\infty$. In terms of the initial value $H(0)$,
the important condition~\eqref{eq:cond:nu+1regular} for the validity of the asymptotic
formulae~\eqref{eq:H-asympt-Large-regular} and \eqref{eq:asympt-reg-int} reads:
\begin{equation*}\label{eq:asympt-regular-cond-H0}
\left|\text{arg}\left(H(0)+\frac{1}{H(0)}+1\right)\right|<\frac{\pi}{3}.
\end{equation*}

In the following, we present two examples of the application of the asymptotic formulae~\eqref{eq:H-asympt-Large-regular} and
\eqref{eq:asympt-reg-int}, and compare them with the corresponding numerical plots for $H(r)$ and $I(r)$. These functions are
finite at $r=0$, whilst the denominators of their asymptotics contain the factor $\sqrt[4]{-3r}$; therefore, the plots of the
functions $H(r)$ and $I(r)$ and their asymptotics are compared outside of some small neighbourhoods of the origin, which are
specified in the figure captions.

\subsection{Example 1: $H(0)=-\frac{1}{30}-\mi$}\label{subsec:example1}
For this value of $H(0)$, $\nu_1=-0.185823\ldots-\mi0.0001892\ldots$, so that the
condition~\eqref{eq:cond:nu+1regular} is satisfied. For the \textsc{Maple} calculations we choose the
parameter \texttt{Digits}=50, and the procedure that provides maximum precision for this value of \texttt{Digits}.
Each plot of the numeric solutions and the corresponding integrals presented below is based on the calculation of $500$ points.
We also choose the initial point $r_1=-10^{-6}$. In principle, for the calculation of $H(r)$, one can choose a smaller value
for the number of digits and larger values for $r_1$, \texttt{Digits}=20 and $r_1=-0.001$, say; however, these parameters
do not produce `correct' numerical precision for the integral $\smallint_{r}^0\tfrac{1}{\sqrt{-r}H(r)}\md r$. Our choice for
these parameters is close to the optimal values, i.e., an increase in the accuracy of calculations is not noticeable on the
plots of the corresponding functions; for example, using, say, the parameter values \texttt{Digits}=100 and $r_1=-10^{-9}$,
the \textsc{Maple} calculations produce plots that are visually indistinguishable from those presented in
Figs.~\ref{fig:H0=-1over30-i+ReH}--\ref{fig:H0=-1over30-i+IntImH}.\footnote{\label{foot:notebooksCalc1}
\textsc{Maple} $16$, on a notebook with 4Gb RAM and processor Intel(R) Core(TM) i7-3517U (3rd Generation),
makes $50$-digits calculation in $84\mathrm{s}$, whilst $100$-digits calculation takes $103\mathrm{s}$. In preparing this work,
we redid the calculations with the help \textsc{Maple} $2017$ on a more modern notebook with 16Gb RAM and processor
Intel(R) Core(TM) i7-12700H (12th Generation): these calculations were executed in $60\mathrm{s}$ and $65\mathrm{s}$,
respectively.}
\begin{figure}[htpb]
\begin{center}
\includegraphics[height=50mm,width=100mm]{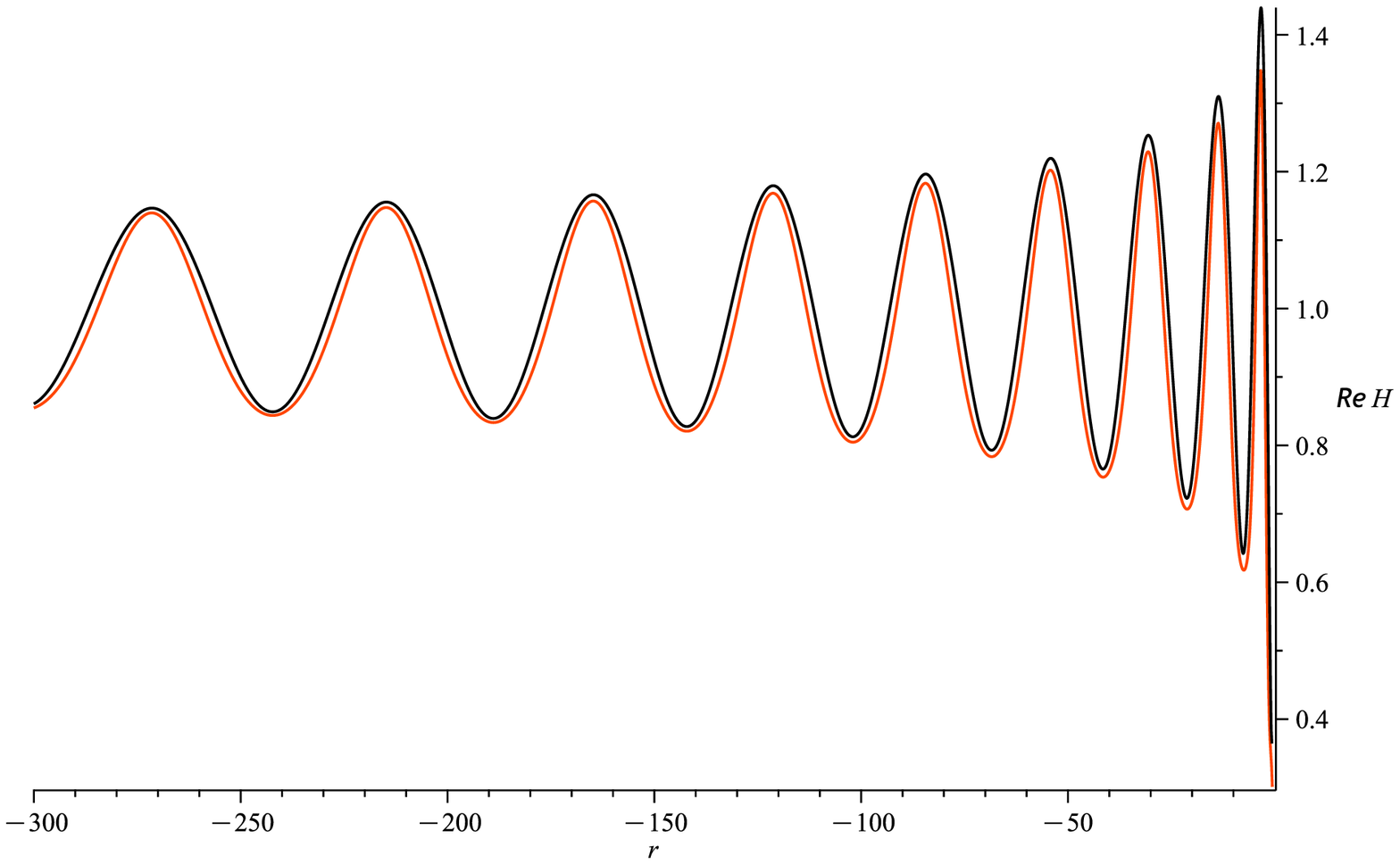}
\caption{The red and black plots are, respectively, the real parts of the numeric and large-$r$ asymptotic values
of the function $H(r)$ for $r\leqslant-0.6$ corresponding to the initial value $H(0)=-1/30-\mi$.}
\label{fig:H0=-1over30-i+ReH}
\end{center}
\end{figure}
\begin{figure}[htpb]
\begin{center}
\includegraphics[height=50mm,width=100mm]{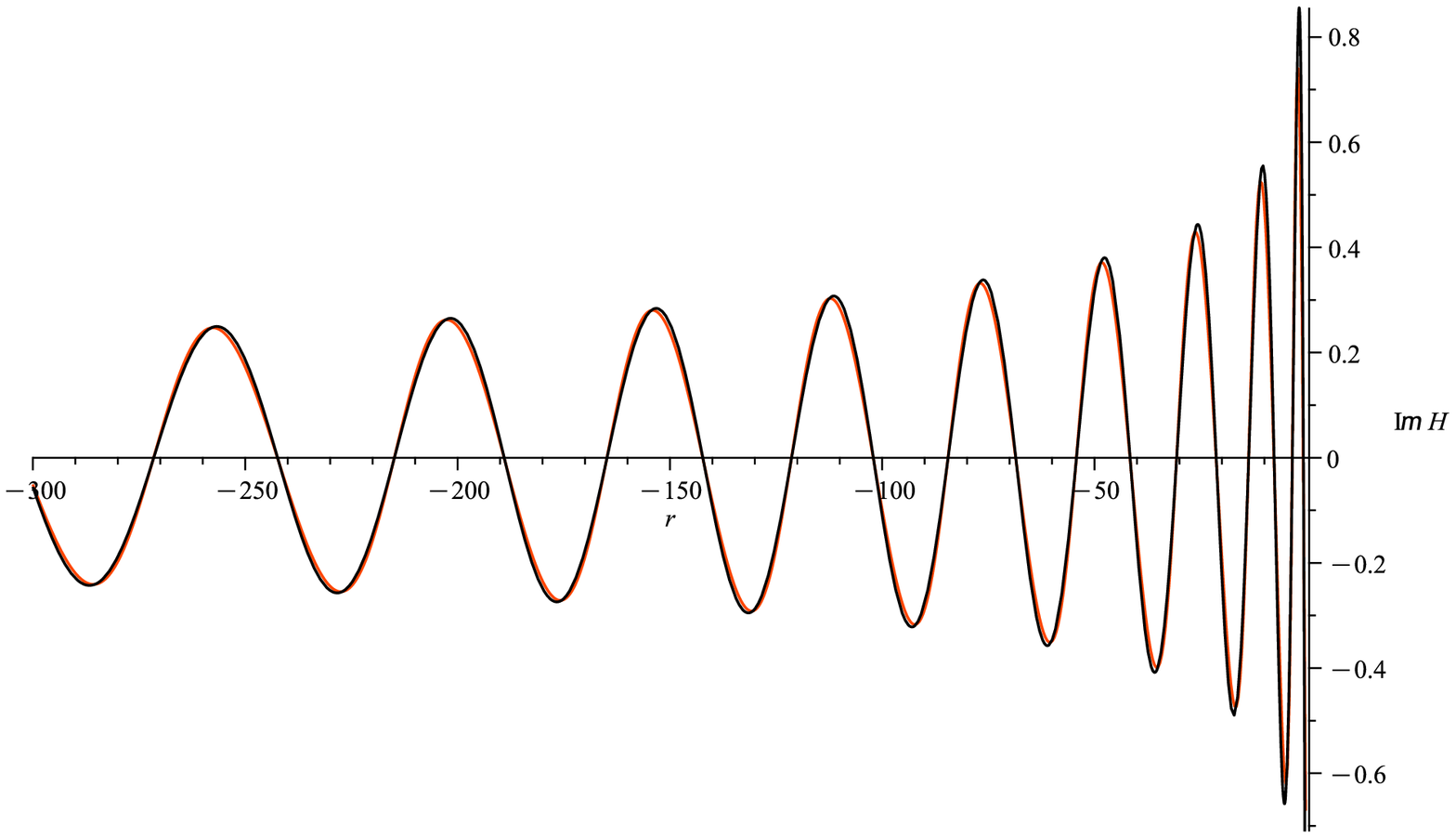}
\caption{The red and black plots are, respectively, the imaginary parts of the numeric and large-$r$ asymptotic values
of the function $H(r)$ for $r\leqslant-0.5$ corresponding to the initial value $H(0)=-1/30-\mi$.}
\label{fig:H0=-1over30-i+ImH}
\end{center}
\end{figure}
\begin{figure}[htpb]
\begin{center}
\includegraphics[height=50mm,width=100mm]{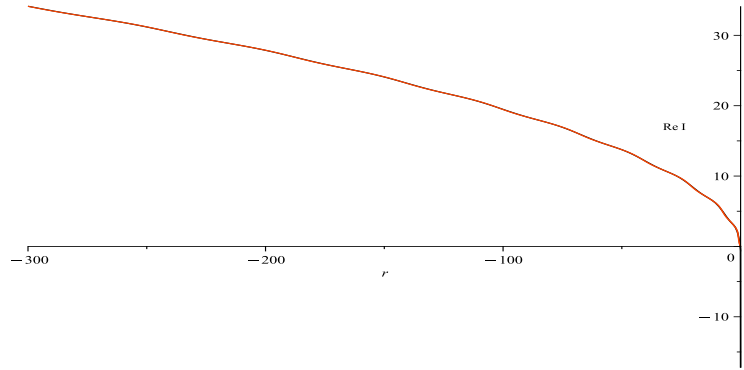}
\caption{The red and black plots are, respectively, the real parts of the numeric and large-$r$ asymptotic values
of $I=\smallint_{r}^0\tfrac{1}{\sqrt{-r}H(r)}\,\md r$ for $r\leqslant-10^{-7}$ corresponding to the initial value $H(0)=-1/30-\mi$.
On this scale, both plots almost coincide. In Figure~\ref{fig:H0=-1over30-i+IntCorrReH}, the reader will see a more detailed
comparison of these plots.}
\label{fig:H0=-1over30-i+IntReH}
\end{center}
\end{figure}
\begin{figure}[htpb]
\begin{center}
\includegraphics[height=50mm,width=100mm]{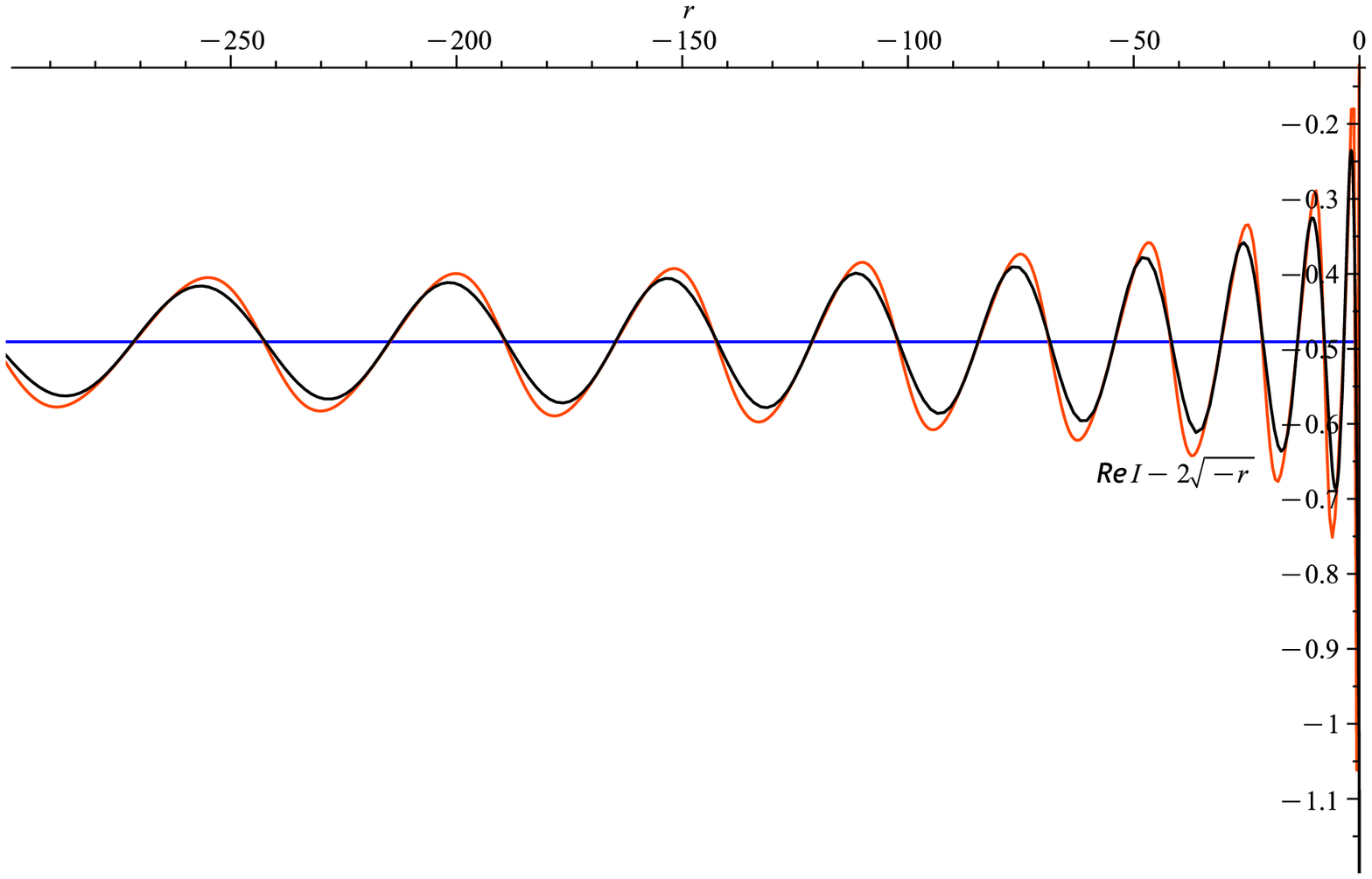}
\caption{The red and black plots are, respectively, the real parts of the numeric and large-$r$ asymptotic values
of $\smallint_{r}^0\tfrac{1}{\sqrt{-r}H(r)}\,\md r-2\sqrt{-r}$ for $r\leqslant-10^{-7}$ corresponding to the initial value
$H(0)=-1/30-\mi$. The blue line is the real part of the asymptotics~\eqref{eq:asympt-reg-int} modulo the
parabola $2\sqrt{-r}$ and $\mathrm{E}(r)$.}
\label{fig:H0=-1over30-i+IntCorrReH}
\end{center}
\end{figure}
\begin{figure}[htpb]
\begin{center}
\includegraphics[height=50mm,width=100mm]{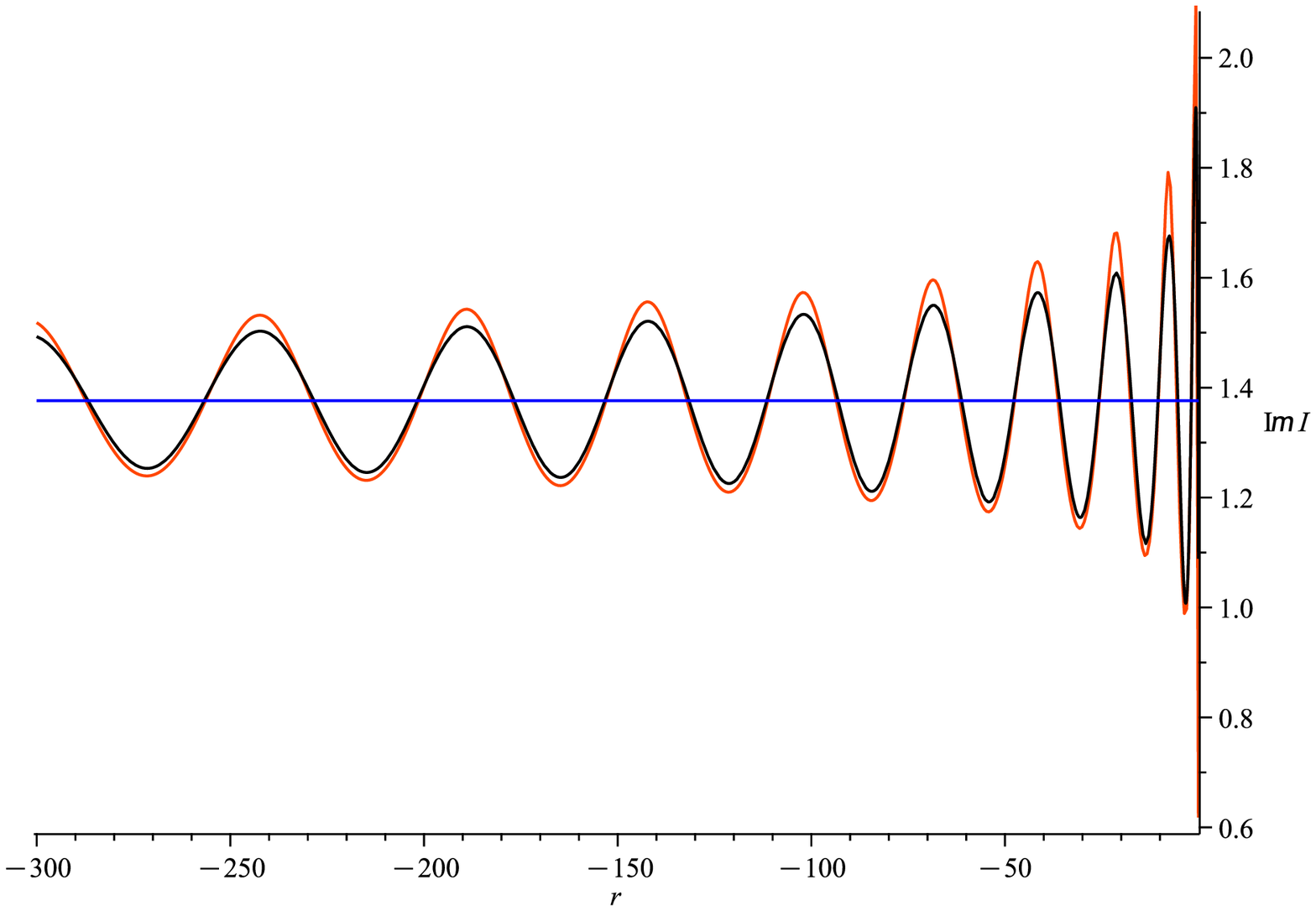}
\caption{The red and black plots are, respectively, the imaginary parts of the numeric and large-$r$ asymptotic values
of $\smallint_{r}^0\frac{1}{\sqrt{-r}H(r)}\,\md r$ for $r\leqslant-0.1$ corresponding to the initial value $H(0)=-1/30-\mi$.
The blue line is the imaginary part of the asymptotics~\eqref{eq:asympt-reg-int} modulo $\mathrm{E}(r)$.}
\label{fig:H0=-1over30-i+IntImH}
\end{center}
\end{figure}

\subsection{Example 2: $H(0)=60-\mi100$}\label{subsec:example2}
For this value of $H(0)$, $\nu_1=0.583250\ldots-\mi0.162814\ldots$, so that the
condition~\eqref{eq:cond:nu+1regular} is satisfied. Here, however, we are closer to the boundary of the interval of validity
\eqref{eq:cond:nu+1regular} for the asymptotics~\eqref{eq:H-asympt-Large-regular}.
For the \textsc{Maple} calculations, we choose the same settings as in Example 1. The results of these calculations are
presented in Figs.~\ref{fig:H0=60-i100+ReH}--\ref{fig:H0=60-i100+IntImH}: these $50$-digits calculations take approximately
$133\text{s}$ on an older notebook (see footnote~\ref{foot:notebooksCalc1}).
\begin{figure}[htpb]
\begin{center}
\includegraphics[height=50mm,width=100mm]{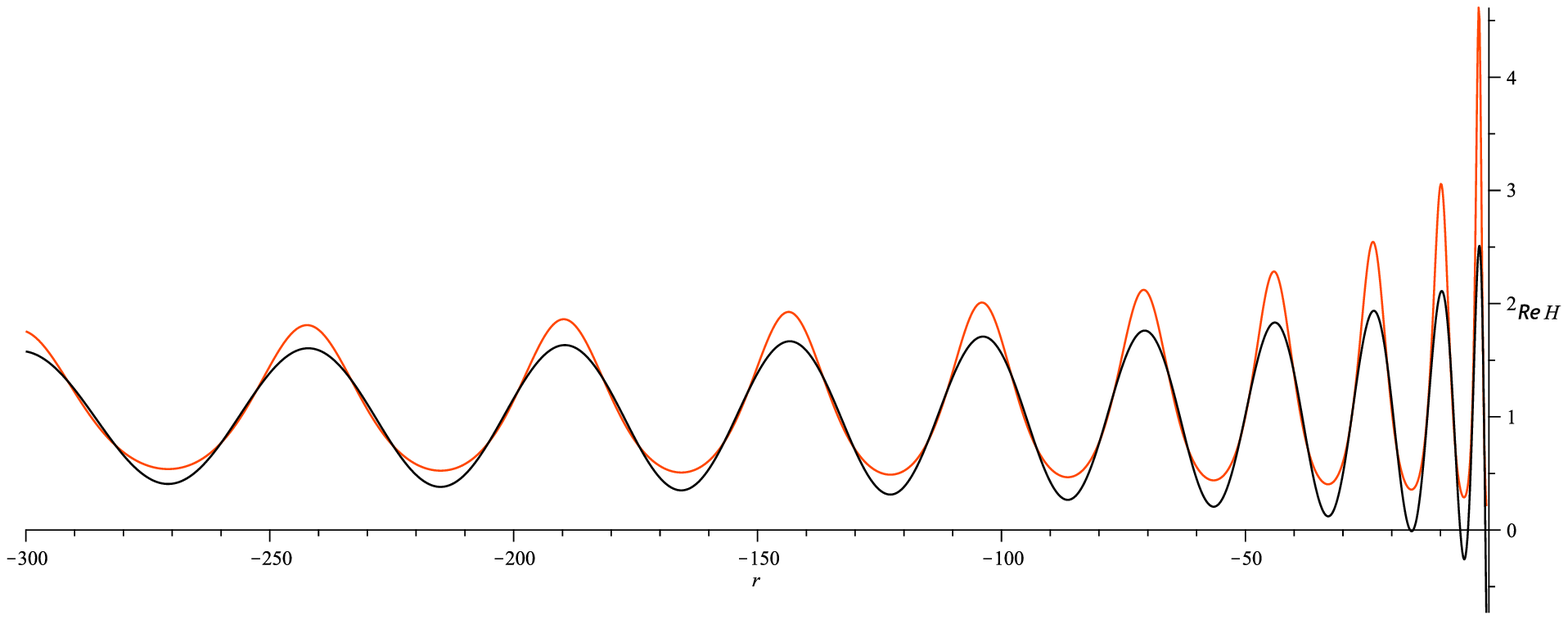}
\caption{The red and black plots are, respectively, the real parts of the numeric and large-$r$ asymptotic values
of the function $H(r)$ for $r\leqslant-0.6$ corresponding to the initial value $H(0)=60-\mi100$}
\label{fig:H0=60-i100+ReH}
\end{center}
\end{figure}
\begin{figure}[htpb]
\begin{center}
\includegraphics[height=50mm,width=100mm]{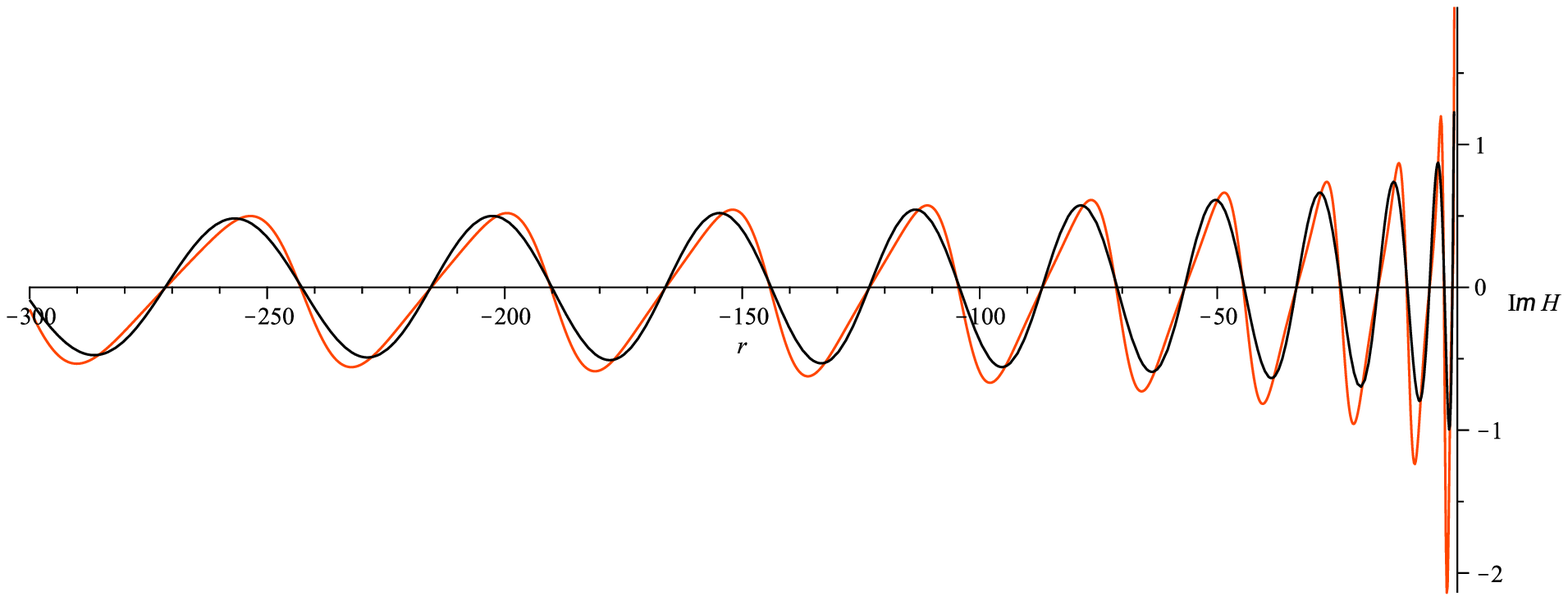}
\caption{The red and black plots are, respectively, the imaginary parts of the numeric and large-$r$ asymptotic values
of the function $H(r)$ for $r\leqslant-0.1$ corresponding to the initial value $H(0)=60-\mi100$.}
\label{fig:H0=60-i100+ImH}
\end{center}
\end{figure}
\begin{figure}[htpb]
\begin{center}
\includegraphics[height=50mm,width=100mm]{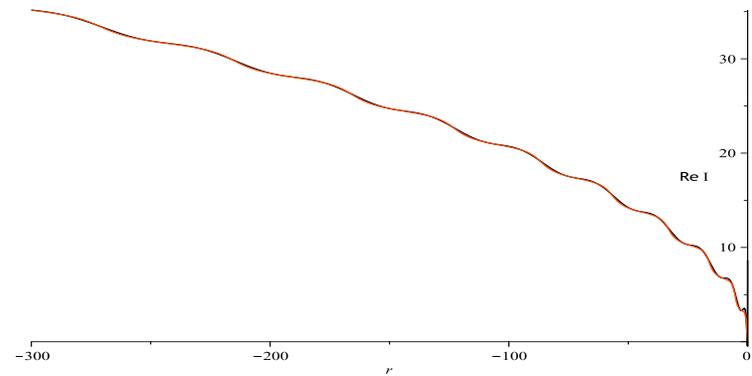}
\caption{The red and black plots are, respectively, the real parts of the numeric and large-$r$ asymptotic values
of $\smallint_{r}^0\tfrac{1}{\sqrt{-r}H(r)}\,\md r$ for $r\leqslant-10^{-6}$ corresponding to the initial value
$H(0)=60-\mi100$. Both plots virtually coalesce into one curve; however, one notices that this curve has some thick segments
coloured in red from one side and in black from the other. A more detailed comparison of these plots is presented in
Figure~\ref{fig:H0=60-i100+IntCorrReH}.}
\label{fig:H0=60-i100+IntReH}
\end{center}
\end{figure}
\begin{figure}[htpb]
\begin{center}
\includegraphics[height=50mm,width=100mm]{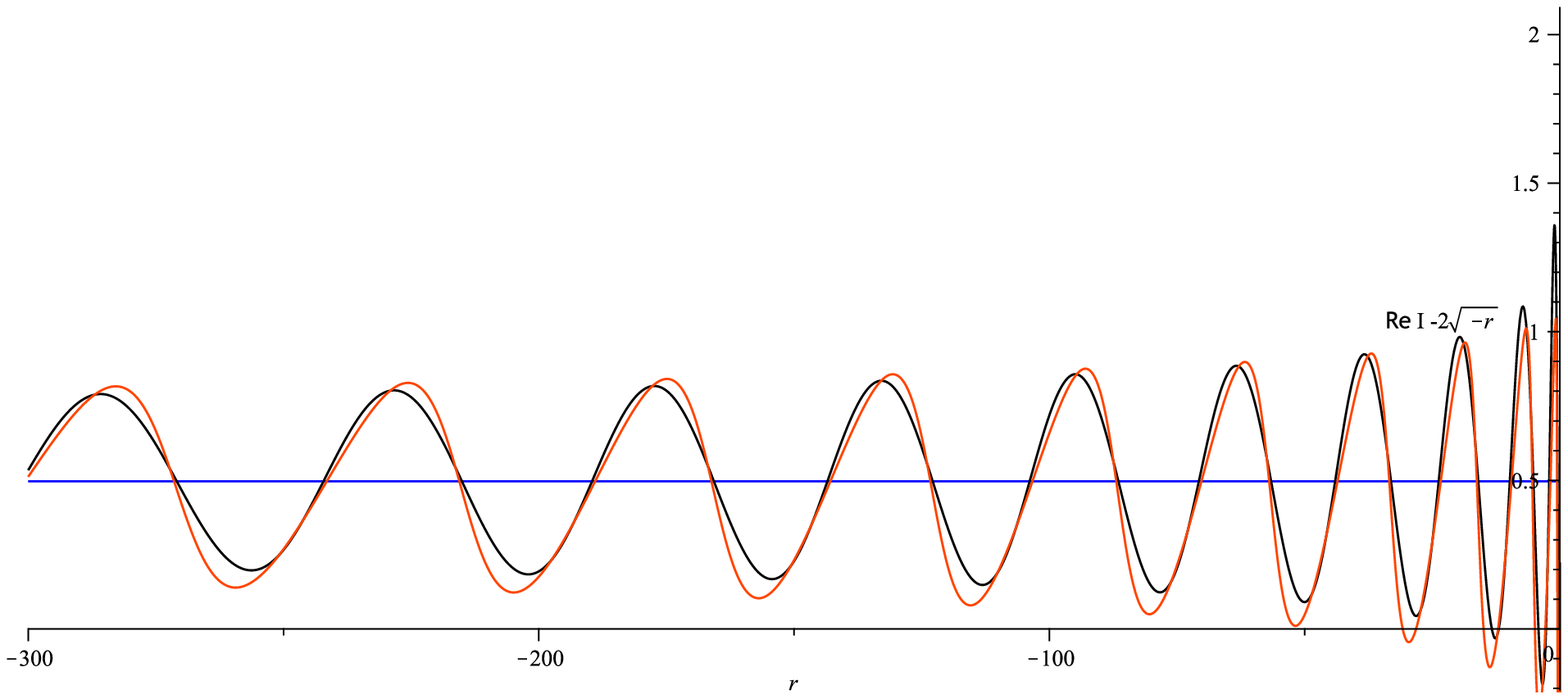}
\caption{The red and black plots are, respectively, the real parts of the numeric and large-$r$ asymptotic values
of $\smallint_{r}^0\frac{1}{\sqrt{-r}H(r)}\,\md r-2\sqrt{-r}$ for $r\leqslant-10^{-2}$ corresponding to the initial value
$H(0)=60-\mi100$. The blue line is the real part of the asymptotics~\eqref{eq:asympt-reg-int} modulo the
parabola $2\sqrt{-r}$ and $\mathrm{E}(r)$.}
\label{fig:H0=60-i100+IntCorrReH}
\end{center}
\end{figure}
\begin{figure}[htpb]
\begin{center}
\includegraphics[height=50mm,width=100mm]{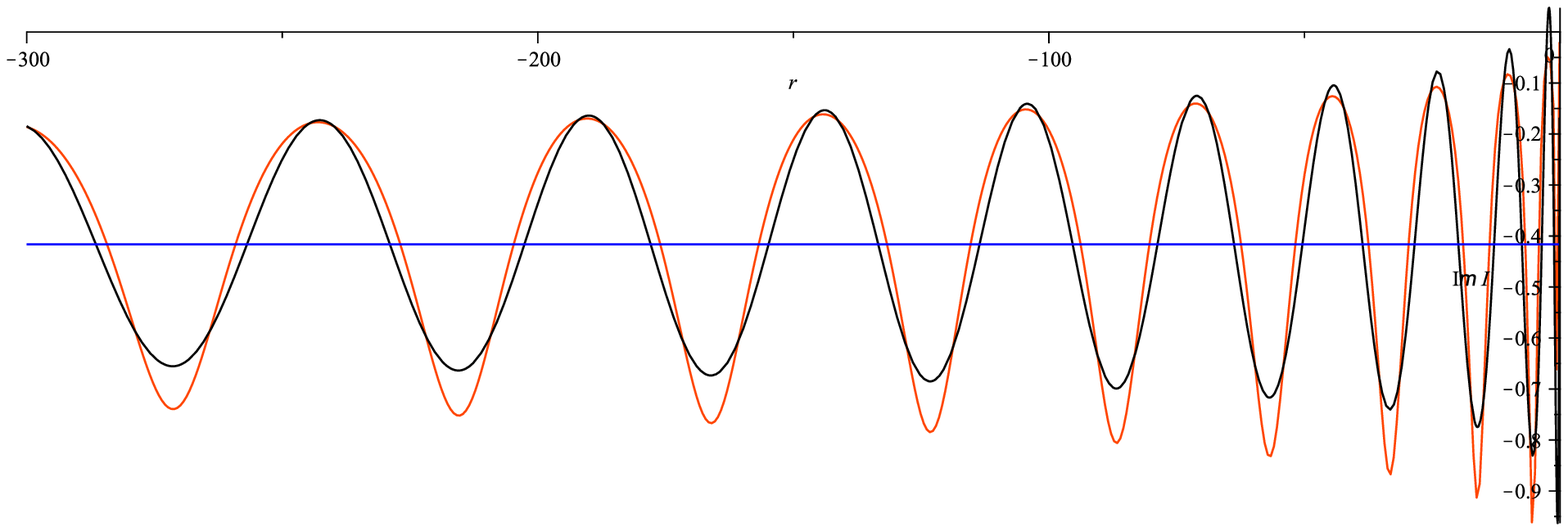}
\caption{The red and black plots are, respectively, the imaginary parts of the numeric and large-$r$ asymptotic values
of $\smallint_{r}^0\frac{1}{\sqrt{-r}H(r)}\,\md r$ for $r\leqslant-0.1$  corresponding to the initial value $H(0)=60-\mi100$.
The blue line is the imaginary part of the asymptotics~\eqref{eq:asympt-reg-int} modulo $\mathrm{E}(r)$.}
\label{fig:H0=60-i100+IntImH}
\end{center}
\end{figure}
\begin{remark}\label{rem:regular-asymptotics}
In principle, the asymptotics~\eqref{eq:H-asympt-Large-regular}  and  \eqref{eq:asympt-reg-int} are valid
even  beyond the condition~\eqref{eq:cond:nu+1regular}, that is, $|\text{Im}\,\nu_1|<1/2$; however, going farther and farther
beyond the condition~\eqref{eq:cond:nu+1regular}, the visual correspondence between the solution and its asymptotics
is attained at larger and larger values of $|r|$. The correction terms to the asymptotics~\eqref{eq:H-asympt-Large-regular}
and  \eqref{eq:asympt-reg-int} (see Appendix~\ref{app:infty}) improve the correspondence, but do not alter the general
tendency: the correspondence between the numeric and asymptotic values for $H(r)$ and $I(r)$ deteriorates for finite,
though quite large, values of $r$. As one approaches $|\text{Im}\,\nu_1|\approx 2/5$, the values of $r$ for which the
visual correspondence is expected appear to be so large that it becomes unfeasible to calculate the solution for such values
of $r$.  This tendency is partially illustrated upon comparing
Figs.~\ref{fig:H0=-1over30-i+ReH} and \ref{fig:H0=-1over30-i+ImH} with Figs.~\ref{fig:H0=60-i100+ReH} and
\ref{fig:H0=60-i100+ImH}, respectively, and the corresponding figures with the plots for the integrals: the visual
correspondence worsens when the boundary value of the condition~\eqref{eq:cond:nu+1regular} is approached. Example~6 below
illustrates at which values of $r$ one may expect to observe that the asymptotics~\eqref{eq:H-asympt-Large-regular}  and
\eqref{eq:asympt-reg-int} faithfully describe the large-$r$ behaviour of $H(r)$ and $I(r)$.

In order to cope with the problem indicated above, we derive in \cite{KitVar2010} yet another asymptotic formula
which will be discussed below.
\hfill$\blacksquare$\end{remark}

We now turn our attention to the numerical illustration of the asymptotic results for the function $H(r)$ that follow from
Theorem~\ref{th:asympt-infty2010} (see Appendix~\ref{app:infty}):
\begin{equation}\label{eq:H-asympt-Large-singular}
H(r)\underset{r\to-\infty}{=}1-\frac3{2\sin^2\left(\frac12\hat\psi(r)+o\big(r^{-\delta}\big)\right)},
\qquad
\delta>0,
\end{equation}
where
\begin{gather}
\hat\psi(r)=2\sqrt{-3r}+\left(\nu_1+\frac{\mi}{2}\right)\ln\big(24\sqrt{-3r}\big)-\frac{\pi}4-\frac{3\pi\mi}{2}\nu_1
-\frac{\mi}{2}\ln(2\pi)+\mi\ln\big(\tilde{g}_1\Gamma(\mi\nu_1)\big),\label{eq:H-asympt-Large-singular-phase}\\
\nu_1:=\frac{\ln{\tilde{g}_3}}{2\pi},\qquad
\text{Im}\,\nu_1\in(-1,0)\setminus\lbrace -1/2 \rbrace.\label{eq:nu1-singular}
\end{gather}
We draw the attention of the reader to the fact that, even though the parameter $\nu_1$ in \eqref{eq:nu1-singular} is defined
by the same formula as $\nu_1$ in \eqref{eq:cond:nu+1regular}, its imaginary part is fixed in another interval. Clearly,
both asymptotics \eqref{eq:H-asympt-Large-regular} and \eqref{eq:H-asympt-Large-singular} have intersecting domains of validity.
The asymptotics for the integral $I(r)$ reads:
\begin{equation}\label{eq:asympt-sing-int}
\int_{r}^0\frac{\md r}{\sqrt{-r}H(r)}\underset{r\to-\infty}{=}
2\sqrt{-r}+(2\nu_1+\mi)\ln(2+\sqrt{3})+\pi(2k-1)+\mi\ln\left(
\frac{{H(0)\me^\frac{2\pi\mi}{3}}-\me^{-\frac{2\pi\mi}{3}}}{\me^{\frac{2\pi\mi}{3}}-H(0)\me^{-\frac{2\pi\mi}{3}}}\right)
+\mathcal{E}(r),
\end{equation}
where $k\in\mathbb{Z}$,
\begin{equation}\label{eq:corr-asympt-sing-int}
\mathcal{E}(r)=-\mi\ln\left(\frac{\sin\big(\frac12\hat\psi(r)+\theta_0+o\big(r^{-\delta}\big)\big)}
{\sin\big(\frac12\hat\psi(r)-\theta_0+o\big(r^{-\delta}\big)\big)}\right),\qquad
\theta_0=-\frac{\pi}{2}+\frac{\mi}{2}\ln\big(2+\sqrt{3}\big).
\end{equation}
\begin{remark}\label{rem:integerK}
The number $k=k(H(0))\in\mathbb{Z}$ cannot be determined via the isomonodromy technique developed in \cite{KitVar2010} because,
in fact, asymptotics of the function $\exp\left(\smallint_{r}^0\frac{1}{\sqrt{-r}H(r)}\,\md r\right)$ is studied there.
It looks like an interesting problem to determine the integer $k$ in terms of the initial value $H(0)$. In the numerical examples considered
below, we find $k$ by comparing the numerical solution for $H(r)$ with its asymptotics~\eqref{eq:asympt-sing-int}. Since
$k$ is an integer, its experimental determination in this situation does not represent a problem. The values
of $k$ are given in the corresponding figure captions.
\hfill $\blacksquare$\end{remark}
\begin{remark}\label{rem:best-intervals-asympt2004}
If $\text{Im}\,\nu_1\in(-1,-5/6)$, which corresponds to the range of validity of the
asymptotics~\eqref{eq:H-asympt-Large-singular} and \eqref{eq:asympt-sing-int} (cf.  condition~\eqref{eq:nu1-singular}), then
one can fix the branch of $\nu_1$ such that $\text{Im}\,\nu_1\in(0,1/6)$, so that the
asymptotics~\eqref{eq:H-asympt-Large-regular} and \eqref{eq:asympt-reg-int} can be used; moreover, the closer
$\text{Im}\,\nu_1$ is to $-1$ or to $0$, the better (in the sense of approximating $H(r)$ at finite values) work
the asymptotics~\eqref{eq:H-asympt-Large-regular} and \eqref{eq:asympt-reg-int}.
\hfill $\blacksquare$\end{remark}
\begin{remark}\label{rem:correctionFORsingularINTEGRAL}
This remark concerns the term $\mathcal{E}(r)$ in the asymptotic formula~\eqref{eq:asympt-sing-int}
(cf. \eqref{eq:corr-asympt-sing-int}). It is assumed that one has to take a continuous branch of the logarithmic function
as $r$ varies from $0$ to $-\infty$. This problem is important in numerical calculations because {\sc Maple} calculates
only the principal branch of the logarithmic function; therefore, plotting the asymptotic formula ~\eqref{eq:corr-asympt-sing-int}
on a large-$r$ interval almost always produces a saw-like plot. To cope with this problem, one can present the logarithm
in ~\eqref{eq:corr-asympt-sing-int} as an integral of the difference of cotangent functions. This idea works on
small-length intervals; however, the calculation of this integral on the segment $[-10,0]$, say, already consumes a considerable
amount of time, several times longer than the numerical calculation of the solution of the Painlev\'e equation itself!
Consequently, in lieu of the integral, we define the continuous branch of the logarithm as the solution of the differential
equation for the above-mentioned integral.
It appears that {\sc Maple} numerically evaluates this solution much faster than its explicit form represented by the integral
in terms of cotangents. The execution time of this fast calculation, however, where only $20$-digits of accuracy
were maintained, appears to be $5$ to $8$ times longer than the numerical calculation of the integral on the left-hand side of
equation~\eqref{eq:asympt-sing-int} based on its direct calculation  (with $120$-digits of accuracy) via the Painlev\'e
transcendent!
There is, thus, a significant difference between the numerical calculations using the asymptotic
formulae~\eqref{eq:asympt-reg-int} and \eqref{eq:asympt-sing-int}:
plots of the first asymptotic formulae~\eqref{eq:asympt-reg-int} are calculated almost immediately, whilst
the second asymptotics~\eqref{eq:asympt-sing-int} is interesting for theoretical
studies until a fast algorithm for the calculation of continuous branches of the logarithmic functions will be implemented into
{\sc Maple} (and analogous codes).
\hfill $\blacksquare$\end{remark}
Below, we consider several examples that illustrate some features of the asymptotic
formulae~\eqref{eq:H-asympt-Large-singular} and \eqref{eq:asympt-sing-int}.

\subsection{Example 3: $H(0)=-0.148+\mi0.191$}\label{subsec:example3}
For this value of $H(0)$, $\nu_1=0.0249933\ldots-\mi0.329580\ldots$, so that the
condition~\eqref{eq:nu1-singular} is satisfied, and we can use the asymptotic formulae~\eqref{eq:H-asympt-Large-singular}
and \eqref{eq:asympt-sing-int}.
For the numerical calculation of the solution $H(r)$ and its related integral $I(r)$ via \textsc{Maple}, we consider the
initial values for $H(r)$ at $r_1=10^{-8}$ and use 4 terms of the corresponding Taylor series expansions for the calculation
of the initial values, $H(r_1)$ and $H^\prime(r_1)$, and the parameter \texttt{Digits} $= 120$.
For the calculation of the corresponding asymptotics, \texttt{Digits} is set equal to $20$.
In principle, for the numerical calculation of $H(r)$ and $I(r)$, one can take a smaller number of digits, $60$, say, and the
resulting calculation with $60$ digits produces visually the same pictures.

Figs.~\ref{fig:H0=-0148+i0191+ReH20}, \ref{fig:H0=-0148+i0191+ImH20}, \ref{fig:H0=-0148+i0191+IntReH20}, and
\ref{fig:H0=-0148+i0191+IntImH20}
demonstrate that the large-$r$ asymptotics also give quite good approximations for the corresponding functions
even for small values of $r$. We took $800$ points for the calculation of these plots.
The calculation from scratch of the numerical solution presented in Figure~\ref{fig:H0=-0148+i0191+IntReH20} with
an older notebook (see footnote~\ref{foot:notebooksCalc1}) takes $20\text{s}$, whilst the calculation of its asymptotics
represented by the blue line of the same figure takes $170\text{s}$ (cf. Remark~\ref{rem:correctionFORsingularINTEGRAL}).
In order to produce the numerical solution and the corresponding integral shown in
Figs.~\ref{fig:H0=-0148+i0191+ReH}, \ref{fig:H0=-0148+i0191+ImH}, \ref{fig:H0=-0148+i0191+IntReH}, and
\ref{fig:H0=-0148+i0191+IntImH} by the red curves, which are virtually concealed under the blue curves, a newer
notebook (cf. footnote~\ref{foot:notebooksCalc1}) requires $188\text{s}$ of execution time. The blue curves representing
the asymptotic values were generated in approximately $1015\text{s}$ (cf. Remark~\ref{rem:correctionFORsingularINTEGRAL}).
For the generation of these plots, we increased the number of points from $800$ to $1837$ because the
plots have sharp peaks and the heights of the peaks depend substantially on the number and position of the points inside the
peaks.

Figure~\ref{fig:H0=-0148+i0191+IntReH} resembles a mole's dwelling that can be reached with the aid of two
ladders.\footnote{The mole seems to be the main personnage in the Carlo Goldoni comedy \emph{Il servitore di due padroni}, or,
perhaps, works as a double agent.}
The slopes of all the steps and risers of the left staircase are negative. The right staircase possesses steps with negative
slopes and risers with positive slopes. Obviously, the sign of the slopes coincides with the sign of $-\mathrm{Re}\,(H(r))$.
If we look at Figure~\ref{fig:H0=-0148+i0191+ReH}, we observe, in fact, that the function $\mathrm{Re}\,(H(r))$ is negative
on the segments near its local minima located near the origin; however, these minima are growing monotonically and slowly,
as long as one moves in the negative direction, farther and farther away from the origin. What is not clearly seen from the
figure, however, is the fact that the
calculations show that the last segment where $H(r)<0$ occurs for $r\in(-414.68789\ldots,-398.07403\ldots)$, which corresponds
to the right wall (first riser) of the mole's dwelling. The left wall of the mole's dwelling is the image of a segment around
a local minimum at $r_{\mathrm{min}}=-482.3271\ldots$ where $H(r_{\mathrm{min}})=0.0022342881\ldots$. Due to the very small
values of $|H(r)|$ on these segments, the walls look almost vertical, although, in fact, they have negative and positive slopes
for the left and right walls, respectively.
\begin{figure}[htpb]
\begin{center}
\includegraphics[height=50mm,width=100mm]{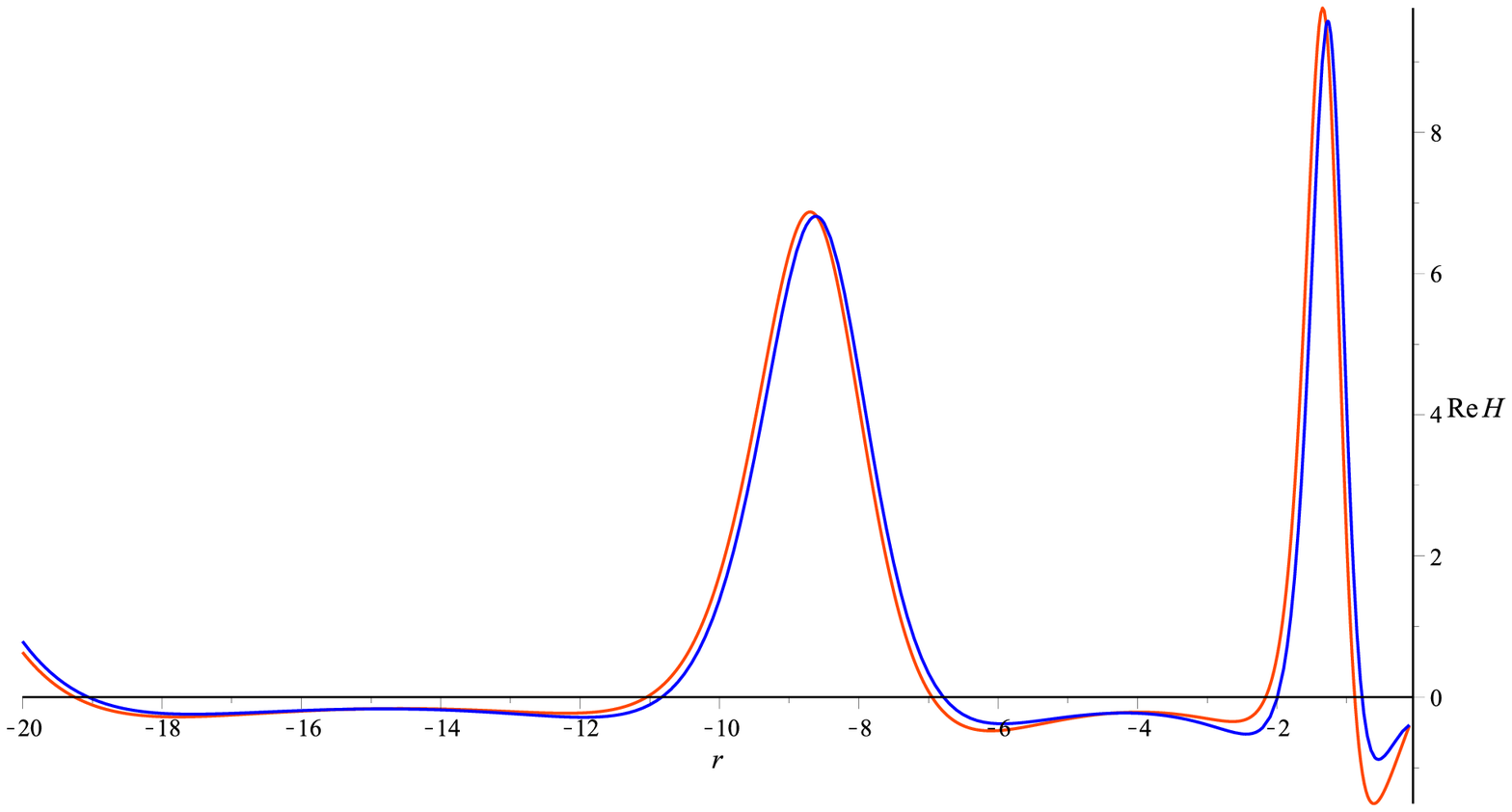}
\caption{The red and blue plots are, respectively, the real parts of the numeric and large-$r$ asymptotic
(cf. \eqref{eq:H-asympt-Large-singular}) values of the function $H(r)$ for $r\leqslant-0.1$ corresponding to
the initial value $H(0)=-0.148+\mi0.191$.}
\label{fig:H0=-0148+i0191+ReH20}
\end{center}
\end{figure}
\begin{figure}[htpb]
\begin{center}
\includegraphics[height=70mm,width=100mm]{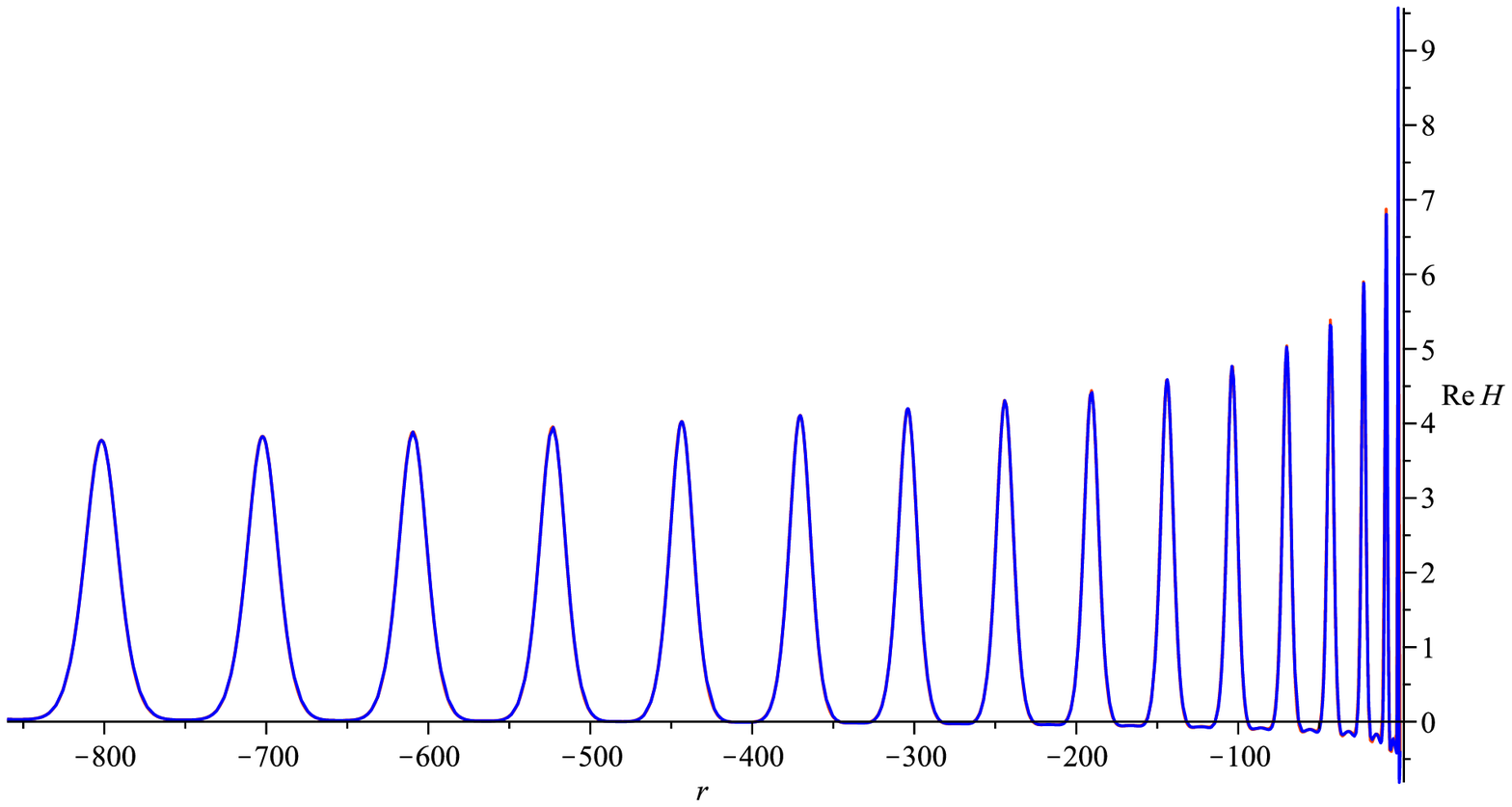}
\caption{Extended version of Figure~\ref{fig:H0=-0148+i0191+ReH20} (both plots almost coincide).}
\label{fig:H0=-0148+i0191+ReH}
\end{center}
\end{figure}
\begin{figure}[htpb]
\begin{center}
\includegraphics[height=50mm,width=100mm]{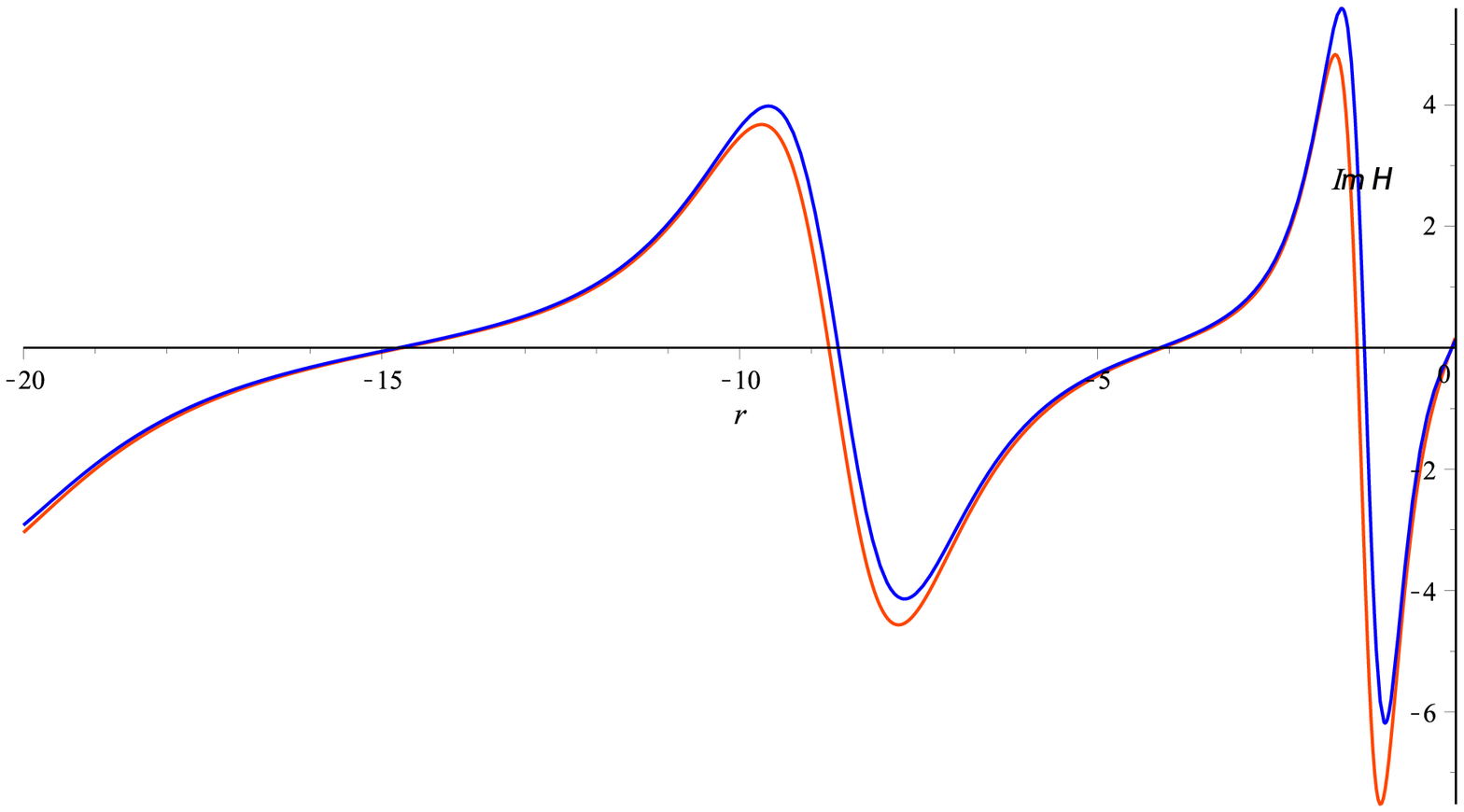}
\caption{The red and blue plots are, respectively, the imaginary parts of the numeric and large-$r$ asymptotic
(cf. \eqref{eq:H-asympt-Large-singular}) values of the function $H(r)$  for $r\leqslant-0.01$ corresponding to the initial value
$H(0)=-0.148+\mi0.191$.}
\label{fig:H0=-0148+i0191+ImH20}
\end{center}
\end{figure}
\begin{figure}[htpb]
\begin{center}
\includegraphics[height=70mm,width=100mm]{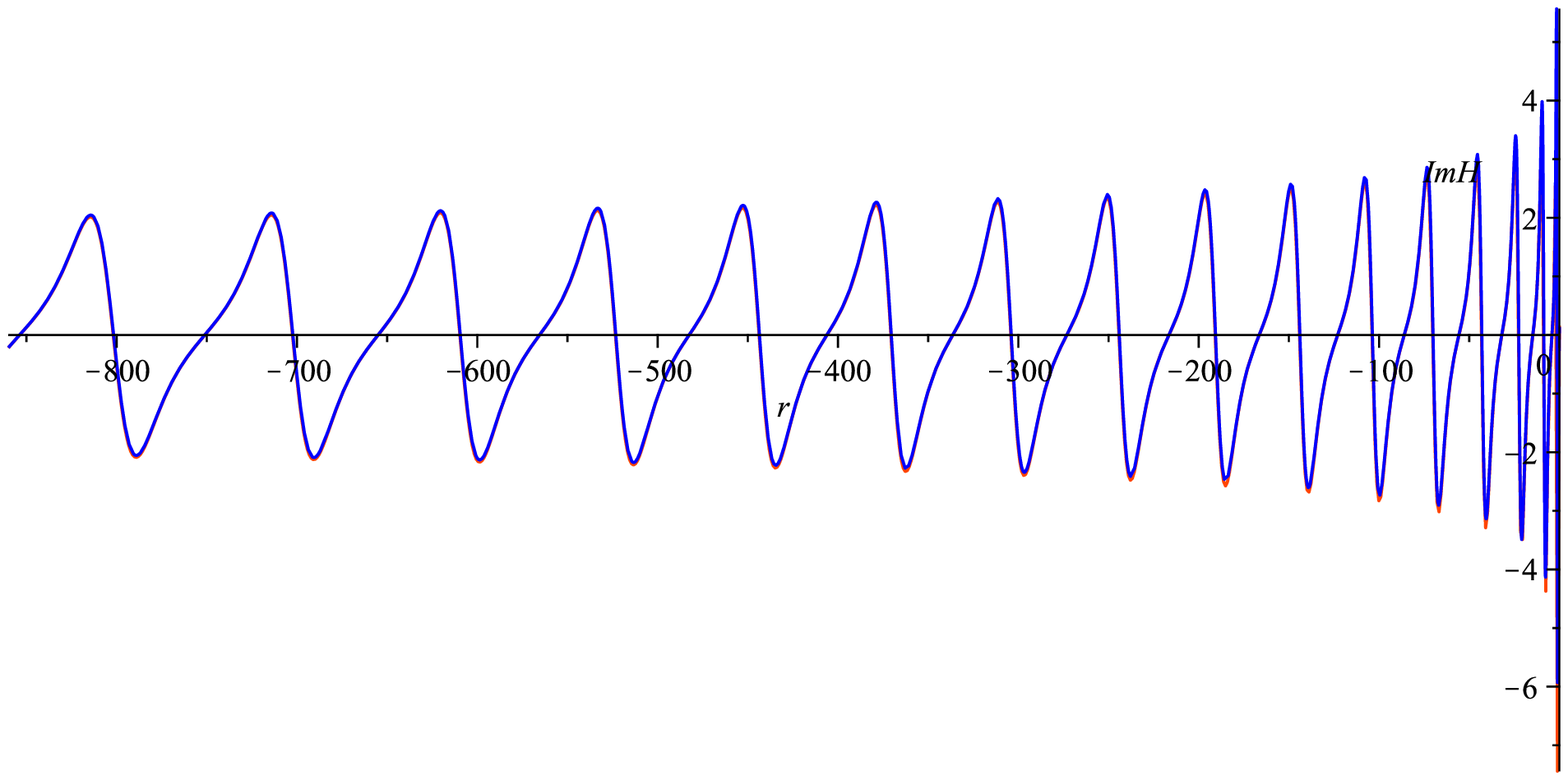}
\caption{Extended version of Figure~\ref{fig:H0=-0148+i0191+ImH20} (both plots almost coincide).}
\label{fig:H0=-0148+i0191+ImH}
\end{center}
\end{figure}
\begin{figure}[htpb]
\begin{center}
\includegraphics[height=60mm,width=100mm]{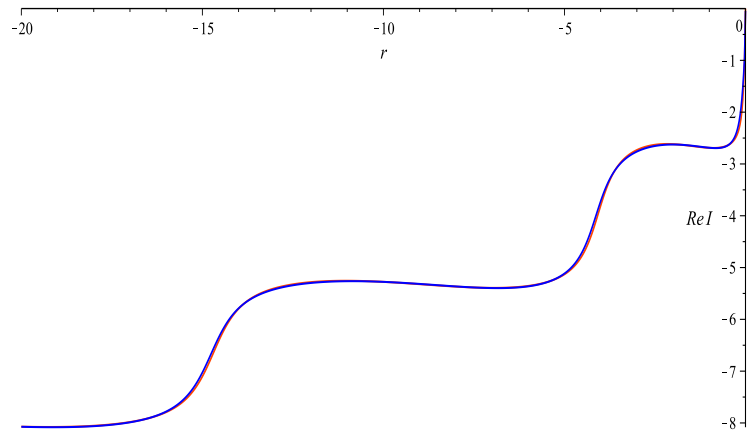}
\caption{The red and blue plots are, respectively, the real parts of the numeric and large-$r$ asymptotic
(cf. \eqref{eq:asympt-sing-int} with $k=0$) values
of $I=\smallint_{r}^0\frac{1}{\sqrt{-r}H(r)}\,\md r$ for $r\leqslant-10^{-8}$, where $H(r)$ is the solution with initial value
$H(0)=-0.148+\mi0.191$. In black-and-white  the curves are indistinguishable; in colour, though, one notices minor
discrepancies between the curves.}
\label{fig:H0=-0148+i0191+IntReH20}
\end{center}
\end{figure}
\begin{figure}[htpb]
\begin{center}
\includegraphics[height=70mm,width=100mm]{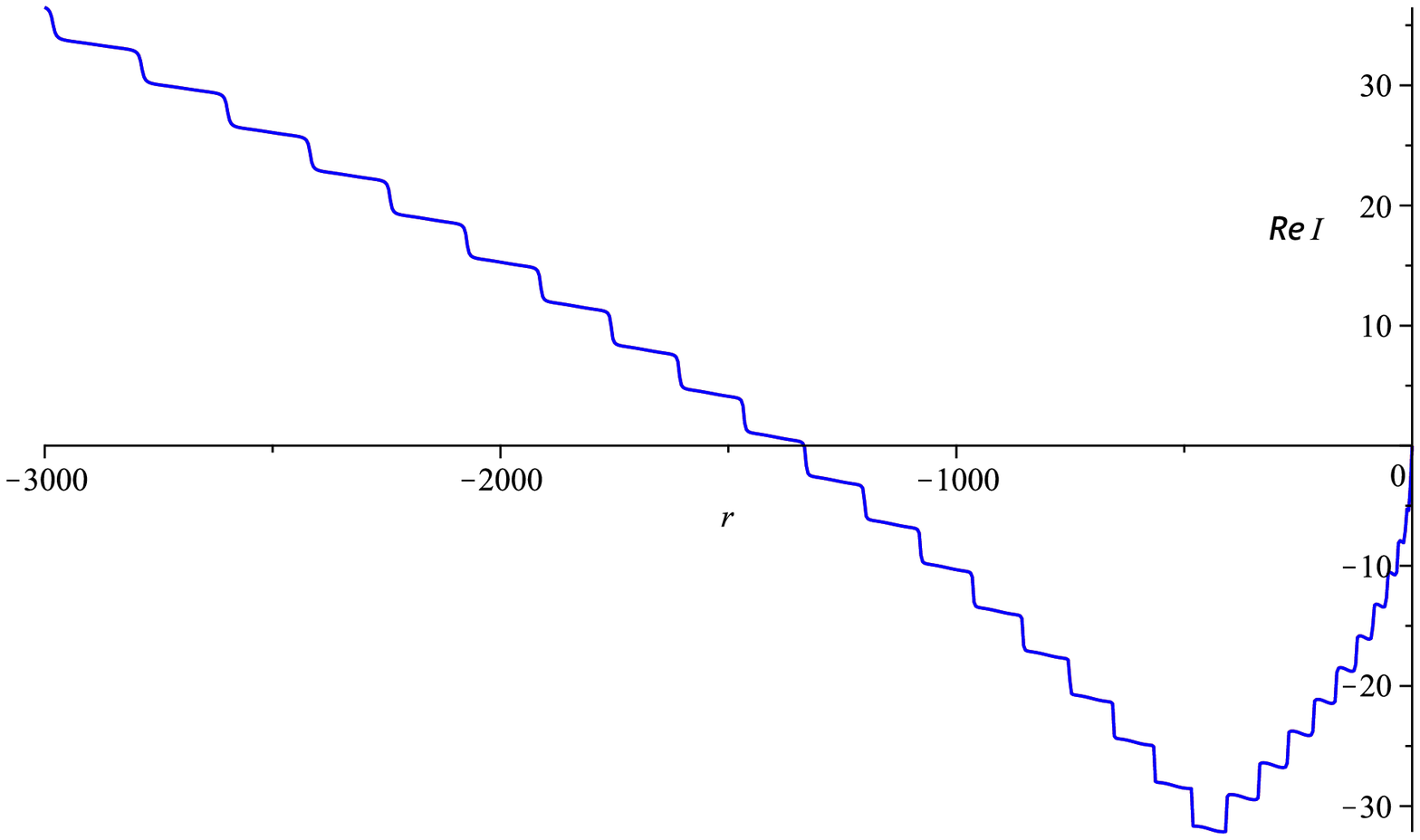}
\caption{Extended version of Figure~\ref{fig:H0=-0148+i0191+IntReH20} (both plots almost coincide).}
\label{fig:H0=-0148+i0191+IntReH}
\end{center}
\end{figure}
\begin{figure}[htpb]
\begin{center}
\includegraphics[height=50mm,width=100mm]{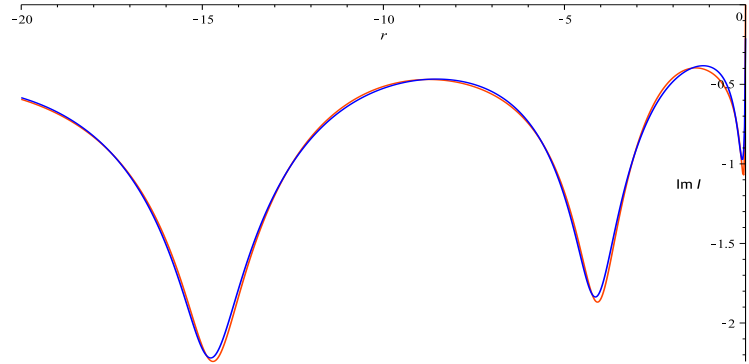}
\caption{The red and blue plots are, respectively, the imaginary parts of the numeric and large-$r$ asymptotic
(cf. \eqref{eq:asympt-sing-int} with $k=0$) values
of $I=\smallint_{r}^0\frac{1}{\sqrt{-r}H(r)}\,\md r$ for $r\leqslant-10^{-3}$, where $H(r)$ is the solution with initial
value $H(0)=-0.148+\mi0.191$. In black-and-white some minor discrepancies between the curves are noticeable.}
\label{fig:H0=-0148+i0191+IntImH20}
\end{center}
\end{figure}
\begin{figure}[htpb]
\begin{center}
\includegraphics[height=50mm,width=100mm]{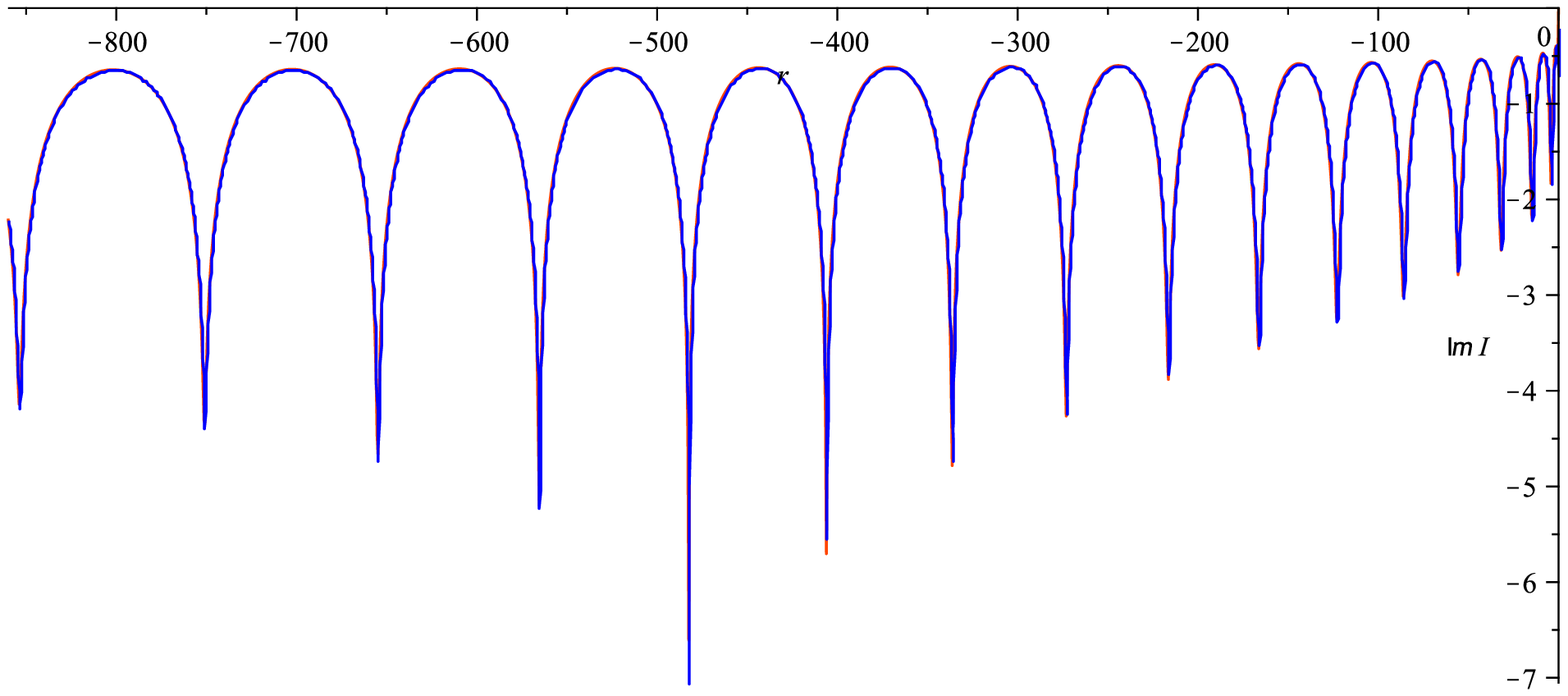}
\caption{The red and blue plots are, respectively, the imaginary parts of the numeric and large-$r$ asymptotic
(cf. \eqref{eq:asympt-sing-int}) values
of $I=\smallint_{r}^0\frac{1}{\sqrt{-r}H(r)}\,\md r$ for $r\leqslant-10^{-3}$ corresponding to the solution $H(r)$ with initial
value $H(0)=-0.148+\mi0.191$. On a coloured figure one can see that the three red peaks on the segment $[-600,-400]$  are
slightly (less than $5$\%) longer than the blue ones, whilst the differences between the lengths of the other corresponding red
and blue peaks are virtually indistinguishable.}
\label{fig:H0=-0148+i0191+IntImH}
\end{center}
\end{figure}

In Example~3, we demonstrated that, for some initial vales of $H(r)$, the leading terms of the large-$r$ asymptotics
approximate very closely the corresponding Painlev\'e function and its related integral starting from very small values
of $r$ ($r<1$). In Example~4 below, we show that there are markedly different initial values of $H(r)$ for which $H(r)$
and its related integral have behaviour similar to that  seen in Example~3; however, an approximation for small values of
$r$ becomes considerably worse, even though it remains satisfactory in the qualitative sense; moreover, in order to achieve
the correct plots, one has to be much more cautious with the numerical settings.

\subsection{Example 4: $H(0)=-100-\mi300$}\label{subsec:example4}
For this value of $H(0)$, $\nu_1=0.741160\ldots-\mi0.300731\ldots$, so that the
condition~\eqref{eq:nu1-singular} is satisfied, and we can use the asymptotic formulae~\eqref{eq:H-asympt-Large-singular}
and \eqref{eq:asympt-sing-int}.
For the numerical calculation of the solution and its related integral via \textsc{Maple}, we consider the initial values for
$H(r)$ at $r_1=10^{-9}$. For the calculation of the initial values of $H(r)$ and its corresponding integral, we use 6 terms of
the Taylor expansion for $H(r)$ (cf. Section~\ref{sec:2}, equations~\eqref{eq:Hat0-expansion}, \eqref{eq:a1}, and
\eqref{eqs:a2-a6}).
For visualisation purposes, 4 terms for $H(r)$ and 2 terms for $I(r)$ of the Taylor series expansion are, in principle,
sufficient. We set the parameter \texttt{Digits} $=180$, and the same value $180$ was set for the calculation of asymptotics.
This large value $180$ for the parameter \texttt{Digits} is important for the calculation of the imaginary part of the
asymptotic formulae, because, as explained above, the logarithmic term in the asymptotics~\eqref{eq:corr-asympt-sing-int}
for the integral is calculated via a solution of an appropriate differential equation. Somehow, in Example~3, for the calculation
of the asymptotics, we kept \texttt{Digits} equal to $20$ and did not notice
visual differences while increasing its value. In this example, however, we found a difference when the value of
\texttt{Digits} was increased from 20 to $100$. To verify that the plots remain stable, we increased \texttt{Digits} up to $180$:
the plots presented below correspond to this value of the parameter \texttt{Digits}. In principle, $80$ to $90$ digits for
this calculation suffices.

For the generation of the plots with `peaks', we found that the most appropriate value for the number of points was $5237$,
while in Example~3, this value was kept at $1837$. In this example, we were obliged to keep it that high because the
height of the three highest peaks were not stable until the number of points reached the value $5237$, after which,
these heights underwent only minor variations. The calculation with that many digits and number of points increases
the required time by about $20\%$, which, in our case, is not crucial.

In Example~3, we explained that the `staircase-structure' of the plot for $\mathrm{Re}\,I(r)$ is a consequence of
the fact that the tail of the numeric plot for $\mathrm{Re}\,H(r)$ is floating slowly upwards from the third to the
second quadrant as $r\to-\infty$; the same behaviour, of course, is replicated in its asymptotic plot. If the tails of both plots
float above the negative real axis almost simultaneously, then the numeric and asymptotic mole dwellings almost coincide
(cf. Figure~\ref{fig:H0=-0148+i0191+IntReH}). The rate of floating of the tails, however, depends on the initial value $H(0)$.
It may happen (and in many cases it does) that, although both plots are very close to one another, as in the present case,
one tail (the red one) floats before the other (the blue one); in Figure~\ref{fig:H0=-100-i300+ReH}, the red tail floats
at $r\approx-55$, prior to the fifth peak, while the blue tail floats at $r\approx-120$, prior to the seventh peak.
Due to scaling, this fact is not clearly seen in Figure~\ref{fig:H0=-100-i300+ReH}, but we verified it numerically.
Thus, the upward-trending float of the asymptotic tail
lags the numeric one by two peaks, so that the `asymptotic' mole's dwelling appears to be two steps lower than the `numeric'
dwelling (cf. Figure~\ref{fig:H0=-100-i300+IntReH160}). This example demonstrates that the integer $k$ in the asymptotic
formula~\eqref{eq:asympt-sing-int} for $I(r)$ appears not only because this formula originates from the asymptotics of the
function $\varphi(r)$, which is defined modulo $2\pi k$ (cf. Appendix~\ref{app:asympt0}), but also because it is related to
the quality of the approximation of the numeric plot by its asymptotics at finite values of $r$.
Note that the parameter $k$ undergoes a shift of $2$ when one approximates the small-$r$ part
of the numeric solution and its large-$r$ part via the asymptotics~\eqref{eq:asympt-sing-int}
(cf. Figs.~\ref{fig:H0=-100-i300+IntReH160} and \ref{fig:H0=-100-i300+IntReH}, respectively).
\begin{figure}[htpb]
\begin{center}
\includegraphics[height=50mm,width=100mm]{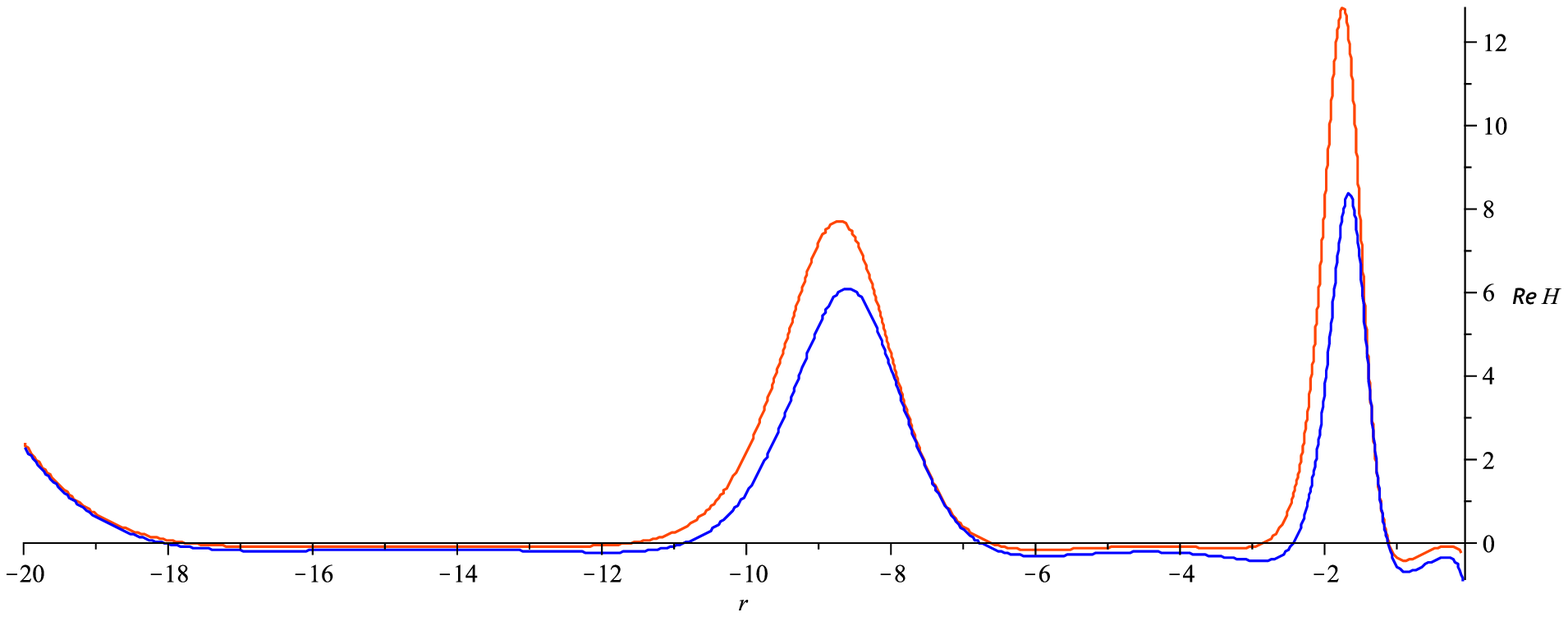}
\caption{The red and blue plots are, respectively, the real parts of the numeric and large-$r$ asymptotic
(cf. \eqref{eq:H-asympt-Large-singular}) values of the function $H(r)$ for $r\leqslant-0.1$ corresponding to the initial
value $H(0)=-100-\mi300$.}
\label{fig:H0=-100-i300+ReH20}
\end{center}
\end{figure}
\begin{figure}[htpb]
\begin{center}
\includegraphics[height=70mm,width=100mm]{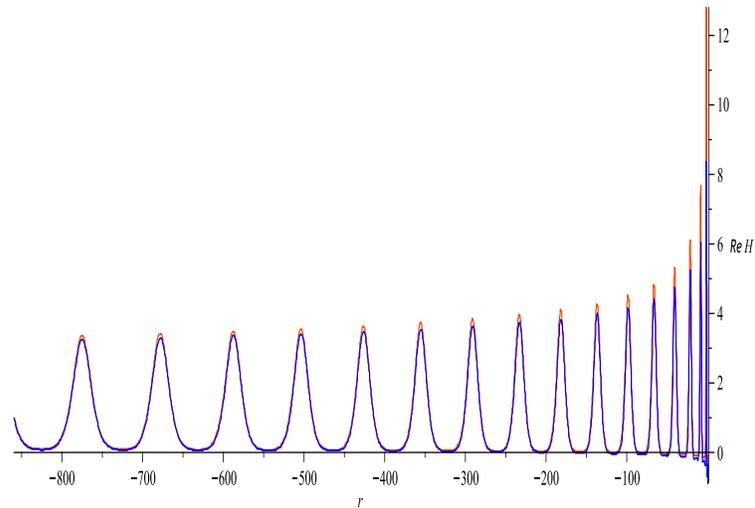}
\caption{Extended version of Figure~\ref{fig:H0=-100-i300+ReH20}, where only the first two peaks are shown.
On this plot, the first peak virtually coincides with the vertical axis.}
\label{fig:H0=-100-i300+ReH}
\end{center}
\end{figure}
\begin{figure}[htpb]
\begin{center}
\includegraphics[height=50mm,width=100mm]{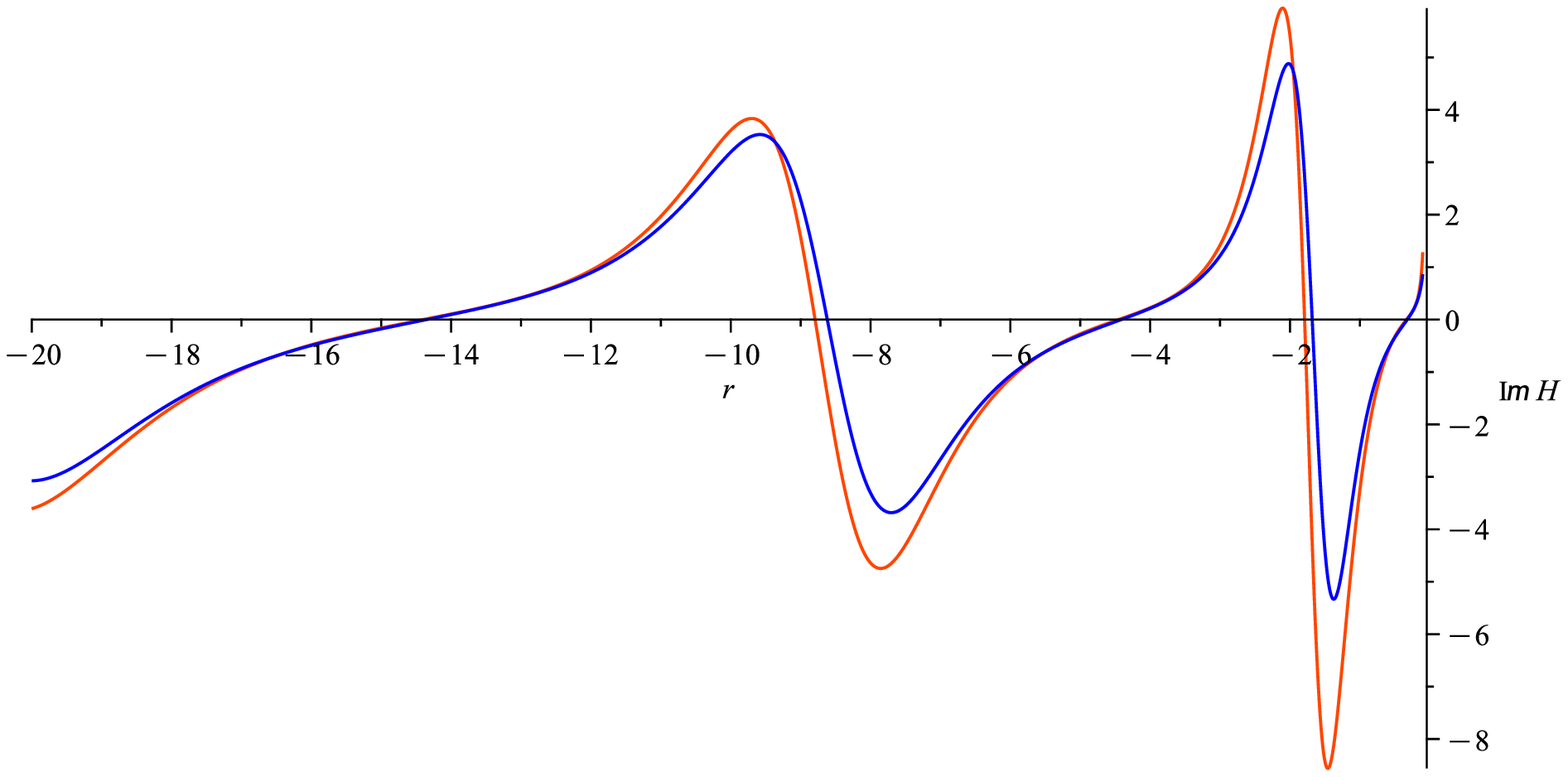}
\caption{The red and blue plots are, respectively, the imaginary parts of the numeric and large-$r$ asymptotic
(cf. \eqref{eq:H-asympt-Large-singular}) values of the function $H(r)$ for $r\leqslant-0.1$ corresponding to the initial value
$H(0)=-100-\mi300$.}
\label{fig:H0=-100-i300+ImH20}
\end{center}
\end{figure}
\begin{figure}[htpb]
\begin{center}
\includegraphics[height=70mm,width=100mm]{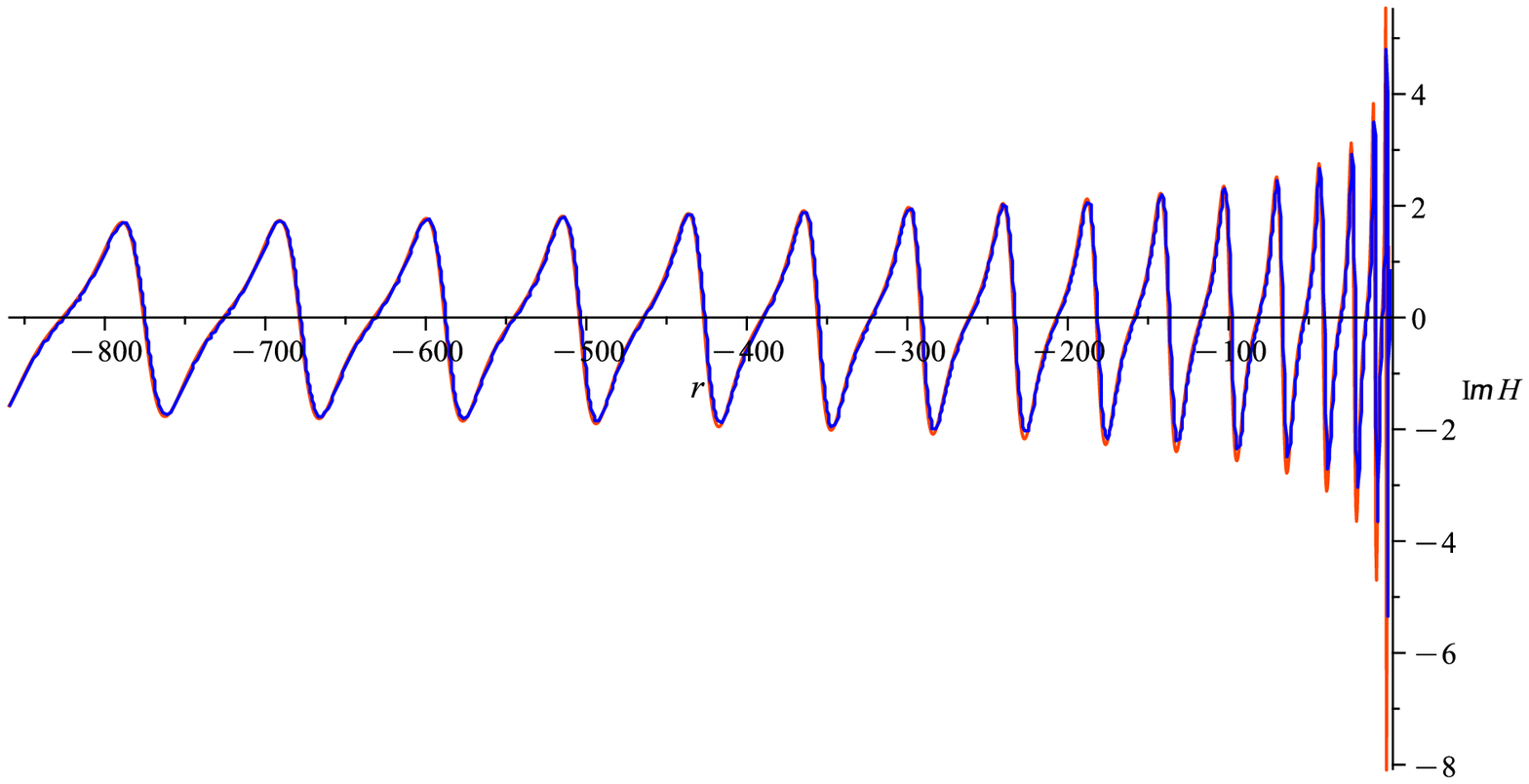}
\caption{Extended version of Figure~\ref{fig:H0=-100-i300+ImH20}. On this plot, the first down-up peak
of Figure~\ref{fig:H0=-100-i300+ImH20} almost coincides with the vertical axis.}
\label{fig:H0=-100-i300+ImH}
\end{center}
\end{figure}
\begin{figure}[htpb]
\begin{center}
\includegraphics[height=70mm,width=100mm]{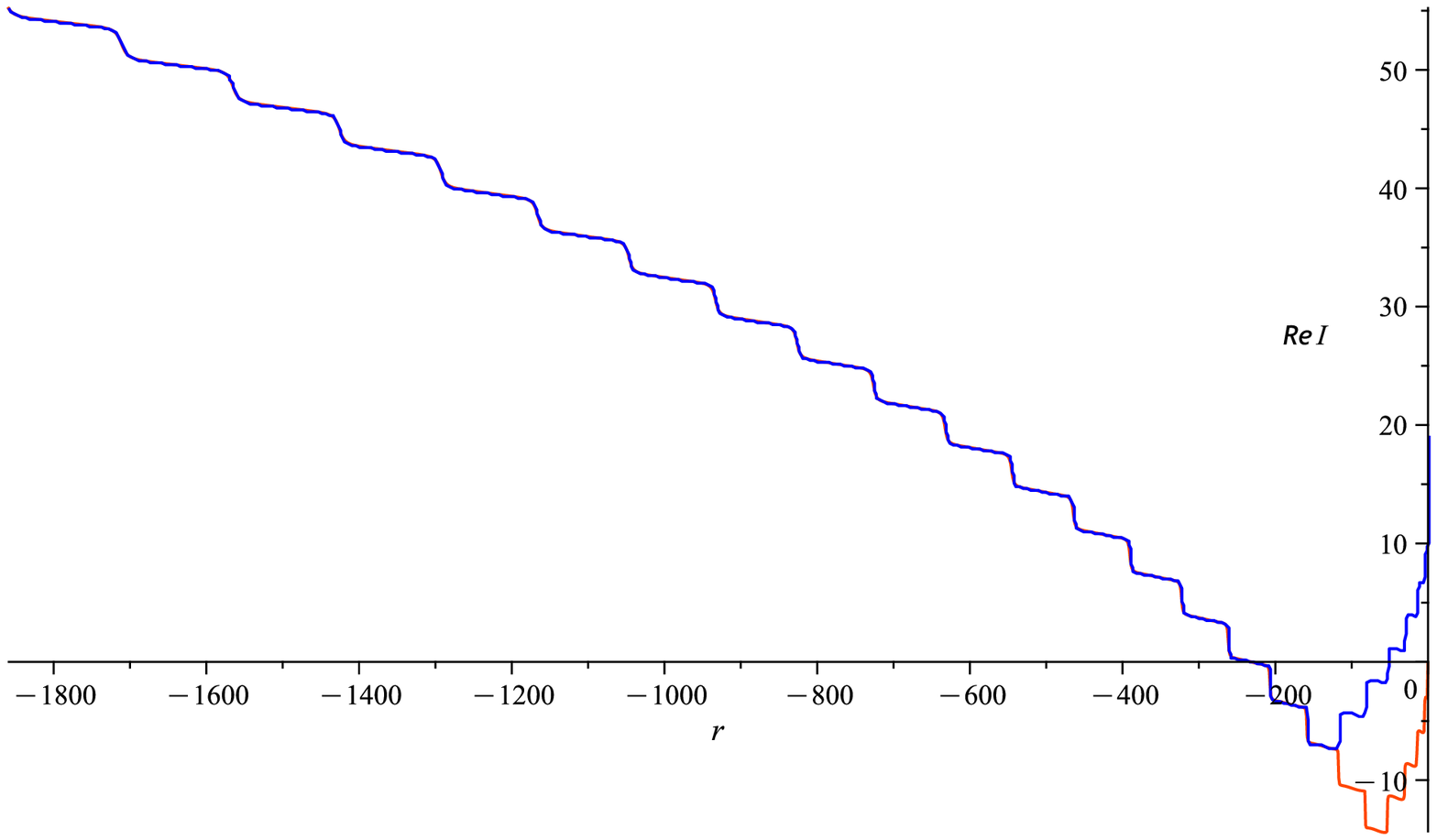}
\caption{The red and blue plots are, respectively, the real parts of the numeric and large-$r$ asymptotic
(cf. \eqref{eq:asympt-sing-int} with $k=3$) values
of $I=\smallint_{r}^0\frac{1}{\sqrt{-r}H(r)}\,\md r$ for $r\leqslant-10^{-8}$ corresponding to the solution $H(r)$ with
initial value $H(0)=-100-\mi300$.}
\label{fig:H0=-100-i300+IntReH}
\end{center}
\end{figure}
\begin{figure}[htpb]
\begin{center}
\includegraphics[height=60mm,width=100mm]{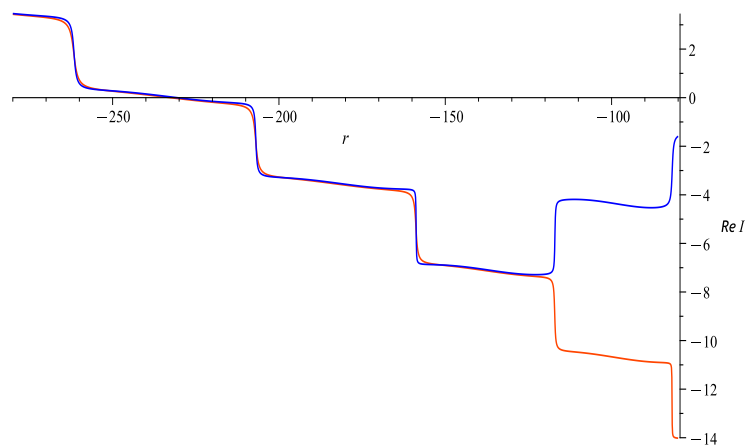}
\caption{A close-up of the segment of the plot on Figure~\ref{fig:H0=-100-i300+IntReH} corresponding to
$-280\leqslant r\leqslant-80$ where the numeric and asymptotic plots merge at $r\approx-120$. One can actually distinguish
two different intertwining (for $r<-120$) curves which are very close to each other.}
\label{fig:H0=-100-i300+IntReH280-80}
\end{center}
\end{figure}
\begin{figure}[htpb]
\begin{center}
\includegraphics[height=60mm,width=100mm]{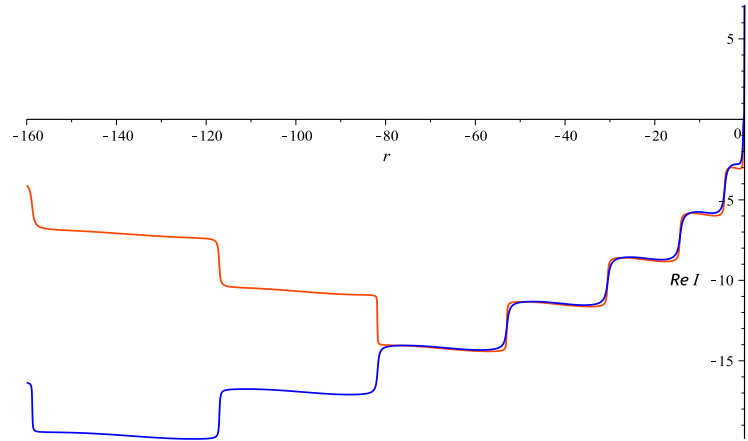}
\caption{A close-up of the part of the plot of the numerical solution on Figure~\ref{fig:H0=-100-i300+IntReH}
for $-160<r<-10^{-9}$ and the corresponding asymptotic plot (cf. \eqref{eq:asympt-sing-int} with---attention!---$k=1$).
One can actually distinguish two separate intertwining curves which are very close to one another. The `numeric' mole's
dwelling corresponds to $r\in(-80,-55)$, while the `asymptotic' dwelling is located two steps below with $r$-coordinate
in $(-160,-120)$.}
\label{fig:H0=-100-i300+IntReH160}
\end{center}
\end{figure}
\begin{figure}[htpb]
\begin{center}
\includegraphics[height=60mm,width=100mm]{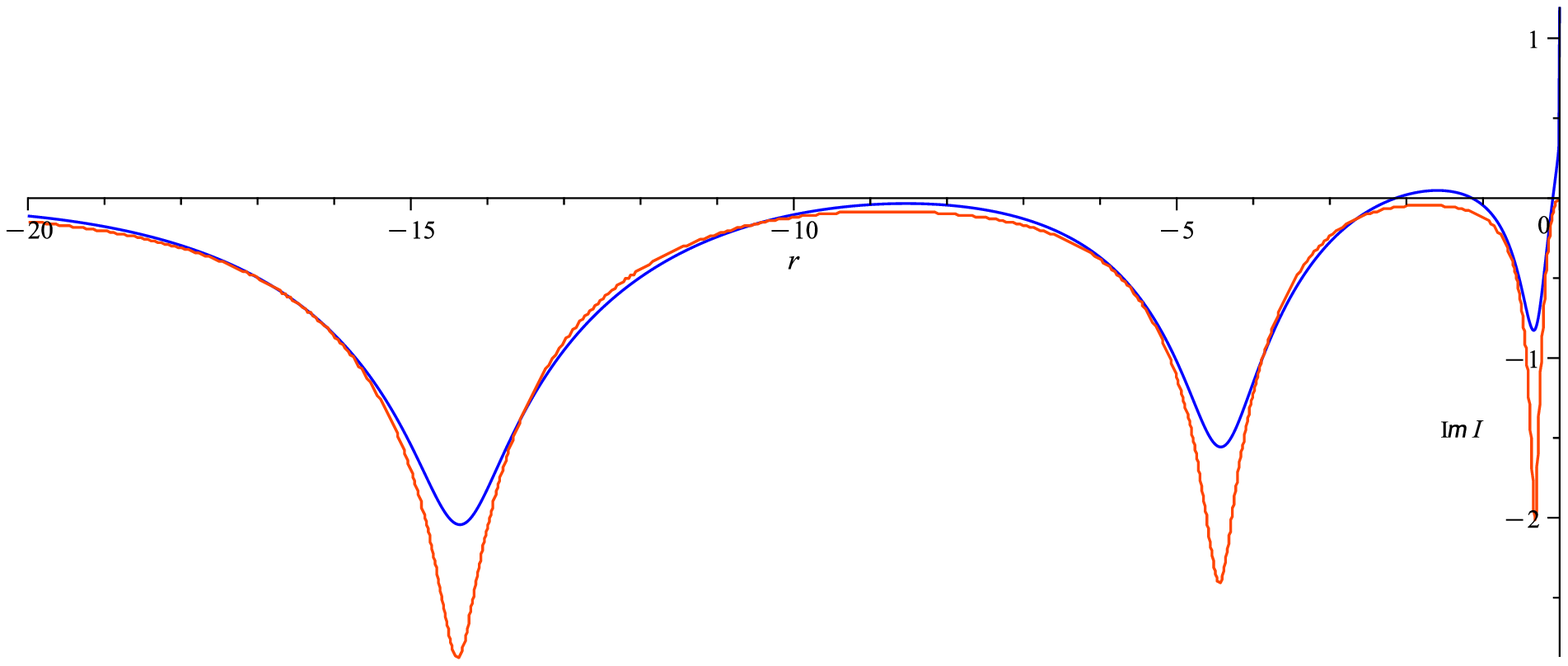}
\caption{The red and blue plots are, respectively, the imaginary parts of the numeric and large-$r$ asymptotic
(cf. \eqref{eq:asympt-sing-int}: $k$ is not important here) values
of $I=\smallint_{r}^0\frac{1}{\sqrt{-r}H(r)}\,\md r$ for $-20\leqslant r\leqslant-10^{-9}$ corresponding to the solution
$H(r)$ with initial value $H(0)=-100-\mi300$.}
\label{fig:H0=-100-i300+IntImH20}
\end{center}
\end{figure}
\begin{figure}[htpb]
\begin{center}
\includegraphics[height=70mm,width=100mm]{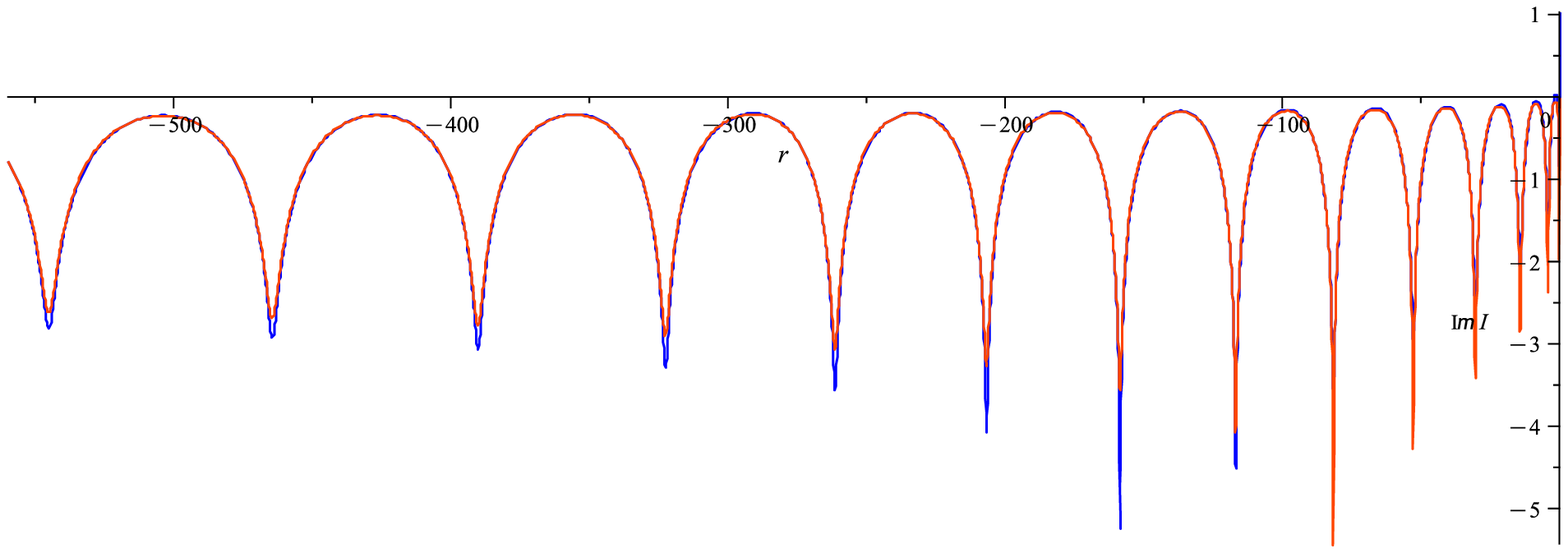}
\caption{Extended version of Figure~\ref{fig:H0=-100-i300+IntImH20} for $-560\leqslant r\leqslant-10^{-9}$. Here, the first
peak on Figure~\ref{fig:H0=-100-i300+IntImH20} virtually coincides with the vertical axis, and is therefore not clearly
visible. The first 6 icicles of the numerical solution are longer than the corresponding asymptotic icicles; however,
from the 7th icicle onward, the asymptotic icicles are longer, but the difference between the corresponding icicles
decreases rapidly.}
\label{fig:H0=-100-i300+IntImH}
\end{center}
\end{figure}
\subsection{Example 5: $H(0)=-\mi300$}\label{subsec:example5}
For this value of $H(0)$, $\nu_1=0.732934\ldots-\mi0.249469\ldots$, so that the
condition~\eqref{eq:nu1-singular} is satisfied, and we can use the asymptotic formulae~\eqref{eq:H-asympt-Large-singular}
and \eqref{eq:asympt-sing-int}.
For the numerical calculation of the solution and its related integral via \textsc{Maple}, we used the same settings as
in Example~4. In this case, the plots look similar to those in Example~4. The difference in this example is that,
for $r<0$, $\mathrm{Re}\,H(r)>0$; however, for small values of $r$, $\mathrm{Re}\,H(r)$ is very close to zero; for example,
at the first minimum $r_{\mathrm{min}}=-0.485998\ldots$, $H(r_{\mathrm{min}})=0.024587\ldots$
(cf. Figure~\ref{fig:H0=-i300+ReH20}).
As in Example~4, the  approximation for $H(r)$ via its large-$r$ asymptotics is not as good as that for the initial data
for which $10^{-3}<|H(0)|<10^{3/2}$. For small values of $r$, though, there are two segments wherein the asymptotics
for $\mathrm{Re}\,H(r)$ is negative (cf. Figure~\ref{fig:H0=-i300+ReH20}). These facts imply that the numerical plot for
$I(r)$ looks similar to the plot of Figure~\ref{fig:H0=60-i100+IntReH} (no mole's dwelling (!)); however, the plot for the
asymptotics does, indeed, contain two descending steps to the numerical plot of $I(r)$ until they merge. So, in lieu of the
mole's dwelling, we have an `asymptotic pigeon hole'. On the segment $[-5.5,0]$, the asymptotic formula does not even
approximate qualitatively the solution.

When we increase the value of $|\mathrm{Im}\,H(0)|$, the asymptotic pigeon hole shifts to the left; for $H(0)=-\mi10^5$, say,
the asymptotic pigeon hole is located on the segment $[-79,-59]$, and, as a result, the asymptotics does not approximate $I(r)$
on $[-59, 0]$. Our numerical experiments show that, in this case, a good approximation for $I(r)$ on the segment $[-79,-3]$ is
attained by a reflection of the asymptotic plot with respect to a straight line passing through two points at the bottom of
the asymptotic pigeon hole. These two points are not uniquely defined; therefore, one may vary the location of these points
in order to achieve a better approximation.
\begin{figure}[htpb]
\begin{center}
\includegraphics[height=50mm,width=100mm]{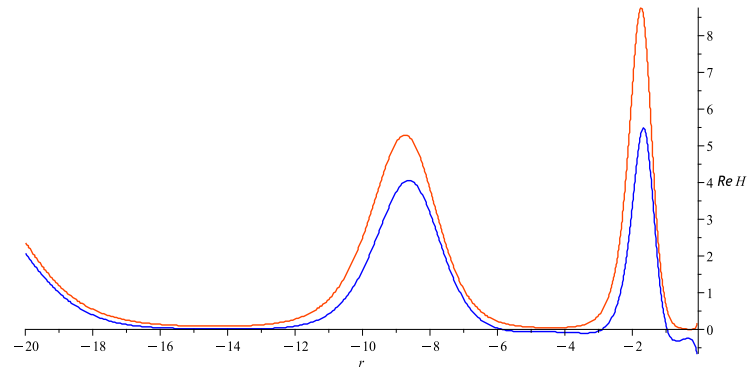}
\caption{The red and blue plots are, respectively, the real parts of the numeric and large-$r$ asymptotic
(cf. \eqref{eq:H-asympt-Large-singular}) values of the function $H(r)$ for $r\leqslant-0.1$ corresponding to the initial value
$H(0)=-\mi300$. The numeric solution (in red) is positive, while the asymptotic solution (in blue) has two segments
where it is negative.}
\label{fig:H0=-i300+ReH20}
\end{center}
\end{figure}
\begin{figure}[htpb]
\begin{center}
\includegraphics[height=70mm,width=100mm]{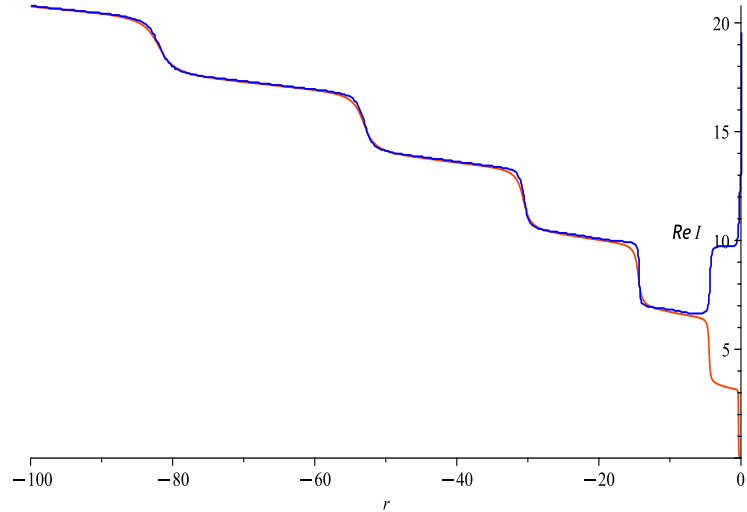}
\caption{The numerical solution (in red) merges with the asymptotic
(cf. \eqref{eq:asympt-sing-int} with---attention!---$k=3$) solution (in blue) on the segment $(-6,-5.5)$. The right-most
point of both plots is $r=-10^{-9}$.}
\label{fig:H0=-i300+IntReH100}
\end{center}
\end{figure}
\subsection{Example 6: $H(0)=-0.2+\mi0.045$}\label{subsec:example6}
For this value of $H(0)$, $\nu_1=0.049319\ldots-\mi0.459650\ldots$, so that the
condition~\eqref{eq:nu1-singular} is satisfied, and we can use the asymptotic formulae~\eqref{eq:H-asympt-Large-singular}
and \eqref{eq:asympt-sing-int}. For the numerical calculation of the solution and its related integral via \textsc{Maple},
we used the same settings as in Example~4.
The plots presented in Figs.~\ref{fig:H0=-02+i0045+ReH20}--\ref{fig:H0=-02+i0045+IntImH} were generated in $957\text{s}$
with the help of the newer notebook described in footnote~\ref{foot:notebooksCalc1}.

It is instructive to compare this example with Example 3: the initial values in both examples are close, but
the behaviour of the solutions is very different. This occurs because, in Example~6, $\textrm{Im}\,\nu_1$ is much closer
to $-0.5$.  This last fact also causes the appearance of very sharp peaks in the plots for the real and imaginary parts
of $H(r)$. These peaks create an additional problem for the visualization of asymptotics because we were not able to obtain
the right heights for the first peaks in our plots: not even 15000 points for constructing the plots was sufficient!
Our strategy is two-fold: (i) we consider two close-up plots for $\textrm{Re}\,H(r)$ and $\textrm{Im}\,H(r)$
(cf. Figs.~\ref{fig:H0=-02+i0045+ReH20} and \ref{fig:H0=-02+i0045+ImH20}) in order to determine the correct heights
of the first two peaks; and (ii) to construct the extended plots (cf. Figs.~\ref{fig:H0=-02+i0045+ReH} and
\ref{fig:H0=-02+i0045+ImH}) we randomly choose the number of points in order to get the heights of the first two peaks to be
as close as possible, like in the close-up figures. For item (i), we used 5237 and 1800 points for the construction of the
plots in Figs.~\ref{fig:H0=-02+i0045+ReH20} and \ref{fig:H0=-02+i0045+ImH20}, respectively. In this case, the heights of
the first two peaks of the real part of the numerical solution were found to be approximately 182 and 121, respectively.
On the extended version of Figure~\ref{fig:H0=-02+i0045+ReH20} (cf. Figure~\ref{fig:H0=-02+i0045+ReH}), the best possible
heights that we were able to achieve were 163 and 119, respectively, with the number of points equal to 6237.
To get good correspondence between the peaks on the close-up and extended figures for the imaginary part of the numerical
solution, we constructed the extended plot with the help of 10800 points. For the corresponding asymptotic plots, 2200 points
was enough for both cases.
The real part of $I(r)$ does not have any sharp peaks, so we used 1800 points. The imaginary part of $I(r)$ has
only one sharp peak for small negative $r$, and we achieved its right height with 5237 points for both the numerical and
asymptotic plots.

An intriguing feature of this example is that, practically, it is not possible
to reach the mole's dwelling by numerically calculating $I(r)$. Our calculation reveals that the last zero of
$\mathrm{Re}\,H(r)$ is located at $\approx-2.625\times10^{24}$: the right wall of the mole's dwelling is located at this point,
and what is termed `asymptotics' only begins here! Therefore, practically speaking, it is not possible to numerically
generate a plot for the solution and verify that, finally, the leading term of asymptotics is given by a shifted parabola
$2\sqrt{-r}$, i.e., the non-oscillatory part of the
asymptotics~\eqref{eq:asympt-reg-int}.\footnote{The mole has burrowed far too deep!\label{foot:deepmole}}
We see, however, that the asymptotic formula~\eqref{eq:asympt-sing-int} very well describes the right staircase leading to
the mole's dwelling starting from the first step; therefore, the asymptotics~\eqref{eq:asympt-sing-int} is correct, from
which it follows that, for very large negative
values of $r$, the solution should behave like the shifted parabola $2\sqrt{-r}$. This illustrates the theoretical application
of the asymptotic formula~\eqref{eq:asympt-sing-int} mentioned in Remark~\ref{rem:correctionFORsingularINTEGRAL};
without knowledge of the asymptotics, one may assume that the right staircase constitutes the asymptotic behaviour
of $I(r)$ for all large negative $r$.

\begin{figure}[htpb]
\begin{center}
\includegraphics[height=50mm,width=100mm]{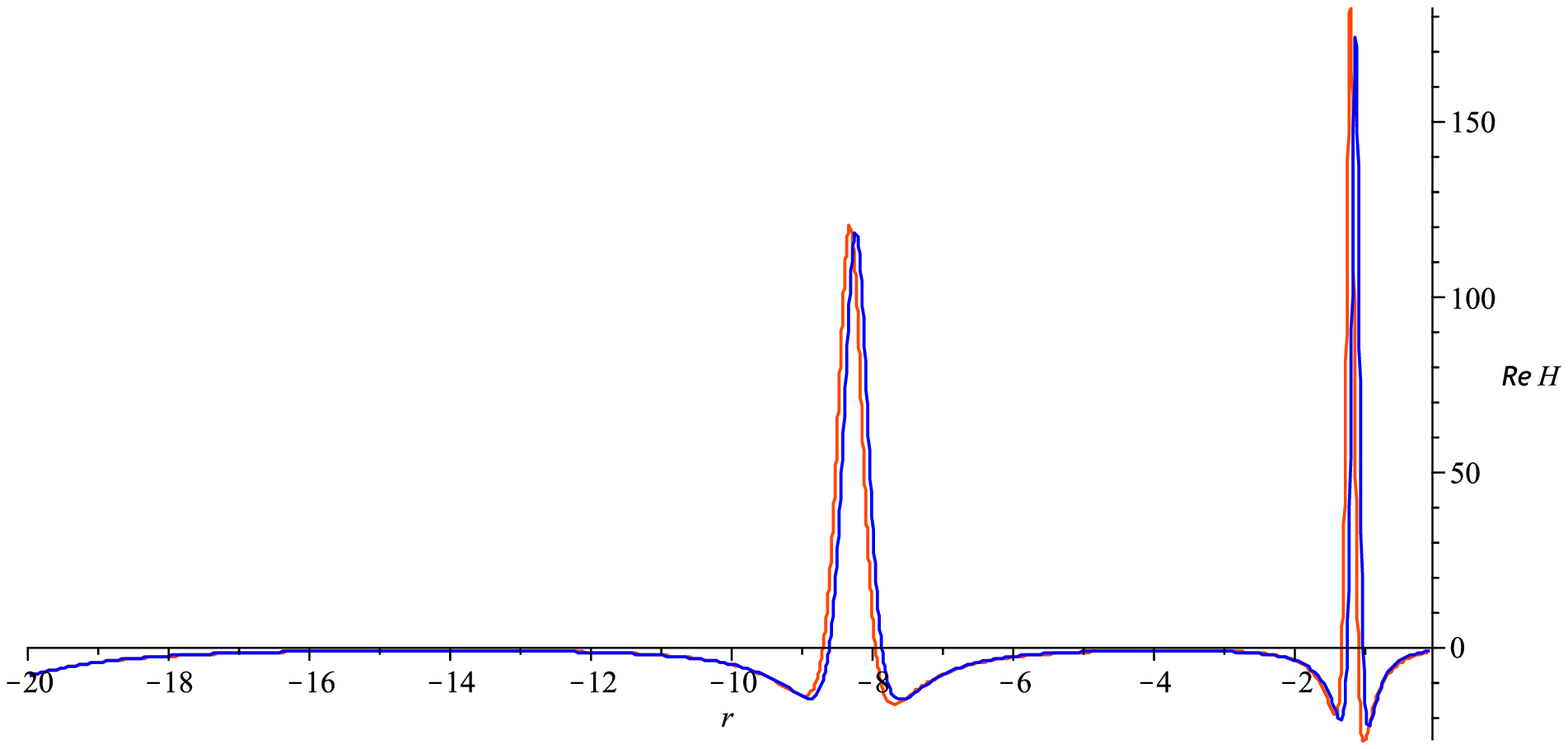}
\caption{The red and blue plots are, respectively, the real parts of the numeric and large-$r$ asymptotic
(cf. \eqref{eq:H-asympt-Large-singular}) values of the function $H(r)$ for $r\leqslant-0.1$ corresponding to the initial
value $H(0)=-0.2+\mi0.045$.}
\label{fig:H0=-02+i0045+ReH20}
\end{center}
\end{figure}
\begin{figure}[htpb]
\begin{center}
\includegraphics[height=70mm,width=100mm]{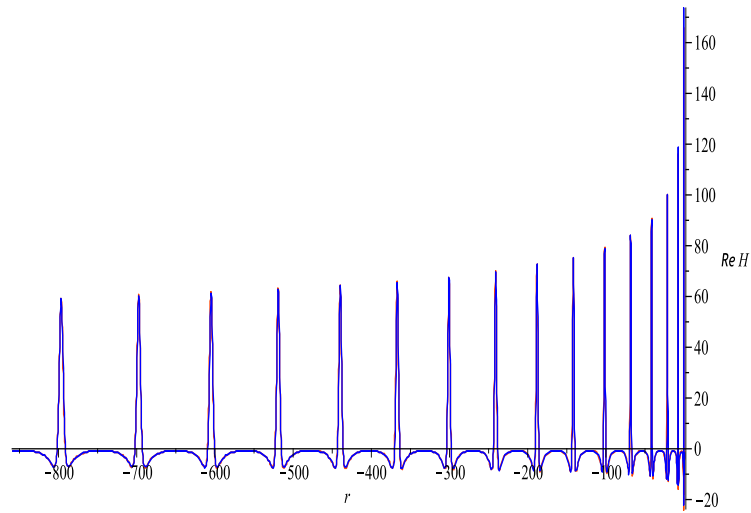}
\caption{Extended version of Figure~\ref{fig:H0=-02+i0045+ReH20} where the first two peaks of the plot are shown.
On this plot, the first peak almost coincides with the vertical axis.}
\label{fig:H0=-02+i0045+ReH}
\end{center}
\end{figure}
\begin{figure}[htpb]
\begin{center}
\includegraphics[height=50mm,width=100mm]{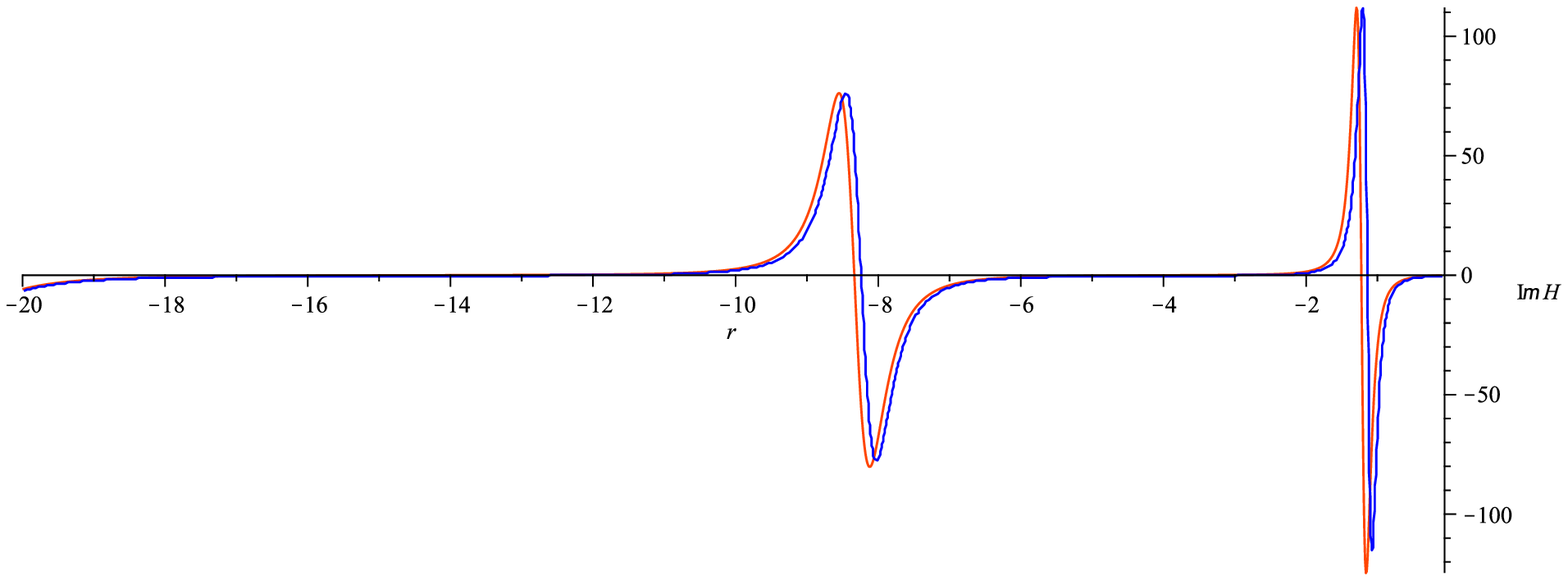}
\caption{The red and blue plots are, respectively, the imaginary parts of the numeric and large-$r$ asymptotic
(cf. \eqref{eq:H-asympt-Large-singular}) values of the function $H(r)$ for $r\leqslant-0.1$ corresponding to the initial
value $H(0)=-0.2+\mi0.045$.}
\label{fig:H0=-02+i0045+ImH20}
\end{center}
\end{figure}
\begin{figure}[htpb]
\begin{center}
\includegraphics[height=70mm,width=100mm]{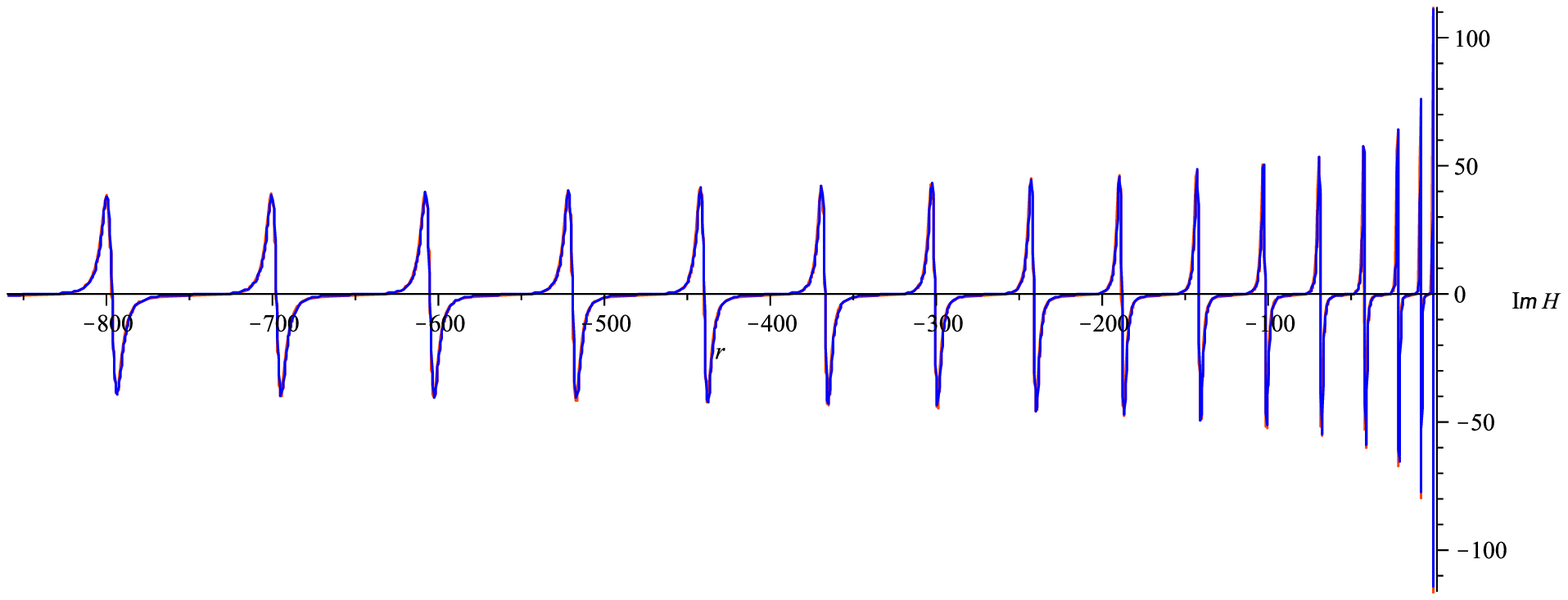}
\caption{Extended version of Figure~\ref{fig:H0=-02+i0045+ImH20}. On this plot, the first down-up peak
of Figure~\ref{fig:H0=-02+i0045+ImH20} virtually coincides with the vertical axis.}
\label{fig:H0=-02+i0045+ImH}
\end{center}
\end{figure}
\begin{figure}[htpb]
\begin{center}
\includegraphics[height=70mm,width=100mm]{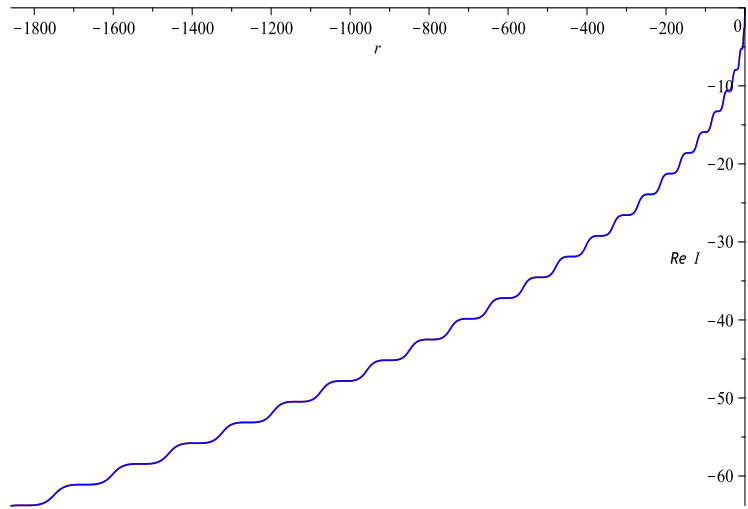}
\caption{The red and blue plots are, respectively, the real parts of the numeric and large-$r$ asymptotic
(cf. \eqref{eq:asympt-sing-int} with $k=0$) values
of $I=\smallint_{r}^0\frac{1}{\sqrt{-r}H(r)}\,\md r$ for $r\leqslant-10^{-8}$ corresponding to the solution $H(r)$ with
initial value $H(0)=-0.2+\mi0.045$.
The red plot is overlapped by the blue plot, and is therefore not visible. On the close-up
Figure~\ref{fig:H0=-02+i0045+IntReH20} that follows,
one can distinguish the red colour.}
\label{fig:H0=-02+i0045+IntReH}
\end{center}
\end{figure}
\begin{figure}[htpb]
\begin{center}
\includegraphics[height=70mm,width=100mm]{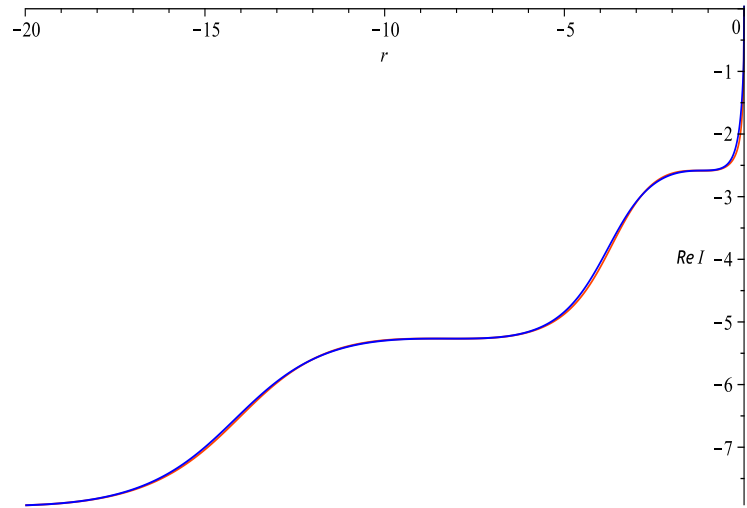}
\caption{A close-up of a part of the plot of the numerical solution in Figure~\ref{fig:H0=-02+i0045+IntReH}
for $-20\leqslant r\leqslant-10^{-8}$, and the corresponding asymptotic plot (cf. \eqref{eq:asympt-sing-int} with $k=0$).
On the coloured picture, one can see that the steeper slopes from below are in red while those from above are in blue.}
\label{fig:H0=-02+i0045+IntReH20}
\end{center}
\end{figure}
\begin{figure}[htpb]
\begin{center}
\includegraphics[height=70mm,width=100mm]{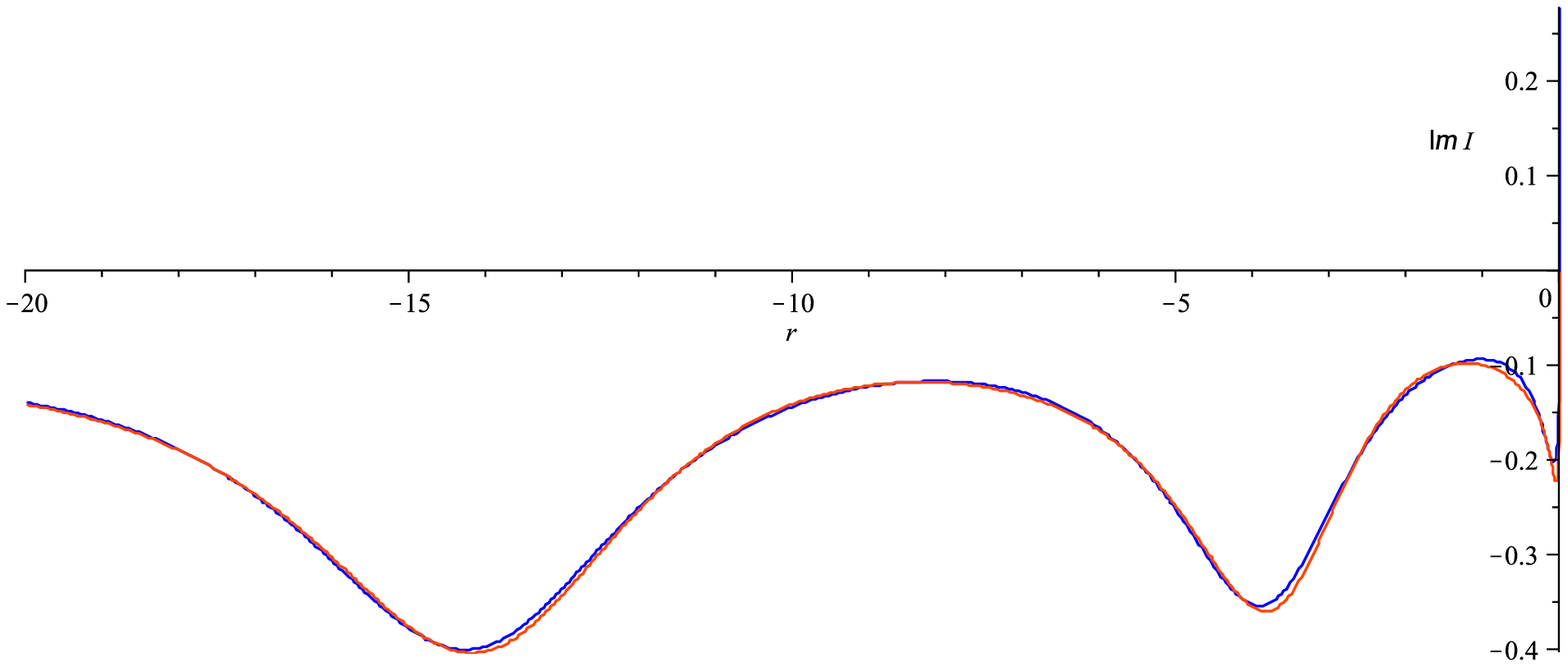}
\caption{The red and blue plots are, respectively, the imaginary parts of the numeric and large-$r$ asymptotic
(cf. \eqref{eq:asympt-sing-int}) values
of $I=\smallint_{r}^0\frac{1}{\sqrt{-r}H(r)}\,\md r$ for $-20\leqslant r\leqslant-10^{-9}$ corresponding to the
solution $H(r)$ with initial value $H(0)=-0.2+\mi0.045$.}
\label{fig:H0=-02+i0045+IntImH20}
\end{center}
\end{figure}
\begin{figure}[htpb]
\begin{center}
\includegraphics[height=70mm,width=100mm]{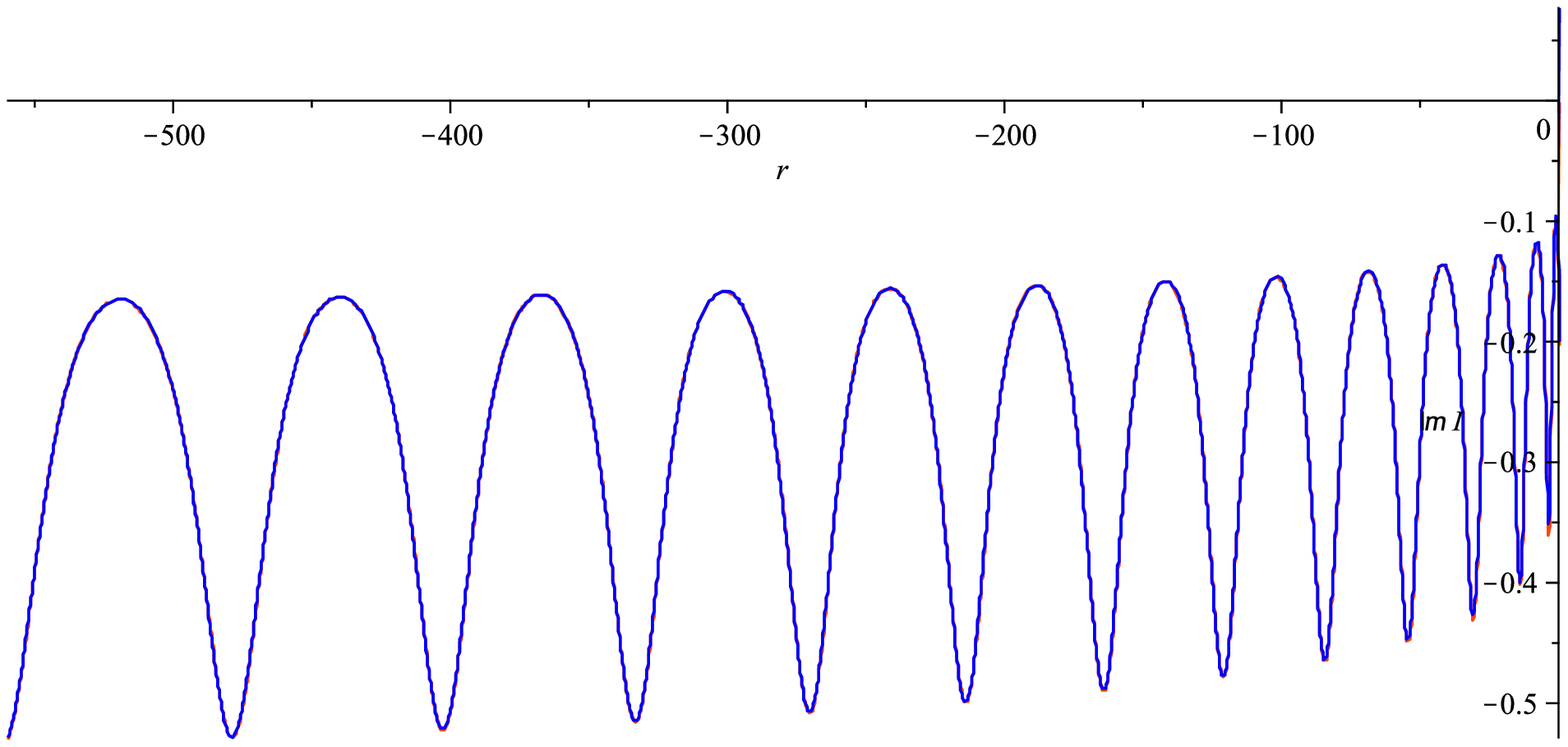}
\caption{Extended version of Figure~\ref{fig:H0=-02+i0045+IntImH20} for $r\leqslant-10^{-5}$. In this figure, the first peak
of Figure~\ref{fig:H0=-02+i0045+IntImH20} almost coincides with the vertical axis, and is therefore not clearly visible.
At the end-points of the first few peaks, one can distinguish small red dots.}
\label{fig:H0=-02+i0045+IntImH}
\end{center}
\end{figure}

\begin{remark}\label{rem:mole's-office}
\textsc{Question}: Where is the mole's dwelling in Figure~\ref{fig:H0=-02+i0045+IntReH}?
\textsc{Answer}: No one can see it, but it exists!
As explained in Example~3, the location of the dwelling (its right wall) corresponds to the last zero
of the function $\mathrm{Re}\,H(r)$. Observing Figure~\ref{fig:H0=-02+i0045+ReH}, one may have a concern as to whether or not
this last zero does, in fact, exist, because we see that the asymptotic formula~\eqref{eq:H-asympt-Large-singular} works very
well within the given plot, and we expect that it should replicate, with minor changes, as $r\to-\infty$.
The major tendency of the large-$r$ behaviour of the solution that follows from Figure~\ref{fig:H0=-02+i0045+ReH} is that the
heights of the peaks becoming lower and the distances between them increase. Another tendency in the transformation of the plot
which is more subtle, however, is that its tail is floating up slowly above the negative real semi-axis as one proceeds to the
the left ($r\to-\infty$): finally, $\mathrm{Re}\,H(r)\to1$; therefore, the last zero of $\mathrm{Re}\,H(r)$ should exist.

In order to comprehend these tendencies, one has to consider the asymptotics of the asymptotic
formula~\eqref{eq:H-asympt-Large-singular}: equation~~\eqref{eq:H-asympt-Large-singular}, in fact, can be simplified and
transformed into an expression resembling the ``regular asymptotics''~\eqref{eq:H-asympt-Large-regular};
so, in this sense, the latter asymptotics serves as ``the asymptotics of asymptotics''. This is reasonable
terminology, because the beginning of the plot where the first asymptotics~\eqref{eq:H-asympt-Large-singular} is already
working, and the tail of the plot, where it is, of course, still working, look radically different! At the same time, though,
we explain below that, practically, one cannot visualize this difference!

The plots in Figs.~\ref{fig:H0=-02+i0045+ReH20} and \ref{fig:H0=-02+i0045+ReH} show that the asymptotics accurately
approximates $\mathrm{Re}\,H(r)$; as a result, for the calculation of the last zero of $\mathrm{Re}\,H(r)$, we can use
the asymptotics of this function instead of its numerical evaluation. This is important because, as is evident from the
result~\eqref{eq:mole-x} stated below, such a calculation is not possible due to the ``astronomical distances'' required
for this purpose. According to our calculations, the last zero of $\text{Re}\,H(r)$ is located
at\footnote{Recall that $10^{24}$ is an estimate for the number of stars in the observable universe.\label{foot:stars}}
\begin{equation}\label{eq:mole-x}
r_0=-2.6279340765216450944920718115\ldots\times10^{24}.
\end{equation}
On the plot of $\mathrm{Re}\,I(r)$, after the point, $r_0$, one steps onto the floor of the mole's dwelling, from which the
left stairway to heaven begins.
In order to evaluate the depth where the dwelling is located, one has to count the number of zeros of $\text{Re}\,H(r)$.
In Figure~\ref{fig:H0=-02+i0045+ReH}, we see that the plot has a quasi-periodic structure: by quasi-period we mean a part of
the plot located between two neighbouring peaks. Each peak ``stands'' on two legs, so that one leg of a peak
belongs to the left quasi-period, whereas the other leg belongs to the right quasi-period; therefore, each leg intersects
the negative real axis, and gives rise to one zero of $\text{Re}\,H(r)$. Thus, each quasi-period has at least two zeros.
There is one more point that requires verification, namely, the maximum point on the ``bridge'' connecting the legs. This point
is always negative until one arrives at the quasi-period with the last zero. It is evidently not a trivial matter to establish
this for the Painlev\'e function; however, it can be proved for its asymptotics~\eqref{eq:H-asympt-Large-singular}.
Numerically, for the first two quasi-periods shown in Figure~\ref{fig:H0=-02+i0045+ReH20}, we found that
$r_{\max,1}=-3.756\ldots$, with $H(r_{\max,1})=-0.46471\ldots$, and $r_{\max,2}=-14.118\ldots$, with
$H(r_{\max,2})=-0.47042\ldots$. So, the floating process of the tail evolves as follows: the legs are lifting up and the
distance between them is growing at the same time that the ``bridge'' between the
legs straightens out so that its maximum point is moving down. Thus, the shape of the bridge changes from convex to concave,
the legs disappear, the maximum turns smoothly to the minimum, and the floating process continues:
the tail (on some appropriate scale) becomes similar to the plot shown in Figure~\ref{fig:H0=-0148+i0191+ReH}. This transformation
of the plot is progressing very slowly on astronomically large distances for $|r|$; however, it can still be observed with the
help of the asymptotic formula~\eqref{eq:H-asympt-Large-singular}. At this stage of the evolution, the transformation of the
plot continues: the spikes become more and more ``plateaued'', so that, finally, the tail, after an appropriate re-scaling,
resembles the black plot in Figure~\ref{fig:H0=-02+i0045+ReHintro}. Practically, the graph of this part of the tail cannot be
plotted even with the help of asymptotics, because the distance between the spikes, as well as the spikes themselves, are
becoming progressively more and more stretched along the negative semi-axis, unless some special scaling for $r$ is used which
changes concomitant with the growth of $|r|$.

Reverting back to the determination of the mole's dwelling, we note that the pair of zeros between two neighbouring peaks
of $\mathrm{Re}\,H(r)$ correspond to one step down in the right staircase. If $2N$ denotes the number of zeros, then there are
$N$ steps down in the right staircase. The depth of each step-down, starting from the second one, should be $\pi$, so that
the depth of the mole's dwelling, for large $N$,  is $\pi N +\mathcal{O}(1)$.
On the other hand, this depth can be calculated (cf. equations~\eqref{eq:asympt-sing-int} and \eqref{eq:corr-asympt-sing-int})
as follows, $\mathrm{Re}\big((\hat\psi(r_0)-\theta_0)/2+(\hat\psi(r_0)+\theta_0)/2\big)-2\sqrt{-r_0}+o(1)$,
so that, for large $|r_0|$, mole's dwelling  is located at a depth of about
$2\sqrt{-3r_0}-2\sqrt{-r_0}+\mathcal{O}\big(\ln\sqrt{-3r_0}\big)$.
A more careful estimation requires the $\mathcal{O}(1)$ contribution
which, on the scale of our distances~\eqref{eq:mole-x}, is comparable to the logarithmic error estimate; but, for the
purposes of the rough evaluation we are looking for, such contributions do not matter. Numerically, for the $y$-co-ordinate
of the mole's dwelling, we get the following estimate:\footnote{Taking into account the co-ordinates of the mole's
dwelling~\eqref{eq:mole-x}, \eqref{eq:mole-y}, its size, and the fact that the left staircase goes up, far beyond the level of
the negative semi-axis, we may suspect that the strange mole from Example 6 fell from the heavens some time ago.\label{foot:dev}}
\begin{equation}\label{eq:mole-y}
-2(\sqrt{3}-1)\sqrt{-r_0}=-2.373441069108\ldots\times10^{12}.
\end{equation}
We expect that $10$ to $11$ digits after the decimal point in \eqref{eq:mole-y} are correct. A more careful estimate for the
depth of the mole's dwelling can be obtained with the help of the so-called stair-stringer~\eqref{eq:right-stringer}
discussed in the following subsection.
\hfill $\blacksquare$\end{remark}
\subsection{Stair-Stringers}\label{subsec:stair-stringer}
In conclusion, we would like to formulate some observations concerning the asymptotic behaviour of $I(r)$.
These observations require further investigations and more careful formulations. In Examples 1 and 2,  the real
part of $I(r)$ looks like a staircase. In Examples 3--6, we observe two staircases connected with the mole's dwelling.
The appearance of the mole's dwelling on the plot of $I(r)$ was explained in terms of the floating of the tail of the plot
of the function $H(r)$ above the negative-$r$ semi-axis. We conjecture that the straight line
$H_-(r)=-1/2=\Re\big(\me^{\pm2\pi\mi/3}\big)$
plays the role of an unstable attracting line, and the appearance of the mole's dwelling and its location can be explained via
the interaction of the stable attracting line, $H_+(r)=+1$, and the unstable attracting line, $H_-(r)$.\footnote{The numbers $1$
and $\me^{\pm2\pi\mi/3}$ are the roots of the non-differentiated part of equation~\eqref{eq:hazzidakis}
(cf. Corollary~\ref{cor:algebraicMONDATA}).\label{foot:stair-stringer-cubic roots}}

By the term \emph{stair-stringer} we mean the non-oscillatory part of the leading term of
asymptotics describing the staircases. The stair-stringer for the left staircase, denoted by $Str_l$, can be deduced
immediately from the asymptotics~\eqref{eq:asympt-reg-int}, namely,
\begin{equation}\label{eq:left-stringer}
Str_l=2\sqrt{-r}+2\mathrm{Re}(\nu_1)\ln(2+\sqrt{3})+2\pi k-\mathrm{Im}\left(\ln\left(
\frac{\me^{\frac{2\pi\mi}{3}}H(0)-\me^{-\frac{2\pi\mi}{3}}}{\me^{\frac{2\pi\mi}{3}}-H(0)\me^{-\frac{2\pi\mi}{3}}}\right)\right).
\end{equation}
To get a formula for the stair-stringer for the right staircase, denoted by $Str_r$, is more complicated; however,
the assiduous reader of this section should be able to derive the following formula:
\begin{equation}\label{eq:right-stringer}
\begin{aligned}
Str_r=&-2(\sqrt{3}-1)\sqrt{-r}-\mathrm{Re}(\nu_1)\ln\sqrt{-r}
+\mathrm{Re}(\nu_1)\ln\left(\frac{(2+\sqrt{3})^2}{(2\sqrt{3})^3}\right)
-\frac{3\pi}{2}\mathrm{Im}(\nu_1)\\
&+\frac{\pi}{4}+2\pi(k-1)-\mathrm{Im}\left(\ln\left(
\frac{\me^{\frac{2\pi\mi}{3}}H(0)-\me^{-\frac{2\pi\mi}{3}}}{\me^{\frac{2\pi\mi}{3}}-H(0)\me^{-\frac{2\pi\mi}{3}}}\right)\right)
+\mathrm{Im}\left(\ln\left(\tilde{g}_1\Gamma(\mi\nu_1)\right)\right).
\end{aligned}
\end{equation}
In equations \eqref{eq:left-stringer} and \eqref{eq:right-stringer}, $k\in\mathbb{Z}$ is an explicit manifestation of the
$2\pi$-indeterminacy of the imaginary part of the asymptotics \eqref{eq:asympt-reg-int} and \eqref{eq:asympt-sing-int}.

The stair-stringer $Str_l$ does not require additional visualization since it is a major part of the leading term of asymptotics
which has already been verified in Examples ~1 and 2. The ``intermediate'' stair-stringer $Str_r$ is illustrated with the help
of Figs.~\ref{fig:H0=-04+i078+IntReHstr} and \ref{fig:H0=-100-i200+IntReHstr}: the integer $k$ in both of these figures
is equal to $1$, while $k=0$ for $Str_r$ corresponding to Example 6, which is not presented here.

\begin{figure}[htpb]
\begin{center}
\includegraphics[height=70mm,width=100mm]{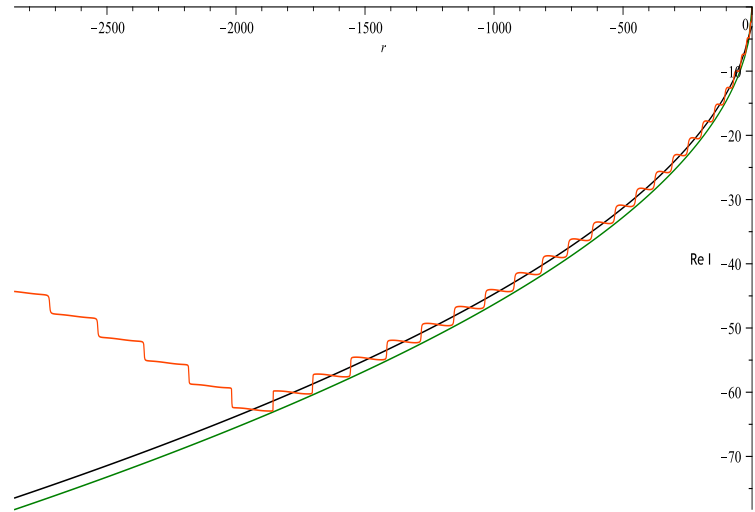}
\caption{The red curve represents the numerical values of $\mathrm{Re}\,I(r)$ corresponding to the initial value
$H(0)=-0.4+\mi0.78$. The {\sc Maple} settings for the generation of this curve are similar to those of Example 6.
The parameter $\nu_1$ for this solution equals $-0.396664\dotsc-\mi0.198139\dotsc$. The black line represents $Str_r$
with $k=1$, and the lowest (green) line is the unstable attracting parabola.}
\label{fig:H0=-04+i078+IntReHstr}
\end{center}
\end{figure}
\begin{figure}[htpb]
\begin{center}
\includegraphics[height=70mm,width=100mm]{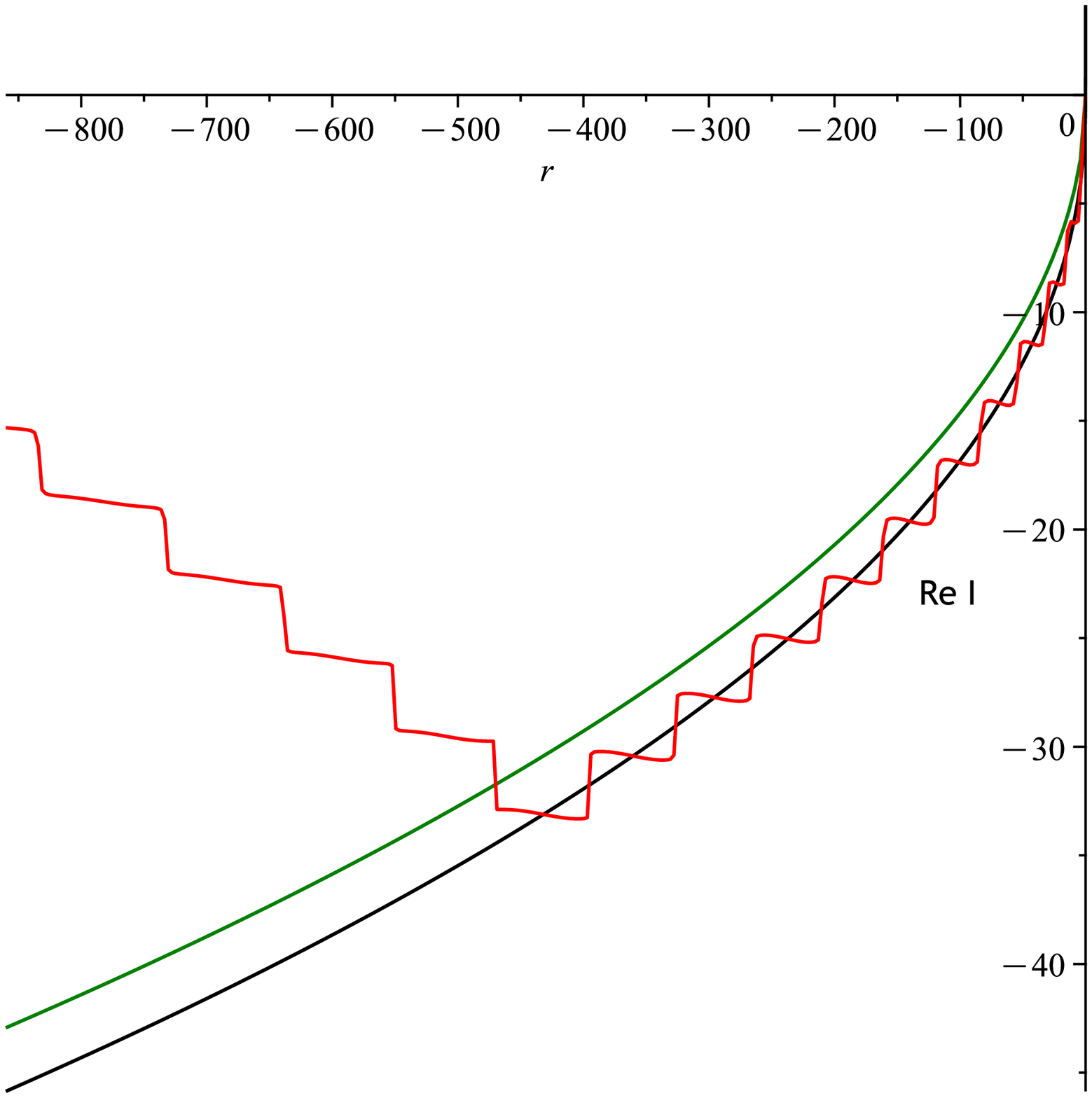}
\caption{The red curve represents the numerical values of $\mathrm{Re}\,I(r)$ corresponding to the initial value
$H(0)=-100-\mi200$. The {\sc Maple} settings for the generation of this curve are similar to those of Example 4.
The parameter $\nu_1$ for this solution equals $0.686841\dotsc-\mi0.323156\dotsc$. The black line represents $Str_r$
with $k=1$, and the upper (green) line is the unstable attracting parabola.}
\label{fig:H0=-100-i200+IntReHstr}
\end{center}
\end{figure}

The asymptotic behaviour as $r\to-\infty$ of almost all solutions of equation~\eqref{eq:hazzidakis} (in particular, all solutions
considered in this work) can be described as follows: there exists some $r$  which could be small or large, depending on
a solution, such that solutions are attracted to the parabola $2\sqrt{-r}$ and remain oscillatory at a distance
(cf. \eqref{eq:left-stringer}) $Str_l-2\sqrt{-r}$ which can be small or large, depending on a particular solution.
In this sense, we can call $r\to2\sqrt{-r}$ the \emph{stable attracting parabola}. Many solutions, prior to their behaviours being
equilibrated by the stable attracting parabola, are being ``captured'' by the \emph{unstable attracting parabola}, that is,
$r\to-2(\sqrt{3}-1)\sqrt{-r}$. For any $r=r_0$, there is a solution which is attracted by the unstable parabola on the distance
$|r_0|$. These solutions are oscillating near $Str_r$. Contrary to the case for $Str_l$, the distance between the
unstable parabola and $Str_r$ has logarithmic growth; however, this growth is not the reason why the unstable parabola
cannot hold the solution on the infinite-$r$ interval. Our formal explanation for this behaviour of the solutions,
which relies on the notion of `asymptotics of asymptotics', is given in Remark~\ref{rem:mole's-office}; it implies that the
tail of the plot for $\textrm{Re}\,H(r)$ is floating above the negative real semi-axis, and therefore the number of down-steps
should be finite.

The function $\mathrm{Re}\,I(r)$ can serve as a mathematical model describing the mole's behaviour. This model
gives a simpler and even more convincing explanation for the instability of the right parabola.
The mole is moving leftward-downward leaving behind a trajectory resembling the right staircase;
the deeper and deeper the mole burrows, the less and less food it finds, so that the right staircase becomes less and less
concave. Finally, the mole realizes that there is no food. At this stage, the mole constructs its dwelling, and, after a brief
respite gathering its thoughts, the mole, regardless of its nature, realizes that it is necessary to change its direction of
movement, and, finally, does so! Consequently, the mole starts to build the left staircase. Initially, the mole moves up in a
near-vertical manner; however, since it is difficult to move vertically, its trajectory becomes ``wavy''. After a few steps,
the mole sees more and more food in front of itself, thus its horizontal motion becomes longer and longer, whilst
its vertical movement continues to shrink; hence, the left staircase becomes less and less convex.
Depending on the particular situation, regulated by the value of $H(0)$, the location of the mole's dwelling
will be  different. We can also interpret the steps of the staircase as tunnels made by the mole, the lengths of which
can be regulated with the help of a scaling parameter, $c>0: r\to cr$. What about the imaginary part of $I(r)$? We see
(cf. Figure~\ref{fig:H0=-0148+i0191+IntImH}) that each icicle corresponds to a step/tunnel in its own right. We know that
the mole is making repositories for its food; so, in each tunnel there is a single food repository, and the lengths of the
icicles can be interpreted as numbers that are proportional to the food supply accrued in the corresponding repositories.
In accordance with this model, the largest repository of food is located precisely in the mole's dwelling.

A more interesting model describing the ``underground mole's geometry'' could be related to a generalization of the
function $I(r)$ that depends on two variables. It seems plausible, therefore, to ``dig'' for such a function amongst the
integrals associated with the higher Painlev\'e equations depending on two variables, namely, the second member of a
hierarchy related to the degenerate third Painlev\'e equation~\eqref{eq:dp3u}.
\appendix
\section{Appendix: The Function $g_2(r)$}\label{app:g2}
In Section~\ref{sec:2}, the definition of the generating functions $g_k(r)$, $k=1,2,\dotsc$, are given,
and the first function of this sequence, $g_1(r)$, is constructed. The second function, $g_2(r)$, is constructed here, and
we also explain how one calculates $P_n^{'}(-1)$ with the help of this result.

The differential equation for $g_2(r)$ is
\begin{equation}\label{eq:g2}
\big(rg_2^{'}(r)\big)'=3g_2(r)-\frac13+\frac13\big(I_1(2\sqrt{3r})\big)^2,
\end{equation}
where $I_1$ is the modified Bessel function of order $1$. This ODE is an inhomogeneous modified Bessel equation of order $0$.
The small-$r$ expansion of $g_2(r)$ does not contain logarithmic terms, and its leading term is $-r/3$. This fact allows us to uniquely
specify the proper solution of this equation:
\begin{equation}\label{eq:g2Bessel}
g_2(r)=\frac19-\frac19I_0(2\sqrt{3r})+
\frac23\int_0^r\left(I_0(2\sqrt{3r})K_0(2\sqrt{3x})-K_0(2\sqrt{3r})I_0(2\sqrt{3x})\right)\left(I_1(2\sqrt{3x})\right)^2\md x.
\end{equation}
The following expansion is not widely known; however, it is very helpful in our study:
\begin{equation}\label{eqI1squaredSERIES}
\left(I_1(2\sqrt{3x})\right)^2=3x\sum_{n=0}^{\infty}\frac{\binom{2n+2}{n}(3x)^n}{((n+1)!)^2},
\end{equation}
where $\tbinom{m}{k}=\tfrac{m!}{k!(m-k)!}$ is the binomial coefficient.

Since, in fact, we need the Taylor series expansion of $g_2(r)$, we solve equation~\eqref{eq:g2} with the help of power series.
Substituting the expansion
\begin{equation}\label{eq:g2SERIEScn}
g_2(r)=\sum_{k=1}^{\infty}c_k\,r^k,
\end{equation}
where the coefficients $c_k$, $k=1,2,\ldots$, are independent of $r$,
into equation~\eqref{eq:g2}, one deduces the following recurrence relation,
$$
(k+1)^2c_{k+1}=3c_k+\frac{3^{k-1}}{(k!)^2}\binom{2k}{k-1},\quad
c_1=-\frac13.
$$
Solving the last relation, we find that
\begin{equation}\label{eq:cnForg2Explicit}
c_n=\frac{3^{n-2}}{(n!)^2}\left(-1+\sum_{k=1}^{n-1}\binom{2k}{k-1}\right),\quad
n=1,2,\ldots.
\end{equation}
\begin{remark}\label{rem:App-g2}
The integer sequence $\sum_{k=1}^{n-1}\binom{2k}{k-1}$, $n=2,3,\ldots$, coincides with sequence A057552
in \cite{OEIS3}.\footnote{See, also, the sequence A279561 in OEIS enumerating the number of length-$n$ inversion sequences
avoiding the patterns $101$, $102$, $201$, and $210$.\label{foot:A279561}}
Shifting $n$ to $n-2$ in a formula given in \cite{OEIS3}, we find that
$$
\sum_{k=1}^{n-1}\binom{2k}{k-1}=\frac{\binom{2n-2}{n-1}}{2n}\left(4n-2-n\,{}_2F_1(1,-n+1;-n+3/2;1/4)\right)-\frac12,\quad
n\in\mathbb{N},
$$
where ${}_2F_1(1,-n+1;-n+3/2;1/4)$ is the value of the Gauss hypergeometric function at $x=1/4$, and
$\tfrac{\binom{2n-2}{n-1}}{n}$ is the $(n-1)$th Catalan number. It is interesting to note that {\sc Maple} gives a more
complicated presentation for this sum:
$$
\sum_{k=1}^{n-1}\binom{2k}{k-1}=-\frac12+\frac{\mi\sqrt{3}}{6}-\binom{2n}{n-1}\,{}_3F_2(1,n+1,n+1/2;n,n+2;4),
$$
where ${}_3F_2(1,n+1,n+1/2;n,n+2;4)$ is the value of the generalized hypergeometric function at $x=4$.
\hfill $\blacksquare$\end{remark}
\begin{corollary}\label{cor:g_2-generating function}
The function~\eqref{eq:g2Bessel} is the generating function for the sequence $c_n$ \emph{(}cf. \eqref{eq:cnForg2Explicit}\emph{)}.
\end{corollary}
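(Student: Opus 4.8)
The plan is to show that the right-hand side of \eqref{eq:g2Bessel} and the formal series $\sum_{k\geq1}c_kr^k$ with coefficients given by \eqref{eq:cnForg2Explicit} define one and the same function, by identifying each of them with the unique solution $g_2(r)$ of \eqref{eq:g2} that is holomorphic at $r=0$ and vanishes there. Once this is done, \eqref{eq:cnForg2Explicit} exhibits precisely the Taylor coefficients of \eqref{eq:g2Bessel}, which is exactly the content of the corollary.

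First I would record the uniqueness fact to be used. The homogeneous equation $(rg'(r))'=3g(r)$ is the modified Bessel equation of order zero in the variable $2\sqrt{3r}$, with fundamental system $\{I_0(2\sqrt{3r}),\,K_0(2\sqrt{3r})\}$; the first member is entire in $r$, the second has a logarithmic branch point at $r=0$, so every homogeneous solution bounded near $r=0$ is a constant multiple of $I_0(2\sqrt{3r})$. Hence, if two solutions of the linear equation \eqref{eq:g2} are both holomorphic at $r=0$ and both vanish at $r=0$, their difference is a bounded homogeneous solution vanishing at the origin, hence is identically zero. It therefore suffices to check that each of the two expressions in question is such a solution.

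For the series this is immediate: the bound $\sum_{k=1}^{n-1}\binom{2k}{k-1}\leq(n-1)\binom{2n-2}{n-2}$ applied to \eqref{eq:cnForg2Explicit} gives $|c_n|=O\bigl(n\,12^{\,n}/(n!)^2\bigr)$, so $\sum_{k\geq1}c_kr^k$ is an entire function; by the very derivation of the recurrence preceding \eqref{eq:cnForg2Explicit} it solves \eqref{eq:g2}, and it plainly vanishes at $r=0$. For \eqref{eq:g2Bessel} I would argue as follows. It solves \eqref{eq:g2} by variation of parameters: for $(rg')'-3g=f$ the product $p(r)\,W\bigl(I_0(2\sqrt{3r}),K_0(2\sqrt{3r})\bigr)=r\cdot\bigl(-\tfrac1{2r}\bigr)=-\tfrac12$ is a nonzero constant, so the kernel $I_0(2\sqrt{3r})K_0(2\sqrt{3x})-K_0(2\sqrt{3r})I_0(2\sqrt{3x})$ carrying the prefactor $\tfrac23$ is the correctly normalised Green's function for the part $\tfrac13\bigl(I_1(2\sqrt{3x})\bigr)^2$ of the inhomogeneity, while $\tfrac19$ is a particular solution for the constant part $-\tfrac13$ and $-\tfrac19 I_0(2\sqrt{3r})$ is the homogeneous correction enforcing the value $0$ at $r=0$. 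It remains to see that \eqref{eq:g2Bessel} extends holomorphically across $r=0$: in the integral term each occurrence of $K_0$ is multiplied by an integral vanishing like $r^2$ at the origin, since $\bigl(I_1(2\sqrt{3x})\bigr)^2=O(x)$ by \eqref{eqI1squaredSERIES} and $r^2\ln r\to0$, and the $\ln r$-contributions cancel (checked directly by writing $K_0(2\sqrt{3r})=-\tfrac12(\ln r)I_0(2\sqrt{3r})+\text{(analytic)}$ and collecting the coefficient of $\ln r$). With analyticity and the value $0$ at $r=0$ in hand, the uniqueness fact forces \eqref{eq:g2Bessel} and $\sum_{k\geq1}c_kr^k$ to coincide.

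The only point needing genuine care is that last one — that \eqref{eq:g2Bessel}, a priori defined only on the cut plane, continues analytically through $r=0$; I expect this Bessel-function bookkeeping to be the main, though not deep, obstacle. An alternative that bypasses the uniqueness argument, at the cost of a longer computation, is to expand \eqref{eq:g2Bessel} directly: insert the series for $I_0$ and $K_0$ together with \eqref{eqI1squaredSERIES}, integrate term by term using $\int_0^r x^j\ln x\,\md x$, and recognise the coefficient of $r^n$ as $\tfrac{3^{n-2}}{(n!)^2}\bigl(-1+\sum_{k=1}^{n-1}\binom{2k}{k-1}\bigr)$, the telescoping built into the recurrence for $c_n$ making the identification feasible. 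Either route reduces the corollary to routine verification.
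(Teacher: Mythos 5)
Your proof is correct and follows essentially the same route as the paper's: both reduce the corollary to showing that \eqref{eq:g2Bessel} is holomorphic at $r=0$ with the prescribed data and then invoke uniqueness of the power-series solution of \eqref{eq:g2}, whose coefficients are the $c_n$ of \eqref{eq:cnForg2Explicit} by construction. The one point you rightly flag as delicate --- the disappearance of the logarithms --- is handled in the paper by combining the two $K_0$-contributions into the single kernel $\ln r-\ln x$ (cf.\ \eqref{eq:INTwithLOGS}) and integrating by parts, which is the precise mechanism behind the ``cancellation'' you describe (the coefficient of the explicit $\ln r$ does not vanish on its own; it is absorbed by the $\ln x$ integral).
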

\begin{proof}
The function~\eqref{eq:g2Bessel} is obtained from the general solution by specifying the initial conditions
$g_2(0)=0$ and $g_2'(0)=-1/3$. Since $r=0$ is the singular point of the equation, we, strictly speaking, have to prove
that this solution can be developed into a power series in $r$ of the form  \eqref{eq:g2SERIEScn}. The proof
is straightforward: one employs the expansions for $I_0(2\sqrt{3r})$, $I_1(2\sqrt{3r})$, and $K_0(2\sqrt{3r})$ at $r=0$.
The expansions for the functions $I_0(2\sqrt{3r})$ and $I_1(2\sqrt{3r})$ are convergent power series. The expansion for
$K_0(2\sqrt{3r})$ reads
$$
K_0(2\sqrt{3r})=-I_0(2\sqrt{3r})\ln\sqrt{3r}+\sum_{m=0}^{\infty}\frac{\psi(1+m)(3r)^m}{(m!)^2},\quad
\psi(1+m)=1+\frac12+\ldots+\frac1m-\gamma,
$$
where $\gamma=0.57721\ldots$ is the Euler-Mascheroni constant. Then, the integral in \eqref{eq:g2Bessel} can be decomposed
into two parts: the first part is the integral with logarithms, that is,
\begin{equation}\label{eq:INTwithLOGS}
\frac12I_0(2\sqrt{3r})\int_0^r(\ln(r)-\ln(x))I_0(2\sqrt{3x})\left(I_1(2\sqrt{3x})\right)^2\md x,
\end{equation}
whilst the second one is the integral which contains functions that can be expanded in power series in the variable of
integration and $r$. Substituting into the second integral the corresponding power series, one can integrate it successively
and obtain a power series with rational (!) coefficients, because, in this expansion, there appear the differences
$\psi(m+1)-\psi(l+1)$ for integers $m$ and $l$ \cite{BE1}.
The integral~\eqref{eq:INTwithLOGS} should be integrated by parts; then, the explicitly integrated term
vanishes, while the remaining integral contains only those functions that can be developed into Taylor series due to the
expansion~\eqref{eqI1squaredSERIES}.
\end{proof}
\begin{remark}
We have executed the scheme discussed in the proof of Corollary~\ref{cor:g_2-generating function} explicitly;
however, the corresponding formula for the numbers $c_n$ which we obtained via substitution of the corresponding series
into the integral~\eqref{eq:g2Bessel} is quite cumbersome. We were not able to convert the latter formula to the simple
expression~\eqref{eq:cnForg2Explicit}, but we confirmed numerically that both formulae coincide.
\hfill $\blacksquare$\end{remark}
\begin{proposition}\label{prop:Pn'(-1)}
\begin{equation}\label{eq:Pn'(-1)}
(-1)^{\lfloor\frac{n+1}{2}\rfloor}3^{\nu_3(n+1)}P_n^{'}(-1)=
3^{b_n-2}\left(\frac{n}{2}-\frac54-(-1)^n\frac34+\sum_{k=1}^{n-1}\binom{2k}{k-1}\right),\quad
n\in\mathbb{N},
\end{equation}
where the sequences $\nu_3(n+1)$ and $b_n$ are defined in Conjecture~\ref{con:structure-ak} and equation~\eqref{eq:Cloitre},
respectively.
\end{proposition}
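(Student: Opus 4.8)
The plan is to run, for $g_2(r)$, the same generating-function comparison that established Proposition~\ref{prop:Pn(-1)} from $g_1(r)$, only now reading off the \emph{quadratic} term in the expansion parameter $\varepsilon := a_0^3 + 1$. Recall that, with $a_0 = -(1-\varepsilon)^{1/3}$, one has $a_0^3 = \varepsilon - 1$ and $a_0^3 + 1 = \varepsilon$; since the exponent $\sigma_n := \tfrac n2 - \tfrac14 - (-1)^n\tfrac34$ occurring in the ansatz~\eqref{eq:a-n-conjecture} is a nonnegative integer for every $n \in \mathbb{N}$ (it equals $\tfrac n2 - 1$ for $n$ even and $\tfrac{n+1}2$ for $n$ odd), the power $a_0^{-\sigma_n}$ is unambiguous and equals $(-1)^{\sigma_n}(1-\varepsilon)^{-\sigma_n/3}$, where $(1-\varepsilon)^{1/3}$ denotes the branch taking the value $1$ at $\varepsilon=0$. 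Hence, by Conjecture~\ref{con:structure-ak},
\begin{equation*}
a_n = (-1)^{n-1}(-1)^{\sigma_n}\,\frac{3^{\nu_3(n+1)}}{(n!|n!|_3)^2}\;\varepsilon\,P_n(\varepsilon - 1)\,(1-\varepsilon)^{-\sigma_n/3},
\end{equation*}
which, regarded as a function of $a_0$, is a Laurent polynomial, while $\varepsilon \mapsto a_0(\varepsilon)$ is holomorphic near $\varepsilon = 0$ with $a_0(0) = -1 \neq 0$; therefore $a_n$ is a single-valued holomorphic function of $\varepsilon$ near the origin. Exactly as in the proof of Proposition~\ref{prop:Pn(-1)}, the convergent double series (cf. Corollary~\ref{cor:convergence} and Corollary~\ref{cor:g_2-generating function}) may be rearranged, so that the coefficient of $r^n$ in $g_2(r)$ is the coefficient of $\varepsilon^2$ in $a_n$.

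Next I would extract this coefficient. As the prefactor $\varepsilon$ is explicit, it is the coefficient of $\varepsilon^1$ in $P_n(\varepsilon-1)(1-\varepsilon)^{-\sigma_n/3}$; from $P_n(\varepsilon - 1) = P_n(-1) + \varepsilon P_n'(-1) + \mathcal{O}(\varepsilon^2)$ and $(1-\varepsilon)^{-\sigma_n/3} = 1 + \tfrac{\sigma_n}{3}\varepsilon + \mathcal{O}(\varepsilon^2)$, that coefficient is $P_n'(-1) + \tfrac{\sigma_n}{3}P_n(-1)$. Equating it to the coefficient $c_n$ of $g_2(r)$ given explicitly in equation~\eqref{eq:cnForg2Explicit} yields
\begin{equation*}
(-1)^{n-1}(-1)^{\sigma_n}\,\frac{3^{\nu_3(n+1)}}{(n!|n!|_3)^2}\left(P_n'(-1) + \frac{\sigma_n}{3}P_n(-1)\right) = \frac{3^{n-2}}{(n!)^2}\left(-1 + \sum_{k=1}^{n-1}\binom{2k}{k-1}\right).
\end{equation*}

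It then remains to clear the arithmetic. Using $(n!|n!|_3)^2 = (n!)^2\,3^{-2\nu_3(n!)}$ together with the Legendre and Cloitre formulae of Remark~\ref{rem:number-theory}, which give $2\nu_3(n!) = n - b_n$, one cancels the factorials and collects the surviving powers of $3$ into $3^{b_n-2}$ after multiplying through by $3^{\nu_3(n+1)}$; for the term carrying $P_n(-1)$ one substitutes $3^{\nu_3(n+1)}P_n(-1) = (-1)^{\lfloor(n-1)/2\rfloor}3^{b_n-1}$ from Proposition~\ref{prop:Pn(-1)}. The signs are disposed of by equation~\eqref{eq:-1}, which is equivalent to $(-1)^{n+\sigma_n+\lfloor(n-1)/2\rfloor} = 1$, hence $(-1)^{n-1}(-1)^{\sigma_n} = -(-1)^{\lfloor(n-1)/2\rfloor}$, together with the elementary congruence $\lfloor(n-1)/2\rfloor + 1 \equiv \lfloor(n+1)/2\rfloor \pmod 2$; transposing the $P_n(-1)$-term to the right produces the overall factor $(-1)^{\lfloor(n+1)/2\rfloor}$ on the left-hand side of~\eqref{eq:Pn'(-1)}, and since $\sigma_n - 1 = \tfrac n2 - \tfrac54 - (-1)^n\tfrac34$ the asserted identity drops out.

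Conceptually this is routine once $g_2(r)$ is in hand; the real inputs sit in Appendix~\ref{app:g2} itself — the closed form~\eqref{eq:cnForg2Explicit} for the Taylor coefficients of $g_2$ and the statement (Corollary~\ref{cor:g_2-generating function}) that the solution~\eqref{eq:g2Bessel} is genuinely holomorphic at $r = 0$ and generates the $c_n$. The only step of the present proof demanding attention is therefore the sign-and-$3$-adic bookkeeping of the final paragraph. I would also take care that each appeal to the ansatz~\eqref{eq:a-n-conjecture} stays within the regime where Conjecture~\ref{con:structure-ak} is assumed, in accordance with Remark~\ref{rem:conjecture}: the value $\kappa_n = 3^{\nu_3(n+1)+2\nu_3(n!)}$ enters only through that conjecture, and it is precisely this value that makes the right-hand side of~\eqref{eq:Pn'(-1)} come out in the stated shape.
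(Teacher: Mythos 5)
Your proposal is correct and follows exactly the route of the paper's own (much terser) proof: substitute $a_0=-(1-\varepsilon)^{1/3}$ into the ansatz~\eqref{eq:a-n-conjecture}, identify the $\varepsilon^2$-coefficient of $a_n$ with the Taylor coefficient $c_n$ of $g_2(r)$ from~\eqref{eq:cnForg2Explicit}, and then eliminate $P_n(-1)$ via Proposition~\ref{prop:Pn(-1)}; your intermediate expression $c_n=(-1)^{\lfloor\frac{n-1}2\rfloor+1}\tfrac{3^{\nu_3(n+1)}}{(n!|n!|_3)^2}\bigl(P_n'(-1)+\tfrac{\sigma_n}{3}P_n(-1)\bigr)$ is precisely the displayed formula in the paper's proof, and your sign and $3$-adic bookkeeping checks out.
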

\begin{proof}
We now calculate the coefficients $c_n$ with the help of the ansatz~\eqref{eq:a-n-conjecture}. Substitute into
ansatz~\eqref{eq:a-n-conjecture} $a_0=-(1-\varepsilon)^{1/3}$, and consider the expansion of $a_n$ as $\varepsilon\to0$.
The numbers $c_n$ coincide with the coefficients of this expansion at $\varepsilon^2$; the result of this calculation reads:
$$
c_n=(-1)^{\left\lfloor\frac{n-1}{2}\right\rfloor+1}\frac{3^{\nu_3(n+1)}}{(n!|n!|_3)^2}\left(P_n^{'}(-1)+
\frac13\left(\frac{n}{2}-\frac14-(-1)^n\frac34\right)P_n(-1)\right).
$$
Taking into account the formula for $P_n(-1)$ given in \eqref{eq:Pn(-1)}, and equation~\eqref{eq:cnForg2Explicit}, we, after
a straightforward calculation, arrive at equation~\eqref{eq:Pn'(-1)}.
\end{proof}

\section{Appendix: Asymptotics as $\tau \! \to \! 0$ of $u(\tau)$ and $\varphi
(\tau)$ for $a \! \in \! \mathbb{C}$} \label{app:asympt0}
The result presented in Theorem~\ref{th:B1asympt0} below is based on Theorem~3.4 of \cite{KitVar2004}. Here, we formulate
only the key asymptotic result of Theorem~3.4 of \cite{KitVar2004} that concerns asymptotics as $\tau\to+0$
($\arg(\tau)=0$), which means that the parameters $\varepsilon_{1}$ and $\varepsilon_{2}$ appearing in Theorem~3.4
must be set equal to $0$. This, in turn, allows one to simplify the notation in Theorem~3.4 of \cite{KitVar2004}, namely,
we use  $(a,s_{0}^{0},s_{0}^{\infty}, s_{1}^{\infty},g_{11},g_{12},g_{21},g_{22})$ instead of
$(a,s_{0}^{0}(0,0),s_{0}^{\infty}(0,0),s_{1}^{\infty}(0,0),g_{11}(0,0),g_{12}(0,0),g_{21}(0,0),g_{22}(0,0))$
for the monodromy co-ordinates. Furthermore, we simplify the notation for the parameters in the asymptotic formulae:
(i) $\varpi_{k}^{\natural}(0,0;\lambda)$ is changed to $\varpi_{k}(\lambda)$, $k=1,2$; (ii) $\chi_{k}(\mathbf{g};\lambda)$
is denoted here as $\chi_{k}(\lambda)$; and (iii) $\mathbf{p}(a,\lambda):=\mathbf{p}_1(\lambda)$ and
$\mathbf{p}(-a,\lambda)\me^{-\mi\pi\lambda}:=\mathbf{p}_2(\lambda)$. Moreover, we used standard identities for
the (Euler) gamma function in order to simplify the expression for $\mathbf{p}(z_{1},z_{2})$ that appears in Theorem~3.4
of \cite{KitVar2004}.
Since the Hamiltonian function corresponding to equation~\eqref{eq:dp3u} is not studied in this work, its small-$\tau$
asymptotics is omitted in Theorem~\ref{th:B1asympt0} below; at the same time, though, Theorem~\ref{th:B1asympt0} contains
the small-$\tau$ asymptotis of the function  $\varphi(\tau)$. The latter asymptotics was not included
in the formulation of Theorem~3.4 of \cite{KitVar2004}, even though, in fact, it was obtained in the course of the proof of
Theorem 3.4.

Recall that, for any solution $u(\tau)$ of
equation~\eqref{eq:dp3u}, the function $\varphi(\tau)$ is defined as the indefinite integral
\begin{equation}\label{eq:app:varphi-definition}
\varphi{'}(\tau)=\frac{2a}{\tau}+\frac{b}{u(\tau)}.
\end{equation}
This function appears in the parametrization of the coefficients of the corresponding isomonodromy system
(see \cite{KitVar2004}, Proposition~1.2) in the form $\me^{\mi\varphi(\tau)}$; therefore, as long as the monodromy data
for the isomonodromy system are given, the function $\varphi(\tau)$ is fixed modulo $2\pi$, or,
in other words, the constant of integration in \eqref{eq:app:varphi-definition} is defined via the monodromy data modulo
$2\pi$, i.e., from the isomonodromy point of view the function $\varphi(\tau)$ is an ``almost definite integral''.
This fact allows one to calculate some particular integrals that are related to the function $u(\tau)$ \cite{KitVar2019}.

There is a one-to-one correspondence between the pair of functions $(u(\tau),\me^{\mi\varphi(\tau)})$,
where the pair $(u(\tau), \varphi(\tau))$ is a solution of the system~\eqref{eq:dp3u}, \eqref{eq:app:varphi-definition}, and
the monodromy data of the first equation of the system~(12) in \cite{KitVar2004}. It is the monodromy data of this equation
that is denoted as $(a,s_{0}^{0},s_{0}^{\infty}, s_{1}^{\infty},g_{11},g_{12},g_{21},\linebreak[1] g_{22})\in\mathbb{C}^8$.
These complex co-ordinates satisfy a set of algebraic equations where, instead of $a$, the variable $\me^{\pi a}$ appears;
this system of equations in $\mathbb{C}^8$ defines a monodromy manifold. By treating $\me^{\pi a}$ as a parameter, this manifold
can also be considered as an algebraic variety in $\mathbb{C}^7$.
We call a point of this manifold the monodromy data corresponding to $(u(\tau),\me^{\mi\varphi(\tau)})$, or, for brevity,
the monodromy data corresponding to the pair $(u(\tau),\varphi(\tau))$, with the \emph{proviso} that under
the function $\varphi(\tau)$ is understood the class of functions defined by the equivalence relation
$\varphi\equiv\varphi+2\pi k, k\in\mathbb{Z}$.
\begin{theorem}\label{th:B1asympt0}
Let $(u(\tau), \varphi(\tau))$ be a solution of the system~\eqref{eq:dp3u}, \eqref{eq:app:varphi-definition} for
$\varepsilon b>0$ corresponding to the monodromy data $(a,s_{0}^{0}$, $s_{0}^{\infty}$, $s_{1}^{\infty}$,
$g_{11},g_{12},g_{21},g_{22})$. Suppose that
\begin{equation} \label{eq:app:Asympt0:mondataConditions}
\lvert \Im (a) \rvert \! < \! 1, \qquad \rho \! \neq \! 0, \qquad \lvert \Re (\rho)
\rvert \! < \! 1/2, \qquad g_{11}g_{22} \! \neq \! 0,
\end{equation}
where
\begin{equation} \label{eq:appAsympt0:rho-s}
\cos (2 \pi \rho) \! := \! -\frac{\mi s_{0}^{0}}{2} \! = \! \cosh (\pi a) \! + \!
\frac{1}{2}s_{0}^{\infty}s_{1}^{\infty} \me^{\pi a}.
\end{equation}
Then, $\exists$ $\delta \! > \! 0$ such that
\begin{equation}\label{eq:app:Asympt0:u}
u(\tau)\underset{\tau\to+0}{=} \, \frac{\tau b \me^{\pi a/2}}{16 \pi}\!
\left(\varpi_{1}(\rho)\tau^{2\rho}+\varpi_{1}(-\rho)\tau^{-2\rho}\right)\!
\left(\varpi_{2}(\rho)\tau^{2\rho}+\varpi_{2}(-\rho)\tau^{-2\rho}\right)\!
\left(1+o\big(\tau^{\delta})\right),
\end{equation}
and
\begin{equation}\label{eq:app:varphi}
\me^{\mi\varphi(\tau)}\underset{\tau\to+0}{=}\me^{\mi \pi}
2^{\mi a} \tau^{\mi 2a}\!
\left(\frac{\varpi_{2}(\rho)\tau^{2\rho}\! + \! \varpi_{2}(-\rho)\tau^{-2\rho}}
{\varpi_{1}(\rho)\tau^{2\rho}\! + \! \varpi_{1}(-\rho)\tau^{-2\rho}}
\right) \!
\left(1+o\big(\tau^{\delta}\big)\right),
\end{equation}
where
\begin{gather}
\varpi_{k}(\lambda)=\mathbf{p}_k(\lambda)\chi_{k}(\lambda), \qquad
k=1,2,\label{eq:app:varpi-pk-chi}\\
\chi_{k}(\lambda)=g_{1k}\me^{\mi\pi(\lambda+1/4)}+g_{2k}\me^{-\mi\pi(\lambda+1/4)} \label{eq:app:chi-g},\\
\mathbf{p}_k(\lambda)=\me^{-(-1)^k\frac{\mi\pi}{2}\lambda}\left(\frac{\varepsilon b}{2}\right)^{\lambda}
\frac{\Gamma(1-2\lambda)}{\Gamma(1+2\lambda)}\frac{\Gamma\big(1+\lambda-(-1)^k\mi a/2\big)}{\lambda},\label{eq:app:pk-gamma}
\end{gather}
and $\Gamma (\ast)$ is the gamma function {\rm \cite{BE1}}.
\end{theorem}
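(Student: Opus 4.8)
The plan is to obtain \eqref{eq:app:Asympt0:u} by the Isomonodromy Deformation Method and then to deduce \eqref{eq:app:varphi} from it; indeed, up to the notational simplifications recorded before the statement, \eqref{eq:app:Asympt0:u} is precisely the $\arg(\tau)=0$ case of Theorem~3.4 of \cite{KitVar2004}. For \eqref{eq:app:Asympt0:u} I would attach to the pair $(u(\tau),\me^{\mi\varphi(\tau)})$ the associated $2\times2$ linear system of \cite{KitVar2004}, whose coefficients are rational in the spectral variable $\lambda$ and whose Stokes and connection data are exactly $(a,s_0^0,s_0^\infty,s_1^\infty,g_{11},g_{12},g_{21},g_{22})$, and recast the isomonodromy condition as a Riemann--Hilbert problem on the union of the Stokes rays at the irregular point and a small loop around the Fuchsian point. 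As $\tau\to+0$ one rescales $\lambda$ so that the jump data localize; the RHP then reduces to a model problem solvable in closed form in terms of classical special functions of confluent hypergeometric type, and this is the source of the gamma-function factors in $\mathbf{p}_k(\lambda)$ (the ratio $\Gamma(1-2\lambda)/\Gamma(1+2\lambda)$ from the Fuchsian exponents, the factors $\Gamma(1+\lambda-(-1)^k\mi a/2)$ from the confluence at $\infty$), while the combinations $\chi_k(\lambda)$ of the $g_{ij}$ arise from matching the model solution to the canonical solutions across the Stokes structure. Extracting the relevant coefficients of the reconstructed $\Psi(\lambda,\tau)$, together with the standard formula expressing $u(\tau)$ through them, yields \eqref{eq:app:Asympt0:u}; the hypotheses \eqref{eq:app:Asympt0:mondataConditions} are exactly the non-resonance conditions ensuring that the model problem is well posed, that the two exponents $\tau^{\pm2\rho}$ are genuinely distinct, and that the leading amplitude $\varpi_1(-\rho)\varpi_2(-\rho)$ (hence $u$ itself) does not vanish. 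The error $o(\tau^{\delta})$, with $\delta>0$ left unspecified, comes from the first correction to the model RHP, and its admissible values depend on the gaps $|\Re\rho|$ and $|\Im a|$ (cf. Remark~\ref{rem:explorationAsymptotics}).

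For \eqref{eq:app:varphi} I would give a self-contained derivation from \eqref{eq:app:Asympt0:u} via $\varphi'(\tau)=2a/\tau+b/u(\tau)$, i.e. $\mi\varphi'(\tau)=2\mi a/\tau+\mi b/u(\tau)$. Writing $A(\tau):=\varpi_1(\rho)\tau^{2\rho}+\varpi_1(-\rho)\tau^{-2\rho}$ and $B(\tau):=\varpi_2(\rho)\tau^{2\rho}+\varpi_2(-\rho)\tau^{-2\rho}$, formula \eqref{eq:app:Asympt0:u} gives $\mi b/u(\tau)=\dfrac{16\pi\mi\,\me^{-\pi a/2}}{\tau A(\tau)B(\tau)}\bigl(1+o(\tau^{\delta})\bigr)$. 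A direct computation shows $A'(\tau)B(\tau)-A(\tau)B'(\tau)=\dfrac{4\rho}{\tau}\bigl(\varpi_1(\rho)\varpi_2(-\rho)-\varpi_1(-\rho)\varpi_2(\rho)\bigr)$, hence $\dfrac{d}{d\tau}\ln\dfrac{B(\tau)}{A(\tau)}=-\dfrac{4\rho\bigl(\varpi_1(\rho)\varpi_2(-\rho)-\varpi_1(-\rho)\varpi_2(\rho)\bigr)}{\tau A(\tau)B(\tau)}$. The crux is therefore the Wronskian-type identity $\varpi_1(\rho)\varpi_2(-\rho)-\varpi_1(-\rho)\varpi_2(\rho)=-4\pi\mi\,\me^{-\pi a/2}/\rho$, which I would verify by inserting \eqref{eq:app:varpi-pk-chi}--\eqref{eq:app:pk-gamma}, applying the reflection formula $\Gamma(1+z)\Gamma(1-z)=\pi z/\sin(\pi z)$ at $z=\rho\pm\mi a/2$, using the relation \eqref{eq:appAsympt0:rho-s} between $\cos(2\pi\rho)$ and the Stokes data, and the normalization $g_{11}g_{22}-g_{12}g_{21}=1$ (cf.~\eqref{eq:mdta6}). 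Granting this identity, $\mi b/u(\tau)$ equals $\dfrac{d}{d\tau}\ln\bigl(B(\tau)/A(\tau)\bigr)$ up to the factor $1+o(\tau^{\delta})$; integrating $\mi\varphi'(\tau)=2\mi a/\tau+\dfrac{d}{d\tau}\ln(B/A)+o(\tau^{\delta-1})$ from the origin (with $\delta$ chosen so that the error is integrable there) and exponentiating yields $\me^{\mi\varphi(\tau)}=\mathrm{const}\cdot\tau^{\mi2a}\,\dfrac{B(\tau)}{A(\tau)}\,\bigl(1+o(\tau^{\delta})\bigr)$.

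It then remains to pin the multiplicative constant to the asserted value $\me^{\mi\pi}2^{\mi a}$. This is the one point that the leading-order matching above does not settle; here I would use the explicit relation between $\me^{\mi\varphi(\tau)}$ and the coefficients of the isomonodromy system (\cite{KitVar2004}, Proposition~1.2) and track the constant through the same small-$\tau$ RHP analysis that proves \eqref{eq:app:Asympt0:u} --- equivalently, observe that \eqref{eq:app:varphi} is already contained in the proof of Theorem~3.4 of \cite{KitVar2004}. I expect the main obstacles to be exactly these constant-bookkeeping issues: (i) the full cancellation in the Wronskian identity for $\varpi_k(\pm\rho)$, where all the gamma-factor relations and the defining equations of the monodromy manifold must conspire, and (ii) fixing the overall constants $\me^{\mi\pi}$, $2^{\mi a}$ and $b\,\me^{\pi a/2}/(16\pi)$, which are invisible to the mere ``form'' of the expansion and require the complete connection-problem computation rather than a local analysis of \eqref{eq:dp3u} alone.
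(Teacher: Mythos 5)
Your proposal is correct and, for the $u$-asymptotics \eqref{eq:app:Asympt0:u}, coincides with what the paper actually does: Theorem~\ref{th:B1asympt0} carries no independent proof in this paper, being a restatement (with $\varepsilon_1=\varepsilon_2=0$ and simplified notation) of Theorem~3.4 of \cite{KitVar2004}, whose Riemann--Hilbert derivation you sketch accurately. Where you genuinely depart from the paper is in the treatment of \eqref{eq:app:varphi}: the paper simply records that this asymptotics was obtained in the course of the proof of Theorem~3.4 of \cite{KitVar2004}, i.e.\ it is read off from the isomonodromy parametrization of $\me^{\mi\varphi(\tau)}$, whereas you reconstruct the $\tau$-dependent part by integrating $\varphi'(\tau)=2a/\tau+b/u(\tau)$ against \eqref{eq:app:Asympt0:u}, reducing everything to the Wronskian identity $\varpi_1(\rho)\varpi_2(-\rho)-\varpi_1(-\rho)\varpi_2(\rho)=-4\pi\mi\,\me^{-\pi a/2}/\rho$. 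That identity is not proved in the paper in this generality, but it is consistent with the one case the paper does verify: for $a=0$, $\rho=1/6$, equations~\eqref{eqs:varpi-sum-relation} and \eqref{eq:varphiProduct} give $\varpi_1(1/6)\varpi_2(-1/6)=-\varpi_1(-1/6)\varpi_2(1/6)$ with product $144\pi^2$, hence difference $\pm24\pi\mi$, matching your $-4\pi\mi/\rho$ at $\rho=1/6$; the general case indeed follows from the reflection formula, \eqref{eq:appAsympt0:rho-s}, and the unimodularity relation \eqref{eq:mdta6}, as you indicate. Your route buys a useful internal consistency check of \eqref{eq:app:varphi} against \eqref{eq:app:Asympt0:u} --- essentially the computation the paper performs only for the special solution $H(r)$ in Remark~\ref{rem:explorationAsymptotics} --- at the cost of leaving the multiplicative constant $\me^{\mi\pi}2^{\mi a}$ undetermined (and of some care with integrating the error near $\tau=0$ when $\Re\rho\neq0$). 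You correctly identify that this constant is invisible to any local analysis of \eqref{eq:dp3u} and \eqref{eq:app:varphi-definition} alone and must be extracted from the connection-problem computation in \cite{KitVar2004}, so your argument complements rather than replaces that reference.
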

\section{Appendix: Asymptotics as $\tau \! \to \! +\infty$ of $u(\tau)$
and $\varphi(\tau)$ for $a \! \in \! \mathbb{C}$} \label{app:infty}
In one of our previous works (see \cite{KitVar2010}, Appendix B), we noted that the phase shift in the large-$\tau$
asymptotics of the function $u(\tau)$ reported in \cite{KitVar2004} contained a mistake which was corrected in
Appendix B of \cite{KitVar2010}.
Subsequently, the author of \cite{KitSIGMA2019} (see Section 7 of \cite{KitSIGMA2019}) uncovered an additional inconsistency
in \cite{KitVar2004} which, ``hereditarily'', transferred to the paper~\cite{KitVar2010}: this error does not affect
the asymptotcs of the function $u(\tau)$; it does, however, impact the asymptotics of the function $\varphi(\tau)$, since
the term $a\ln\tau$ was not accounted for in its small- and large-$\tau$ asymptotics. The latter asymptotics for $\varphi(\tau)$
were not included in list of the results obtained in the aforementioned papers. Nevertheless, we wrote the paper~\cite{KitVar2019}
which dealt with the asymptotics of an integral for the meromorphic solution of equation~\eqref{eq:dp3u}, where it was explained
how to rectify the asymptotics for $\varphi(\tau)$ in case it is extracted from \cite{KitVar2004}, and the particular example
considered in \cite{KitVar2019} was verified numerically. The paper~\cite{KitVar2019} contains the asymptotics of the function
$\varphi(\tau)$ for the meromorphic solution together with the correction term, which was stated, but not derived; however,
it was checked numerically for the particular example considered therein. In this appendix, we present the asymptotics with
the correction term for the general solution and also give the derivation for the correction term.

Furthermore, during the course of the numerical calculations for the present work, we found one more ``mysterious'' misprint
in the large-$\tau$ asymptotics of the function $u(\tau)$ (see \cite{KitVar2004}, Section 3, p. 1174, Theorem 3.1):
the term $-\tfrac{\pi\mi}{2}$ in the definition of $z(\varepsilon_1,\varepsilon_2)$ must be changed to $+\tfrac{\pi\mi}{2}$.
Since our original calculations presented in \cite{KitVar2004} do, in fact, contain the term $+\tfrac{\pi\mi}{2}$ , it is
clear that this is a typographical error.
This error in the sign appears, sadly, to have leapfrogged to the asymptotics obtained in the subsequent work
\cite{KitVar2010}, because the method used therein (see \cite{KitVar2010}, the paragraph at the top of p. 43) allows us
to determine the corresponding phase shift modulo $\pi\mi$.
In order to determine, ultimately, whether or not $\pi\mi$ should be added, we matched the asymptotics obtained in
\cite{KitVar2004} with those of \cite{KitVar2010}. Moreover, in \cite{KitSIGMA2019}, where this discrepancy with $\pi\mi$
was noticed, and where, in Subsections 8.1 and 8.2, even though the correct formulae
are written and used for the numerical calculations, the general result for the asymptotics of $u(\tau)$ stated on p. 47
still contains the wrong sign for $\tfrac{\pi\mi}{2}$ in the formula for $z$ in equation (8.7)!

In this appendix, we restate the main results obtained in the papers \cite{KitVar2004} and \cite{KitVar2010} but with all the
corrections outlined above included. As in Appendix~\ref{app:asympt0}, we present here only asymptotics on the positive real
semi-axis and exclude all transformation issues, i.e., we set $\varepsilon_1=\varepsilon_2=0$ and simplify the notation for
the monodromy data and the parameters used in Theorem 3.1 of \cite{KitVar2004} and in Theorems~2.1--2.3 and B.1
of \cite{KitVar2010}, namely,
$\linebreak[1]
(s_{0}^{0}(0,0),s_{0}^{\infty}(0,0),s_{1}^{\infty}(0,0),g_{11}(0,0),g_{12}
(0,0),g_{21}(0,0),g_{22}(0,0),\tilde{\nu}(0,0),z(0,0)):=
(s_{0}^{0},s_{0}^{\infty},s_{1}^{\infty},g_{11}, g_{12},g_{21},g_{22},\tilde{\nu},z).
$
\begin{remark}\label{rem:app:inffty:branches}
Here and throughout the paper we use non-single-valued functions such as roots, logarithms, fractional powers, and power
functions with complex exponent to construct single-valued asymptotics. We use the natural agreement that, for real positive
argument, all these functions are positive. The branch of such a function for complex argument, in case it appears several
times in a formula or within a group of related formulae, is assumed to be the same within the formula or the group.
In case a choice of the branch is not discussed, it means that its particular choice does not affect the value of the resulting
formula(e); for example, the branch of $\ln(z)$ in $\sinh(\ln(z))$ or the branch of $\sqrt{\tilde\nu+1}$ in
equation~\eqref{eq:app:u-reg-as} below. Here, as in the paper, $\Gamma (\ast)$ is the gamma function \cite{BE1}.
\hfill $\blacksquare$\end{remark}

Since the Hamiltonian function corresponding to equation~\eqref{eq:dp3u} is not studied in this work, its large-$\tau$
asymptotics stated in Theorem~3.1 of \cite{KitVar2004} is not included in Theorem~\ref{th:asympt-infty2004} below; instead,
it is supplanted by the asymptotics of the function $\varphi(\tau)$: with this, the asymptotic description of the
isomonodromy deformation for the first equation of the system (12) in \cite{KitVar2004} is complete.
\begin{theorem} \label{th:asympt-infty2004}
Let $(u(\tau), \varphi(\tau))$ be a solution of the system~\eqref{eq:dp3u}, \eqref{eq:app:varphi-definition} for
$\varepsilon b>0$ corresponding to the monodromy data $(a,s_{0}^{0}$, $s_{0}^{\infty}$, $s_{1}^{\infty}$,
$g_{11},g_{12},g_{21},g_{22})$.
Define
\begin{equation} \label{eq:app:nu+1}
\tilde{\nu} \! + \! 1 \! := \! \frac{\mi}{2 \pi} \ln (g_{11}g_{22}),
\end{equation}
and suppose that
\begin{equation} \label{eq:app:u-reg-as-conditions}
g_{11}g_{12}g_{21}g_{22} \! \neq \! 0, \, \qquad \, \lvert \Re (\tilde{\nu} \! + \! 1)
\rvert \! < \! 1/6.
\end{equation}
Then, $\exists$ $\delta \! > \! 0$ such that
\begin{equation}\label{eq:app:u-reg-as}
u(\tau)\underset{\tau\to+\infty}{=}\,\frac{\varepsilon(\varepsilon b)^{2/3}}{2}\tau^{1/3}\!
\left(1+\frac{2\sqrt{\tilde{\nu}+1}\me^{3\pi\mi/4}}{3^{1/4}(\varepsilon b)^{1/6}\tau^{1/3}}
\cosh\Big(\mi\theta(\tau)+(\tilde{\nu}+1)\ln\theta(\tau)+z+o\big(\tau^{-\delta}\big)\Big)\right),
\end{equation}
where
\begin{gather}
\theta(\tau):=3^{3/2}(\varepsilon b)^{1/3}\tau^{2/3},\label{eq:app:infty:regular-theta-tau}\\
z:=\,\frac{\ln2\pi}{2}+\frac{\pi\mi}{2}-\frac{3\pi\mi}{2}(\tilde{\nu}+1)+\mi a\ln(2 +\sqrt{3})+(\tilde{\nu}+1)\ln12
-\ln\!\left(g_{11}g_{12}\sqrt{\tilde{\nu}+1}\,\Gamma(\tilde{\nu}+1)\right), \label{eq:app:asympt-regular-phase}
\end{gather}
and
\begin{equation}\label{eq:app:infty:regular-varphi}
\begin{gathered}
\varphi(\tau)\underset{\tau\to+\infty}{=} \,
3(\varepsilon b)^{1/3}\tau^{2/3}+2a\ln\big(\tau^{2/3}\big)-a\ln\left((\varepsilon b)^{1/3}/4\right)+\pi-2\pi(\tilde{\nu}+1)\\
+\mi\ln\big(g_{11}^2\big)-2\mi(\tilde{\nu}+1)\ln(2+\sqrt{3})+\rm{E}_{\varphi}(\tau),\qquad
\rm{E}_{\varphi}(\tau)=o\big(\tau^{-\delta}\big).
\end{gathered}
\end{equation}
\end{theorem}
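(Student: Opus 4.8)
Both assertions are, in essence, restatements of results of \cite{KitVar2004,KitVar2010} with the corrections listed at the beginning of this appendix incorporated; the only genuinely new ingredient is the asymptotics of $\varphi(\tau)$. For~\eqref{eq:app:u-reg-as}, setting $\varepsilon_1=\varepsilon_2=0$ in Theorem~3.1 of \cite{KitVar2004} gives exactly this formula once the phase shift rectified in Appendix~B of \cite{KitVar2010} and the replacement $-\tfrac{\pi\mi}{2}\to+\tfrac{\pi\mi}{2}$ in $z$ are made. The route for a self-contained proof is the isomonodromy scheme of \cite{KitVar2004}: the pair $(u(\tau),\me^{\mi\varphi(\tau)})$ parametrizes the coefficients of the linear system~(12) of \cite{KitVar2004}, whose monodromy data $(a,s_0^0,s_0^\infty,s_1^\infty,g_{11},g_{12},g_{21},g_{22})$ does not depend on $\tau$; as $\tau\to+\infty$ one analyses the associated $2\times2$ ODE in the spectral variable by WKB together with parabolic-cylinder parametrices near the turning points, and matching the local solutions through the deformation-invariant connection matrix produces~\eqref{eq:app:u-reg-as}, with $\theta(\tau)$ in~\eqref{eq:app:infty:regular-theta-tau} the accumulated WKB phase and $z$ in~\eqref{eq:app:asympt-regular-phase} the aggregate of the Gamma-function and exponential factors created by the matching. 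The hypotheses~\eqref{eq:app:u-reg-as-conditions} guarantee that the four connection coefficients entering the matching are nonzero and that the displayed correction term dominates the remaining $o(\tau^{-\delta})$ contributions; the sign of $\tfrac{\pi\mi}{2}$ in $z$, which the argument of \cite{KitVar2010} determines only modulo $\pi\mi$, is fixed by comparing the two independent derivations.

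For~\eqref{eq:app:infty:regular-varphi} the plan is to integrate the defining relation~\eqref{eq:app:varphi-definition}, $\varphi'(\tau)=2a/\tau+b/u(\tau)$. Using the complete asymptotic expansion of $u(\tau)$ supplied by the same WKB analysis — whose non-oscillatory $\tau^{-1}$ correction term is proportional to $a$ — one obtains $b/u(\tau)=2(\varepsilon b)^{1/3}\tau^{-1/3}-\tfrac{2a}{3}\tau^{-1}+R(\tau)$, where $R(\tau)$ is oscillatory and $O(\tau^{-2/3})$; integration then yields $3(\varepsilon b)^{1/3}\tau^{2/3}$, the logarithmic term $\big(2a-\tfrac{2a}{3}\big)\ln\tau=2a\ln(\tau^{2/3})$, and a constant of integration. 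That $\int^{\tau}R(\tau_1)\,\md\tau_1$ is $o(\tau^{-\delta})$ rather than merely $o(1)$ follows from integration by parts: since $\theta'(\tau)$ is of order $\tau^{-1/3}$ and nonvanishing, oscillatory integrals of the form $\int^{\tau}\tau_1^{-2/3}\me^{\pm\mi\theta(\tau_1)}\,\md\tau_1$ are $O(\tau^{-1/3})$, which is absorbed into $\rm{E}_{\varphi}(\tau)$. The remaining task — evaluating the constant of integration, including the intrinsic $2\pi\mathbb{Z}$ indeterminacy of the isomonodromy normalization — is best carried out not through the integral but by reading $\me^{\mi\varphi(\tau)}$ directly off the asymptotic form of the fundamental solution of system~(12) near $\tau=\infty$, where the factor $\me^{\mi\varphi(\tau)}$ sits in the coefficients, exactly as in the course of the proof of Theorem~3.1 of \cite{KitVar2004}; equivalently, the constant is pinned down by matching with the small-$\tau$ asymptotics of $\varphi(\tau)$ from Appendix~\ref{app:asympt0} through the connection matrix.

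The main obstacle is this last step: obtaining the constant term in~\eqref{eq:app:infty:regular-varphi}, namely the combination $-a\ln\big((\varepsilon b)^{1/3}/4\big)+\pi-2\pi(\tilde{\nu}+1)+\mi\ln(g_{11}^2)-2\mi(\tilde{\nu}+1)\ln(2+\sqrt3)$, unambiguously, with the correct signs and the correct $2\pi$ representative. This reduces to careful bookkeeping of the Gamma-function and exponential prefactors produced by the parametrix matching (the same data feeding $z$ in~\eqref{eq:app:asympt-regular-phase}) and of the branch conventions of Remark~\ref{rem:app:inffty:branches}; it is precisely at this stage that the sign misprints in \cite{KitVar2004} originated, so the computation must be cross-checked against the independent derivation of \cite{KitVar2010}. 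By contrast, the analytic part — the error bounds and the dominance of the exhibited correction term — is routine once the WKB and parametrix apparatus of \cite{KitVar2004,KitVar2010} is in place.
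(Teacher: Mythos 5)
Your proposal follows essentially the same route as the paper: formula~\eqref{eq:app:u-reg-as} is taken as the corrected restatement of Theorem~3.1 of \cite{KitVar2004} (with the phase-shift and sign-of-$\tfrac{\pi\mi}{2}$ rectifications), and the $\varphi(\tau)$ asymptotics~\eqref{eq:app:infty:regular-varphi} is obtained by integrating \eqref{eq:app:varphi-definition} against the large-$\tau$ expansion of $b/u(\tau)$, with the non-oscillatory $-\tfrac{2a}{3}\tau^{-1}$ term producing the $2a\ln(\tau^{2/3})$ contribution, the oscillatory remainder handled by integration by parts, and the monodromy constant read off from the isomonodromy parametrization — which is exactly the content of the paper's Proposition~\ref{prop:app:error} and the surrounding discussion. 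No gaps to flag.
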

\begin{remark}\label{rem:app:varphi-error}
The explicit expression for the correction term $\mathrm{E}_{\varphi}(\tau)$ (cf. equation~\eqref{eq:app:infty:regular-varphi})
is obtained in Proposition~\ref{prop:app:error} below.
\hfill $\blacksquare$\end{remark}
\begin{remark}\label{rem:app:cond-regular}
Here, we discuss the second restriction in \eqref{eq:app:u-reg-as-conditions}. The methodology used in \cite{KitVar2004}
is such that the function $\cosh(\cdot)$ in the asymptotics~\eqref{eq:app:u-reg-as} is obtained during the course of the
appropriate estimates as the half-sum of two exponentials: if $\mathrm{Re}(\tilde\nu+1)\neq0$, then, one of the exponentials
has a power-like growth, and the other has a power-like decay.
Each exponential should represent a leading term of asymptotics at the corresponding stage of the derivation which leads
to the second restriction in \eqref{eq:app:u-reg-as-conditions}. In principle, had we required that only the growing
exponential represent the leading term of asymptotics, then we would have obtained a less restrictive condition, namely,
$|\mathrm{Re}(\tilde\nu+1)|<1/2$. Had we followed the latter scheme of the derivation, we would have had to consider
three different conditions on $\mathrm{Re}(\tilde\nu+1)$, i.e.,
$\mathrm{Re}(\tilde\nu+1)=0$ and $\pm\mathrm{Re}(\tilde\nu+1)\in(0,1/2)$, and would have derived three separate asymptotic
formulae which, in our case, are amalgamated in the unique formula \eqref{eq:app:u-reg-as}.

This fact can be revealed from a completely different perspective: if we address the complete asymptotic expansion given in
equation~\eqref{eq:app:u-asympt-regular-complete} below, then we can verify that, in fact, the smallest exponential in
the leading term of this expansion is larger than the largest exponential in the second correction term when
the second condition in \eqref{eq:app:u-reg-as-conditions} is valid.

The value $1/6$ is quite clearly observed in our numerical examples presented in Section~\ref{sec:asymptnumerics}
where, in the first two examples  $|\mathrm{Re}(\tilde\nu+1)|<1/6$, whilst in the other examples, this condition is violated.
According to our numerical studies, the solutions with the monodromy data satisfying this condition swiftly reach their
asymptotic behaviour provided that none of the factors in the product of the first condition in
\eqref{eq:app:u-reg-as-conditions} is close to zero. By ``not close to zero'' we mean that $|g_{i,j}|\geqslant0.1$;
however, the specification of this numerical aspect requires further investigation.

If one understands the $\cosh(\cdot)$ term as a unique function, then it plays the role of the leading term of asymptotics
(as its growing exponential) for a wider range of monodromy data, namely, $|\text{Re}(\tilde{\nu}+1)|<1/2$.
When $|\text{Re}(\tilde{\nu}+1)|$ increases farther and farther away from the value $1/6$, the corresponding solutions
reach their asymptotic behaviour at larger and larger values of $\tau$. Application of the higher-order terms
of the expansion \eqref{eq:app:u-asympt-regular-complete} is helpful, but not radically, though. A much better correspondence
between the asymptotic and numeric results for finite values of $\tau$ for the case
$|\text{Re}(\tilde{\nu}+1)|\geqslant1/6$ is achieved with the help of the asymptotics given in
Theorem~\ref{th:asympt-infty2010} below.
\hfill $\blacksquare$\end{remark}
\begin{remark}\label{rem:app:varphi}
The branch of $\ln\big(g_{11}^2\big)$ in the asymptotics~\eqref{eq:app:infty:regular-varphi} can not be fixed because the
function $\varphi(\tau)$ enters into the corresponding isomonodromy system in the form $\me^{\mi\varphi(\tau)}$
(see \cite{KitVar2004}, Propositions 1.1 and 1.2); therefore, the real part of the asymptotics for $\varphi(\tau)$ is defined
modulo $2\pi$.

On the other hand, the function $u(\tau)$ has movable zeros with leading term $\pm\mi b(\tau-\tau_0)$, and
equation~\eqref{eq:app:varphi-definition} implies that $\varphi(\tau)$ has movable logarithmic singularities with
leading term $\mp\mi\ln(\tau-\tau_0)$. Even with a branch cut
on the complex $\tau$-plane from the origin to the point at infinity, the analytic continuation of $\varphi(\tau)$ depends
on the path of continuation: we thus get infinitely many branches of $\varphi(\tau)$ that differ by $2\pi k$, $k\in\mathbb{Z}$.
As a matter of fact, the asymptotics~\eqref{eq:app:u-reg-as} is valid not only on the positive semi-axis, but also in the
larger domain $\mathcal{D}=\{\tau\in\mathbb{C};\mathstrut\, |\text{Im}\,\tau^{2/3}|<h, h\in\mathbb{R}_+\}$: for large
$|\tau|$, the width of $\mathcal{D}$ is growing as $\mathcal{O}\big(|\tau|^{1/3}\big)$.
There may be some logarithmic singularities of $\varphi(\tau)$ in the interior of this domain along with a few non-homotopic
paths going around them.
The locations of such singularities for finite values of $\tau$ are not known, and, consequently, the corresponding
asymptotics ``are not cognizant'' of the paths along which $\varphi(\tau)$ is continued; therefore, the formula for the
asymptotics should contain an ambiguity in order to account for all possible analytic continuations of $\varphi(\tau)$
along non-homotopic paths, if any, which is retained in the ambiguity in the choice of the imaginary part of
$\ln\big(g_{11}^2\big)$.

During the course of our numerical studies, we noticed one additional mechanism for the $2\pi$-ambiguity in the asymptotic
formula $\varphi(\tau)$. This mechanism is related to the fact that our plots are made starting from
finite---and quite small---values of $\tau$. For such values of $\tau$, the asymptotics does (sometimes) not approximate
the solution $u(\tau)$ as well as it does for very large values of $\tau$; therefore, if, in some segment,
$u(\tau)$ is close, but not equal, to zero, its asymptotics may vanish in this segment, which provides a $2\pi$-discrepancy in
the approximation of the function $\varphi(\tau)$ via its asymptotics (cf. Section~\ref{sec:asymptnumerics}, Example~4).

This remark brings to the forefront the fact that the formulation of a general rule which would fix the $2\pi$-ambiguity
in the $\varphi(\tau)$ asymptotics, applicable for all solutions, is a nontrivial problem; however, for some classes of
the solutions, such a rule could, feasibly, be formulated.
Assume one considers the positive semi-axis as the path of analytic continuation for $\varphi(\tau)$: on this semi-axis,
there exists a class of solutions, $u(\tau)$, with strictly positive real part. Suppose one finds the correct
branch of $\ln\big(g_{11}^2\big)$ for one solution from this class; then, since the asymptotics of $u(\tau)$ is given
explicitly, one can determine the proper branch for all solutions from this class.
\hfill $\blacksquare$\end{remark}
In order to obtain the explicit formula for $\mathrm{E}_{\varphi}(\tau)$, we are going to use
equation~\eqref{eq:app:varphi-definition}: for this purpose, the first few terms of the complete asymptotic expansion for
the function $1/u(\tau)$ are required. The asymptotic expansion we need was obtained by Shimomura~\cite{ShSh-inf2015}
with the help of a unique method, together with asymptotic expansions for the other Painlev\'e transcendents. Here, we
present a ``visualization'' for the first few terms of this expansion for the functions $u(\tau)$ and
$1/u(\tau)$, and suggest some conjectures for their coefficients:
\begin{equation}\label{eq:app:u-asympt-regular-complete}
u(\tau)\underset{\tau\to+\infty}{=}\frac{\varepsilon(\varepsilon b)^{2/3}}{2}\tau^{1/3}\left(1+
\sum_{k=1}^\infty\frac{1}{\tau^{k/3}}\sum_{j=-k}^{j=k}a_{k,j}w^{j}\right),\qquad
w:={\tau}^{\frac23(\tilde{\nu}+1)}\me^{\mi\theta(\tau)}.
\end{equation}
Comparing the expansion~\eqref{eq:app:u-asympt-regular-complete} with the asymptotics~\eqref{eq:app:u-reg-as}, we find that
\begin{equation}\label{eq:app:a1j}
a_{1,0}=0,\qquad
a_{1,\pm1}=\frac{\sqrt{\tilde{\nu}+1}\,\me^{3\pi\mi/4}}{3^{1/4}(\varepsilon b)^{1/6}}
\me^{\pm\left((\tilde{\nu}+1)\ln\big(3^{3/2}(\varepsilon b)^{1/3}\big)+z\right)},\qquad
a_{1,1}\,a_{1,-1}=-\frac{\mi(\tilde{\nu}+1)}{\sqrt{3}(\varepsilon b)^{1/3}}.
\end{equation}
\begin{remark}\label{rem:app:full-asympt-exp}
The finite inner sum in the expansion~\eqref{eq:app:u-asympt-regular-complete} is considered as a unique function
(the $k$th correction term): in this case, we get an asymptotic expansion of $u(\tau)$ for $|\text{Re}(\tilde{\nu}+1)|<1/2$.
One can consider different types of summations for the double series in \eqref{eq:app:u-asympt-regular-complete} and obtain
different asymptotic formulae for $u(\tau)$.
\hfill $\blacksquare$\end{remark}

To derive the explicit formula for the leading-order correction $\rm{E}_{\varphi}(\tau)$,
the coefficients $a_{1,\pm1}$, $a_{2,\pm1}$, and $a_{3,0}$ must be known explicitly: the coefficients $a_{1,\pm1}$
are given in equations~\eqref{eq:app:a1j}; therefore, it remains to determine $a_{2,\pm1}$ and $a_{3,0}$.
It is known that, in order to obtain the coefficients $a_{k,j}$ for $k\geqslant2$ and $j\neq\pm1$ and $a_{k-2,\pm1}$
in terms of the lower-order coefficients $a_{l,j}$ with $l<k$, one can substitute the
expansion~\eqref{eq:app:u-asympt-regular-complete}, with the terms up to order $\tau^{-k/3}$ retained, into
equation~\eqref{eq:dp3u} and equate to zero (in the obtained expression) the first $k-1$ higher-order coefficients
as $\tau\to+\infty$. To get the last two coefficients `at level $k$', i.e., $a_{k,\pm1}$, one has to repeat the analogous
procedure, but keep in the expansion~\eqref{eq:app:u-asympt-regular-complete} the terms of two more orders, namely,
$\tau^{-(k+1)/3}$ and $\tau^{-(k+2)/3}$. To calculate, for example, the coefficient $a_{3,0}$, one has to keep in the
expansion~\eqref{eq:app:u-asympt-regular-complete} the $\mathcal{O}\big(\tau^{-4/3}\big)$ terms.

It is convenient to introduce new variables:
$$
\varkappa=\tilde{\nu}+1,\qquad
\alpha=2\mi\sqrt{3}\,a.
$$
In terms of these variables, the coefficients of orders $\mathcal{O}\left(\tau^{-2/3}\right)$
and $\mathcal{O}\left(\tau^{-1}\right)$ read:\footnote{In all formulae with $\pm$'s, one has to choose, on both
the left- and right-hand sides, either the upper or the lower sign.\label{foot:app:sign-rule}}
\begin{gather}
a_{2,\pm2}=\frac{a_{1,\pm1}^2}{3},\quad
a_{2,\pm1}=0,\quad
a_{2,0}=\frac{\alpha+12\varkappa}{6\mi\sqrt{3}(\varepsilon b)^{1/3}},\label{eq:app:a2j}\\
a_{3,\pm3}=\frac{a_{1,\pm1}^3}{12},\quad
a_{3,\pm2}=a_{3,0}=0,\quad
a_{3,\pm1}=\pm\frac{\mi\sqrt{3}\,a_{1,\pm1}}{6^3(\varepsilon b)^{1/3}}
\big(3(\alpha^2+8\varkappa\alpha+10\varkappa^2)-3\mp12\alpha\mp80\varkappa\big).
\label{eq:app:a3j}
\end{gather}
We have explicitly calculated the coefficients of the expansion~\eqref{eq:app:u-asympt-regular-complete} up to the level $k=10$;
based on these calculations, we formulate below some conjectures regarding the coefficients $a_{k,j}$. Since the formulae
become progressively more cumbersome for higher values of the level $k$,
we present below the coefficients for levels $k=4,5,6$ in order to demonstrate our hypotheses: these examples might
be useful for the proofs of our conjectures.
\begin{equation}\label{eq:app:a4j}
\begin{gathered}
a_{4,\pm4}=\frac{a_{1,\pm1}^4}{54},\qquad
a_{4,\pm3}=a_{4,\pm1}=a_{4,0}=0,\\
a_{4,\pm2}=\pm\frac{\mi\sqrt{3}\,a_{1,\pm1}^2}{2^23^4(\varepsilon b)^{1/3}}
\big(3\alpha^2+24\varkappa\alpha+30\varkappa^2+1\mp12\alpha\mp54\varkappa\big).
\end{gathered}
\end{equation}
\begin{equation}\label{eq:app:a5j}
\begin{gathered}
a_{5,\pm5}=\frac{5a_{1,\pm1}^5}{1296},\qquad
a_{5,\pm4}=a_{5,\pm2}=a_{5,0}=0,\\
a_{5,\pm3}=\pm\frac{\mi\sqrt{3}\,a_{1,\pm1}^3}{2^53^4(\varepsilon b)^{1/3}}
\big(9(\alpha^2+8\varkappa\alpha+10\varkappa^2)+1\mp36\alpha\mp138\varkappa\big),\\
a_{5,\pm1}=-\frac{a_{1,\pm1}}{2^73^5(\varepsilon b)^{2/3}}
\left(9(\alpha^2+8\varkappa\alpha+10\varkappa^2)^2\mp(24\alpha^3+48\alpha^2\varkappa-912\alpha\varkappa^2
-3440\varkappa^3)\right.\\
\left.-90\alpha^2-1296\varkappa\alpha-4168\varkappa^2\pm(216\alpha+240\varkappa)+81\right).
\end{gathered}
\end{equation}
\begin{equation}\label{eq:app:a6j}
\begin{gathered}
a_{6,\pm6}=\frac{a_{1,\pm1}^6}{1296},\qquad
a_{6,\pm5}=a_{6,\pm3}=a_{6,\pm1}=0,\\
a_{6,0}=-\frac{\mi\sqrt{3}}{2^33^6\varepsilon b}\left(\alpha^3+18\alpha^2\varkappa+72\alpha\varkappa^2+60\varkappa^3
-12\alpha-90\varkappa\right),\\
a_{6,\pm4}=\pm\frac{\mi\sqrt{3}\,a_{1,\pm1}^4}{2^33^6(\varepsilon b)^{1/3}}
\big(6(\alpha^2+8\varkappa\alpha+10\varkappa^2)-1\mp24\alpha\mp84\varkappa\big),\\
a_{6,\pm2}=-\frac{a_{1,\pm1}^2}{2^53^6(\varepsilon b)^{2/3}}
\left(9(\alpha^2+8\varkappa\alpha+10\varkappa^2)^2-171\mp(48\alpha^3+396\alpha^2\varkappa+576\alpha\varkappa^2
-880\varkappa^3)\right.\\
\left.-30\alpha^2-816\varkappa\alpha-2770\varkappa^2\pm(384\alpha+1408\varkappa)\right).
\end{gathered}
\end{equation}
The coefficients calculated above suggest the following
\begin{conjecture}\label{con:app:akk+parity}
If $k\in\mathbb{N}$ and $j\in\mathbb{Z}$, $|j|\leqslant k$, have different parity, then $a_{k,j}=0$, and
\begin{equation*}\label{eq:app:conj:akk}
a_{k,\pm k}=\frac{k\,a_{1,\pm1}^k}{2^{k-1}3^{k-1}}.
\end{equation*}
\end{conjecture}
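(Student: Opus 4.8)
The plan is to derive the recurrence for the $a_{k,j}$ by substituting the formal series~\eqref{eq:app:u-asympt-regular-complete} into equation~\eqref{eq:dp3u}, and then to prove the two assertions by strong induction on $k$. It is convenient to pass to the variable $x=\tau^{1/3}$: then $u/(Ax)$, with $A=\tfrac{\varepsilon(\varepsilon b)^{2/3}}{2}$, is a formal series in $x^{-1}$ and $w^{\pm1}$, where $w=x^{2(\tilde\nu+1)}\me^{\mi\theta}$ and $\theta=3^{3/2}(\varepsilon b)^{1/3}x^{2}$, and equation~\eqref{eq:dp3u} takes the form
\begin{equation*}
u_{xx}+\frac{u_{x}}{x}=\frac{u_{x}^{2}}{u}-72\varepsilon xu^{2}+18abx+\frac{9b^{2}x^{4}}{u},\qquad u_{x}:=\frac{\md u}{\md x}.
\end{equation*}
Plugging in the expansion and equating coefficients of $x^{-m}w^{j}$ yields, for each $(k,j)$, a relation in which $a_{k,j}$ occurs linearly with a nonzero numerical coefficient (proportional to $j^{2}-1$ when $|j|<k$, $j\neq\pm1$, and, after descending two further orders exactly as described in the paragraph preceding~\eqref{eq:app:a2j}, a nonzero coefficient when $j=\pm1$), while the remaining terms are products of lower coefficients $a_{l,i}$ with $l<k$ (together with the constants $a$ and $\tilde\nu$), or, in the two deeper relations, of $a_{l,i}$ with $l\le k$, with $a_{k,\pm1}$ not among them.

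For the vanishing statement, assign to the monomial $x^{-m}w^{j}$ the class $\overline{m+j}\in\mathbb{Z}/2$. This grading is additive under multiplication and is preserved by the operator $x\,\md/\md x$, and one checks that every term of the displayed equation is grade-homogeneous of the single class $\bar1$ once $u$ is declared to have grade $\bar1$ (equivalently, once $u/x$ is grade $\bar0$); consequently every monomial $\prod_{s}a_{l_{s},i_{s}}$ occurring in the coefficient of $x^{-m}w^{j}$ in the substituted equation satisfies $\sum_{s}(l_{s}+i_{s})\equiv m+j\pmod 2$. Now the induction runs: the base case is~\eqref{eq:app:a1j}, and, assuming $a_{l,i}=0$ whenever $l<k$ and $l+i$ is odd, any product of such lower coefficients has $\sum_{s}(l_{s}+i_{s})$ even and hence cannot contribute to the coefficient of $x^{-m}w^{j}$ with $m+j$ odd; since $m\equiv k$ in the relation singling out $a_{k,j}$, that relation reduces to $(\text{nonzero constant})\cdot a_{k,j}=0$ whenever $k+j$ is odd, which gives $a_{k,j}=0$ and closes the step uniformly in $j$ (including $j=\pm1$, via the deeper relation).

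For the closed formula, I would restrict attention to the maximal-slope subseries $\Phi(\zeta):=1+\sum_{k\ge1}c_{k}\zeta^{k}$ with $c_{k}:=a_{k,k}$ and $\zeta:=w/x$. Since $|i|\le l$ forces $i_{s}=l_{s}$ in any product of $a_{l_{s},i_{s}}$'s whose total slope equals its total order, the $c_{k}$ form a closed subsystem, and the dominant balance of the ODE (in which $18abx$ is subleading and drops out, so that the answer does not involve $a$, consistent with~\eqref{eq:app:a2j}--\eqref{eq:app:a4j}) shows that $\Phi$ solves the autonomous equation $3\bigl(\Phi\,\delta^{2}\Phi-(\delta\Phi)^{2}\bigr)=\Phi^{3}-1$, $\delta:=\zeta\,\md/\md\zeta$, with $\Phi(0)=1$ and $\delta\Phi(0)=0$. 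Integrating once (regarding $\delta\Phi$ as a function of $\Phi$ along the solution) gives the first integral $(\delta\Phi)^{2}=\tfrac13(\Phi-1)^{2}(2\Phi+1)$; the substitution $2\Phi+1=\sigma^{2}$ separates the variables and produces the rational solution
\begin{equation*}
\Phi(\zeta)=\frac{1+4t+t^{2}}{(1-t)^{2}}=1+6\sum_{k\ge1}k\,t^{k},\qquad t:=\frac{c_{1}\zeta}{6},
\end{equation*}
whence $c_{k}=\dfrac{k}{2^{k-1}3^{k-1}}\,c_{1}^{k}$. Recalling $c_{1}=a_{1,1}$ from~\eqref{eq:app:a1j}, this is exactly the asserted formula for $a_{k,\pm k}$ (the $-k$ case being identical under $w\mapsto 1/w$, with $c_{1}$ replaced by $a_{1,-1}$).

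The main obstacle, and the step demanding the most care, is the precise form of the recurrence in the first paragraph: one must justify cleanly that $a_{k,j}$ is always pinned down by a relation of the advertised shape — in particular the ``two orders deeper'' mechanism for $j=\pm1$ together with the non-vanishing of the corresponding numerical coefficient — and that, read off that relation, the $\mathbb{Z}/2$-grading is genuinely respected term by term (i.e., that the displayed equation is grade-homogeneous). Once this grading lemma and the maximal-slope reduction are in place, everything else is routine: the induction of the second paragraph and the elementary first integral/quadrature of the third.
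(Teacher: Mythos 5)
First, be aware that the paper does not prove this statement at all: it is stated as a conjecture, supported only by explicit computation of the coefficients up to level $k=10$, and Remark~\ref{rem:app:conjectures} merely suggests that generating functions would be ``the quickest way'' to a proof. Your third paragraph actually executes that suggestion for the diagonal coefficients, and it is correct: in the scaling limit $\zeta=w/\tau^{1/3}$ fixed, the diagonal subsystem closes (as you say, $|i_{s}|\leqslant l_{s}$ forces $i_{s}=l_{s}$ in every contributing monomial, and the logarithmic derivatives and explicit powers of $x$ only increase the defect $l-i$), the limiting equation is indeed $3\bigl(\Phi\,\delta^{2}\Phi-(\delta\Phi)^{2}\bigr)=\Phi^{3}-1$, the first integral $(\delta\Phi)^{2}=\tfrac13(\Phi-1)^{2}(2\Phi+1)$ and the rational solution follow by quadrature, and the resulting $a_{k,\pm k}=k\,a_{1,\pm1}^{k}/(2^{k-1}3^{k-1})$ reproduces the tabulated values through $k=6$. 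So the second assertion of the conjecture is genuinely established by your argument --- something the paper does not do.

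The gap is in the parity induction, exactly where you flag it. The grading lemma itself is sound ($\delta_x=x\,\md/\md x$ maps $x^{-m}w^{j}$ into the span of $x^{-m}w^{j}$ and $x^{-m+2}w^{j}$, so it preserves $\overline{m+j}$, and the equation in the variable $x$ is grade-homogeneous), and it kills $a_{k,j}$ for $j\neq\pm1$ because the leading relation carries the factor $j^{2}-1$. But for $j=\pm1$ (hence $k$ even) the determining relation sits two orders deeper, at $x$-order $-k+2$, and its monomials involve odd-grade coefficients at levels $k+1$ and $k+2$ --- e.g.\ $a_{k+1,0}\,a_{1,1}$, where $a_{k+1,0}$ has odd grade $k+1$ --- which are not covered by the hypothesis ``$a_{l,i}=0$ for $l<k$, $l+i$ odd''; the induction must be reorganized, say by inducting on the order of the \emph{determining} relation, so that $a_{k+1,i}$ and $a_{k+2,i}$ with $i\neq\pm1$ are already known to vanish when odd-graded. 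Even then, what remains is a coupled homogeneous $2\times2$ linear system for $(a_{k,1},a_{k,-1})$ --- the coupling arising from monomials such as $a_{k,-1}a_{2,2}$ and $a_{k,-1}a_{1,1}^{2}$ at $w$-degree $1$ --- and you must exhibit a nonzero determinant for it as a function of $k$, $\tilde{\nu}$, and $a$; this is asserted (``a nonzero coefficient when $j=\pm1$'') but never computed. Until that non-degeneracy is established, the vanishing half of the conjecture remains open.
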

In addition, we calculated the coefficients for the levels $k=7$, $8$, $9$, and $10$ in order to arrive at the following
\begin{conjecture}\label{con:app:ak(k-2)}
For $k\geqslant4$,
\begin{equation}\label{eq:app:ak(k-2)}
\begin{aligned}
a_{k,\pm(k-2)}=&\pm\frac{\mi\sqrt{3}\,a_{1,\pm1}^{k-2}}{2^k3^k(\varepsilon b)^{1/3}}\left(3(k-2)^2(\alpha^2+
8\varkappa\alpha+10\varkappa^2)-5(k-2)^2+24(k-2)-24\right.\\
&\left.\mp12(k-2)^2\alpha\mp6\big(5(k-2)^2+8(k-2)\big)\varkappa\right).
\end{aligned}
\end{equation}
\end{conjecture}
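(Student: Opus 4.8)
The plan is to turn the defining equation~\eqref{eq:dp3u} into a recursion for the coefficients $a_{k,j}$ of the expansion~\eqref{eq:app:u-asympt-regular-complete} and then to prove the stated closed form for $a_{k,\pm(k-2)}$ by induction on $k$, starting from the base cases $k=4,5,6$ (and the further verifications up to $k=10$) recorded in the excerpt. First I would clear denominators: multiplying \eqref{eq:dp3u} by $\tau u(\tau)$ produces the polynomial differential equation $\tau u u'' - \tau(u')^2 + u u' + 8\varepsilon u^3 - 2ab\,u - \tau b^2 = 0$. Substituting $u(\tau) = \tfrac{\varepsilon(\varepsilon b)^{2/3}}{2}\tau^{1/3}(1+v)$, $v = \sum_{k\geqslant1}\tau^{-k/3}\sum_{j}a_{k,j}w^j$, and using the elementary identity $\tau\tfrac{\md}{\md\tau}(\tau^{-k/3}w^j) = 2\sqrt{3}\,\mi j(\varepsilon b)^{1/3}\,\tau^{-(k-2)/3}w^j + (\tfrac{2j}{3}(\tilde\nu+1) - \tfrac{k}{3})\tau^{-k/3}w^j$ (so that differentiation has a dominant ``fast'' part that shifts the $\tau$-power and a ``slow'' part that does not), one collects the coefficient of each monomial and arrives, for every level $k\geqslant2$ and every admissible $j$ with $|j|\leqslant k$, at a relation of the form $3(\varepsilon b)^2(1-j^2)\,a_{k,j} = Q_{k,j}(\{a_{l,j'}\}_{l<k})$, where $Q_{k,j}$ is an explicit polynomial whose coefficients depend on $\varkappa=\tilde\nu+1$, $\alpha=2\mi\sqrt3\,a$ and $j$, assembled from the bilinear and trilinear pieces of $\tau(u')^2$, $\tau u u''$, $8\varepsilon u^3$ and from $uu'$ and $-2ab\,u$. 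The prefactor $1-j^2$ makes transparent both why $a_{k,\pm1}$ is not pinned down at level $k$ (it is fixed at level $k+2$) and why $a_{k,\pm(k-2)}$ is uniquely determined for $k\geqslant4$, since then $(k-2)^2-1\neq0$.

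Next I would prove, in the same induction, the support and parity statement of Conjecture~\ref{con:app:akk+parity} together with the closed form for the extreme coefficients $a_{k,\pm k}$: this is the easy part, because $a_{k,\pm k}$ couples only to the bilinear and trilinear convolutions of the $a_{l,\pm l}$, and the constraints $\sum j_i = \pm k$ with $|j_i|\leqslant l_i$, $\sum l_i = k$, force every factor to be extreme. With Conjecture~\ref{con:app:akk+parity} in hand, a triangle-type bookkeeping on the indices shows that the monomials actually occurring in $Q_{k,\,k-2}$ are built only from the extreme coefficients $a_{l,\pm l}$, the near-extreme coefficients $a_{l,\pm(l-2)}$ with $l<k$, and the few low-order ``slow'' coefficients $a_{1,\pm1}$, $a_{2,0}$, $a_{3,\pm1}$ already computed in \eqref{eq:app:a1j}, \eqref{eq:app:a2j} and \eqref{eq:app:a3j}. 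The inductive step then consists of substituting these (the extreme ones and the near-extreme ones via the induction hypothesis, the slow ones explicitly) into $Q_{k,\,k-2}$, dividing by $3(\varepsilon b)^2\big((k-2)^2-1\big)$, and simplifying to the asserted quadratic in $(\alpha,\varkappa)$ with its explicit $(k-2)^2$- and $(k-2)$-dependence.

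I expect the main obstacle to be precisely this last combinatorial-algebraic step: the cubic term $8\varepsilon u^3$ and the square $(u')^2$ contribute genuine convolution sums, and isolating the coefficient of $\tau^{-(k-2)/3}w^{\,k-2}$ while tracking how the polynomial in $(\alpha,\varkappa)$, and especially the quadratic dependence on $k-2$, propagates through the recursion is delicate to carry out uniformly in $k$. I would attack it by passing to the fast variable, i.e.\ working at each level with the generating polynomial $A_k(w) = \sum_j a_{k,j}w^j$, so that the convolutions become products and the $j$-weights become the operator $w\tfrac{\md}{\md w}$; after diagonalising the leading fast oscillation by an additional change of dependent variable, the recursion restricted to the near-extreme modes should collapse to a first-order linear recursion in $k$ whose solution is exactly the stated polynomial. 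As a fallback, one can verify a handful of further cases (the excerpt already supplies $k\leqslant10$) and then run the induction from there, the point being that for $k$ large the list of contributing terms stabilises and the coefficients of the resulting recursion in $k$ are polynomial, hence determined by finitely many values.
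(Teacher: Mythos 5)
You should be aware that the paper contains no proof of this statement: it is explicitly labelled a \emph{conjecture}, formulated after the authors computed all coefficients of the expansion~\eqref{eq:app:u-asympt-regular-complete} explicitly up to level $k=10$ (the text surrounding \eqref{eq:app:a4j}--\eqref{eq:app:a6j} states this, and the levels $k=7,\dotsc,10$ were computed precisely ``in order to arrive at'' \eqref{eq:app:ak(k-2)}). Remark~\ref{rem:app:conjectures} then remarks that ``the quickest way of proving these conjectures \dots is to use the technique of generating functions, presented, in particular, in Section~\ref{sec:2}'' --- i.e.\ the authors themselves point to a summation-in-$k$/generating-function route rather than the level-by-level induction you propose. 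So there is no ``paper's proof'' to match; at best your recursion set-up coincides with the computational procedure the authors describe for generating the data (substitute the truncated expansion into \eqref{eq:dp3u}, equate coefficients, note that $a_{k,\pm1}$ is only fixed two levels later), and your observation that the diagonal prefactor is proportional to $1-j^2$ correctly explains both that phenomenon and why $a_{3,\pm1}$ is exceptional while $a_{k,\pm(k-2)}$ with $k\geqslant4$ (hence $|j|\geqslant2$) is uniquely determined at its own level. That part of your outline is sound and consistent with Remark~\ref{rem:app:conjectures}'s observation that $a_{3,\pm1}$ does not fit the pattern.

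As a proof, however, your proposal is incomplete, and the gap is exactly where you locate it. First, you lean on Conjecture~\ref{con:app:akk+parity} (the parity/support statement and the closed form for $a_{k,\pm k}$) as an ingredient to be proved ``in the same induction''; that statement is itself only conjectural in the paper, and while the extreme-index case is plausibly the easy one, it still has to be established before the ``triangle-type bookkeeping'' restricting $Q_{k,k-2}$ to extreme and near-extreme coefficients is legitimate. Second, the decisive step --- showing that the near-extreme recursion collapses to a first-order linear recursion in $k$ whose coefficients are polynomials in $k$ of \emph{bounded, explicitly controlled} degree, so that the quadratic-in-$(k-2)$ ansatz propagates and finitely many initial checks suffice --- is asserted rather than carried out. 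Without a rigorous degree bound on the $k$-dependence of the convolution contributions from $\tau(u')^2$, $\tau uu''$ and $8\varepsilon u^3$ (each of which a priori produces sums whose length grows with $k$), the fallback ``verify up to $k=10$ and invoke polynomiality'' is not a proof. In short: your strategy is a reasonable and arguably the most direct attack, it is not the route the authors suggest, and as written it does not close the conjecture --- which, to be fair, the paper does not close either.
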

\begin{remark}\label{rem:app:conjectures}
The coefficients $a_{3,\pm1}$ look similar in form to \eqref{eq:app:ak(k-2)}, but are different. An analogous situation
occurs with the Taylor coefficients for the function $H(r)$ studied in Section~\ref{sec:2}, where the first few members
of some sequences of the coefficients do not conform to the general formula for the sequence.
The quickest way of proving these conjectures, and other analogous formulae, is to use the technique of generating functions,
presented, in particular, in Section~\ref{sec:2}.
\hfill $\blacksquare$\end{remark}
Using the expansion~\eqref{eq:app:u-asympt-regular-complete} for $u(\tau)$, we find a similar expansion for $1/u(\tau)$:
\begin{equation}\label{eq:app:1/u-asympt-regular-complete}
\begin{aligned}
\frac{b}{u(\tau)}\underset{\tau\to+\infty}{=}&\frac{2(\varepsilon b)^{1/3}}{\tau^{1/3}}-
\frac{2(\varepsilon b)^{1/3}}{\tau^{2/3}}\left(a_{1,-1}w^{-1}+a_{1,1}w\right)+
\frac{1}{\tau}\left(\frac43(\varepsilon b)^{1/3}\big(a_{1,-1}^2w^{-2}+a_{1,1}^2w^2\big)-\frac23a\right)\\
+&\sum_{k=4}^\infty\frac{1}{\tau^{k/3}}\sum_{j=-k+1}^{j=k-1}b_{k,j}w^{j},\qquad\quad
w={\tau}^{\frac23(\tilde{\nu}+1)}\me^{\mi\theta(\tau)},
\end{aligned}
\end{equation}
where the coefficients $b_{k,j}$ have similar expressions in terms of $a_{1,\pm1}$ as do the $a_{k,j}$'s. For
our goal of determining the leading term of asymptotics for $\rm{E}_{\varphi}(\tau)$, the coefficient
$b_{4,0}$ is important: it can be determined with the help of equations~\eqref{eq:app:a2j} and \eqref{eq:app:a3j}; in fact,
we determined all the coefficients at level $k=4$:
\begin{align*}
b_{4,0}&=b_{4,\pm2}=0,\qquad
b_{4,\pm3}=-\frac56(\varepsilon b)^{1/3}a_{1,\pm1}^3,\\
b_{4,\pm1}&=\mp\frac{\mi\sqrt{3}\,a_{1,\pm1}}{108}\big(3(\alpha^2+8\varkappa\alpha+10\varkappa^2-1)
\pm12\alpha\pm40\varkappa\big).
\end{align*}
\begin{proposition}\label{prop:app:error}
\begin{equation}\label{eq:app:error}
\mathrm{E}_{\varphi}(\tau)\underset{\tau\to+\infty}{=}
\frac{2\sqrt{\tilde{\nu}+1}\,\me^{-3\pi\mi/4}}{3^{3/4}(\varepsilon b)^{1/6}\tau^{1/3}}
\sinh\big(\mi\theta(\tau)+(\tilde{\nu}+1)\ln\theta(\tau)+z\big)+
\mathcal{O}\left(\frac{\me^{\pm2\mi\theta(\tau)}}{\tau^{\frac23(1-2|\mathrm{Re}(\tilde{\nu}+1)|)}}\right)
+\mathcal{O}\left(\frac{1}{\tau^{2/3}}\right),
\end{equation}
where $\theta(\tau)$ and $z$ are defined in Theorem~\ref{th:asympt-infty2004}.
\end{proposition}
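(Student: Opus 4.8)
The plan is to integrate the defining relation~\eqref{eq:app:varphi-definition}, $\varphi'(\tau)=2a/\tau+b/u(\tau)$, term by term against the complete asymptotic expansion~\eqref{eq:app:1/u-asympt-regular-complete} for $b/u(\tau)$, fixing the constant of integration by demanding consistency with the leading asymptotics~\eqref{eq:app:infty:regular-varphi} established in Theorem~\ref{th:asympt-infty2004}. Concretely, I would differentiate~\eqref{eq:app:infty:regular-varphi} to get $\varphi'(\tau)=2(\varepsilon b)^{1/3}\tau^{-1/3}+\tfrac{4a}{3\tau}+\mathrm{E}_{\varphi}'(\tau)$, subtract this from $2a/\tau+b/u(\tau)$, and invoke~\eqref{eq:app:1/u-asympt-regular-complete}: the non-oscillatory pieces at orders $\tau^{-1/3}$ and $\tau^{-1}$ cancel (the logarithmic term matching because $\int\tfrac{2a}{\tau}+\int\tfrac{-2a/3}{\tau}=2a\ln(\tau^{2/3})$, exactly as in~\eqref{eq:app:infty:regular-varphi}), leaving $\mathrm{E}_{\varphi}'(\tau)=-\tfrac{2(\varepsilon b)^{1/3}}{\tau^{2/3}}(a_{1,-1}w^{-1}+a_{1,1}w)+\tfrac{4(\varepsilon b)^{1/3}}{3\tau}(a_{1,-1}^2w^{-2}+a_{1,1}^2w^2)+\sum_{k\geqslant4}\tau^{-k/3}\sum_j b_{k,j}w^j$. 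Then $\mathrm{E}_{\varphi}(\tau)=\int_{+\infty}^{\tau}\mathrm{E}_{\varphi}'(t)\,dt$, the lower limit being legitimate because $|\mathrm{Re}(\tilde{\nu}+1)|<1/6<1/2$ forces every term to decay.

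The heart of the argument is the leading oscillatory integral $\int^{\tau}t^{-2/3}w^{\pm1}(t)\,dt=\int^{\tau}t^{-1/3}\cdot t^{-1/3\pm\frac23(\tilde{\nu}+1)}\me^{\pm\mi\theta(t)}\,dt$, which I would evaluate by parts using $\theta'(\tau)=2\sqrt{3}(\varepsilon b)^{1/3}\tau^{-1/3}$, so that $\int^{\tau}t^{-1/3}\me^{\pm\mi\theta}\,dt=(\pm2\sqrt{3}\,\mi(\varepsilon b)^{1/3})^{-1}\me^{\pm\mi\theta}$ to leading order. This produces $\mathrm{E}_{\varphi}(\tau)=\tfrac{\tau^{-1/3}}{\sqrt{3}\,\mi}\bigl(a_{1,-1}\tau^{-\frac23(\tilde{\nu}+1)}\me^{-\mi\theta}-a_{1,1}\tau^{\frac23(\tilde{\nu}+1)}\me^{\mi\theta}\bigr)+(\text{lower order})$. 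Inserting the explicit values of $a_{1,\pm1}$ from~\eqref{eq:app:a1j} and using the key identity $\tau^{\pm\frac23(\tilde{\nu}+1)}\me^{\pm\mi\theta}\me^{\pm(\tilde{\nu}+1)\ln(3^{3/2}(\varepsilon b)^{1/3})}=\me^{\pm(\mi\theta(\tau)+(\tilde{\nu}+1)\ln\theta(\tau))}$ — which is precisely where $\theta(\tau)=3^{3/2}(\varepsilon b)^{1/3}\tau^{2/3}$ enters — the bracket collapses into $-2\,\tfrac{\sqrt{\tilde{\nu}+1}\,\me^{3\pi\mi/4}}{3^{1/4}(\varepsilon b)^{1/6}}\sinh(\mi\theta+(\tilde{\nu}+1)\ln\theta+z)$; combining $\sqrt{3}\cdot3^{1/4}=3^{3/4}$ with $-\me^{3\pi\mi/4}/\mi=-\me^{\mi\pi/4}=\me^{-3\pi\mi/4}$ yields exactly the coefficient stated in~\eqref{eq:app:error}.

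What remains is error bookkeeping. The term $\tfrac{4(\varepsilon b)^{1/3}}{3\tau}(a_{1,-1}^2w^{-2}+a_{1,1}^2w^2)$ integrates, again by parts, to $\mathcal{O}(\tau^{-2/3\pm\frac43\mathrm{Re}(\tilde{\nu}+1)}\me^{\pm2\mi\theta})=\mathcal{O}(\me^{\pm2\mi\theta}/\tau^{\frac23(1-2|\mathrm{Re}(\tilde{\nu}+1)|)})$, which for $\tilde{\nu}+1\neq0$ genuinely dominates $\tau^{-2/3}$ and so must be displayed separately — the second error term in~\eqref{eq:app:error}. The second-order remainders of the integrations by parts above are $\mathcal{O}(\tau^{-1\pm\frac23\mathrm{Re}(\tilde{\nu}+1)})=\mathcal{O}(\tau^{-2/3})$, and every tail contribution $\int^{\tau}t^{-k/3}w^j\,dt$ with $k\geqslant4$ is, after one integration by parts, $\mathcal{O}(\tau^{-(k-1)/3+\frac23 j\,\mathrm{Re}(\tilde{\nu}+1)})$, which under $|\mathrm{Re}(\tilde{\nu}+1)|<1/6$ is $\mathcal{O}(\tau^{-2/3})$ for every relevant pair $(k,j)$; crucially, the would-be non-oscillatory contribution of order $\tau^{-1/3}$ (from $k=4,5$ with $j=0$) is absent because $b_{4,0}=0$ and $b_{5,0}=0$ by the parity structure inherited from~\eqref{eq:app:u-asympt-regular-complete} (cf. Conjecture~\ref{con:app:akk+parity}), which is exactly why the leading correction is purely the $\sinh$, with no constant or monomial companion. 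Assembling these pieces gives~\eqref{eq:app:error}.

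I expect the main obstacle to be analytic rather than algebraic: one must justify that the relation~\eqref{eq:app:infty:regular-varphi}, known with error $o(\tau^{-\delta})$, may be differentiated, and that the expansion~\eqref{eq:app:1/u-asympt-regular-complete} for $1/u(\tau)$ (in the form due to Shimomura) may be integrated term by term against $\int_{+\infty}^{\tau}$ with the remainder of the $1/u$ expansion integrating to something that does not swamp the explicit terms above. This requires the uniformity of~\eqref{eq:app:1/u-asympt-regular-complete} in the sector $\mathcal{D}$ together with a careful check that the non-oscillatory constant generated here agrees with the constant already pinned down in Theorem~\ref{th:asympt-infty2004}; once that matching is in hand, the explicit form of $\mathrm{E}_{\varphi}(\tau)$ follows from the bracketed computation.
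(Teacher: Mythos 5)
Your proposal follows essentially the same route as the paper's proof: subtract the explicit $\tau^{-1/3}$, $\tau^{-2/3}$, and $\tau^{-1}$ terms of the expansion~\eqref{eq:app:1/u-asympt-regular-complete} from $b/u(\tau)$, integrate, evaluate the leading oscillatory integral by parts using $\md\theta=2\sqrt{3}(\varepsilon b)^{1/3}\tau^{-1/3}\md\tau$, and recombine the two exponentials into the $\sinh$ via the identity $\tau^{\pm\frac23(\tilde{\nu}+1)}\me^{\pm(\tilde{\nu}+1)\ln(3^{3/2}(\varepsilon b)^{1/3})}=\me^{\pm(\tilde{\nu}+1)\ln\theta(\tau)}$; your coefficient computation checks out, and your identification of the $w^{\pm2}$ term as the source of the first error term matches the paper. (The paper organizes the constant-matching slightly differently --- it integrates $\varphi'$ from $\tau_0$ and separates the $\tau$-dependent from the $\tau$-independent parts rather than differentiating the known asymptotics --- but this is presentational.)

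One concrete slip: you assert that $b_{5,0}=0$ by the parity structure of Conjecture~\ref{con:app:akk+parity}. That parity rule governs the coefficients $a_{k,j}$ of $u(\tau)$, whose expansion carries an overall factor $\tau^{+1/3}$; passing to $b/u(\tau)$, whose expansion carries $\tau^{-1/3}$, shifts the level index by one, so the surviving coefficients $b_{k,j}$ have $k+j$ odd rather than even. Hence $b_{4,0}=0$ (which is the cancellation you actually need to avoid a non-oscillatory $\tau^{-1/3}$ companion to the $\sinh$), but $b_{5,0}=\big(\alpha^2-3(4\varkappa+\alpha)^2\big)/\big(2^23^3(\varepsilon b)^{1/3}\big)$ is generically nonzero --- the paper points to it explicitly as the source of the $\mathcal{O}(\tau^{-2/3})$ error term in~\eqref{eq:app:error}. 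Since that contribution lands inside the stated $\mathcal{O}(\tau^{-2/3})$ anyway, your conclusion is unaffected, but the stated reason for discarding it is wrong and should be replaced by the observation that $\smallint^{\tau}b_{5,0}t^{-5/3}\,\md t=\mathcal{O}(\tau^{-2/3})$.
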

\begin{proof}
For $\tau>0$, define the function $f(\tau)$ via
\begin{equation}\label{eq:app:f}
\frac{b}{u(\tau)}=\frac{2(\varepsilon b)^{1/3}}{\tau^{1/3}}-
\frac{2(\varepsilon b)^{1/3}}{\tau^{2/3}}\left(a_{1,-1}w^{-1}+a_{1,1}w\right)-\frac{2a}{3\tau}+f(\tau).
\end{equation}
This definition suggests the following estimates as $\tau\to+\infty$,
\begin{equation}\label{eq:app:f-estimates}
f(\tau)\underset{\tau\to+\infty}{=}\mathcal{O}\left(\frac{w^2}{\tau}\right)+\mathcal{O}\left(\frac{w^{-2}}{\tau}\right)
+\mathcal{O}\left(\frac{1}{\tau^{5/3}}\right),
\end{equation}
\begin{equation}\label{eq:app:f-integral-estimate}
\int_{+\infty}^{\tau}f(\tau)\,\md\tau\underset{\tau\to+\infty}{=}
\mathcal{O}\left(\frac{\me^{\pm2\mi\theta(\tau)}}{\tau^{\frac23(1-2|\text{Re}(\tilde{\nu}+1)|)}}\right)
+\mathcal{O}\left(\frac{1}{\tau^{2/3}}\right),
\end{equation}
where the contour of integration is assumed to be taken along the positive semi-axis, or, more generally,
in the domain $\mathcal{D}$ described in Remark~\ref{rem:app:varphi}. The right-most estimates in the
asymptotics~\eqref{eq:app:f-estimates} and \eqref{eq:app:f-integral-estimate} are proportional to
$b_{5,0}=\big(\alpha^2-3(4\varkappa+\alpha)^2\big)/\big(2^23^3(\varepsilon b)^{1/3}\big)$.

Now, integrating equation~\eqref{eq:app:varphi-definition} from $\tau_0$ to $\tau$ along the contour in $\mathcal{D}$,
and taking into account the definition of the function $f(\tau)$ (cf.  equation~\eqref{eq:app:f}), we obtain the following
equation:
\begin{equation}\label{eq:app:varphi-f-integral}
\varphi(\tau)-\varphi(\tau_0)=3(\varepsilon b)^{1/3}\big(\tau^{2/3}-\tau_0^{2/3}\big)+\frac{4a}{3}\ln\frac{\tau}{\tau_0}
-\int_{\tau_0}^{\tau}\frac{2(\varepsilon b)^{1/3}}{\tau^{2/3}}\left(a_{1,-1}w^{-1}+a_{1,1}w\right)\,\md\tau
+\int_{\tau_0}^{\tau}f(\tau)\,\md\tau.
\end{equation}
Substituting into \eqref{eq:app:varphi-f-integral} the asymptotics for $\varphi(\tau)$  given in
\eqref{eq:app:infty:regular-varphi} and separating the $\tau$-dependent and $\tau$-independent parts, one finds that
\begin{gather}
\mathrm{E}_{\varphi}(\tau)=-\int_{+\infty}^{\tau}
\frac{2(\varepsilon b)^{1/3}}{\tau^{2/3}}\left(a_{1,-1}w^{-1}+a_{1,1}w\right)\md\tau
+\int_{+\infty}^{\tau}f(\tau)\,\md\tau,\label{eq:app:Error-f-integral}\\
3(\varepsilon b)^{1/3}\tau_0^{2/3}+\frac{4a}{3}\ln\tau_0+M-\varphi(\tau_0)=
\int_{\tau_0}^{+\infty}\left(\frac{b}{u(\tau)}-\frac{2(\varepsilon b)^{1/3}}{\tau^{1/3}}+\frac{2a}{3\tau}\right)\md\tau,
\label{eq:app:integral-b/u}
\end{gather}
where, in the integral \eqref{eq:app:integral-b/u}, we substituted for $f(\tau)$ its definition given in \eqref{eq:app:f},
and $M$ denotes the monodromy constant from equation~\eqref{eq:app:infty:regular-varphi}, namely,
\begin{equation*}\label{eq:app:M}
M=-a\ln\big((\varepsilon b)^{1/3}/4\big)+\pi-2\pi(\tilde{\nu}+1)+\mi\ln\big(g_{11}^2\big)-2\mi(\tilde{\nu}+1)\ln(2+\sqrt{3}).
\end{equation*}
Consider the first integral in \eqref{eq:app:Error-f-integral}: using equations~\eqref{eq:app:a1j}, convert the integrand
back to $\cosh$-form,
\begin{equation}\label{eq:app:Error-derivation}
-\int_{+\infty}^{\tau}\frac{2(\varepsilon b)^{1/3}}{\tau^{2/3}}\left(a_{1,-1}w^{-1}+a_{1,1}w\right)\md\tau=
\frac{2\sqrt{\tilde{\nu}+1}\,\me^{-\frac{3\pi\mi}{4}}}{3^{3/4}(\varepsilon b)^{1/6}}
\int_{+\infty}^{\tau}\frac{\cosh\tilde\psi(\tau)}{\tau^{1/3}\big(1+\mathcal{O}(\tau^{-2/3})\big)}\,\md\tilde\psi(\tau),
\end{equation}
where $\tilde\psi(\tau)=\mi\theta(\tau)+(\tilde{\nu}+1)\ln\theta(\tau)+z$. Integrating by parts with the help of the relation
for the differential
$\md\tilde\psi(\tau)=2\mi\sqrt{3}(\varepsilon b)^{1/3}\tau^{-1/3}\big(1+\mathcal{O}(\tau^{-2/3})\big)\md\tau$, one finds the
leading term of asymptotics in equation~\eqref{eq:app:error} with the correction
$\mathcal{O}\left(\frac{\me^{\pm\mi\theta(\tau)}}{\tau^{1-\frac23|\text{Re}(\tilde{\nu}+1)|}}\right)$. The last correction,
however, is smaller than the estimate for the second integral in \eqref{eq:app:Error-f-integral}
(cf.~\eqref{eq:app:f-integral-estimate}); thus, we arrive at the result stated in \eqref{eq:app:error}.
\end{proof}
\begin{remark}
The procedure presented in the proof of Proposition~\ref{prop:app:error} can surely be extended to obtain the corrections for
$\varphi(\tau)$ to all orders. We restricted our attention to only the leading-order correction term because it is visible on
the plots of $\varphi(\tau)$.

Equation~\eqref{eq:app:integral-b/u} calculates the integral of $1/u(\tau)$ that is regularized at infinity.
One can derive asymptotics of such integrals as $\tau_0\to0$ with the help of Theorem~\ref{th:B1asympt0}.
For $a=0$, as studied in Section~\ref{sec:asymptnumerics}, the integral of $1/u(\tau)$ is convergent at the origin,
and we studied the large-$\tau$ asymptotics when the upper limit of integration is not fixed but approaches the point at infinity.
Another possibility is to regularize the integral at the origin for $a\neq0$ as in \cite{KitVar2019}.
\hfill $\blacksquare$\end{remark}

The following Theorem~\ref{th:asympt-infty2010} is a reformulation of Theorem~2.1 in \cite{KitVar2010} with
$\varepsilon_{1} \! = \! \varepsilon_{2} \! = \! 0$ and where the asymptotics of the
Hamiltonian function has been supplanted with the asymptotics of the function $\varphi(\tau)$. As per the discussion
given at the beginning of this appendix, the function $\vartheta(\tau)$ which appears in
equation~\eqref{eq:app:vartheta-singular} below is obtained by subtracting $\pi$ from equation~(2.5) in \cite{KitVar2010}.
\begin{theorem}\label{th:asympt-infty2010}
Let $(u(\tau), \varphi(\tau))$ be a solution of the system~\eqref{eq:dp3u}, \eqref{eq:app:varphi-definition} for
$\varepsilon b>0$ corresponding to the monodromy data
$(a,s_{0}^{0},s_{0}^{\infty},s_{1}^{\infty},g_{11},g_{12},g_{21},g_{22})$.
Assume that
\begin{equation} \label{eq:app:u-sing-as-conditions}
g_{11}g_{12}g_{21}g_{22} \! \neq \! 0, \, \qquad \,
|g_{11}g_{22}|\neq-g_{11}g_{22},
\end{equation}
and define
\begin{equation}\label{eq:app:tilde-nu-def-singular}
\tilde{\nu}+1=\frac{\mi}{2\pi}\ln(g_{11}g_{22}),\quad
\text{where}\quad
\Re (\tilde{\nu} \! + \! 1) \! \in \!(0,1) \setminus \lbrace 1/2 \rbrace.
\end{equation}
Then, $\exists$ $\delta_{G} \! > \! 0$
such that
\begin{align}
u(\tau)\underset{\tau\to+\infty}{=}&\,\frac{\varepsilon(\varepsilon b)^{2/3}}{2} \tau^{1/3}
\! \left(1 \! - \! \frac{3}{2\sin^2\!\left(\frac{1}{2}\vartheta(\tau)\right)}\!\right) \label{eq:app:u-sing-as}\\
\underset{\tau \to +\infty}{=}&\,\frac{\varepsilon (\varepsilon b)^{2/3}}{2}\,
\tau^{1/3} \frac{\sin \! \left(\frac{1}{2} \vartheta (\tau) \! - \! \vartheta_{0} \right) \! \sin \!
\left(\frac{1}{2} \vartheta (\tau) \! + \! \vartheta_{0} \right)}{\sin^{2} \! \left(\frac{1}{2}
\vartheta (\tau) \right)}, \label{eq:app:u-sing-as-factor}
\end{align}
where
\begin{equation}\label{eq:app:vartheta-singular}
\begin{aligned}
\vartheta (\tau) \! = &\, \theta (\tau) \! - \! \mi ((\tilde{\nu} \! + \! 1) \! - \! 1/2) \ln
\theta (\tau) \! - \! \frac{3\pi}{4} \! - \! \frac{3 \pi}{2}(\tilde{\nu} \! + \! 1) \! - \! \mi
((\tilde{\nu} \! + \! 1) \! - \! 1/2) \ln 12 \! - \! \frac{\mi}{2} \ln 2 \pi \! + \! a \ln
(2 \! + \! \sqrt{3})\\
+& \, \mi \ln \! \left(g_{11}g_{12} \Gamma (\tilde{\nu} \! + \! 1) \right) \! + \!
\mathcal{O}\big(\tau^{-\delta_{G}} \ln \tau\big),\qquad\quad
\theta(\tau)=3^{3/2}(\varepsilon b)^{1/3}\tau^{2/3},
\end{aligned}
\end{equation}
\begin{equation}\label{eq:app:theta0}
\vartheta_{0} \! := \! -\frac{\pi}{2} \! + \! \frac{\mi}{2} \ln (2 \! + \! \sqrt{3}), \,
\quad \, \sin \vartheta_{0} \! = \! -(3/2)^{1/2}, \, \quad \, \cos \vartheta_{0}
\! = \! \mi/\sqrt{2},
\end{equation}
and $\exists$ $\delta>0$ satisfying the inequality $0 <\delta<2/3$ such that
\begin{equation}\label{eq:app:infty:irregular-varphi}
\begin{gathered}
\varphi(\tau)\underset{\tau\to+\infty}{=} \,
3(\varepsilon b)^{1/3}\tau^{2/3}+2a\ln\big(\tau^{2/3}\big)-a\ln\big((\varepsilon b)^{1/3}/4\big)+\pi
-2\pi\big((\tilde{\nu}+1)-1/2\big)+\mi\ln\big(g_{11}^2\big)\\
-2\mi\big((\tilde{\nu}+1)-1/2\big)\ln(2+\sqrt{3})+\mathcal{E}_{\varphi}(\tau),\qquad
\mathcal{E}_{\varphi}(\tau)=-\mi\,\mathrm{Ln}\left(\frac{\sin (\frac{1}{2} \vartheta (\tau) \! + \!
\vartheta_{0})}{\sin (\frac{1}{2} \vartheta (\tau) \! - \! \vartheta_{0})}\right) +o\big(\tau^{-\delta}\big).
\end{gathered}
\end{equation}
\end{theorem}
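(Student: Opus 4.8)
The plan is to prove Theorem~\ref{th:asympt-infty2010} in two stages. The asymptotics \eqref{eq:app:u-sing-as}--\eqref{eq:app:theta0} for $u(\tau)$ are not new: they are the specialization $\varepsilon_1=\varepsilon_2=0$ of Theorem~2.1 of \cite{KitVar2010}, with the two corrections recorded at the start of this appendix (the sign of the $\tfrac{\pi\mi}{2}$-term, and the subtraction of $\pi$ from equation~(2.5) of \cite{KitVar2010} that produces $\vartheta(\tau)$). The factorized form \eqref{eq:app:u-sing-as-factor} follows from \eqref{eq:app:u-sing-as} by the elementary identity $\sin(x-\vartheta_0)\sin(x+\vartheta_0)=\sin^2 x-\sin^2\vartheta_0$ together with $\sin^2\vartheta_0=3/2$ (cf.~\eqref{eq:app:theta0}). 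So the genuinely new content is the asymptotics \eqref{eq:app:infty:irregular-varphi} for $\varphi(\tau)$, which I obtain by integrating the defining relation \eqref{eq:app:varphi-definition}, in the spirit of the proof of Proposition~\ref{prop:app:error}.

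First I would rewrite $b/u(\tau)$. Using \eqref{eq:app:u-sing-as-factor}, $b/(\varepsilon(\varepsilon b)^{2/3})=(\varepsilon b)^{1/3}$, the identity $\tfrac{\sin^2x}{\sin^2x-3/2}=1+\tfrac{3/2}{\sin(x-\vartheta_0)\sin(x+\vartheta_0)}$, the partial-fraction identity $(\sin(x-\vartheta_0)\sin(x+\vartheta_0))^{-1}=(\sin 2\vartheta_0)^{-1}(\cot(x-\vartheta_0)-\cot(x+\vartheta_0))$, and $\sin 2\vartheta_0=2\sin\vartheta_0\cos\vartheta_0=-\mi\sqrt3$ (cf.~\eqref{eq:app:theta0}), one gets, with $x=\tfrac12\vartheta(\tau)$,
\[
\frac{b}{u(\tau)}=\frac{2(\varepsilon b)^{1/3}}{\tau^{1/3}}\left(1+\frac{\mi\sqrt3}{2}\Big(\cot\big(\tfrac12\vartheta-\vartheta_0\big)-\cot\big(\tfrac12\vartheta+\vartheta_0\big)\Big)\right).
\]
The next step is to recognize the cotangent difference as an exact $\tau$-derivative: since $\tfrac{d}{d\tau}\mathrm{Ln}\tfrac{\sin(\tfrac12\vartheta+\vartheta_0)}{\sin(\tfrac12\vartheta-\vartheta_0)}=\tfrac12\vartheta'(\tau)\big(\cot(\tfrac12\vartheta+\vartheta_0)-\cot(\tfrac12\vartheta-\vartheta_0)\big)$ and $\vartheta'(\tau)=2\sqrt3(\varepsilon b)^{1/3}\tau^{-1/3}\big(1+\mathcal{O}(\tau^{-2/3})\big)$, this becomes
\[
\frac{b}{u(\tau)}=\frac{2(\varepsilon b)^{1/3}}{\tau^{1/3}}-\mi\,\frac{d}{d\tau}\,\mathrm{Ln}\frac{\sin\big(\tfrac12\vartheta(\tau)+\vartheta_0\big)}{\sin\big(\tfrac12\vartheta(\tau)-\vartheta_0\big)}+R(\tau),
\]
where the remainder $R(\tau)$ collects the $\mathcal{O}(\tau^{-2/3})$-correction to $\vartheta'(\tau)$ times the (continuous, capital-$\mathrm{Ln}$) logarithm, the contribution of the $\mathcal{O}(\tau^{-\delta_G}\ln\tau)$-error carried inside $\vartheta(\tau)$, and a $-\tfrac{2a}{3\tau}$-type term analogous to the one present in the regular-case expansion \eqref{eq:app:1/u-asympt-regular-complete}.

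Then I would integrate $\varphi'(\tau)=2a/\tau+b/u(\tau)$ from a base point $\tau_0$ to $\tau$ along the positive semi-axis (more generally in the domain $\mathcal{D}$ of Remark~\ref{rem:app:varphi}). The term $2a/\tau$ together with the $-2a/(3\tau)$ inside $R$ integrates to $2a\ln(\tau^{2/3})$; $2(\varepsilon b)^{1/3}\tau^{-1/3}$ integrates to $3(\varepsilon b)^{1/3}\tau^{2/3}$; and the exact-derivative term integrates to $-\mi\,\mathrm{Ln}\tfrac{\sin(\tfrac12\vartheta+\vartheta_0)}{\sin(\tfrac12\vartheta-\vartheta_0)}$, which is precisely $\mathcal{E}_\varphi(\tau)$ modulo $o(\tau^{-\delta})$. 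The rest of $\int R$ is shown to be $o(\tau^{-\delta})$ for $0<\delta<2/3$ by integration by parts, using that the capital-$\mathrm{Ln}$ logarithm stays bounded (indeed it tends to the finite constant $\mp2\mi\vartheta_0$ as $\tau\to+\infty$, since $\mathrm{Im}\,\vartheta(\tau)\to\pm\infty$ only logarithmically by \eqref{eq:app:tilde-nu-def-singular}); the restriction $\delta<2/3$ is needed because near the finitely many movable zeros of $u(\tau)$ in $\mathcal{D}$ the cotangents are large, so those contributions are only $o(\tau^{-\delta})$ rather than uniformly $\mathcal{O}(\tau^{-2/3})$. Finally, the $\tau$-independent part of the integral, i.e.\ the monodromy constant $M=-a\ln((\varepsilon b)^{1/3}/4)+\pi-2\pi((\tilde{\nu}+1)-1/2)+\mi\ln(g_{11}^2)-2\mi((\tilde{\nu}+1)-1/2)\ln(2+\sqrt3)$, I would fix either by comparison with the large-$\tau$ asymptotics of the Hamiltonian given in \cite{KitVar2010}, via the relation between $\mathcal{H}$ and $\varphi$ of \cite{KitVar2004} (Propositions~1.1--1.2), or by matching with the asymptotics \eqref{eq:app:infty:regular-varphi} of Theorem~\ref{th:asympt-infty2004} on the overlap of the two ranges of validity (the branch change $\tilde{\nu}+1\to(\tilde{\nu}+1)-1/2$ of Remark~\ref{rem:best-intervals-asympt2004}), where the ``asymptotics of asymptotics'' must be consistent.

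The main obstacle is the rigorous determination of the constant $M$: the isomonodromy construction fixes $\varphi(\tau)$ only modulo $2\pi$ and, at the level of the error estimates, modulo $o(1)$, so the constant must be imported from an independent computation (the Hamiltonian asymptotics of \cite{KitVar2010}, or the matching above). A secondary technical point is justifying that the $\mathcal{O}(\tau^{-\delta_G}\ln\tau)$-error inside $\vartheta(\tau)$ may be differentiated and integrated against the oscillatory factor with the expected gain; in the isomonodromy method this follows from analyticity of the matrix-valued error terms and is handled exactly as in the proof of Proposition~\ref{prop:app:error}, so I do not expect it to present conceptual difficulties beyond careful bookkeeping.
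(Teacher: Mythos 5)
Your proposal is correct and follows essentially the same route as the paper: Theorem~\ref{th:asympt-infty2010} is presented there precisely as a reformulation of Theorem~2.1 of \cite{KitVar2010} (with $\varepsilon_1=\varepsilon_2=0$, the sign correction in the phase, and the subtraction of $\pi$ producing $\vartheta(\tau)$), with the Hamiltonian asymptotics supplanted by the $\varphi(\tau)$-asymptotics obtained by integrating \eqref{eq:app:varphi-definition} in the manner of Proposition~\ref{prop:app:error}. Your cotangent partial-fraction identification of $\mathcal{E}_{\varphi}(\tau)$ as an exact logarithmic derivative is a clean singular-case analogue of that proposition, and your observation that the $\tau$-independent constant $M$ cannot come from the integration alone but must be imported from the isomonodromy parametrization (or from matching with \eqref{eq:app:infty:regular-varphi} on the overlap) accurately reflects how the paper fixes it.
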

\begin{remark}\label{rem:app:nu+1definition}
Even though Theorem~\ref{th:asympt-infty2010} uses the same equation for the definition of $\tilde{\nu}+1$
as in Theorem~\ref{th:asympt-infty2004} (cf. \eqref{eq:app:tilde-nu-def-singular} and \eqref{eq:app:nu+1}),
it must be emphasized that the branch of $\ln(\cdot)$ in these equations is fixed in different domains. The function
$\theta(\tau)$ in equation~\eqref{eq:app:vartheta-singular} is given by the same formula as in
Theorem~\ref{th:asympt-infty2004}, and is rewritten here for the convenience of the reader.
\hfill $\blacksquare$\end{remark}
\begin{remark}\label{rem:app:varphi-LnfracSIN-corr}
As mentioned in Remark~\ref{rem:app:varphi}, the function $\varphi(\tau)$ does not possess the Painlev\'e property, and is
defined modulo the additive constant $2\pi k$, for some integer $k$, which depends on the path of analytic continuation for the
function $\varphi(\tau)$. This ambiguity is manifested by the presence of the term $\mi\ln\big(g_{11}^2\big)$ in the
asymptotic formula~\eqref{eq:app:infty:irregular-varphi}; moreover, there is an alternative mechanism which corroborates this
ambiguity, namely, the function $\mathcal{E}_{\varphi}(\tau)$. The $2\pi k$ ambiguity in $\mathcal{E}_{\varphi}(\tau)$
reflects the quality of the approximation of $\varphi(\tau)$ by its asymptotics in the finite domain! How might this occur?
In the definition of $\mathcal{E}_{\varphi}(\tau)$, we use $\mathrm{Ln}$ to denote the continuous branch of the corresponding
logarithmic function: this means that we have to fix at some point in a neighbourhood of the positive semi-axis a value of
the logarithm, and then consider a path for the analytic continuation of this logarithm from the chosen point to the point
where we want to know the value of the logarithm.
Of course, the simplest way would be to take the same path as for the definition of the function $\varphi(\tau)$, but then,
the appearance of the additional constant $2\pi k$ depends on the respective location of the zeros of $u(\tau)$ and of
its asymptotics. Since the function $u(\tau)$ and its asymptotics have a finite number of zeros along the positive semi-axis
and the path of analytic continuation is fixed, the integer $k$ is uniquely defined.
Our asymptotics provides a very good approximation for $\varphi(\tau)$ not only for large values
of $\tau$, but also for finite values as well (see Section~\ref{sec:asymptnumerics}, Examples 3, 4, 5, and 6);
therefore, we define the continuous branch of $\mathcal{E}_{\varphi}(\tau)$ starting from small values of $\tau$.

Note that, for computing the function $\mathcal{E}_{\varphi}(\tau)$, one cannot use the standard commands in {\sc Maple} and
{\sc Mathematica} for the calculation of $\ln(\cdot)$ because they calculate the principal branch of the logarithm,
and as a result, instead of the plot of the asymptotics, one would see a saw-like line. We have discussed this
issue as it relates to the function $I(r)$ in Remark~\ref{rem:correctionFORsingularINTEGRAL} of Section~\ref{sec:asymptnumerics}.
\hfill $\blacksquare$\end{remark}
\begin{remark}\label{rem:app:thC2validity}
The notation $\tau\to+\infty$ in Theorem~\ref{th:asympt-infty2010} has the same meaning as
in Theorem~\ref{th:asympt-infty2004}, i.e., $\tau\in\mathcal{D}$ and $|\tau|\to\infty$ (cf. Remark~\ref{rem:app:varphi}).
We have excluded from consideration in Theorem~\ref{th:asympt-infty2010} monodromy data satisfying the condition
$\text{Re}(\tilde{\nu}+1)=1/2$ because, in this case, the domain $\mathcal{D}$ contains an infinite number of poles and zeros
accumulating at the point at infinity. Theorem~\ref{th:asympt-infty2010} also remains valid in this case, but in the
domain $\mathcal{D}_{u}$ defined below. It is a matter of convenience, therefore, to formulate this result separately;
we formulate this result in Theorem~\ref{th:asympt-poles-zeros} below.

Our numeric studies demonstrate that good correspondence between the numeric solution and asymptotics presented in
Theorem~\ref{th:asympt-infty2010} for finite values of $\tau$ is achieved for $\Re(\tilde{\nu}+1)$ in the intervals
$[1/6,1/2)\cup(1/2,5/6]$.
For $\Re(\tilde{\nu}+1)$ in the intervals $[0,1/6)\cup(5/6,1)$, the correspondence between the numerical solution and the
asymptotics  given in Theorem~\ref{th:asympt-infty2004} for finite values of $\tau$ is better.
\hfill $\blacksquare$\end{remark}

We now turn our attention to the case $\Re(\tilde{\nu}+1)=1/2$. In this case, the domain of validity $\mathcal{D}$
(cf. Remark~\ref{rem:app:thC2validity}) for the asymptotics presented in Theorem~\ref{th:asympt-infty2010}
contains zeros and poles of the function $u(\tau)$, and therefore requires a more delicate formulation.
The following Theorem~\ref{th:asympt-poles-zeros} is, substantially, a reproduction of Theorem~2.2 in \cite{KitVar2010},
but with $\varepsilon_{1}=\varepsilon_{2}= 0$, and with $+\pi/2$ in equation~(2.14) of \cite{KitVar2010} corrected to $-\pi/2$
(see \eqref{eq:app:vahrrho2} below); furthermore, in \eqref{eq:app:vahrrho2-arg}, the arguments of two monodromy functions
are combined into the argument of one function.
\begin{theorem} \label{th:asympt-poles-zeros}
Let $(u(\tau), \varphi(\tau))$ be a solution of the system~\eqref{eq:dp3u}, \eqref{eq:app:varphi-definition} for
$\varepsilon b>0$ corresponding to the monodromy data $(a,s_{0}^{0},s_{0}^{\infty},s_{1}^{\infty},g_{11},g_{12},g_{21},g_{22})$.
Assume that
\begin{equation*} \label{eq:app:u-sing-as-conditionsRe<nu+1>=1/2}
g_{11}g_{12}g_{21}g_{22} \! \neq \! 0, \, \qquad \,
|g_{11}g_{22}|=-g_{11}g_{22}.
\end{equation*}
Define
\begin{equation*} \label{eq:app:varrho1}
\varrho_{1} \! := \! \frac{1}{2 \pi} \ln (-g_{11}g_{22}) \quad (\in \! \mathbb{R}),
\end{equation*}
and
\begin{align}
\varrho_{2} :=&\,\varrho_{1}\ln(24\pi)-\frac{3\pi}{2}+a\ln(2+\sqrt{3})-\frac{3\pi\mi}{2}\varrho_{1}-\frac{\mi}{2}\ln(2\pi)
+\mi\ln\left(g_{11}g_{12}\Gamma\big(\tfrac12+\mi\varrho_{1}\big)\right) \label{eq:app:vahrrho2}\\
=&\,\varrho_{1}\ln(24\pi)-\frac{3\pi}{2}+a\ln(2+\sqrt{3})-\arg \left(\Gamma\big(\tfrac{1}{2}+\mi\varrho_{1}\big)
\frac{\sqrt{g_{11}g_{12}}}{\sqrt{g_{21}g_{22}}}\right)+\frac{\mi}{2}\ln\left\lvert\frac{g_{11}g_{12}}{g_{21}g_{22}}\right\rvert.
\label{eq:app:vahrrho2-arg}
\end{align}
The right-most logarithm in \eqref{eq:app:vahrrho2} is complex, and the principal branch is assumed.
The branches of the square roots in \eqref{eq:app:vahrrho2-arg} are defined such that
$\sqrt{g_{11}g_{12}}\,\sqrt{g_{21}g_{22}}>0$, and $\arg(\cdot)\in(-\pi,\pi]$ denotes the principal value of the argument
of the corresponding complex function.

Then, $\exists$ $\delta \! \in \! (0,1/39)$ such that the function $u(\tau)$ has, for
all large enough $m \! \in \! \mathbb{N}$, second-order poles, $\tau_{m}^{\infty}$,
accumulating at the point at infinity, and, the function $u(\tau)$ (resp., $\varphi(\tau))$
has, for all large enough $m \! \in \! \mathbb{N}$, a pair of first-order zeros
(resp., movable logarithmic branch points), $\tau_{m}^{\pm}$, accumulating at the
point at infinity, where
\begin{equation} \label{eq:app:tau-m-poles}
\tau_{m}^{\infty} \underset{m \to \infty}{=} \left(\frac{2 \pi m}{3^{3/2}
(\varepsilon b)^{1/3}} \right)^{3/2} \! \left(1 \! - \! \frac{3 \varrho_{1}}{4 \pi}\,
\frac{\ln m}{m} \! - \! \frac{3 \varrho_{2}}{4 \pi}\, \frac{1}{m} \right) \! + \!
\mathcal{O} \! \left(m^{(1-3 \delta)/2} \right),
\end{equation}
and
\begin{equation} \label{eq:app:tau-m-zeros}
\tau_{m}^{\pm} \underset{m \to \infty}{=} \left(\frac{2 \pi m}{3^{3/2}
(\varepsilon b)^{1/3}} \right)^{3/2} \! \left(1 \! - \! \frac{3 \varrho_{1}}{4 \pi}\,
\frac{\ln m}{m} \! - \! \frac{3}{4 \pi}(\varrho_{2} \! \pm \! 2 \vartheta_{0})\,
\frac{1}{m} \right) \! + \! \mathcal{O} \! \left(m^{(1-3 \delta)/2} \right),
\end{equation}
with $\vartheta_{0}$ defined in equation~\eqref{eq:app:theta0}.
\end{theorem}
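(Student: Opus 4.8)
\textbf{Proof proposal for Theorem~\ref{th:asympt-poles-zeros}.}

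The plan is to treat this theorem not as an independent result requiring a fresh isomonodromy analysis, but as a careful translation of Theorem~\ref{th:asympt-infty2010} into the degenerate regime $\Re(\tilde{\nu}+1)=1/2$. First I would record the complete statement of Theorem~2.2 of \cite{KitVar2010}, setting $\varepsilon_{1}=\varepsilon_{2}=0$ and specializing the monodromy notation exactly as done in Appendix~\ref{app:infty}, and then identify the single sign correction that must be made: the term $+\pi/2$ appearing in equation~(2.14) of \cite{KitVar2010} must be replaced by $-\pi/2$, yielding the formula \eqref{eq:app:vahrrho2} for $\varrho_{2}$ stated above. This is the same ``leapfrogging'' misprint already tracked down for Theorem~\ref{th:asympt-infty2004}: the original calculations in \cite{KitVar2004} carry $+\tfrac{\pi\mi}{2}$ in the definition of $z(\varepsilon_{1},\varepsilon_{2})$, and the method of \cite{KitVar2010} determines the corresponding phase only modulo $\pi\mi$, so the sign must be fixed by matching. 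I would justify the choice of sign by the same matching argument used in Appendix~\ref{app:infty} — comparing the asymptotics \eqref{eq:app:u-sing-as}–\eqref{eq:app:vartheta-singular} of Theorem~\ref{th:asympt-infty2010} with \eqref{eq:app:u-reg-as}–\eqref{eq:app:asympt-regular-phase} of Theorem~\ref{th:asympt-infty2004} on the overlapping strip $\Re(\tilde{\nu}+1)$ near $1/2$, so that the phase $\varrho_{2}$ inherits precisely the corrected value of $z$.

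The core of the argument is then the localization of the poles and zeros of $u(\tau)$ via the representation \eqref{eq:app:u-sing-as}. Since $\Re(\tilde{\nu}+1)=1/2$, write $\tilde{\nu}+1=\tfrac12+\mi\varrho_{1}$ with $\varrho_{1}\in\mathbb{R}$, so that in \eqref{eq:app:vartheta-singular} the coefficient of $\ln\theta(\tau)$ becomes $-\mi((\tilde{\nu}+1)-1/2)=\varrho_{1}$, which is \emph{real}; hence $\vartheta(\tau)$ is, to leading order and for $\tau$ on the positive semi-axis, real-valued plus bounded terms, and the factor $\sin^{-2}(\tfrac12\vartheta(\tau))$ in \eqref{eq:app:u-sing-as} genuinely produces infinitely many poles accumulating at infinity. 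A double pole of $u(\tau)$ occurs where $\tfrac12\vartheta(\tau)\in\pi\mathbb{Z}$, i.e. $\vartheta(\tau_{m}^{\infty})=2\pi m$; a (simple) zero of $u(\tau)$ occurs where one of the factors in \eqref{eq:app:u-sing-as-factor} vanishes, i.e. $\tfrac12\vartheta(\tau_{m}^{\pm})\mp\vartheta_{0}\in\pi\mathbb{Z}$, i.e. $\vartheta(\tau_{m}^{\pm})=2\pi m\pm2\vartheta_{0}$. I would then solve these transcendental equations asymptotically: plug the ansatz $\tau=\big(2\pi m/(3^{3/2}(\varepsilon b)^{1/3})\big)^{3/2}(1+\xi_{m})$ into $\vartheta(\tau)=2\sqrt{3}(\varepsilon b)^{1/3}\cdot$ (wait — rather) into $\theta(\tau)=3^{3/2}(\varepsilon b)^{1/3}\tau^{2/3}$, expand $\theta(\tau)=2\pi m(1+\xi_{m})^{2/3}$, and use $\theta=\vartheta+\mi(\varrho_{1})\ln\theta+(\text{const})+\mathcal{O}(\tau^{-\delta_{G}}\ln\tau)$ — more precisely invert \eqref{eq:app:vartheta-singular} for $\theta$ in terms of $\vartheta$ — to obtain $\xi_{m}=-\tfrac{3\varrho_{1}}{4\pi}\tfrac{\ln m}{m}-\tfrac{3\varrho_{2}}{4\pi}\tfrac1m+\mathcal{O}(m^{(1-3\delta)/2-3/2})$ for the poles, and the analogous expression with $\varrho_{2}\mapsto\varrho_{2}\pm2\vartheta_{0}$ for the zeros; multiplying back through by the prefactor gives exactly \eqref{eq:app:tau-m-poles} and \eqref{eq:app:tau-m-zeros}. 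The bookkeeping of the $\ln m/m$ versus $1/m$ terms, and the propagation of the $\mathcal{O}(\tau^{-\delta_{G}}\ln\tau)$ error in \eqref{eq:app:vartheta-singular} through the inversion into the stated $\mathcal{O}(m^{(1-3\delta)/2})$, is routine but must be done; I would also note that $\delta\in(0,1/39)$ is simply the value inherited from Theorem~2.2 of \cite{KitVar2010} (where it arises from the width of the cheese-like domain and the error estimates there). Finally, I would observe that $\varphi(\tau)$, by \eqref{eq:app:varphi-definition}, picks up a $\mp\mi\ln(\tau-\tau_{m}^{\pm})$ contribution at each zero of $u(\tau)$, hence has movable logarithmic branch points at precisely the $\tau_{m}^{\pm}$, with the same asymptotic locations.

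The main obstacle I anticipate is not the asymptotic inversion — that is mechanical once \eqref{eq:app:vartheta-singular} is in hand — but rather the correct and defensible determination of the \emph{branches}: fixing the sign $-\pi/2$ in \eqref{eq:app:vahrrho2}, reconciling the logarithmic and $\arg$ forms \eqref{eq:app:vahrrho2} and \eqref{eq:app:vahrrho2-arg} (this requires the condition $|g_{11}g_{22}|=-g_{11}g_{22}$, which forces $g_{11}g_{22}<0$ and hence $\Re(\tilde{\nu}+1)=1/2$ exactly, and the square-root normalization $\sqrt{g_{11}g_{12}}\sqrt{g_{21}g_{22}}>0$), and ensuring consistency of the branch of $\ln$ in $\Gamma(\tfrac12+\mi\varrho_{1})$ with the principal-value conventions stated in Remark~\ref{rem:app:inffty:branches}. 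Since all of these are inherited — with the one explicit correction — from \cite{KitVar2010}, the honest write-up will consist of (i) stating the source theorem, (ii) specializing $\varepsilon_{1}=\varepsilon_{2}=0$ and the notation, (iii) flagging and justifying the $+\pi/2\rightsquigarrow-\pi/2$ correction by matching with Theorem~\ref{th:asympt-infty2004}, and (iv) carrying out the elementary asymptotic inversion of $\vartheta(\tau)=2\pi m$ and $\vartheta(\tau)=2\pi m\pm2\vartheta_{0}$ to produce \eqref{eq:app:tau-m-poles} and \eqref{eq:app:tau-m-zeros}.
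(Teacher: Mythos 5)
Your proposal matches the paper's treatment: Theorem~\ref{th:asympt-poles-zeros} is presented there with no independent derivation, only as a reproduction of Theorem~2.2 of \cite{KitVar2010} specialized to $\varepsilon_{1}=\varepsilon_{2}=0$ with the $+\pi/2\rightsquigarrow-\pi/2$ correction in the phase and the two arguments combined into one in \eqref{eq:app:vahrrho2-arg}, which is exactly your items (i)--(iii). Your additional step (iv), the asymptotic inversion of $\vartheta(\tau)=2\pi m$ and $\vartheta(\tau)=2\pi m\pm2\vartheta_{0}$ to recover \eqref{eq:app:tau-m-poles} and \eqref{eq:app:tau-m-zeros}, is a correct and useful consistency check that the paper omits, but it does not change the route.
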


Assume that the conditions stated in Theorem~\ref{th:asympt-poles-zeros} are valid.
Denote by $\hat{\tau}_{m}^{\infty}$ and $\hat{\tau}_{m}^{\pm}$, respectively, the leading terms of the
asymptotics \eqref{eq:app:tau-m-poles} and \eqref{eq:app:tau-m-zeros} of the poles and zeros of $u(\tau)$.

Define the multiply-connected domain $\mathcal{D}_{u} \! := \! \lbrace \mathstrut \tau \! \in \! \mathcal{D}; \, \lvert
\theta (\tau) \! - \! \theta (\hat{\tau}_{m}^{\ast}) \rvert \! \geqslant \! C \lvert\hat{\tau}_{m}^{\ast} \
\rvert^{-\delta} \rbrace$, $\ast \! \in \! \lbrace \infty,\pm \rbrace$, where the strip
domain $\mathcal{D}$ is defined in Remark~\ref{rem:app:varphi}, $\theta (\tau)$ is defined by
equation~\eqref{eq:app:infty:regular-theta-tau}, and $C \! > \! 0$ and $\delta \! \in \! (0,1/39)$ are parameters.
In Theorem~\ref{th:app:asymptRe<nu+1>=1/2} below, the notation $\tau\to+\infty$ means $\tau\in\mathcal{D}_{u}$ and
$|\tau|\to\infty$.
\begin{theorem}\label{th:app:asymptRe<nu+1>=1/2}
For the conditions of Theorem~\ref{th:asympt-poles-zeros}, there exists $\delta_{G}>0$ satisfying the inequality
$0<\delta<\delta_{G}<\tfrac{1}{15}-\tfrac{8\delta}{5}$, where $\delta\in(0,1/39)$, such that
\begin{align} \label{eq:app:u-asymptRe<nu+1>=1/2}
u(\tau)\underset{\tau \to +\infty}{=}& \, \frac{\varepsilon (\varepsilon
b)^{2/3}}{2} \tau^{1/3} \! \left(1 \! - \! \frac{3}{2\sin^{2} \! \left(\frac{1}{2} \Theta
(\tau) \right)} \! \right)\\
\underset{\tau \to +\infty}{=}& \, \frac{\varepsilon
(\varepsilon b)^{2/3}}{2} \tau^{1/3} \frac{\sin \! \left(\frac{1}{2} \Theta (\tau) \! - \! \vartheta_{0}
\right) \! \sin \! \left(\frac{1}{2} \Theta (\tau) \! + \! \vartheta_{0} \right)}{\sin^{2} \!
\left(\frac{1}{2} \Theta (\tau) \right)}, \nonumber
\end{align}
and $\exists$ $\delta_{1} \! > \! 0$ satisfying the inequality $0 \! < \! \delta_{1} \! <
\! 1/3$ such that
\begin{equation} \label{eq:app:exp-varphi-asymptRe<nu+1>=1/2}
\me^{\mi\varphi(\tau)}\underset{\tau \to +\infty}{=}
\frac{\me^{\mi\Phi(\tau)}}{g_{11}^{2}}\,
\frac{\sin(\frac{1}{2}\Theta(\tau)+\vartheta_{0})}{\sin (\frac{1}{2}\Theta(\tau)-\vartheta_{0})}\!
\left(1+\mathcal{O}\big(\tau^{-2 \delta_{1}+\delta}\big)\right),
\end{equation}
where
\begin{align}
\Theta (\tau) \! &= \,\theta (\tau)+\varrho_{1} \ln \theta (\tau)-\frac{3 \pi}{2}+\varrho_{1}\ln 12 \!
+a\ln (2+\sqrt{3})-\frac{3\pi\mi}{2}\varrho_{1}-\frac{\mi}{2}\ln(2\pi)\nonumber\\
&+\mi\ln \!\left(g_{11}g_{12}\Gamma\big(\tfrac12+\mi\varrho_{1}\big)\right)+\mathcal{O}(\tau^{-\delta_{G}}\ln \tau),
\label{eq:app:Theta}\\
\Phi(\tau) \! =& \, \frac{\theta (\tau)}{ \sqrt{3}} \! + \! \frac{4a}{3}
\ln \tau \! - \! 2\mi\pi \varrho_{1} \! +\pi+\! 2\varrho_{1} \ln\big(2+\sqrt{3}\big)-
a\ln\big((\varepsilon b)^{1/3}/4\big)+\mathcal{O}\big(\tau^{-\delta_{G}}\big). \label{eq:app:Phi}
\end{align}
\end{theorem}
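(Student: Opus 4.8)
The plan is to establish the two assertions — the large-$\tau$ asymptotics of $u(\tau)$ and of $\me^{\mi\varphi(\tau)}$ on the multiply-connected domain $\mathcal{D}_{u}$ — by invoking, and then supplementing, the isomonodromy analysis of \cite{KitVar2010}; the theorem is the $\Re(\tilde{\nu}+1)=1/2$ counterpart of Theorem~\ref{th:asympt-infty2010}. For $u(\tau)$, equations~\eqref{eq:app:u-asymptRe<nu+1>=1/2} together with \eqref{eq:app:Theta} are, up to the present notational conventions and the sign amendments recorded at the beginning of this appendix, precisely Theorem~2.3 of \cite{KitVar2010}: one parametrizes $u(\tau)$ by the monodromy data of the associated $2\times2$ linear ODE (cf.\ \cite{KitVar2004}, Propositions~1.1 and~1.2), runs the Riemann--Hilbert/isomonodromy-deformation machinery to obtain the large-$\tau$ asymptotics of the $\Psi$-function in the appropriate Stokes sectors, and reads $u(\tau)$ off the expansion coefficients. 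The case $\Re(\tilde{\nu}+1)=1/2$ is exactly the borderline situation in which the two exponentials forming the $\cosh$-term of the regular asymptotics~\eqref{eq:app:u-sing-as} stay of comparable size along the whole strip $\mathcal{D}$, so the estimates can only be closed away from the accumulating zeros and poles located by Theorem~\ref{th:asympt-poles-zeros} — whence the excised disks defining $\mathcal{D}_{u}$ — and the admissible ranges $0<\delta<\delta_{G}<\tfrac1{15}-\tfrac{8\delta}5$ and $0<\delta_{1}<1/3$ for the error exponents are inherited verbatim from that analysis. Thus for the $u(\tau)$-part nothing new is needed beyond the transcription.

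The genuinely new content is the asymptotics~\eqref{eq:app:exp-varphi-asymptRe<nu+1>=1/2} of $\me^{\mi\varphi(\tau)}$, which replaces the Hamiltonian asymptotics of \cite{KitVar2010}. Two routes are available, and I would present the second. First, $\me^{\mi\varphi(\tau)}$ occurs as one of the coefficients of the linear ODE in the isomonodromy parametrization, so the very same $\Psi$-function asymptotics that produces $u(\tau)$ produces $\me^{\mi\varphi(\tau)}$ simultaneously — this is how the reference treats the Hamiltonian, and it also explains why the statement is phrased for the single-valued quantity $\me^{\mi\varphi(\tau)}$ (the function $\varphi$ itself picks up logarithmic branch points at the zeros $\tau_{m}^{\pm}$, cf.\ Theorem~\ref{th:asympt-poles-zeros}, and the multiplicative constant $g_{11}^{-2}$ is the associated monodromy constant). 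Alternatively, and more self-containedly given the results already in hand, I would integrate \eqref{eq:app:varphi-definition}: along a fixed path inside $\mathcal{D}_{u}$ from a base point $\tau_{0}$ to $\tau$ one has $\varphi(\tau)-\varphi(\tau_{0})=2a\ln(\tau/\tau_{0})+\int_{\tau_{0}}^{\tau}(b/u(\xi))\,\md\xi$, and one substitutes the refined asymptotics of $b/u$ valid on $\mathcal{D}_{u}$, whose leading part is governed by \eqref{eq:app:u-asymptRe<nu+1>=1/2} and whose $\mathcal{O}(\tau^{-1})$ non-oscillatory correction is the same $-\tfrac{2a}{3\tau}$ term visible in the complete expansion \eqref{eq:app:1/u-asympt-regular-complete}. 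Using $\sin(x-\vartheta_{0})\sin(x+\vartheta_{0})=\sin^{2}x-\sin^{2}\vartheta_{0}$ with $\sin^{2}\vartheta_{0}=3/2$ and $\sin2\vartheta_{0}=-\mi\sqrt{3}$ (cf.\ \eqref{eq:app:theta0}), one rewrites
\begin{equation*}
\frac{\sin^{2}x}{\sin(x-\vartheta_{0})\sin(x+\vartheta_{0})}=1+\frac{3/2}{\sin2\vartheta_{0}}\bigl(\cot(x-\vartheta_{0})-\cot(x+\vartheta_{0})\bigr),\qquad x=\tfrac12\Theta(\tau),
\end{equation*}
so that, since $\tfrac{\md}{\md\tau}\Theta(\tau)=\theta'(\tau)\bigl(1+\mathcal{O}(\tau^{-2/3})\bigr)$ and $2(\varepsilon b)^{1/3}\tau^{-1/3}=\theta'(\tau)/\sqrt{3}$, the oscillatory part of the integrand is a total derivative and integrates to $-\mi\,\mathrm{Ln}\bigl(\sin(\tfrac12\Theta+\vartheta_{0})/\sin(\tfrac12\Theta-\vartheta_{0})\bigr)$, while the non-oscillatory part builds up $\theta(\tau)/\sqrt{3}=3(\varepsilon b)^{1/3}\tau^{2/3}$ and the logarithm $\tfrac43a\ln\tau$ appearing in $\Phi(\tau)$. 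Exponentiating and fixing the additive constant of integration — equivalently the multiplicative constant $g_{11}^{-2}$ — by matching to the monodromy data through the value of the regularized integral $\int_{\tau_{0}}^{\infty}\bigl(b/u(\xi)-2(\varepsilon b)^{1/3}\xi^{-1/3}+2a/(3\xi)\bigr)\,\md\xi$, exactly as in the proof of Proposition~\ref{prop:app:error} and in the Hamiltonian computation of \cite{KitVar2010}, yields \eqref{eq:app:exp-varphi-asymptRe<nu+1>=1/2}.

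The step I expect to be the main obstacle is the uniform control of the error terms on the multiply-connected domain $\mathcal{D}_{u}$: one must verify that the refined expansion of $b/u$ holds with a controlled remainder off the real axis but inside $\mathcal{D}_{u}$ (in particular, that the $\sin^{-1}$-factors do not degrade the estimates because the integration path stays at a distance of order at least $|\tau|^{-\delta}$ from the poles and zeros), that term-by-term integration along admissible paths preserves those estimates, and that the resulting errors are of the stated orders — $\mathcal{O}(\tau^{-\delta_{G}}\ln\tau)$ inside the phase $\Theta(\tau)$ and a multiplicative $1+\mathcal{O}(\tau^{-2\delta_{1}+\delta})$ in \eqref{eq:app:exp-varphi-asymptRe<nu+1>=1/2} — with the exponent inequalities compatible with those already in force for $u(\tau)$. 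This bookkeeping is a transcription, with the extra integration, of the delicate estimates of \cite{KitVar2010}; the only genuinely fresh computation is the one sketched above, namely the evaluation of the phase $\Phi(\tau)$ and of the $\sin/\sin$ factor.
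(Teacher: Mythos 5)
Your proposal matches the paper's treatment: the $u(\tau)$-asymptotics is taken over from Theorem~2.3 of \cite{KitVar2010} (with $\varepsilon_1=\varepsilon_2=0$ and the sign corrections recorded at the start of the appendix), and the asymptotics of $\me^{\mi\varphi(\tau)}$ is obtained exactly as in Proposition~\ref{prop:app:error}, by integrating \eqref{eq:app:varphi-definition} with the refined expansion of $b/u(\tau)$ and recognizing the oscillatory part of the integrand as the total derivative $-\mi\,\md\,\mathrm{Ln}\bigl(\sin(\tfrac12\Theta(\tau)+\vartheta_0)/\sin(\tfrac12\Theta(\tau)-\vartheta_0)\bigr)$, the non-oscillatory part yielding $\theta(\tau)/\sqrt{3}+\tfrac{4a}{3}\ln\tau$. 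The only caveat is that the constant $g_{11}^{-2}$ and the $\tau$-independent part of $\Phi(\tau)$ cannot be produced by the integration alone and must be inherited from the isomonodromy parametrization of $\me^{\mi\varphi(\tau)}$ (your first route), which is precisely how the paper handles the analogous monodromy constant $M$ in Proposition~\ref{prop:app:error}.
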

\begin{remark}\label{rem:app:Th4paths}
The functions $u(\tau)$ and $\me^{\mi\varphi(\tau)}$ have the Painlev\'e property, so that they, as well as their asymptotics,
are uniquely defined in $\mathcal{D}_u$. The situation with respect to the function $\varphi(\tau)$ is more complicated because
of the infinite number of zeros in $\mathcal{D}_u$, and therefore requires further investigation.
\hfill $\blacksquare$\end{remark}
\section{Appendix: Comments on the Paper~\cite{Kit87}}\label{app:Kit87}

In this appendix, the notations introduced in the paper~\cite{Kit87} are used; these notations deviate from that employed
in the main body of this work, where the notation of ~\cite{KitVar2004,KitVar2010} is adopted. It is assumed that the reader has the
paper~\cite{Kit87} at hand, because the purpose of this appendix is to discuss not only the results presented there, but also
to correct some typographical errors and omissions; for example, in the discussion below, the function $e^{u(\tau)}$
is identical to the solution $H(r)$, with $r=\tau$, studied in this paper.

The paper~\cite{Kit87} is devoted to the asymptotic analysis of the following second-order ODE
\begin{equation}\label{eq:tzieica-sym}
\varepsilon(\tau u'(\tau))'={\me}^{u(\tau)}-{\me}^{-2u(\tau)},\quad
\varepsilon=\pm1,
\end{equation}
via isomonodromy deformations of a $3\times3$ matrix linear ODE. As a result of this paper, the leading terms of asymptotics
as $\tau\to+0$ of all solutions are obtained; moreover, asymptotics as $\tau\to+\infty$ of ``regular'' solutions to
equation~\eqref{eq:tzieica-sym} are constructed. The asymptotics are parametrised in terms of the monodromy data of an
associated $3\times3$ matrix linear ODE, which, consequently, allows one to obtain the connection formulae for asymptotics as $\tau\to+0$
and as $\tau\to+\infty$.

In \cite{Kit87} the monodromy data are defined differently, depending on whether $\varepsilon=+1$ or $\varepsilon=-1$; even
though they are denoted by the same letters, this should not, however, cause any confusion, since each group of formulae
where such data appear is indicated the corresponding value of $\varepsilon$.
These monodromy data formally define different homeomorphic manifolds; however, because of the obvious symmetry reduction
$\tau\to-\tau$ and $\varepsilon\to-\varepsilon$ of equation~\eqref{eq:tzieica-sym}, one can easily deduce the relation
between the monodromy co-ordinates of the same solution on these manifolds, and, therefore, derive connection
formulae for asymptotics as $\tau\to0$ and $\tau\to\infty$ of the corresponding solution. This relation between the manifolds
is important for the connection results, because some asymptotics are given for $\varepsilon=-1$, whilst others are presented
for $\varepsilon=+1$.

In the English translation of \cite{Kit87}, we noticed that the following line is absent:
$$
a.\quad
\varepsilon=+1.
\qquad
S_k^0=S_k^{\infty},
\quad
k=1,2,3,4,5,6;
$$
this line should be inserted directly above the last line of p. 2079.
In both the Russian and English versions of \cite{Kit87}, the symbol $\thicksim$ is used to denote the leading terms of
asymptotics. In the first equation of the list (18), in lieu of $\thicksim$ the symbol $=$ is incorrectly used.

Throughout \cite{Kit87}, the correction terms for asymptotics are not presented: they have the standard form for all
error estimations that are obtainable via the isomonodromy deformation method, namely,
for asymptotics as $\tau\to\infty$ the leading terms should be multiplied by $(1+\mathcal{O}(\tau^{-\delta}))$, whilst
for asymptotiics as $\tau\to0$ by $(1+\mathcal{O}(\tau^{\delta}))$, where $\delta>0$, which is different in
different formulae, is some small enough number. A more precise value for $\delta$, and even full asymptotic expansions,
can be obtained by other local asymptotic methods.

The expression for $\omega$ in the list of formulae enumerated as (18) in \cite{Kit87} should be
$\omega^{\pm1}=\me^{\mp\frac{2\pi\mi}3\mu}$, instead of $\omega^{\mp1}$ (as typed).

The main formulae defining the asymptotics as $\tau\to+0$ for the function $\me^{u(\tau)}$ are given by the lists of equations
enumerated as (18) and (19) in \cite{Kit87}.
The range of validity of these formulae is presented geometrically with the help of a hyperbola separating the complex
plane of the monodromy parameter $s$ into two parts (see Figure~1 in the English translation of \cite{Kit87}; in the
Russian version, the corresponding figure is not numbered). In terms of the auxiliary parameter $\mu$, the range of validity of
the asymptotics (18) can be written as $|\Re\mu|<1$. Usually, such formulae are valid for a wider range of values for
parameters like $\mu$; in this case, it could be $|\Re\mu|<3/2$, i.e., the complex $s$-plane with the negative semi-axis deleted:
the last claim remains to be verified.
It is assumed that the square root in the logarithm defining $\mu$ is positive for $s>0$.
The choice for this branch of the square root, however, is unimportant: the asymptotics (18) is invariant under the symmetry
$\mu\to-\mu$; therefore, assuming that the main branch of the logarithm is chosen so that $\ln(1)=0$ and it changes like
$\ln(x)=-\ln(1/x)$, the asymptotics (18) remains invariant under the change of the branch of the square root.

The asymptotic formula that appears next corresponds to the pole of the hyperbola ($s=3$): it is obtained from the
asymptotics (18) by taking the limit $\mu\to0$. This formula is followed by two lines (each beginning with $s=3$) describing
the complete set of the monodromy data for the solutions with this asymptotics. In fact, these formulae are obtained by
substituting $s=3$ into equations~(15): doing so, one gets
$$
s=3,\quad
g_1=\frac12+g_2+\sqrt{\frac14+2g_2},\quad
g_3=\frac12+g_2-\sqrt{\frac14+2g_2}.
$$
In the above equations one can choose either branch for the square root, so that these equations define two one-parameter
families of solutions with corresponding logarithmic asymptotics. In \cite{Kit87}, some other related formulae for the monodromy
data are given, but with an arithmetic mistake.

Now, we comment on asymptotics (19) in \cite{Kit87}. This asymptotics contains the parameter $\nu$. Similar to equations
(18), which are invariant under the transformation $\mu\to-\mu$, the asymptotics (19) is invariant under the change $\nu\to-\nu$.
Compared to the parameter $\mu$, $\nu$ is defined by a slightly different formula: the latter formula
contains the same long logarithm (with the square root) as the one for $\mu$. The branches of the logarithm and the square root
are assumed to be the same as the ones in equations (18); in particular, $\ln(-1)=\pi\mi$.
When we change the branch of the square root in this definition, the branch of the logarithm is chosen such
it changes like $\ln(x)=-\ln(1/x)+2\pi\mi$. This guarantees that the parameter $\nu$ changes to $-\nu$,
so that the asymptotics (19) does not change. In terms of the parameter $\nu$, the validity of the asymptotics (19) can be
formulated as $|\Re\nu|<1$. Similar to the situation with equation (18), the asymptotics (19) is, most probably, valid
for a wider range of the parameter $\nu$, namely, $|\Re\nu|<3$; however, a better approximation of solutions via
the asymptotics (19) is achieved when $|\Re\nu|<1$: for larger values of $|\Re\nu|$, the asymptotics (18) works better.
Note that, in terms of the parameter $\nu$, one writes $\omega^{\pm1}=-\me^{\pm\frac{\pi\mi\nu}3}$. The formula given in
\cite{Kit87} depends on the branch of the square root, and, therefore, is less accurate.

We now demonstrate how one specifies the solution studied in this paper with the help of the results of ~\cite{Kit87}.
First, we have to specify the monodromy data by using the asymptotics as $\tau\to+0$. For this purpose, one can use
any of the formulae (18), (19), or (20);
begin with, say, the formula (18). Obviously, the only opportunity to get a finite non-vanishing value for $\me^{u(0)}$ is
to choose $\mu=1$ (or, surely, $\mu=-1$). Substituting $\mu=1$ into the equation for $\mu$, one finds that
$(s-1)/2=\cos(2\pi/3)$, or $s=0$. The parameter $\omega=\me^{-\tfrac{2\pi\mi}3}$; therefore, employing formulae given in (18),
we obtain
\begin{equation}\label{eq:r1}
r_1=\me^{-\tfrac{2\pi\mi}3}g_1+\me^{\tfrac{2\pi\mi}3}g_2+g_3,\qquad
\frac{c_2}{c_0}=\frac12.
\end{equation}
Dividing both sides of the asymptotic formula in (18) by $\tau$ and taking the limit $\tau\to0$, we obtain
$\me^{u(0)}=H(0)=r_1$.
Combined with equations (15), which define the monodromy data, we arrive at three equations that completely define the
monodromy data in terms of $H(0)$:
\begin{equation}\label{sys:mondata3x3}
\begin{aligned}
g_1&=\frac{\me^{\frac{2\pi\mi}3}}{3H(0)}\left(H(0)-\me^{\frac{2\pi\mi}3}\right)(H(0)-1),\quad
g_2=\frac{\me^{-\frac{2\pi\mi}3}}{3H(0)}\left(H(0)-\me^{-\frac{2\pi\mi}3}\right)(H(0)-1),\\
g_3&=\frac{1}{3H(0)}\left(H(0)-\me^{\frac{2\pi\mi}3}\right)\left(H(0)-\me^{-\frac{2\pi\mi}3}\right)
=\frac13\left(H(0)+1+\frac{1}{H(0)}\right),\quad
s=0.
\end{aligned}
\end{equation}
Before considering asymptotics as $\tau\to\infty$, we check how the same result can be obtained via the asymptotics (19).
Since the branch of the long logarithm in (18) and (19) is the same, we get that $\nu=\mu=1$, in which case
\begin{equation}\label{eq:r}
r=g_3-\me^{\tfrac{\pi\mi}3}g_1-\me^{-\tfrac{\pi\mi}3}g_2,\qquad
\frac{c_{-}}{c_{+}}=-1.
\end{equation}
Multiplying both sides of (19) by $\tau^{1/2}$ and taking the limit $\tau\to+0$, one obtains $\me^{u(0)}=H(0)=r$.
Since, according to equations~\eqref{eq:r1} and ~\eqref{eq:r}, $r=r_1$, the asymptotics (19) provides us with the
same monodromy data for the given initial value of $H(0)$.
Now, we turn our attention to equation (20). The case of interest to us corresponds to $\varphi=0$; then, the parameter $p=1$,
and equation (20) reads
$$
H(\tau)=\me^{u(\tau)}\underset{\tau\to0}{\thicksim} r\left(1+(r-1/r^2)\tau\right),
$$
where $r=H(0)$, as established above (cf. equation~\eqref{eq:Hat0short}).

Now, we are ready to consider asymptotics as $\tau\to\infty$. For $\varepsilon=+1$, there is only one regular
asymptotics decaying exponentially to $1$. This asymptotic is proportional to $s$. Since $s=0$, it follows that this
asymptotics corresponds to the exact solution $H(\tau)=1$. The other monodromy parameters corresponding to
this solution are $g_1=g_2=0$ and $g_3=1$. Comparing them with \eqref{sys:mondata3x3}, we find that, actually, $H(0)=1$.

The other asymptotic results as $\tau\to+\infty$ concern the case of equation~\eqref{eq:tzieica-sym} with $\varepsilon=-1$.
If $u(\tau)$ is a solution of \eqref{eq:tzieica-sym} with $\varepsilon=+1$, then $u(-\tau)$ is a solution for $\varepsilon=-1$.
It is straightforward to deduce the corresponding mapping between the monodromy manifolds of equation~\eqref{eq:tzieica-sym}
with $\varepsilon=\pm1$, and, thus, to obtain regular asymptotics as $\tau\to-\infty$ for given asymptotics as $\tau\to+0$
of the solution $u(\tau)$ in case $u(\tau)$ has such regular asymptotics on the negative semi-axis. Note that the general
solution of equation~\eqref{eq:tzieica-sym} is not single-valued;
therefore, mappings between the monodromy manifolds corresponding  to the solutions $u(\me^{\pi\mi}\tau)$ and
$u(\me^{-\pi\mi}\tau)$ are different. Our case, $s=0$, is especially simple, because the corresponding solution is single-valued,
$u(\me^{\pi\mi}\tau)=u(\me^{-\pi\mi}\tau)$, and the corresponding mapping between the monodromy manifolds is just the identity
transformation. Therefore, we can use the results for asymptotics as $\tau\to+\infty$ for the case $\varepsilon=-1$
by merely changing $\tau\to-\tau$ in them and assuming that $\tau\to-\infty$, with the monodromy parameters in these formulae
coinciding with those in \eqref{sys:mondata3x3}: the formulae in ~\cite{Kit87} are not enumerated; rather, they are given at
the bottom of p. 2080 and at the top of p. 2081 in the English translation (resp., the bottom of p. 49 and at
the beginning of p. 50 in the Russian version). For the case $H(0)=1$, the first asymptotic formula at the bottom of
p. 2080 (resp., p. 49) is applicable, and gives $H(\tau)=1$, but now on the negative semi-axis. In a more general situation,
we have to use the second asymptotic formula on the bottom of p. 2080 (which continues to the top of the next page); in our
case ($s=0$), it reads:
\begin{equation}\label{eq:infinity-asymptotics}
\me^{u(\tau)}-1\underset{\tau\to-\infty}{=}-\frac{a\sqrt{6}}{(-3\tau)^{1/4}}\cos\left(2\sqrt{-3\tau}+a^2\ln\sqrt{-3\tau}
+\varphi-\frac{\pi}4\right)\left(1+\mathcal{O}\left(\tau^{-\delta}\right)\right),
\end{equation}
where
$$
a=\sqrt{\frac{|\ln g_3|}{2\pi}}\exp\left(\frac{\mi}2\arg\ln g_3\right).
$$
and
$$
\varphi=a^2\ln 24-\frac{\mi}{2}\ln\left|\frac{\Gamma(-\mi a^2)}{\Gamma(\mi a^2)}\frac{g_2}{g_1}\right|+
\frac12\arg\frac{\Gamma(-\mi a^2)}{\Gamma(\mi a^2)}+\frac12\arg\frac{g_2}{g_1}.
$$
Note that, in the paper~\cite{Kit87}, the formula for $a$ contains a conspicuous misprint in
$\exp\left(\tfrac{1}2\arg\ln g_3\right)$, i.e., the factor $\tfrac12$ must be changed to $\tfrac{\mi}2$.

The solution has regular asymptotics given by \eqref{eq:infinity-asymptotics} provided that
$g_1\neq0$, $g_2\neq0$, and $|\arg g_3|<\pi/2$. Note that if $g_1g_2\neq0$, then $g_3\neq0$.
The argument, $\arg\ln g_3$, is calculated via the principal branch of $\ln$ in the standard range
$(-\pi,\pi]$.

We restrict our consideration, hereafter, to the case $H(0)\in\mathbb{R}\setminus\{0\}$. The qualitative behaviour of the
corresponding solutions are studied in Proposition 5.3.2. of \cite{BobEitLMN2000}.
Briefly, solutions $H(r)$ with $H(0)>0$ are positive and bounded on the negative semi-axis. On the positive semi-axis,
these solutions monotonically approach a pole. Each solution for $H(0)<0$ increases monotonically from some pole on the
negative semi-axis to a zero on the positive semi-axis.

If $H(0)<0$, then the equation for $g_3$ in ~\eqref{sys:mondata3x3} implies $g_3\leqslant-1/3$, so that $\arg g_3=\pi$,
and the asymptotics~\eqref{eq:infinity-asymptotics} is not applicable. For $H(0)>0$, one proves that $g_3>1$, so that
$\arg g_3=0$, and  the asymptotics~\eqref{eq:infinity-asymptotics} is applicable. In this case, $a>0$ and
$\overline{g_2}=g_1\Rightarrow|g_2|=|g_1|$. This allows one to simplify the equations for $a$ and $\varphi$:
\begin{equation}\label{eq:a-varphi-positive-H0}
a=\sqrt{\frac{\ln g_3}{2\pi}},\qquad
\varphi=a^2\ln 24+\arg{\Gamma(-\mi a^2)}+\arg{g_2}.
\end{equation}
Below, we present plots for the numerical solutions together with the plots of their large-$\tau$ asymptotics calculated via
equations \eqref{eq:infinity-asymptotics} and \eqref{eq:a-varphi-positive-H0}. In the figures below, as in the main body of the
text, the notation $\tau=r$ is adopted (this is not related to, nor should it be confused with, the variable $r$ used in
\cite{Kit87}).
Observing these figures, one notes good qualitative correspondence between the numeric and asymptotic behaviours. This
correspondence starts from very small values of $\tau$. In case $H(0)>0$ is very large, the solution decreases ``very rapidly''
down to the real axis, but does not cross it: for such large values of $H(0)$, the first minimum of asymptotics is, certainly,
achieved below the real axis, because the large-$\tau$ asymptotics contains the factor $\tau^{1/4}$ in the denominator.
The asymptotics, though, continue to follow the behaviour of the solution even for $r\ll 1$ and for very large $H(0)$;
for example, for $H(0)=15$, the first minimum of asymptotics, denoted by $H_{as}(r)$, occurs at $r=r_m=-0.8181\ldots$,
with $H_{as}(r_m)=-0.6219\ldots$, and the first minimum of the solution is $H(-0.887801\ldots)=0.439959\ldots$.
For $H(0)=100$, the minimum of asymptotics is $H_{as}(-0.393734\ldots)=-0.72913780\ldots$, and the first minimum of
the solution is $H(-0.4622134\ldots)=0.289185\ldots$. One notes that the approximation for both values of $H(0)$ is of the
same order of accuracy: for larger values of $r$, the large-$\tau$ asymptotics approximates the solution
more precisely, as expected. The closer $H(0)$ is to $1$, the better, in the numerical sense, works the asymptotic
formula~\eqref{eq:infinity-asymptotics}, i.e., it more accurately (numerically) approximates the solution for smaller
values of $\tau$. These conclusions are illustrated in Figs.~\ref{fig:H0=5over7}--\ref{fig:H0=100}.

\begin{figure}[tbhp]
\begin{center}
\includegraphics[height=50mm,width=100mm]{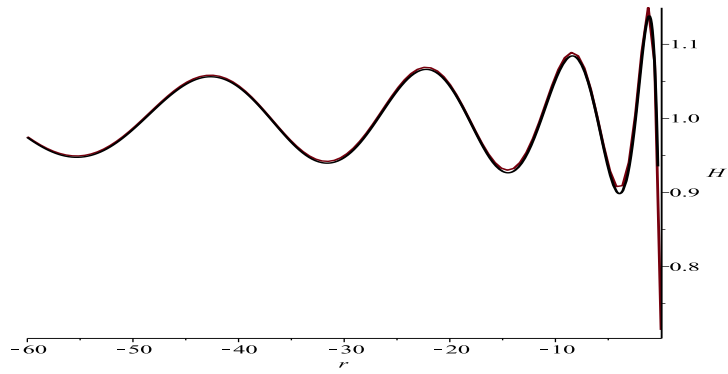}
\caption{The red and black plots are, respectively, the numeric and large-$r$ asymptotic solutions
($\me^{u(\tau)}=H(r)$, $\tau=r$)
of equation~\eqref{eq:tzieica-sym} corresponding to the initial datum $\me^{u(0)}=H(0)=5/7$.}
\label{fig:H0=5over7}
\end{center}
\end{figure}
\begin{figure}[tbhp]
\begin{center}
\includegraphics[height=50mm,width=100mm]{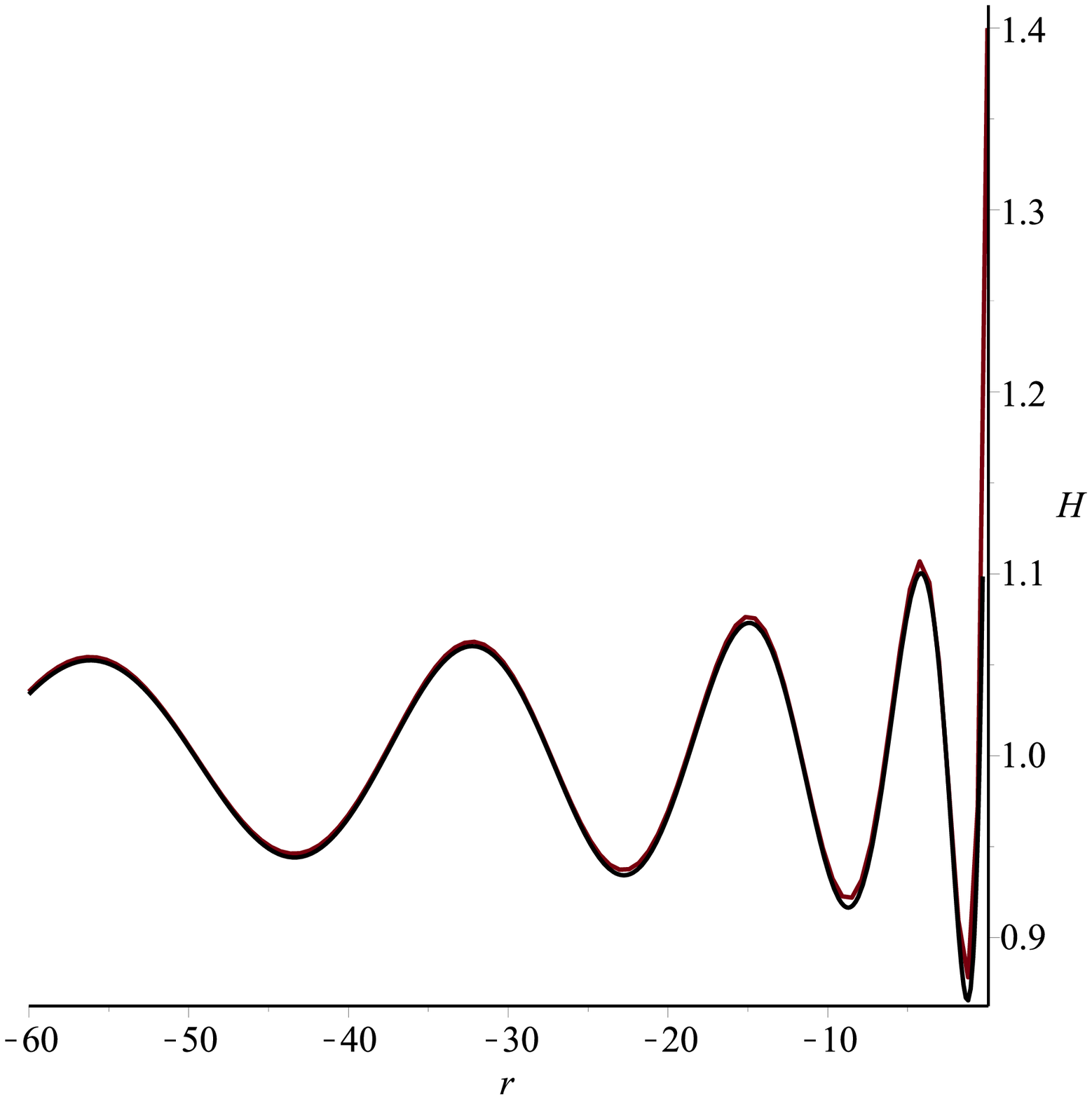}
\caption{The red and black plots are, respectively, the numeric and large-$r$ asymptotic solutions
($\me^{u(\tau)}=H(r)$, $\tau=r$) of equation~\eqref{eq:tzieica-sym} corresponding to the initial datum $\me^{u(0)}=H(0)=7/5$.}
\label{fig:H0=7over5}
\end{center}
\end{figure}
\begin{figure}[tbhp]
\begin{center}
\includegraphics[height=50mm,width=100mm]{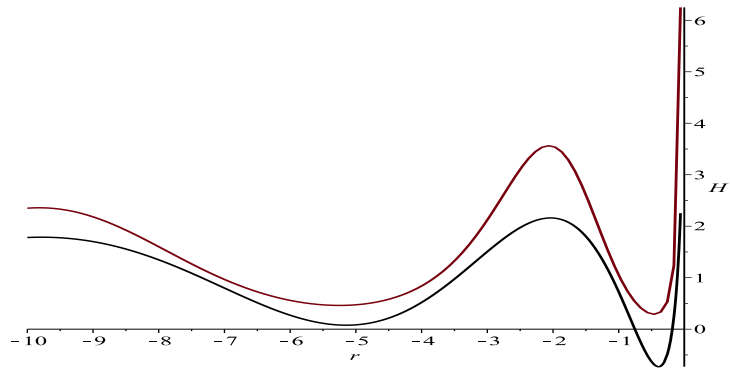}
\caption{The red and black plots are, respectively, the numeric and large-$r$ asymptotic solutions
($\me^{u(\tau)}=H(r)$, $\tau=r$) of equation~\eqref{eq:tzieica-sym} corresponding to the initial datum $\me^{u(0)}=H(0)=100$.}
\label{fig:H0=100}
\end{center}
\end{figure}

\begin{remark}
In this appendix, we used the manifold defined in \cite{Kit87}. As mentioned above, two equivalent monodromy manifolds
were introduced in \cite{Kit87}. For the case $s=0$, this equivalency is established via the identity mapping,
because the monodromy data of these manifolds are denoted by the same letters, namely, $g_1$, $g_2$, and $g_3$, which
are used in this appendix. On the other hand, in Section~\ref{sec:mondata}, we use the monodromy manifold defined
in \cite{KitVar2004}. Since both monodromy manifolds describe the same set of the solutions of equation~\eqref{eq:dP3y}
for $a=0$, they should be equivalent.
Strictly speaking, though, the manifold considered in Section~\ref{sec:mondata} contains one more
parameter which allows one to find, additionally, asymptotics of the function $\varphi(\tau)$ considered in
Appendices~\ref{app:asympt0} and \ref{app:infty} (see, also, the integral $I(r)$ in Section~\ref{sec:asymptnumerics});
however, the quadratic contraction of the manifold in Section~\ref{sec:mondata} enumerates only solutions of
equation~\eqref{eq:dP3y} with $a=0$, so that this contraction should be equivalent to the manifold employed in this appendix.

We do not consider the mappings between the manifolds discussed above in the general setting; however, we present
the explicit mapping between the manifolds for the case $s=0$, which follows from the comparison of the results of this
appendix and Lemma~\ref{lem:mondataH0}:
\begin{gather*}
s=\tilde{s}=0
\qquad
\Leftrightarrow
\qquad
s_0^0=i,\quad
s_0^\infty s_1^\infty=-1,\\
g_1=\tilde{g}_1,\quad
g_2=-\tilde{g}_2,\quad
g_3=\tilde{g}_3,
\end{gather*}
which coincides, for $s=\tilde{s}=0$, with the transformation~\eqref{eqs:contraction-symmetries-g2=-g2}.
\hfill$\blacksquare$\end{remark}
\vspace*{0.35cm}
\noindent
\textbf{\Large Acknowledgements}
\vspace*{0.25cm}

\noindent
The authors are grateful to R. Conte for discussions related to the Painlev\'e Property.
A.~V. is very grateful to the St.~Petersburg Branch of the Steklov Mathematical Institute (POMI) for
hospitality during the Summer of 2019, when this work began.

\end{document}